\documentclass[10pt]{article}
\usepackage{a4wide}
\usepackage{amsmath}
\usepackage{amssymb}
\usepackage{latexsym}
\usepackage[pdftex]{graphicx}
\usepackage{enumerate}
\usepackage{url}

  \usepackage{tikz}
  \usetikzlibrary{calc,patterns}
  \usetikzlibrary{positioning,arrows}

  \usepackage{color}
  \definecolor{brown}{cmyk}{0, 0.72, 1, 0.45}
\definecolor{grey}{gray}{0.5}
\def\red{\color{black}}
\def\blue{\color{black}}
\def\grn{\color{black}}

\def\orange{\color{black}}
\newcommand{\mnote}[1]{}
\newcommand{\gnote}[1]{}
\newcommand{\onote}[1]{}

  \newenvironment{proof}{\vspace{1ex}\noindent{\bf Proof:}}{\hspace*{\fill}
  $\blacksquare$\vspace{1ex}}
  \newenvironment{proofof}[1]{\vspace{1ex}\noindent{\bf Proof of #1:}}{\hspace*{\fill}
  $\blacksquare$\vspace{1ex}}
  \def\noproof{\hspace*{\fill}$\blacksquare$}

  \newtheorem{theorem}{Theorem}[section]
  \newtheorem{lemma} [theorem] {Lemma}
  \newtheorem{corollary} [theorem] {Corollary}
\newtheorem{question} [theorem] {Question}


\newcommand{\Bcal}[0]{\ensuremath{{\mathcal B}}}
\newcommand{\Ccal}[0]{\ensuremath{{\mathcal C}}}
\newcommand{\Dcal}[0]{\ensuremath{{\mathcal D}}}

\newcommand{\Fcal}[0]{\ensuremath{{\mathcal F}}}
\newcommand{\Gcal}[0]{\ensuremath{{\mathcal G}}}

\newcommand{\Ical}[0]{\ensuremath{{\mathcal I}}}
\newcommand{\Jcal}[0]{\ensuremath{{\mathcal J}}}

\newcommand{\Pcal}[0]{\ensuremath{{\mathcal P}}}

\newcommand{\Tcal}[0]{\ensuremath{{\mathcal T}}}

\newcommand{\Vcal}[0]{\ensuremath{{\mathcal V}}}

\newcommand{\Xcal}[0]{\ensuremath{{\mathcal X}}}

\newcommand{\eR}[0]{\ensuremath{ \mathbb R}}

\newcommand{\eN}[0]{\ensuremath{ \mathbb N}}
\newcommand{\Zed}[0]{\ensuremath{ \mathbb Z}}


\newcommand{\Ztil}[0]{\tilde{Z}}


\newcommand{\norm}[1]{\ensuremath{\|#1\|}}

\newcommand{\Pee}[0]{\ensuremath{{\mathbb P}}}
\newcommand{\Ee}[0]{\ensuremath{{\mathbb E}}}

\newcommand{\isd}[0]{\hspace{.2ex} \raisebox{-.1ex}{$=$} \hspace{-1.5ex}
\raisebox{1ex}{{$\scriptstyle d$}} \hspace{.8ex} }
\newcommand{\convd}[0]{\hspace{.2ex} \raisebox{-.1ex}{$\rightarrow$}
\hspace{-1.5ex}
\raisebox{1ex}{{$\scriptstyle d$}} \hspace{.8ex} }

 \newcommand{\eps}{\varepsilon}
\newcommand{\dtv}{d_{\text{TV}}}

\DeclareMathOperator{\diam}{diam}

\DeclareMathOperator{\area}{area}
\DeclareMathOperator{\Po}{Po}
\DeclareMathOperator{\Bi}{Bi}
\DeclareMathOperator{\dd}{d}
\DeclareMathOperator{\Var}{Var}

\newcommand{\pr}[1]{{\bf(pr-#1)}}
\newcommand{\PP}[1]{{\bf(\Pcal\Pcal-#1)}}
\newcommand{\str}[1]{{\bf(str-#1)}}
\newcommand{\strs}[1]{{\bf({str0-#1)}}}
\newcommand{\cruc}[1]{{\bf(cruc-#1)}}

\newcommand{\GPo}[0]{\ensuremath{G_{\Pcal}}}

\newcommand{\Rcnr}[0]{\ensuremath{R_{\text{cnr}}}}
\newcommand{\Rsde}[0]{\ensuremath{R_{\text{sde}}}}
\newcommand{\Rmdl}[0]{\ensuremath{R_{\text{mdl}}}}
\newcommand{\Wcnr}[0]{\ensuremath{W_{\text{cnr}}}}
\newcommand{\Wsde}[0]{\ensuremath{W_{\text{sde}}}}
\newcommand{\Wmdl}[0]{\ensuremath{W_{\text{mdl}}}}
\newcommand{\Zcnr}[0]{\ensuremath{Z_{\text{cnr}}}}
\newcommand{\Zsde}[0]{\ensuremath{Z_{\text{sde}}}}
\newcommand{\Zmdl}[0]{\ensuremath{Z_{\text{mdl}}}}
\newcommand{\mucnr}[0]{\ensuremath{\mu_{\text{cnr}}}}
\newcommand{\musde}[0]{\ensuremath{\mu_{\text{sde}}}}
\newcommand{\mumdl}[0]{\ensuremath{\mu_{\text{mdl}}}}

\newcommand{\chvatal}[0]{Chv{\'a}tal}

\newcommand{\side}[0]{q}


 \title{Maker-Breaker games on random geometric graphs}
  \author{Andrew Beveridge\thanks{Department of Mathematics, Statistics and
  Computer Science, Macalester College, Saint Paul, MN. E-mail:
  \texttt{abeverid@macalester.edu}}\,,
Andrzej Dudek\thanks{Department of Mathematics, Western Michigan University,
Kalamazoo, MI. E-mail: \texttt{andrzej.dudek@wmich.edu}. {\orange Research supported in part by Simons Foundation Grant \#244712}} \,,
Alan Frieze\thanks{Department of Mathematical Sciences, Carnegie Mellon
University,
Pittsburgh, PA. E-mail: \texttt{alan@random.math.cmu.edu}.
Supported in part by NSF Grant CCF1013110}\,,
Tobias M\"uller\thanks{Mathematical Institute, Utrecht University, Utrecht,
the
Netherlands. E-mail: \texttt{t.muller@uu.nl}.
Part of this work was done while this author was supported by a VENI grant
from Netherlands Organisation for Scientific Research (NWO)}\,,
Milo\v{s} Stojakovi\'{c}\thanks{Department of Mathematics and Informatics, University of Novi Sad,
Serbia. Email: \texttt{milos.stojakovic@dmi.uns.ac.rs}.
Partly supported by Ministry of Science and Technological Development,
Republic of Serbia, and Provincial Secretariat for Science, Province of
Vojvodina.}}

  \begin{document}

  \maketitle

\begin{abstract}
In a Maker-Breaker game on a graph $G$, Breaker and Maker alternately claim
edges of $G$. Maker wins if, after all edges have been claimed, the graph
induced by his edges has some desired property. We consider four 
Maker-Breaker games played on random geometric graphs.
For each of our four games we show that if we add edges between $n$ points chosen 
uniformly at random in the unit square by order of increasing edge-length
then, with probability tending to one as $n\to\infty$, the
graph becomes Maker-win the very moment it satisfies a simple
necessary condition.
In particular, with high probability,
Maker wins the connectivity game as soon as the minimum degree is at least two; Maker wins the Hamilton cycle 
game as soon as the minimum degree is at least four; Maker wins the perfect matching game as soon as the
minimum degree is at least two and every edge has at least
three neighbouring vertices; and Maker wins the $H$-game as soon as there is a subgraph from a finite list of
``minimal graphs".
These results also allow us to give precise expressions for the limiting probability that $G(n,r)$ is Maker-win
in each case, where
$G(n,r)$ is the graph on $n$ points chosen uniformly at random on the unit square with an 
edge between two points if and only if their distance is at most $r$.
\end{abstract}


\section{Introduction}

Let $H = (X, \Fcal)$ be a hypergraph. That is, $X$ is a finite set and
$\Fcal \subseteq 2^X$ is
a collection of subsets of $X$.
The {\em Maker-Breaker game} on $H$ is played as follows.
There are two players, Maker and Breaker, that take turns claiming {\grn unclaimed} elements
of $X$, with Breaker moving first.
Maker wins if, after all the elements of $X$ have been claimed, the set
$M \subseteq X$ of elements
claimed by him contains an element of $\Fcal$ (i.e.~if $F \subseteq M$
for some $F \in \Fcal$).
Otherwise Breaker wins.

Maker-Breaker games have considerable history going back to \chvatal\ and
Erd\H{o}s~\cite{ChvatalErdos78}
and have become increasingly popular over the past decade or so.
Often the case is considered where $X = E(G)$ is the edge set of
some graph $G$ and $\Fcal \subseteq 2^{E(G)}$ is
a collection of graph theoretic structures of interest, such as the set
of all spanning trees,
the set of all perfect matchings, the set of all Hamilton cycles, or the set
of all subgraphs isomorphic to
a given graph $H$.
In these cases we speak of, respectively, the connectivity game,
the perfect matching game, the Hamilton cycle game, the $H$-game.

{\grn
Already in~\cite{ChvatalErdos78}, an interesting connection between the positional games on the complete graph and the corresponding properties of the random graph was pointed out, roughly noting that the course of a game between the two (smart) players often resembles a purely random process. This many time repeated and enriched paradigm in positional game theory later came to be known as 
\emph{Erd\H{o}s's probabilistic intuition}, and although still lacking the form of a precise statement, it has proved a valuable source of inspiration.
See~\cite{BeckBook} for an overview of the theory of positional games.

Combining two related concepts, Maker-Breaker games on the Erd\H{o}s-R\'enyi model of random graph were first introduced and studied in~\cite{StojakovicSzabo05}. Several works followed, including~\cite{HKSS09, BFHK11}, resulting in precise descriptions of the limiting probabilities for Maker-win in games of connectivity, perfect matching and Hamilton \gnote{Replaced "Hamiltonicity game" w. "Hamilton cycle game" everywhere.} cycle, all played on the edges of a random graph $G\sim G(n,p)$. As for the $H$-game, not that much is known. The limiting probability is known precisely only for several special classes of graphs~\cite{MuSt}. Recently, the order of the threshold probability was determined for all graphs $H$ that are not trees, and whose maximum 2-density is not determined by a $K_3$ as a 
subgraph~\cite{NeStSt}. When it comes to other models of random graphs, some positional games on random regular graphs were studied 
in~\cite{BKS11}.
}

In the current paper we add to this line of research, by considering
Maker-Breaker games played on the random geometric graph on the unit square.
Given points $x_1,\dots,x_n \in \eR^2$ and $r>0$ we define
the geometric graph $G(x_1,\dots,x_n;r)$ as follows.
It has vertex set $V = \{ x_1,\dots,x_n\}$, and an edge
$x_ix_j$ if and only if $\norm{x_i-x_j} \leq r$, {\orange where $\norm{.}$ is the Euclidean norm.}
The random geometric graph $G(n,r) = G(X_1,\dots,X_n;r)$
is defined by taking $X_1,\dots,X_n$ i.i.d.~uniform at random
on the unit square $[0,1]^2$.
The random geometric graph essentially goes back to Gilbert~\cite{Gilbert61} who defined a very similar model in 
1961. For this reason the random geometric graph is sometimes also called the {\em Gilbert model}.
Random geometric graphs have been the subject of a considerable research effort in the last two decades or so. 
As a result, detailed information is
now known on aspects such as ($k$-)connectivity~\cite{PenroseMST1, Penrosekconn}, the largest component~\cite{PenroseBook},
the chromatic number and clique number~\cite{twopoint, McDiarmidMuller11} and the simple random walks on the graph~\cite{CooperFriezeCoverRgg}.
A good overview of the results prior to 2003 can be found in the monograph~\cite{PenroseBook}.
It is of course possible to define the random geometric graph in dimensions other than two, using a probability measure other
than the uniform and using a metric other than the euclidean norm, and in fact several authors do this.
In the present work we have decided to stick with the two-dimensional, uniform, euclidean setting because this
is the most natural choice in our view, and this setting is already challenging enough.

Recall that, formally, a graph property $\Pcal$ is a collection of graphs
that is closed under isomorphism.
We call a graph property $\Pcal$  {\em increasing} if it is preserved
under addition of edges (i.e.~$G \in \Pcal$ implies $G\cup e \in \Pcal$
for all $e \in {V(G)\choose 2}$).
Examples of increasing properties are being connected, being non-planar,
containing a Hamilton cycle,
or being Maker-win in any of the games mentioned above.
We define the {\em hitting radius} of an increasing property $\Pcal$ as:
\[
 \rho_n(\Pcal) := \inf\{ r\geq 0 : G(X_1,\dots,X_n;r) \text{ satisfies }
\Pcal \}.
\]
Here we keep the locations of the points $X_1,\dots,X_n$ fixed as we take the
infimum. 

{\blue
We give explicit descriptions of the hitting radius for three different games, namely the connectivity game, the Hamilton cycle game and the perfect matching game. For each game, we have a very satisfying  characterization: the hitting radius for $G(n,r)$ to be Maker-win coincides exactly with a simple, necessary minimum degree condition.   Each characterization engenders an extremely precise description of the behavior at the threshold value for the radius. We note that these results require some very technical lemmas that explicitly catalog the structure of the graph around its sparsest regions. Finally, we also state a general  theorem for the $H$-game for a fixed graph $H$. The hitting radius obeys a similar behavior, and  can be determined by finding the smallest $k$ for which the $H$-game is Maker-win on the $k$-clique. {\grn Upper bounds for such $k$ are available, see e.g.~\cite{Beck1}, so determining $k$ for a given $H$ is essentially a finite problem.}

We state these four results below, as couplets consisting of a theorem recognizing the coincidence of the hitting radii,  followed by a corollary that describes the behavior  around that critical radius.  We say that event $A_n$ holds  \emph{whp} (short for \emph{with high probability}) to mean that $\Pee(A_n) = 1 - o(1)$ as $n \rightarrow \infty$.
}

\begin{theorem}\label{thm:connhit}
The random geometric graph process satisfies \onote{Added ``emph'' to \emph{whp} in the whole paper}
\[
 \rho_n(\text{Maker wins the connectivity game}) =
\rho_n(\text{minimum degree $\geq 2$}) \quad \emph{whp}.
\]
\end{theorem}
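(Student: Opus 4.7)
The bound $\rho_n(\text{Maker-win}) \ge \rho_n(\delta \ge 2)$ is deterministic and essentially trivial: so long as some vertex $v$ has $\deg(v) \le 1$, Breaker opens by claiming the unique edge at $v$ if one exists, leaving $v$ isolated in Maker's subgraph forever, so Maker cannot obtain a connected spanning subgraph. Hence the event ``Maker wins the connectivity game'' implies $\delta \ge 2$ for every sample.

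For the reverse inequality the key reduction is Lehman's theorem on the Shannon switching game. Since Breaker moves first, Maker plays as the second player, and Lehman's theorem in that setting asserts that Maker wins the spanning connectivity game on $G$ if and only if $G$ contains two edge-disjoint spanning trees. So it suffices to prove that at the random hitting radius $r^\star := \rho_n(\delta \ge 2)$ the graph $G(n, r^\star)$ whp contains two edge-disjoint spanning trees. The plan is to partition the vertex set as $V = L \cup H$, where $L$ is the set of \emph{defect} vertices of degree exactly $2$ and $H := V \setminus L$ is the \emph{bulk}, and to build the two trees accordingly.

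The structural claims I would establish whp at $r = r^\star$ are: (i) the defect vertices in $L$ lie at pairwise Euclidean distance much larger than $r^\star$, so in particular no two are adjacent or share a neighbour; (ii) each $v \in L$ has both of its two neighbours in $H$; (iii) the bulk subgraph $G[H]$ is at least $4$-edge-connected and therefore, by the Nash-Williams / Tutte theorem, already contains two edge-disjoint spanning trees $T_1, T_2$. Granted (i)--(iii), two edge-disjoint spanning trees of all of $G(n, r^\star)$ are obtained by attaching each $v \in L$ to $T_1$ via one of its two incident edges and to $T_2$ via the other; by (i) and (ii) these attachments use pairwise disjoint edges and add only leaves, so no cycles form and edge-disjointness is preserved.

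The main obstacle is the structural analysis (i)--(iii), which is a delicate hitting-time problem at the minimum-degree-$2$ threshold, where $\pi n r^{\star 2} = \log n + \log\log n + O(1)$. Since the expected number of defect vertices is bounded and they tend to cluster near the boundary of $[0,1]^2$, Penrose-style Poisson and second moment arguments should yield (i); property (ii) is then immediate because the two neighbours of any $v \in L$ lie in a disk of radius $r^\star$ around $v$, which contains no other defect vertex by (i); and (iii) should follow from covering $[0,1]^2$ by tiles of side $\Theta(r^\star)$ and applying standard concentration to the vertex counts per tile, since the bulk has average degree of order $\log n$. These are presumably the ``very technical lemmas that explicitly catalog the structure of the graph around its sparsest regions'' alluded to in the introduction; assembling them and verifying edge-disjointness in the tree construction above should close the argument.
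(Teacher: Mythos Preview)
Your overall strategy---reduce to Lehman's theorem and build two edge-disjoint spanning trees---is exactly what the paper does. But the structural claims you propose to verify are not the right ones, and (iii) in particular fails.

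First, a miscalculation: at $r^\star = \rho_n(\delta\ge 2)$ one has $\pi n (r^\star)^2 = \ln n + \ln\ln n + O(1)$, and a short computation gives $\Ee|L| = \Ee\#\{v:\deg(v)=2\} \sim \frac12 \ln n$, not $O(1)$. (It is the number of degree-\emph{one} vertices that is bounded at this threshold.) So the ``defect'' set $L$ is growing. This does not by itself kill (i)--(ii), since $(\ln n)^2\cdot (r^\star)^2 \to 0$ still gives well-separation, but it does affect (iii): the bulk $H=V\setminus L$ contains all vertices of degree $3$, and there are $\Theta((\ln n)^2)$ of those. Any such vertex has degree at most $3$ in $G[H]$, so $G[H]$ is certainly not $4$-edge-connected, and your appeal to Nash--Williams/Tutte via $4$-edge-connectivity breaks down. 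One would have to verify the partition condition for $G[H]$ directly, and that is essentially the full problem again---the sparse spots of $G[H]$ are no easier to handle than those of $G$.

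The paper avoids this recursion by working geometrically rather than by degree. It dissects $[0,1]^2$ into cells of side $\eta r$, calls a cell \emph{good} if it contains at least $T$ points, and shows (Dissection Lemma~\ref{lem:structure}) that whp the good cells form a giant component $\Gamma_{\max}$ with only a few tiny, well-separated \emph{obstructions} outside it. The Obstruction Lemma~\ref{lem:crucial} then shows each obstruction has at least two \emph{crucial} vertices anchoring it to $\Gamma_{\max}$. The two spanning trees are built by hand (Lemma~\ref{lem:conngamedet}): two trees on each good cell's clique, a pair of cross-edges for each $\Gamma_{\max}$-adjacency, and one crucial vertex per tree for each obstruction. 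No edge-connectivity or Nash--Williams argument is needed. The ``sparsest regions'' catalogued are thus geometric clusters (bad-cell islands and dangerous points), not the set of minimum-degree vertices.
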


Theorem~\ref{thm:connhit} allows us to derive an expression for the limiting
probability
that Maker wins the connectivity game on $G(n,r)$, as follows.
Let $n \geq 2$ be an integer and $r> 0$ be arbitrary.
Since having minimum degree at least two is a necessary condition
for Maker winning the connectivity game, we have:

\begin{equation}\label{eq:connprobconn1}
\begin{array}{rcl}
\Pee( \delta(G(n,r)) \geq 2 )
& \geq &
\Pee( \text{Maker wins on } G(n,r) )\\
& = &
\Pee( \text{Maker wins on } G(n,r) \text{ and } \delta(G(n,r)) \geq 2 ) \\
& = &
\Pee( \delta(G(n,r)) \geq 2 ) \\
& &
- \Pee( \text{Maker loses on } G(n,r) \text{ and } \delta(G(n,r)) \geq 2 ) \\
& \geq &
\Pee( \delta(G(n,r)) \geq 2 ) \\
& & -
\Pee( \rho_n(\text{Maker wins}) \neq
\rho_n(\text{minimum degree $\geq 2$}) ).
\end{array}
\end{equation}

\noindent
Combining this with Theorem~\ref{thm:connhit} we see that, for any
sequence $(r_n)_n$ of positive numbers:

\begin{equation}\label{eq:connprobconn2}
\Pee( \text{Maker wins the connectivity game on } G(n,r_n) )
=
\Pee( \delta(G(n,r_n)) \geq 2 ) - o(1).
\end{equation}

\noindent
Combining this with a result of Penrose (repeated as
Theorem~\ref{thm:penrosemindeg} below), we
find that:

\begin{corollary}\label{cor:connprob}
Let $(r_n)_n$ be a sequence of positive numbers and let us write
\[
x_n := \pi n r_n^2 -\ln n -\ln\ln n.
\]
Then the following holds in the random geometric graph $G(n,r_n)$:
\[
\lim_{n\to\infty} \Pee(\text{Maker wins the connectivity game})
=
\left\{\begin{array}{cl}
        1 & \text{ if } x_n \to +\infty, \\
e^{-(e^{-x}+\sqrt{\pi e^{-x}})} & \text{ if } x_n \to x \in\eR, \\
0 & \text{ if } x_n \to -\infty.
       \end{array}
\right.
\]
\end{corollary}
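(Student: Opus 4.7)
The proof of Corollary~\ref{cor:connprob} is essentially immediate once Theorem~\ref{thm:connhit} is established. My plan is to combine the two ingredients already assembled in the lead-up to the corollary, namely the chain of inequalities in \eqref{eq:connprobconn1} together with Theorem~\ref{thm:connhit} (which together give \eqref{eq:connprobconn2}), and Penrose's asymptotic result on the minimum-degree hitting radius (Theorem~\ref{thm:penrosemindeg}).

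First I would observe that \eqref{eq:connprobconn2} reduces the corollary to computing
\[
\lim_{n\to\infty} \Pee(\delta(G(n,r_n)) \geq 2)
\]
in each of the three regimes for $x_n$. In the regime $x_n \to x \in \eR$, Theorem~\ref{thm:penrosemindeg} gives directly that this limit equals $e^{-(e^{-x}+\sqrt{\pi e^{-x}})}$, and then \eqref{eq:connprobconn2} transfers this to the Maker-win probability.

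For the two cases $x_n \to \pm\infty$, I would exploit the fact that $\{\delta(G(n,r)) \geq 2\}$ is an increasing event in $r$ (under the natural coupling where vertices are fixed and $r$ varies). Fix an arbitrary $M \in \eR$ and let $r_n^M > 0$ be defined by $\pi n (r_n^M)^2 - \ln n - \ln\ln n = M$. When $x_n \to +\infty$, eventually $r_n \geq r_n^M$, so by the coupling and the finite-$x$ case applied to the constant sequence $M$,
\[
\liminf_{n\to\infty} \Pee(\delta(G(n,r_n)) \geq 2) \;\geq\; e^{-(e^{-M}+\sqrt{\pi e^{-M}})}.
\]
Letting $M \to +\infty$ shows the liminf is $1$. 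The case $x_n \to -\infty$ is symmetric, using $r_n \leq r_n^M$ for large $n$ and letting $M \to -\infty$, which drives $e^{-(e^{-M}+\sqrt{\pi e^{-M}})}$ to $0$.

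The main obstacle is not really present in the corollary itself: all of the delicate work is hidden in Theorem~\ref{thm:connhit} (which provides the reduction to minimum degree) and in Theorem~\ref{thm:penrosemindeg} (which provides the limiting distribution, including the subtle boundary term $\sqrt{\pi e^{-x}}$ arising from vertices near the sides of the unit square, as opposed to the bulk term $e^{-x}$). I would not attempt to reprove either of these here; the corollary is purely an algebraic composition of them with a short monotone-coupling argument for the $\pm\infty$ regimes.
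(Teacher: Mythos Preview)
Your proposal is correct and follows essentially the same approach as the paper: the paper derives \eqref{eq:connprobconn2} from Theorem~\ref{thm:connhit} and the sandwich \eqref{eq:connprobconn1}, and then simply invokes Theorem~\ref{thm:penrosemindeg} to read off the limit. You are in fact slightly more careful than the paper, which does not spell out the $x_n\to\pm\infty$ cases; your monotone-coupling argument via comparison with a sequence at fixed level $M$ is exactly the standard way to fill that small gap.
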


Let us define $N(v)$ to be the set of neighbors of vertex $v$, and  the
edge-degree of an edge $e=uv \in E(G)$ as
$d(e) = |(N(v)\cup N(u))\setminus \{u,v\}|$.

\begin{theorem}\label{thm:perfecthit}
The random geometric graph process satisfies, for
$n$ even: \onote{Added an extra space after min. and applied this to the whole paper.}
\[
 \rho_n(\text{Maker wins the perfect matching game}) =
\rho_n(\text{{\orange min.\,deg.} $\geq 2$ and {\orange min.\,edge-deg.} $\geq 3$}) \quad \emph{whp.}
\]
\end{theorem}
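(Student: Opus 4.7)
I would approach this in two directions. The lower bound $\rho_n(\text{Maker wins}) \geq \rho_n(\delta \geq 2,\, \min_e d(e) \geq 3)$ holds for every point configuration: if $\deg(v) \leq 1$ then Breaker claims the unique edge at $v$ and $v$ cannot be matched; and if an edge $e=uv$ has $d(e)\leq 2$, a finite case analysis over the few local graphs consistent with $\delta \geq 2$ and $d(e)\leq 2$ shows that Breaker's strategy of claiming $e$ first and then blocking off the remaining vertices of $N(u)\cup N(v)$ always prevents both $u$ and $v$ from being matched. This direction uses no randomness.

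For the matching upper bound I would prove a structural lemma saying that whp at the radius $r^\star := \rho_n(\delta \geq 2,\, \min_e d(e) \geq 3)$, the graph $G(n,r^\star)$ decomposes into a dominant \emph{bulk} subgraph that is very highly connected and locally regular (as follows from standard Poisson estimates for $G(n,r)$ above the $k$-connectivity threshold for any fixed $k$), together with $O(1)$ pairwise-distant constant-size \emph{defect clusters}, each anchored near the boundary or a corner of $[0,1]^2$ and each of one of a finite list of isomorphism types. The calibration of the two conditions is precisely what forces every defect type that arises to be one on which Maker wins the local matching game. For each type I would check by direct finite case analysis that Maker has a local strategy producing a matching of the cluster's vertices using only cluster edges; on the bulk, the extreme density lets Maker complete a perfect matching by a pairing argument---or equivalently by adapting the matching-game theorem for dense random graphs of~\cite{HKSS09}. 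Maker then plays the union strategy: respond inside whichever region Breaker just played in. This is non-conflicting because the defects are pairwise well-separated and interact with the bulk through only $O(1)$ edges.

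The main obstacle is the structural lemma. Unlike the connectivity game, where Penrose's catalogue of degree-$2$ configurations essentially suffices, one here also needs a tight classification of edges with edge-degree exactly $3$, and must show that these edges, together with the degree-$2$ vertices, group into a short list of geometrically isolated local configurations. This requires Poisson estimates, for each candidate shape built from two discs of radius $r^\star$, of the expected number of its copies in $G(n,r^\star)$, together with a second-moment plus union-bound argument to rule out both the merging of two defects and the appearance of any defect type outside the list. Once the catalogue is in place, the game-theoretic layer collapses to finitely many manual verifications.
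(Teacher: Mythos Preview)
Your overall architecture --- split into a dense bulk plus well-separated sparse defects, win a local game on each defect and a global game on the bulk, combine by a union strategy --- is exactly the paper's plan. But there is a genuine gap in the interface between the two pieces.

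You assert that on each defect cluster ``Maker has a local strategy producing a matching of the cluster's vertices using only cluster edges''. This cannot be right: nothing prevents a defect from having odd cardinality (a single vertex of degree two is already a $1$-obstruction), so some defect vertices must be matched to bulk vertices. Once you allow that, the defect games and the bulk game are no longer edge- and vertex-disjoint, and the ``respond in whichever region Breaker played'' union strategy breaks: whichever bulk vertices get consumed by the defect matchings are determined by Breaker's play and are not known in advance, so Maker cannot simply run an off-the-shelf matching game on a fixed bulk. The paper handles this by (i) playing, on each obstruction $A$, an $(a,b)$-matching game on $A\cup B$ where $B$ is a pre-selected set of crucial/important vertices in nearby good cells, guaranteeing only that $A$ is saturated; and (ii) on the remaining vertices of $\Gamma_{\max}$ it does \emph{not} play a matching game but the full blob-cycle Hamilton machinery from the Hamilton-cycle section, which is flexible enough to absorb whichever $B$-vertices were left unused and still produce a Hamilton cycle on the residual (even-cardinality) set, whence a perfect matching by taking alternate edges. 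Your ``pairing argument'' or appeal to~\cite{HKSS09} for the bulk is too weak to provide this absorption property, and the Erd\H{o}s--R\'enyi techniques in~\cite{HKSS09} do not transfer directly to the geometric setting in any case.

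A smaller point: ``defects interact with the bulk through only $O(1)$ edges'' is false --- every crucial vertex for an obstruction has at least $T$ neighbours in a good cell --- and your lower-bound case analysis, while morally right, should be checked: when $d(uv)\le 2$ and $\delta\ge 2$, Breaker claims $uv$ first and then, whatever Maker does among the at most four edges from $\{u,v\}$ to $\{w_1,w_2\}$, Breaker can always leave one of $u,v$ with no Maker-edge to a free $w_i$.
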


{\grn A similar argument to the one we used for the connectivity game yields the following.}
\begin{corollary}\label{cor:perfecthitprob}
Let $(r_n)_n$ be a sequence of positive numbers and
let us write
\[
x_n := \pi n r_{\grn n}^2 -\ln n -\ln\ln n.
\]
Then the following holds in the random geometric graph $G(n,r_n)$:
\[
\lim_{n\to\infty, \atop n \text{ even}} \Pee(\text{Maker wins the
perfect matching game})
=
\left\{\begin{array}{cl}
        1 & \text{ if } x_n \to +\infty, \\
e^{-((1+\pi^2/8)e^{-x} + \sqrt{\pi}(1+\pi) e^{-x/2})} &
\text{ if } x_n \to x \in\eR, \\
0 & \text{ if } x_n \to -\infty.
       \end{array}
\right.
\]
\end{corollary}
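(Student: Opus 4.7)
The plan is to mimic the derivation of Corollary~\ref{cor:connprob} from Theorem~\ref{thm:connhit}. Let $Q_n$ denote the event that $G(n,r_n)$ has minimum degree at least $2$ and minimum edge-degree at least $3$. Since $Q_n$ is a necessary condition for Maker to win the perfect matching game on $G(n,r_n)$, the sandwich computation of (\ref{eq:connprobconn1})--(\ref{eq:connprobconn2}), applied verbatim with Theorem~\ref{thm:perfecthit} in place of Theorem~\ref{thm:connhit}, yields
\[
\Pee(\text{Maker wins the perfect matching game on } G(n,r_n)) = \Pee(Q_n) - o(1),
\]
uniformly in $r_n$, and the parity restriction $n$ even has no effect on the Poisson-type limits below. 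Thus the corollary reduces to the Penrose-style statement $\lim_{n\to\infty}\Pee(Q_n) = \exp\bigl(-(1+\pi^2/8)e^{-x} - \sqrt{\pi}(1+\pi)e^{-x/2}\bigr)$ when $x_n\to x\in\eR$, with the obvious limits of $1$ and $0$ when $x_n\to\pm\infty$.

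Next I would establish this limit via Poisson approximation. Let $W_n$ count vertices of degree at most $1$ and $Z_n$ count edges of edge-degree at most $2$, so that $Q_n = \{W_n = 0\}\cap\{Z_n = 0\}$. By the Chen--Stein method, or equivalently by a bivariate factorial-moment computation, it suffices to show that $(W_n, Z_n)$ converges jointly to a pair of independent Poisson random variables with parameters $\mu_1(x)$ and $\mu_2(x)$ satisfying $\mu_1(x)+\mu_2(x) = (1+\pi^2/8)e^{-x} + \sqrt{\pi}(1+\pi)e^{-x/2}$. The expectations $\Ee W_n$ and $\Ee Z_n$ are evaluated as integrals over $[0,1]^2$, split into a bulk interior region, a boundary strip of width $O(r_n)$, and four corner regions. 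Interior contributions scale like $e^{-x}$, since an interior ball of radius $r_n$ has area $\pi r_n^2\sim (\ln n+\ln\ln n + x)/n$; boundary contributions scale like $e^{-x/2}$, since the intersection of such a ball with the square has roughly half the area, so survival probabilities take a square root.

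The step I expect to be the main obstacle is this Poisson convergence, in particular the treatment of $Z_n$. The vertex count $W_n$ is essentially already handled by Penrose's work on the minimum degree of the random geometric graph (the same machinery underlying Theorem~\ref{thm:penrosemindeg}), and supplies $e^{-x}$ from the interior and $\sqrt{\pi}\,e^{-x/2}$ from the boundary. For $Z_n$ one must, for every pair $u,v$ with $\|u-v\|\leq r_n$, compute the probability that no other sample point lies in $\bigl(B(u,r_n)\cup B(v,r_n)\bigr)\cap[0,1]^2$, i.e.\ use the area of a two-disc lune; integrating this over $\|u-v\|\in[0,r_n]$ and over the position of $u$ relative to the boundary produces exactly the extra constants $\tfrac{\pi^2}{8}e^{-x}$ in the interior and $\pi\sqrt{\pi}\,e^{-x/2}$ in the boundary strip. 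Finally, one needs to check that the bad vertex events and bad edge events decouple asymptotically---two such rare configurations typically sit at macroscopic distance at this radius---so that the total Poisson parameter is simply the sum $\mu_1(x)+\mu_2(x)$, giving the claimed exponent. Standard Chen--Stein dependency-graph bounds should suffice for this last point.
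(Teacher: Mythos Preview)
Your plan is essentially the paper's own: the sandwich argument of~\eqref{eq:connprobconn1}--\eqref{eq:connprobconn2} with Theorem~\ref{thm:perfecthit} reduces the corollary to computing $\lim_n \Pee(Q_n)$, and the paper then carries out exactly the Chen--Stein Poisson approximation you describe (via the Arratia--Goldstein--Gordon theorem) for the combined count of low-degree vertices and low-edge-degree edges, first in the Poissonised model and then de-Poissonised.

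One correction to your sketch of the $Z_n$ calculation: the dominant contribution at this threshold comes from edges of edge-degree \emph{exactly two}, not zero. For a pair $u,v$ with $\|u-v\|\leq r_n$ you need the probability that \emph{exactly two} further points land in $(B(u,r_n)\cup B(v,r_n))\cap[0,1]^2$, i.e.\ a factor $\tfrac{\mu^2}{2}e^{-\mu}$ with $\mu = n\cdot\area(\cdot)$, not just $e^{-\mu}$. The extra $\ln\ln n$ in $x_n$ is precisely what makes the $\mu^2$ factor balance; your ``no other sample point'' integral would tend to $0$, not to $\tfrac{\pi^2}{8}e^{-x}$. Edges of edge-degree $0$ or $1$ have $o(1)$ expected count here and are handled separately (this is where Lemma~\ref{lem:boundarydistancecount} and the $(a,b,c)$-pair machinery of Lemma~\ref{lem:pairexp} enter).
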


\begin{theorem}\label{thm:hamhit}
The random geometric graph process satisfies
\[
 \rho_n(\text{Maker wins the Hamilton cycle game}) =
\rho_n(\text{minimum degree $\geq 4$}) \quad \emph{whp.}
\]
\end{theorem}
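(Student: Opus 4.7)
The necessity direction $\rho_n(\text{Maker wins}) \geq \rho_n(\delta \geq 4)$ is deterministic: at a vertex $v$ of degree $d(v) \leq 3$, Breaker can devote his first two moves at $v$ to claiming distinct incident edges, leaving Maker with at most $\lfloor 3/2 \rfloor = 1$ edge at $v$, which is insufficient for a Hamilton cycle. So it remains to establish the reverse inequality whp, which by the same argument as in~\eqref{eq:connprobconn1}--\eqref{eq:connprobconn2} reduces to proving that whp, whenever $G(n,r)$ has $\delta \geq 4$, Maker wins the Hamilton cycle game.

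The plan is to prove a structural lemma showing that whp, at any radius $r$ with $\delta(G(n,r))\geq 4$ in the relevant regime, the graph decomposes into a ``dense core'' and a sparse collection of ``exceptional'' low-degree vertices. Concretely, let $S$ denote the set of vertices of degree at most some large constant $K$; I would show that whp (a) any two vertices of $S$ are at Euclidean distance $\gg r$ apart, and (b) the induced subgraph on $V\setminus S$ has minimum degree $\Omega(\log n)$ together with sufficient expansion / pseudorandomness properties (e.g.\ every small vertex set has many external neighbours). Part (a) should follow from a Poissonisation / second-moment argument over the positions of low-degree vertices, in the same spirit as Penrose's analysis of the minimum-degree hitting time; part (b) should follow from a tessellation of $[0,1]^2$ into cells of side $\Theta(r)$ and standard Chernoff bounds on cell occupancies.

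With this structure in hand, Maker's strategy has two interleaved components. Locally, for each $v\in S$, Maker's priority is to secure two incident edges: whenever Breaker claims an edge at $v$, Maker immediately claims another edge at $v$, so that after all $d(v)$ incident edges have been distributed Maker holds $\lfloor d(v)/2\rfloor \geq 2$ of them. Because the vertices of $S$ are pairwise far apart and each has bounded degree, these ``defensive'' moves can be scheduled one vertex at a time without conflict and use only a bounded total number of Maker moves per vertex of $S$. Globally, on the dense core $V\setminus S$, Maker plays an expander-based Hamilton cycle strategy (in the spirit of Hefetz--Krivelevich--Stojakovi\'c--Szab\'o and related work), and stitches the resulting cycle to $S$ by routing it through the two edges Maker has secured at each $v\in S$.

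The main obstacle is the coordination between the two components: Maker must ensure that the particular pair of edges he secures at each $v\in S$ is compatible with a globally consistent Hamilton cycle, since Breaker may try to force him into a pair that cannot be extended. I expect to handle this either by enlarging Maker's pool of acceptable targets at each $v$ (sacrificing some flexibility per Breaker move but retaining enough good pairs), or by strengthening the core pseudorandomness in part (b) so that a Pos\'a-type rotation argument absorbs any admissible pair of ``entry'' edges at each $v\in S$. The heaviest technical lifting will lie in the structural lemma --- especially the pairwise separation of low-degree vertices, which requires fine control over the joint distribution of sparse neighbourhoods --- and in verifying that the combined strategy remains sound against all of Breaker's simultaneous threats throughout the game.
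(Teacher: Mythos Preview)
Your necessity argument is correct and matches the paper's. On the Maker side, however, there are two genuine gaps.

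\textbf{The structural dichotomy (a)+(b) cannot hold as stated.} At this threshold $\pi n r^2 = \ln n + 5\ln\ln n + O(1)$, so for $V\setminus S$ to have minimum degree $\Omega(\ln n)$ you would need $K$ itself of order $\ln n$. But already for any fixed constant $K$ large enough, a Palm computation (this is essentially Lemma~\ref{lem:pairexp} specialised to $k=4$) shows that the expected number of pairs $u,v$ with $\|u-v\|<r/100$ and $d(u),d(v)\le K$ grows like a positive power of $\ln n$, so such vertices are \emph{not} pairwise far apart. The paper does not try to separate them individually: its Dissection Lemma~\ref{lem:structure} shows instead that the sparse vertices come in \emph{clusters} of diameter $<r/100$ (and there may in addition be high-degree ``risky'' pockets $\Gamma_i^+$, $i\ge 2$, reachable from the bulk only through a sparse collar), and it is these obstructions that are pairwise $10^{10}r$ apart. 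Your defensive rule ``whenever Breaker plays at $v\in S$, Maker plays at $v$'' breaks down exactly when two $S$-vertices are adjacent --- Breaker claims the edge between them and Maker cannot respond at both --- which is why the paper needs the quantitative Obstruction Lemma~\ref{lem:crucial} (an $s$-obstruction has $\ge s+2$ safe common neighbours) together with the bespoke $(a,b)$-path game of Lemma~\ref{lem:abpathgame}, rather than per-vertex edge grabbing.

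\textbf{The expander heuristic does not apply.} Random geometric graphs are far from pseudorandom: a vertical bisector of the unit square gives a balanced cut of size $O(nr)=O(\sqrt{n\ln n})$, so HKSS/Pos\'a-rotation strategies on the core simply do not go through. The paper's Maker strategy is entirely geometric. It dissects $[0,1]^2$ into cells of side $\eta r$ with $\eta$ tiny; inside each good cell (a clique on $\Theta(\ln n)$ vertices) Maker plays a local blob-Hamilton game (Lemma~\ref{lem:sblob}, resting on Krivelevich's biased-Hamiltonicity Theorem~\ref{thm:kriv}); across each obstruction he plays the $(a,b)$-path game to produce one or two spanning paths with safe endpoints; and finally the resulting blob cycles and paths are merged along a bounded-degree spanning tree of the good-cell graph via the absorption Lemmas~\ref{lem:blobcrap0}--\ref{lem:blobcrap5}. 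No global expansion hypothesis is used or available --- the fine tessellation and the blob-cycle machinery replace it entirely.
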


{\grn Again, we can make use of this result to obtain the precise relation between the radius $r$ and the probability of Maker-win in the Hamilton cycle game.}

\mnote{Updated final constant: $2 \ln ((k-1)! )= 2 \ln (6)$.}
\begin{corollary}\label{cor:hamhitprob}
Let $(r_n)_n$ be a sequence of positive numbers and
let us write
\[
x_n := \frac12\left(\pi n r_{\grn n}^2 - (\ln n+ 5\ln\ln n - 2\ln(6))\right).
\]
Then the following holds in the random geometric graph $G(n,r_n)$:
\[
\lim_{n\to\infty} \Pee(\text{Maker wins the Hamilton cycle game})
=
\left\{\begin{array}{cl}
        1 & \text{ if } x_n \to +\infty, \\
e^{-e^{-x}} & \text{ if } x_n \to x \in\eR, \\
0 & \text{ if } x_n \to -\infty.
       \end{array}
\right.
\]
\end{corollary}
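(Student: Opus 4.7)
The plan is to mirror the derivation of Corollary~\ref{cor:connprob} verbatim, only with ``$\delta\geq 2$'' replaced by ``$\delta\geq 4$''. The starting observation is that $\delta(G)\geq 4$ is a necessary condition for Maker to win the Hamilton cycle game: at a vertex $v$ of degree at most three Breaker can use a pairing strategy on the edges at $v$ (pair two of them and play the remaining unpaired edge as an early move at $v$) to guarantee that Maker claims at most one edge incident to $v$, which is insufficient for a Hamilton cycle. Hence $\Pee(\text{Maker wins on }G(n,r))\leq \Pee(\delta(G(n,r))\geq 4)$.

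Feeding Theorem~\ref{thm:hamhit} into the sandwich~(\ref{eq:connprobconn1})--(\ref{eq:connprobconn2}) (literally the same chain of inequalities, only with $2$ replaced by $4$) then yields
\[
\Pee(\text{Maker wins the Hamilton cycle game on }G(n,r_n)) \;=\; \Pee(\delta(G(n,r_n))\geq 4) \;+\; o(1).
\]
So the corollary reduces to determining the limiting behaviour of $\Pee(\delta(G(n,r_n))\geq 4)$, which is precisely the content of the earlier Theorem~\ref{thm:penrosemindeg} (Penrose's minimum-degree theorem) applied with $k=4$. The normalisation in the statement is the standard Penrose one: the coefficient $2k-3 = 5$ of $\ln\ln n$ and the factor $(k-1)!=6$ inside the logarithm both arise from the Poisson mean of the count of degree-$<k$ vertices, while the overall factor of $\tfrac12$ in the definition of $x_n$ reflects the fact that for $k\geq 3$ the dominant contribution to that count comes from vertices in the boundary strip of $[0,1]^2$ (of width $\asymp r$), where the disk of radius $r$ has area $\sim \pi r^2/2$, producing an exponent $e^{-\pi n r^2/2}$ rather than $e^{-\pi n r^2}$.

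There is essentially no substantive new work specific to this corollary: all the heavy lifting sits inside Theorem~\ref{thm:hamhit} (the coincidence of hitting radii) and inside Penrose's theorem (the asymptotic distribution of $\delta$). The only cosmetic difference from Corollary~\ref{cor:connprob} is that for $k=4$ the interior and corner contributions to the Poisson mean vanish in the limit, so the limit is a single exponential $e^{-e^{-x}}$ rather than the two-term expression $e^{-(e^{-x}+\sqrt{\pi e^{-x}})}$ that appears for $k=2$.
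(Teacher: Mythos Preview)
Your proposal is correct and follows exactly the route the paper intends: the paper does not spell out a separate proof of this corollary but simply remarks (after Theorem~\ref{thm:hamhit}) that one repeats the sandwich argument~\eqref{eq:connprobconn1}--\eqref{eq:connprobconn2} with $\delta\geq 4$ in place of $\delta\geq 2$ and then reads off the limit from part~(iii) of Theorem~\ref{thm:penrosemindeg} with $k=4$. Your added heuristic for why the limit collapses to a single exponential (boundary contribution dominates for $k\geq 3$) is a nice gloss but not part of the formal argument.
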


Our next result, on the $H$ game, is a bit different from our results for the other games.
In particular, the ``action" now takes place long before the connectivity threshold, when the average
degree $\pi n r^2$ decays polynomially in $n$.

\begin{theorem}\label{thm:Hgame}
Let $H$ be any fixed graph and let $k_H$ denote the
least $k$ for which the $H$-game is Makers
win on a $k$-clique, and let $\Fcal_H$ denote the
family of all graphs on $k_H$ vertices for which
the game is Maker-win.
Then
\[
\rho_n(\text{Maker wins the $H$-game}) =
\rho_n(\text{the graph contains a subgraph $\in\Fcal_H$})
\quad \emph{whp.}
\]
\end{theorem}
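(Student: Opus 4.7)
The plan is to deduce Theorem~\ref{thm:Hgame} from three standard ingredients: monotonicity of ``Maker wins the $H$-game'' under edge addition, the decomposition of Maker-Breaker games across disjoint boards, and a first-moment estimate controlling the component structure of $G(n,r)$ at the sparse scale at which $\Fcal_H$-subgraphs first appear. The inequality $\rho_n(\text{Maker wins}) \leq \rho_n(\text{contains a subgraph in }\Fcal_H)$ is deterministic: as soon as $G$ contains some $F \in \Fcal_H$, Maker follows his winning strategy on $F$, treating each Breaker move in $E(G) \setminus E(F)$ as a free turn on which to play an arbitrary throwaway edge. This is the standard ``extra moves only help Maker'' principle, and it also yields monotonicity of Maker-winning under edge addition.

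For the reverse inequality, consider the scale $r_F^\ast = \Theta\bigl(n^{-k_H/(2(k_H-1))}\bigr)$ at which connected $k_H$-vertex subgraphs first appear: a Penrose-style first-moment/second-moment calculation places $\rho_n(\text{contains a subgraph in }\Fcal_H)$ within a constant factor of $r_F^\ast$ \emph{whp}. At this scale we are strongly sub-critical, $\pi n (r_F^\ast)^2 = \Theta\bigl(n^{-1/(k_H-1)}\bigr) \to 0$, and the expected number of connected subgraphs on $k$ vertices is, via a spanning-tree union bound, $O\bigl(n^{-(k-k_H)/(k_H-1)}\bigr)$ for any fixed $k \ge k_H+1$; this is $o(1)$ and, combined with standard sub-critical Boolean-model estimates that control large $k$ uniformly, implies that \emph{whp} every connected component of $G(n,r)$ has order at most $k_H$ for all $r$ up to $\rho_n(\text{contains a subgraph in }\Fcal_H)$.

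At any such $r$, no component of $G(n,r)$ is isomorphic to a member of $\Fcal_H$. A component on fewer than $k_H$ vertices is a subgraph of $K_{k_H-1}$ and hence Maker-lose by the definition of $k_H$ combined with monotonicity; a component on exactly $k_H$ vertices is, as a graph on $k_H$ vertices not in $\Fcal_H$, Maker-lose by the definition of $\Fcal_H$. Any copy of $H$ lies within a single component (we may assume $H$ connected; otherwise a straightforward variant handles the connected components of $H$ separately), so the standard decomposition of Maker-Breaker games over disjoint boards implies that Breaker wins the $H$-game on $G(n,r)$ \emph{whp}. Combining with the easy direction yields $\rho_n(\text{Maker wins}) = \rho_n(\text{contains a subgraph in }\Fcal_H)$ \emph{whp}. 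The main technical step is the uniform-in-$k$ first-moment estimate that rules out components larger than $k_H$, relying essentially on the strong sub-criticality $\pi n (r_F^\ast)^2 \to 0$ at the relevant scale.
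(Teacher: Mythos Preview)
Your proof is correct and follows essentially the same route as the paper: fix a radius $r_U = K\cdot n^{-k_H/2(k_H-1)}$ with $K$ large, use the small-subgraphs theorem to show $\rho_n(\text{contains a subgraph in }\Fcal_H)\le r_U$ \emph{whp}, show by a first-moment bound that $G(n,r_U)$ has no component of order $>k_H$ \emph{whp}, and conclude via the decomposition over components together with the definitions of $k_H$ and $\Fcal_H$. One simplification you should make: there is no need for a ``uniform-in-$k$'' estimate to rule out large components. A component of order $>k_H$ necessarily contains a connected subgraph on exactly $k_H+1$ vertices, hence $k_H+1$ points within pairwise distance $(k_H+1)r_U$; the single first-moment bound $n^{k_H+1}\bigl((k_H+1)r_U\bigr)^{2k_H}=O\bigl(n^{-k_H/(k_H-1)}\bigr)=o(1)$ already does the job, exactly as in the paper.
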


Again the theorem on the hitting radius allows us to determine the hitting probability.
This time we make use of a theorem of Penrose~\cite{PenroseBook} (stated as Theorem~\ref{thm:smallsubgraphs} below)
on the appearance of small subgraphs to obtain:

\begin{corollary}\label{cor:Hprob}
Let $H$ be any fixed graph, and let $k_H$ denote the smallest
$k$ for which the $H$-game is Maker-win on a $k$-clique.
If $r = c \cdot n^{-1/2+1/2{\orange (k-1)}}$ with $c\in\eR$ fixed then \onote{Replaced $k$ by $k-1$}
\[
\Pee(\text{Maker wins the $H$-game}) \mapsto f(c),
\]
\noindent
where $0<f(c)<1$ (is an expression which can be computed explicitly in
principle and)
satisfies $f(c)\to 0$ as $c\to-\infty$; and $f(c)\to 1$ as $c\to\infty$.
\end{corollary}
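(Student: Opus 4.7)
The plan is to deduce Corollary~\ref{cor:Hprob} from Theorem~\ref{thm:Hgame} by the same kind of argument used to pass from Theorem~\ref{thm:connhit} to Corollary~\ref{cor:connprob}, combined with Penrose's theorem (Theorem~\ref{thm:smallsubgraphs}) on the appearance of small subgraphs. Since the family $\Fcal_H$ is finite and containing a subgraph from $\Fcal_H$ is a necessary condition for Maker to win the $H$-game, the chain of inequalities in~\eqref{eq:connprobconn1}---with ``$\delta(G(n,r)) \geq 2$'' replaced by ``$G(n,r)$ contains a subgraph from $\Fcal_H$'' and Theorem~\ref{thm:connhit} replaced by Theorem~\ref{thm:Hgame}---gives
\[
\Pee(\text{Maker wins the $H$-game on } G(n,r_n)) \;=\; \Pee\bigl(G(n,r_n) \text{ contains a subgraph from } \Fcal_H\bigr) \;+\; o(1).
\]
Thus it is enough to control the right-hand side under the scaling $r_n = c \, n^{-1/2 + 1/(2(k_H - 1))}$.

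The second step is to apply Penrose's small-subgraphs theorem. The exponent in $r_n$ is calibrated so that, for each $\Gamma \in \Fcal_H$, the expected number of copies of $\Gamma$ in $G(n,r_n)$ tends to a finite, strictly positive constant $\mu_\Gamma(c)$, and the counts converge jointly to Poisson random variables (with the joint law described by Penrose). Passing to the limit,
\[
f(c) \;=\; \Pee\Bigl(\,\bigcup_{\Gamma \in \Fcal_H} \{\text{limiting copy count of $\Gamma$} \geq 1\}\,\Bigr),
\]
which is an explicit function of $c$ that lies strictly between $0$ and $1$. The asymptotics $f(c) \to 0$ as $c \to -\infty$ and $f(c) \to 1$ as $c \to +\infty$ then follow from the fact that each $\mu_\Gamma(c)$ is monotone in $c$ and tends to $0$ and $\infty$ respectively at the two extremes.

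The main technical issue is not the overall argument structure but rather a careful invocation of Penrose's theorem across the whole family $\Fcal_H$. Different graphs in $\Fcal_H$ all have $k_H$ vertices, but may have different numbers of edges---or even different numbers of connected components---so one must check that the Poisson regime is hit simultaneously for all members of $\Fcal_H$, identify which members contribute at leading order at the critical radius, and verify the joint limit law of their counts. Since $\Fcal_H$ is a finite, explicitly described family, these are bounded bookkeeping tasks rather than deep obstacles.
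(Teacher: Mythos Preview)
Your proposal is correct and follows essentially the same route as the paper: the paper's proof is a one-liner invoking Theorem~\ref{thm:Hgame} (the hitting-radius result) together with Theorem~\ref{thm:smallsubgraphs}, and you have spelled out exactly how those two ingredients combine via the sandwich argument of~\eqref{eq:connprobconn1}--\eqref{eq:connprobconn2}.

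One remark on your final paragraph: the concerns you raise there do not actually arise. First, in the random geometric graph the Poisson threshold for a connected $k$-vertex subgraph depends only on $k$, not on the edge count, so all connected members of $\Fcal_H$ live at the same scale and Theorem~\ref{thm:smallsubgraphs} handles them jointly in one shot. Second, disconnected members of $\Fcal_H$ are irrelevant: since $\Fcal_H$ is upward-closed under adding edges, ``contains a subgraph in $\Fcal_H$'' is equivalent to ``contains an \emph{induced} subgraph in $\Fcal_H$'', and one only needs the graphs in $\Fcal_H$ that are realisable as geometric graphs. The paper notes (in the proof of Theorem~\ref{thm:Hgame}, assuming $H$ connected) that every such realisable graph is connected, because Maker cannot win on any component with fewer than $k_H$ vertices. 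So the ``bounded bookkeeping'' you anticipate collapses to nothing, and the limit is simply $f(c)=1-\exp\bigl(-c^{2(k_H-1)}\sum_i \mu(H_i)\bigr)$, with $c>0$, which visibly has the claimed behaviour as $c\to 0^+$ and $c\to\infty$.
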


\subsection{Overview}

\mnote{It looks to me like the $H$-game is a different (and simpler) argument than the rest. Does it make sense to prove this first to avoid reviewer fatigue?}

{\blue
The proofs of the main theorems leverage a characterization of the local structure of the graph $G(n,r)$. In particular, we meticulously describe the graph around its sparser regions. Section \ref{sec:prelim} contains our background results. We start with a short list of known results about Maker-Breaker games, followed by a selection of results about random geometric graphs, drawn or adapted from \cite{PenroseBook}. Next, we give some geometric preliminaries, most of which provide approximations of small areas defined by intersecting disks in $[0,1]^2$.

Section \ref{sec:structure} contains the technical lemmas that chart a detailed cartography of $G(n,r)$. We  \emph{dissect} the unit square $[0,1]^2$ into \emph{cells}, which are squares of side length roughly $\eta r$ where $\eta >0$ is very small. We fix a large constant $T >0$ (made explicit later). We say that a cell $c$ is \emph{good} if it is dense, meaning that there are at least $T$ vertices in $c$. The sparse cells are $\emph{bad}$. We show that bad cells come in clusters of very small diameter, which we call \emph{obstructions}. Moreover, these obstructions are well-separated from one another. These results are collected in the \emph {Dissection Lemma \ref{lem:structure}}. Next, we prove the \emph{Obstruction Lemma \ref{lem:crucial}}, which shows that for each obstruction, there are enough points in nearby good cells to allow Maker to overcome these bottlenecks.

Section \ref{sec:connect} contains the straight-forward proof for the  connectivity game. A classic result \cite{Lehman64} states that the connectivity game on a graph $G$ is Maker-win if and only if $G$ has two disjoint spanning trees. Our characterization of obstructions quickly reveal that such a pair of trees exist.

The argument for the Hamilton cycle game, found in Section \ref{sec:hamilton},  is far more delicate. In order to win the game on $G(n,r_n)$, Maker plays lots of local games. The games that are played  in and around the obstructions require the most judicious play: this is where Maker must directly play against Breaker to keep his desired structure viable at a local level. Ultimately, he is able to construct long paths that span the obstructions and have endpoints in nearby good cells. Meanwhile, Maker plays a different kind of game in each good cell. Therein, he creates a family of flexible \emph{blob cycles}, which consist of cycles that also contain a fairly large clique. In addition, Maker  plays to claim half the edges between the vertices in nearby local games. Once the edge claiming is over, Maker stitches together his desired structure. The soup of blob cycles in each cell gives Maker the flexibility to connect together the local games. Each local game is absorbed into a nearby blob cycle. This process is repeated by merging the current blob cycles in a good cell into one blob cycle in the cell. Along the way, we also absorb other vertices that are not yet attached, and also connect the blob cycles in nearby good cells, following a pre-determined tree structure. {\grn The final result is a Hamilton cycle.}

Section \ref{sec:perfect} considers the perfect matching game. The argument and game play is similar to the Hamilton cycle game. The local games in and around the obstructions are played head-to-head with Breaker, creating a matching that saturates each obstruction (and uses some nearby vertices in good cells). Meanwhile, Maker creates a Hamilton cycle through the rest of the vertices (as in the previous game) and then takes every other edge to get the perfect matching.

In Section \ref{sec:H-game}, we handle the $H$-game with a straight-forward argument. Once we reach the threshold for the appearance of a clique on which Maker can win the $H$-game, we use a Poisson argument for independent copies of such a clique appearing in well-separated regions of the graph.

Finally, we conclude in Section \ref{sec:conclusion} and list some directions for future research.

}

\section{Preliminaries}
\label{sec:prelim}

\mnote{Re-organized the prelims, adding two subsections.}

In this section, we present some preliminary results that will be useful in the sequel. We start with a modest collection of previous results on Maker-Breaker games.
The Maker-Breaker connectivity game is also known as the {\em Shannon switching game}.
A classical result by Lehman~\cite{Lehman64} states:

\begin{theorem}[\cite{Lehman64}] \label{t:lehman}
The connectivity game played on $G$ is Maker-win
if and only if $G$ admits two disjoint spanning trees.
\end{theorem}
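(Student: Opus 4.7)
The plan is to prove both directions by invoking the Nash--Williams/Tutte matroid-union characterization: $G$ has $k$ edge-disjoint spanning trees if and only if for every partition of $V(G)$ into $r$ nonempty parts, at least $k(r-1)$ edges cross the partition. I would handle the easier (necessity) direction first, since it boils down to a one-line cut-counting argument; then I would describe Maker's strategy for the sufficiency direction, which is the substantive part.

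For the necessity direction ($\Rightarrow$), suppose $G$ does \emph{not} contain two edge-disjoint spanning trees. By Nash--Williams/Tutte there is a partition of $V(G)$ into $r\geq 2$ parts whose number $m$ of crossing edges satisfies $m\leq 2(r-1)-1=2r-3$. Breaker's strategy is to claim a crossing edge whenever one is available, and otherwise to play an arbitrary free edge. Since Breaker moves first, Breaker secures at least $\lceil m/2\rceil\geq r-1$ of the crossing edges, so Maker claims at most $\lfloor m/2\rfloor\leq r-2$ crossing edges --- too few to connect $r$ components. Hence Maker's graph is disconnected, so Maker loses.

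For the sufficiency direction ($\Leftarrow$), suppose $G$ contains two edge-disjoint spanning trees $T_1,T_2$. I would describe a Maker strategy that maintains the following invariant after each of Maker's moves: the multigraph $G^\ast$ obtained from $G$ by contracting every edge already claimed by Maker and deleting every edge already claimed by Breaker (removing loops) still contains two edge-disjoint spanning trees, say $T_1^\ast,T_2^\ast$. Initially this holds with $T_1,T_2$ themselves. When Breaker claims an edge $e$, if $e\notin T_1^\ast\cup T_2^\ast$ then the trees are undisturbed and Maker may replace one by any base in its fundamental-cycle neighbourhood that preserves the invariant; if $e\in T_1^\ast$ (the other case being symmetric), then $T_1^\ast-e$ has two components, some $f\in T_2^\ast$ crosses the induced cut (since $T_2^\ast$ spans), and Maker claims $f$. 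In the updated contracted/deleted multigraph, $(T_1^\ast-e)\cup\{f\}$ and $T_2^\ast$ each restrict to an edge-disjoint spanning tree, verifying the invariant. Since each round strictly decreases the number of unclaimed edges, the process terminates and at that point Maker's edges (being the contracted edges of a graph reduced to a single vertex) form a connected spanning subgraph.

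The main obstacle, in my view, is clean verification of the invariant --- especially in the case where Breaker plays outside $T_1^\ast\cup T_2^\ast$ and Maker must nevertheless move, potentially perturbing one of the two trees. The cleanest formulation re-casts everything in matroid language: two edge-disjoint spanning trees are two disjoint bases of the graphic matroid, Breaker's moves are element deletions, Maker's moves are element contractions, and the required replacement edge is produced by the strong base-exchange axiom applied inside the matroid union. This matroid-union viewpoint also explains why the same argument generalizes to Maker-Breaker games on arbitrary matroids with two disjoint bases, but for the graphic case one can, with a bit more bookkeeping, avoid explicit matroid theory and argue directly with fundamental cycles in $T_1^\ast$ and $T_2^\ast$.
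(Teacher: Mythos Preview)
The paper does not prove this statement at all: Theorem~\ref{t:lehman} is simply cited from Lehman~\cite{Lehman64} as a classical result, and no argument is given. So there is no ``paper's proof'' to compare your proposal against; what follows is an assessment of your argument on its own merits.

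Your necessity direction is correct and clean. The Nash--Williams/Tutte partition with at most $2r-3$ crossing edges, together with the observation that Breaker (moving first and always grabbing a crossing edge) leaves Maker at most $\lfloor m/2\rfloor\leq r-2$ of them, is exactly right.

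Your sufficiency direction has the right skeleton (the contract/delete invariant is essentially Lehman's original idea), and the case $e\in T_1^\ast$ is handled correctly. The gap --- which you yourself flag --- is the case $e\notin T_1^\ast\cup T_2^\ast$. Your sentence ``Maker may replace one by any base in its fundamental-cycle neighbourhood that preserves the invariant'' does not actually specify which edge Maker claims, and a reader cannot reconstruct the move from what you wrote. The fix is short and you should just write it out: Maker claims any edge $f\in T_1^\ast$ (such an edge exists whenever $G^\ast$ has more than one vertex, and if $G^\ast$ is a single vertex Maker has already won). In the new contracted graph, $T_1^\ast/f$ is a spanning tree; for the second tree, let $g$ be any edge on the unique $u$--$v$ path in $T_2^\ast$ where $f=uv$, and take $(T_2^\ast-g)/f$. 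Contracting $f$ reconnects the two components of $T_2^\ast-g$, so this is a spanning tree, and it is edge-disjoint from $T_1^\ast/f$ because $T_1^\ast$ and $T_2^\ast$ were disjoint to begin with. With this one paragraph added, your proof is complete and is the standard argument; the matroid language you mention at the end is not needed for the graphic case.
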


This has the immediate corollary:

\begin{corollary}\label{cor:connectivityKn}
The connectivity game is Maker-win on $K_n$ if and only if
$n\geq 4$.
\end{corollary}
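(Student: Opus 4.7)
The plan is to apply Theorem~\ref{t:lehman} and reduce the question to: when does $K_n$ admit two edge-disjoint spanning trees? This converts a game-theoretic claim into a purely combinatorial one.

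For the ``only if'' direction ($n\leq 3$), I would use a crude edge count. Two edge-disjoint spanning trees of $K_n$ use exactly $2(n-1)$ edges, whereas $K_n$ has only $\binom{n}{2}$ edges. The inequality $\binom{n}{2} \geq 2(n-1)$ holds precisely when $n\geq 4$ (among $n\geq 2$). Hence for $n\in\{2,3\}$ no such pair of trees can exist, and Lehman's theorem gives Breaker a winning strategy. (The case $n=1$ is degenerate -- there are no edges to claim -- and I would simply exclude it or mention that the statement refers to $n\geq 2$.)

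For the ``if'' direction ($n\geq 4$), I would proceed by induction on $n$. For the base case $n=4$, label the vertices $\{1,2,3,4\}$ and exhibit explicitly the two spanning trees $T_1 = \{12,23,34\}$ and $T_2 = \{13,14,24\}$; these together partition $E(K_4)$ and each is a Hamilton path, hence a spanning tree. For the inductive step, suppose $K_{n-1}$ (with $n-1\geq 4$) admits edge-disjoint spanning trees $T_1,T_2$. To extend to $K_n$, introduce the new vertex $v$ and add the edge $vu_1$ to $T_1$ and $vu_2$ to $T_2$, where $u_1\neq u_2$ are any two vertices of $K_{n-1}$; this preserves tree-ness (we add a leaf) and edge-disjointness. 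By Theorem~\ref{t:lehman}, Maker wins.

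There is no real obstacle here: the only thing to double-check is that the degenerate cases $n\leq 3$ are correctly treated and that the inductive extension preserves both tree-ness and disjointness, which it clearly does. An alternative (perhaps more elegant) route would be to invoke the Nash-Williams/Tutte tree-packing theorem directly, but for $K_n$ the explicit construction above is simpler and self-contained.
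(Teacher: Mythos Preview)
Your proof is correct and follows the same approach as the paper: reduce via Lehman's theorem (Theorem~\ref{t:lehman}) to the question of when $K_n$ contains two edge-disjoint spanning trees. The paper in fact states this corollary without proof, calling it an ``immediate corollary'' of Lehman's result; your edge-count for $n\leq 3$ and inductive construction for $n\geq 4$ simply spell out the details the paper leaves to the reader.
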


The following is a standard result.

\begin{theorem}[\cite{Beck1}] \label{thm:Hgamedet}
Let $H$ be a finite graph. There is a $N = N(H)$ such that
Maker can win the $H$-game on $K_s$ for all $s\geq N$.
\end{theorem}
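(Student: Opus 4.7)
The plan is to reduce to the clique game and then apply a classical Ramsey-plus-strategy-stealing argument. Since $H$ embeds in $K_q$ for $q := |V(H)|$, Maker wins the $H$-game whenever he claims all edges of some $K_q$, so it suffices to show the existence of an $N_0$ such that Maker wins the $K_q$-game on $K_s$ for every $s \geq N_0$.

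Take $N_0 \geq R(q,q) + 2$, where $R(q,q)$ is the diagonal Ramsey number. I first analyse the \emph{strong} game on $K_{R(q,q)}$: both players alternate claiming edges and the first to complete a monochromatic $K_q$ wins (with a draw declared if all edges are claimed without either player forming a $K_q$). By Ramsey's theorem, once all edges are claimed at least one player must hold a $K_q$, so the strong game cannot end in a draw. A standard strategy-stealing argument rules out a winning strategy for the second player: if such a strategy $\tau$ existed, the first player could play an arbitrary initial edge and then execute $\tau$ in the role of second player; the extra initial edge only helps in a monotone building game, so the first player would also complete $K_q$ ahead of the second, contradicting $\tau$. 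By determinacy of the finite game, the first player therefore has a winning strategy $\sigma$ in the strong game.

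The strategy $\sigma$ guarantees that the first player completes a $K_q$ within a bounded number of turns no matter what the opponent does, so it also wins for Maker in the Maker-Breaker $K_q$-game on $K_{R(q,q)}$ with Maker moving first. To accommodate the convention of the present paper (Breaker moves first), we exploit the two extra vertices available in $K_{N_0}$: after Breaker's opening move, Maker restricts attention to a subclique of size $R(q,q)$ that avoids both endpoints of that edge, and then runs $\sigma$ inside it as though he were the first player. Any later Breaker moves outside the chosen subclique simply translate into extra free turns for Maker in the subgame, which can only hasten Maker's completion of $K_q$. An analogous subclique restriction extends the conclusion from $K_{N_0}$ to every $K_s$ with $s \geq N_0$.

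The main obstacle is the strategy-stealing step, which requires a careful justification that an extra claimed edge really does not hurt the first player in the strong game (so that the stolen strategy still produces a $K_q$ ahead of the opponent); once this is pinned down, the reduction to the Maker-Breaker game with Breaker opening is essentially bookkeeping.
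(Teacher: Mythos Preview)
The paper does not prove this theorem; it is quoted as a known result with a citation to~\cite{Beck1}. Your argument is the standard classical proof (Ramsey plus strategy stealing, followed by the usual monotonicity bookkeeping to pass from the strong game to the Maker--Breaker game with Breaker moving first), and it is correct. One minor remark: in Step~6 you should spell out what Maker does when the stolen strategy asks him to claim an edge he already owns (he plays an arbitrary free edge inside the subclique and continues), but this is routine and you have already flagged the monotonicity point.
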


It turns out that the Hamilton cycle
game, as well as several other standard games on graphs, are easy wins for
Maker when played on a sufficiently large complete graph.
To make the game more balanced, \chvatal\ and Erd\H os~\cite{ChvatalErdos78}
introduced {\em biased games}.
In the $(1:b)$ biased game, Maker claims a single edge in each move, as before,
but Breaker claims $b$ edges. The parameter $b$ is called the bias (towards
Breaker). {\grn Due to the ``bias monotonicity" of Maker-Breaker games,} it is straightforward to conclude that for any positional game there is some
value $b(n)$ such that Maker wins the game for all $b < b(n)$, while Breaker
wins for $b \geq b(n)$. We call $b(n)$ the {\em {\grn threshold} bias} for that game.
Later on,  we  make use of a recent break-through result of
Krivelevich on the {\grn threshold} bias of the Hamilton cycle game on $K_n$.

\begin{theorem}[Krivelevich~\cite{Krivelevich10}]\label{thm:kriv}
The {\grn threshold} bias of the Hamilton cycle game on $K_n$ satisfies
$b(n) = (1+o(1)) n/\ln n$.
\end{theorem}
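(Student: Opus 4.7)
The theorem has two halves that must be proved separately: a Breaker-side upper bound ($b \geq (1+\eps) n / \ln n \Rightarrow$ Breaker wins) and a matching Maker-side lower bound ($b \leq (1-\eps) n / \ln n \Rightarrow$ Maker wins). The upper bound is the older and easier half; the lower bound is the main new content and I would follow the now-standard two-phase ``expander plus boosters'' template.

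For the upper bound, since any Hamilton cycle requires minimum degree at least $2$, it suffices to exhibit a Breaker strategy reducing some vertex $v$ to Maker-degree at most $1$. I would use an Erd\H{o}s--Selfridge style potential argument: for each vertex $v$ let $f(v)$ count edges at $v$ that are unclaimed and not yet claimed by Maker, and maintain the potential $\Phi=\sum_v 2^{-f(v)}$. Breaker plays greedily, in each round claiming $b$ edges that maximise the drop in $\Phi$ (roughly, filling up whichever vertex currently has the smallest $f(v)$). A standard calculation shows that when $b \geq (1+\eps) n / \ln n$ this strategy forces some $f(v)=0$ while Maker has claimed at most one edge incident to $v$, so that vertex has Maker-degree $\leq 1$ at the end.

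For the lower bound I would follow Krivelevich's two-phase plan. In Phase~1, Maker spends the majority of his moves building a spanning subgraph $G_1$ which is a good \emph{expander}: every vertex set $S$ with $|S| \leq n/(10\ln n)$ satisfies $|N_{G_1}(S)\setminus S| \geq 2|S|$, and in addition the minimum degree of $G_1$ is $\Theta(\ln n)$. To produce $G_1$, Maker uses a pseudo-random degree-balancing strategy: at each round he claims an unclaimed edge joining vertices of smallest current Maker-degree, with ties broken by an appropriate Erd\H{o}s--Selfridge weight function that penalises potential non-expanding subsets. A potential-function analysis shows that, against bias $b \leq (1-\eps) n/\ln n$, this keeps Maker-degrees from falling below $\Theta(\ln n)$ and suppresses the emergence of small non-expanding sets. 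In Phase~2 Maker invokes the \emph{booster} machinery from P\'osa's rotation--extension method: if $G$ is a connected expander on $n$ vertices that is not yet Hamiltonian, then $G$ has at least $\Omega(n^2/(\ln n)^2)$ non-edges whose addition to $G$ either strictly lengthens the longest path or creates a Hamilton cycle. Since Breaker claims only $b$ edges per round, there is always an unclaimed booster available to Maker; after at most $n$ rounds the longest path becomes a Hamilton cycle. A union bound over the $O(n)$ rounds of Phase~2 shows Breaker cannot derail this as long as $b \leq (1-\eps) n/\ln n$.

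The main obstacle is Phase~1. The gap between Breaker winning by isolating a vertex and Maker winning is only a $(1\pm\eps)$ factor in $n/\ln n$, so Maker has essentially no slack: the potential function tracking both low-degree vertices and small would-be non-expanding sets must be kept $o(1)$ throughout, while Breaker is claiming nearly the maximum number of edges per round that still lets Maker win at all. Once a good $G_1$ is in hand, Phase~2 is relatively routine thanks to the quantitatively strong booster lemma.
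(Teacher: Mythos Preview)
The paper does not prove this theorem at all: it is quoted as a black-box result of Krivelevich~\cite{Krivelevich10} and used only once, in the proof of Lemma~\ref{lem:sblob}, where the mere existence of a finite bias $b$ for which Maker wins on sufficiently large cliques is needed. There is therefore nothing in the paper to compare your attempt against.

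That said, your outline is broadly faithful to Krivelevich's actual argument. The upper bound is indeed the classical Chv\'atal--Erd\H{o}s isolation strategy (though it is usually phrased as a direct box-game rather than via the potential $\sum_v 2^{-f(v)}$). The lower bound does follow a two-phase expander-plus-boosters template: Maker first secures a sparse expander with minimum degree $\Theta(\ln n)$, and then completes via P\'osa rotation--extension and the booster lemma. One point worth flagging: your description of Phase~1 as ``pseudo-random degree-balancing with an Erd\H{o}s--Selfridge weight penalising non-expanding sets'' is vaguer than what Krivelevich actually does---his Phase~1 is built on a careful minimum-degree game (due to Gebauer--Szab\'o) together with a separate argument that the resulting graph expands; the expansion is not enforced directly by the potential but follows from properties of random-like greedy play. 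Since the present paper only needs the statement, not its proof, this is immaterial here.
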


\subsection{Probabilistic preliminaries}

Throughout this paper, $\Po(\lambda)$ will denote the Poisson distribution
with parameter $\lambda$, and $\Bi(n,p)$ will denote the binomial
distribution with parameters $n, p$.
Recall that the $\Bi(1,p)$-distribution is also called
the {\em Bernoulli} distribution.
We will make use of the following incarnation of the Chernoff bounds.
A proof can for instance be found in Chapter 1 of~\cite{PenroseBook}.

\begin{lemma}\label{lem:chernoff}
Let $Z$ be either Poisson or Binomially distributed, and write
$\mu := \Ee Z$.
\begin{enumerate}
 \item For all $k \geq \mu$ we have
\[
\Pee( Z \geq k ) \leq e^{-\mu H(k/\mu)},
\]
\item For all $k \leq \mu$ we have
\[
\Pee( Z \leq k ) \leq e^{-\mu H(k/\mu)},
\]
\end{enumerate}
where $H(x) := x\ln x - x + 1$.\noproof
\end{lemma}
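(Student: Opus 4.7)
The plan is to apply the standard Chernoff / exponential Markov approach, handle the Poisson and Binomial cases in parallel by comparing their moment-generating functions, and then optimise over the free parameter.

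First I would state the exponential Markov inequality: for any $t > 0$ and any real-valued random variable $Z$,
\[
\Pee(Z \geq k) \;=\; \Pee(e^{tZ} \geq e^{tk}) \;\leq\; e^{-tk} \cdot \Ee[e^{tZ}],
\]
and symmetrically $\Pee(Z \leq k) \leq e^{-tk}\Ee[e^{tZ}]$ for $t < 0$. So everything reduces to controlling the MGF $\varphi(t) := \Ee[e^{tZ}]$. For $Z \sim \Po(\mu)$ a direct computation from the series gives $\varphi(t) = e^{\mu(e^t-1)}$. For $Z \sim \Bi(n,p)$ with $np = \mu$ the binomial theorem gives $\varphi(t) = (1-p+pe^t)^n$, and applying the elementary inequality $1+x \leq e^x$ with $x = p(e^t-1)$ yields $\varphi(t) \leq e^{np(e^t-1)} = e^{\mu(e^t-1)}$. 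So in both cases we have the uniform bound
\[
\Ee[e^{tZ}] \;\leq\; e^{\mu(e^t - 1)} \quad \text{for all } t \in \eR.
\]

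Next I would plug this into the exponential Markov inequality and optimise. For the upper tail $k \geq \mu$, combining the two estimates gives $\Pee(Z \geq k) \leq \exp\{\mu(e^t-1) - tk\}$ for every $t > 0$; differentiating in $t$ shows the right-hand side is minimised at $e^t = k/\mu$, i.e.\ $t = \ln(k/\mu) \geq 0$. Substituting this optimal $t$ back in and simplifying,
\[
\mu(e^t - 1) - tk \;=\; (k - \mu) - k\ln(k/\mu) \;=\; -\mu\bigl((k/\mu)\ln(k/\mu) - (k/\mu) + 1\bigr) \;=\; -\mu H(k/\mu),
\]
which is exactly the claimed bound (i). For the lower tail $k \leq \mu$ the same calculation works with $t = \ln(k/\mu) \leq 0$ in the analogue of the exponential Markov inequality for $\Pee(Z \leq k)$, yielding (ii) with the identical exponent $-\mu H(k/\mu)$ (noting $H(x) \geq 0$ on $[0,\infty)$ with equality at $x = 1$, so the bound is non-trivial in both regimes).

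The only subtlety I expect is the Binomial case: one must notice that the bound $1+x \leq e^x$ gives a Binomial MGF dominated by the Poisson MGF with the same mean, so the two cases collapse into one optimisation. The edge case $k = 0$ in (ii) (where $\ln(k/\mu)$ is undefined) is handled separately by a direct computation, e.g.\ $\Pee(\Po(\mu) = 0) = e^{-\mu} = e^{-\mu H(0)}$ and $\Pee(\Bi(n,p) = 0) = (1-p)^n \leq e^{-np} = e^{-\mu H(0)}$. Otherwise the argument is entirely routine, which is consistent with the authors simply citing Chapter~1 of~\cite{PenroseBook}.
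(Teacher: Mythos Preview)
Your proof is correct and is the standard exponential-Markov/Chernoff argument. The paper does not actually give a proof of this lemma (note the \verb|\noproof| marker): it simply cites Chapter~1 of~\cite{PenroseBook}, where essentially the same computation appears.
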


An easy inequality on the Poisson distribution that we will use below  is as follows.
For completeness we spell out the short proof.

\begin{lemma}\label{lem:Poissontriviality}
Let $Z$ be a Poisson random variable.
Then $\Pee( Z \geq k ) \leq (\Ee Z)^k$, for all $k \in \eN$ 
\end{lemma}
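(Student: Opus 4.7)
The plan is a direct computation from the Poisson mass function. Writing $\lambda := \Ee Z$, the tail probability is
\[
\Pee(Z \geq k) = \sum_{j \geq k} e^{-\lambda} \frac{\lambda^j}{j!},
\]
and my goal is to reshape this so that a factor of $\lambda^k$ comes out and the remaining sum is an exponential series that cancels the $e^{-\lambda}$ prefactor.

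First I would reindex the sum by setting $j = k+i$, which pulls out the desired factor of $\lambda^k$ and leaves a sum of the form $\sum_{i\geq 0} \lambda^i/(k+i)!$. The only real step is the elementary factorial inequality $(k+i)! \geq k!\,i!$, which follows because $(k+1)(k+2)\cdots(k+i) \geq 1\cdot 2\cdots i$. Plugging this bound in term-by-term, the sum is dominated by $(1/k!)\sum_{i\geq 0} \lambda^i/i! = e^{\lambda}/k!$. The $e^{-\lambda}$ and $e^{\lambda}$ cancel, leaving the intermediate bound $\Pee(Z\geq k) \leq \lambda^k/k!$, and since $k! \geq 1$ for every $k \in \eN$ the statement follows.

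I do not anticipate any genuine obstacle: this is essentially a one-line calculation once the factorial inequality is in place. The only sanity checks are the edge cases, namely $k=0$ where the bound reads $1 \leq 1$, and the regime $\lambda \geq 1$ where the bound is already trivial from $\Pee(Z \geq k) \leq 1 \leq \lambda^k$, so the work really lies in the small-$\lambda$ regime where the intermediate $\lambda^k/k!$ bound is actually informative.
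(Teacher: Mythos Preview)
Your proof is correct. It differs from the paper's argument, which is a one-liner via Markov's inequality applied to the $k$-th falling factorial: since $Z$ is a nonnegative integer, $Z\geq k$ is equivalent to $Z(Z-1)\cdots(Z-k+1)\geq 1$, and then Markov together with the standard identity $\Ee[Z(Z-1)\cdots(Z-k+1)]=\lambda^k$ for Poisson gives the bound directly. Your route is a direct series computation using the factorial inequality $(k+i)!\geq k!\,i!$; it is slightly longer but entirely self-contained (it does not invoke the factorial-moment identity), and it makes the sharper intermediate bound $\Pee(Z\geq k)\leq \lambda^k/k!$ explicit. The paper's argument could recover this sharper form too by noting $Z(Z-1)\cdots(Z-k+1)\geq k!$ on the event $\{Z\geq k\}$, but as written it stops at $\lambda^k$.
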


\begin{proof}
Using Markov's inequality we have

\[ \Pee( Z \geq k ) 
= \Pee( Z(Z-1)\cdots(Z-k+1) \geq 1 ) \leq \Ee{\big[}Z(Z-1)\dots(Z-k+1){\big]} = (\Ee Z)^k, \]

\noindent
where we also used the well-known, elementary fact that the $k$-th factorial moment of the Poisson equals the
$k$-th power of its first moment.
\end{proof}

The {\em total variational distance} between two integer-valued random variables
$X,Y$ is defined as:

\[
\dtv( X, Y ) = \sup_{A\subseteq\Zed} |\Pee( X \in A ) - \Pee( Y \in A )|.
\]

Let $g : \eR^d \to [0,\infty)$ be a bounded, measurable function.
A Poisson process on $\eR^d$ with intensity $g$ is a random set of points
$\Pcal \subseteq \eR^d$ with the properties that:
\begin{enumerate}
\item[\PP{1}] For every measurable $A \subseteq \eR^d$, we have
$\Pcal(A) \isd \Po\left( \int_A g(x){\dd}x \right)$, where
$\Pcal(A) := |A \cap \Pcal|$ denotes the number of
points of $\Pcal$ that fall into $A$;
\item[\PP{2}] If $A_1,\dots, A_n$ are disjoint and measurable, then
$\Pcal(A_1), \dots, \Pcal(A_n)$ are independent.
\end{enumerate}

An important special case is when $\int_{\eR^d} g(x){\dd}x < \infty$.
In this case we can write $g = \lambda \cdot f$, with $f$
(the probability density of) an absolute continuous
probability measure.
In this case the {\em Poisson process} $\Pcal$ can also be generated as follows.
Let $X_1,X_2,\ldots \in \eR^d$ be an infinite supply of random
points, i.i.d.~distributed
with probability density $f$; and let $N \isd \Po(\lambda)$ be independent
of $X_1,X_2,\dots$.
Then $\Pcal \isd \{ X_1, \dots, X_N \}$.
A proof of this folklore result and more background on Poisson point processes
can for instance be found in~\cite{KingmanBook}.

It is often useful to consider a {\em Poissonized} version of the
random geometric graph.
Here we mean the following.
Let us take $X_1, X_2, \dots$ i.i.d.~uniform at random on the unit square,
and we let $N \isd \Po(n)$ be independent of $X_1, X_2,\dots$.
Following Penrose~\cite{PenroseBook}, we will write

\begin{equation}\label{eq:Pdef}
\Pcal_n := \{ X_1,\dots, X_N\}.
\end{equation}

\noindent
(Thus $\Pcal_n$ is a Poisson process with intensity $n$ on the unit square
and intensity 0 elsewhere.)
The {\em Poisson random geometric graph} is defined as

\[
\GPo(n,r) := G(\Pcal_n; r).
\]

\noindent
The properties~\PP{1} and \PP{2} above make $\GPo(n,r)$ often
slightly easier to deal
with than the ordinary random geometric graph.
For notational convenience (and again following
Penrose~\cite{PenroseBook}) we set:

\begin{equation}\label{eq:Xdef}
\Xcal_n := \{ X_1,\dots, X_n \}.
\end{equation}

The usual random geometric graph $G(n,r) = G(\Xcal_n;r)$ is sometimes
also called
the {\em binomial random geometric graph}.
By defining both $G(n,r)$ and $\GPo(n,r)$ on the same set of points
$X_1,X_2,\dots$ we get an explicit
coupling which often helps to transfer results from the Poissonized
setting to the original setting.

The next theorem is especially useful for dealing with the subgraph counts and
counts of other ``small substructures'' in the Poissonized random geometric
graph.
The statement and its proof are almost identical to
Theorem 1.6 of~\cite{PenroseBook},
but for completeness
we include a  proof in Appendix~\ref{sec:Palmapp}.

\begin{theorem}\label{thm:palm}
Let $\Pcal_n$ be as in~\eqref{eq:Pdef}, and let $h(a_1,\dots,a_k;A)$ be a
bounded
measurable function defined
on all tuples $(a_1,\dots,a_k;A)$ with
$A \subseteq \eR^2$ finite and $a_1,\dots,a_k \in A$. Let us write
\[
 Z := \sum_{a_1,\dots,a_k \in \Pcal_n, \atop a_1,\dots, a_k \text{ distinct }}
 h(a_1,\dots,a_k;\Pcal_n).
\]
Then
\[
\Ee Z
= n^k \cdot \Ee h( Y_1,\dots,Y_k; \{Y_1, \dots, Y_k\} \cup \Pcal_n ),
\]
where $Y_1,\dots, Y_k$ are i.i.d.~uniform on the unit square, and are
independent of
$\Pcal_n$.
\end{theorem}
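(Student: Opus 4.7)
The plan is to execute the standard Palm-calculus reduction: condition on the total number of Poisson points $N := |\Pcal_n| \sim \Po(n)$ and exploit the fact that, conditional on $N=m$, the points of $\Pcal_n$ are distributed as $m$ i.i.d.\ uniform points on $[0,1]^2$. So I would write $\Pcal_n = \{X_1,\dots,X_N\}$ with $X_1,X_2,\dots$ an infinite i.i.d.\ uniform sequence independent of $N$, exactly as in \eqref{eq:Pdef}. Then the sum $Z$ becomes $\sum_{(i_1,\dots,i_k)} h(X_{i_1},\dots,X_{i_k};\{X_1,\dots,X_N\})$, where $(i_1,\dots,i_k)$ ranges over ordered $k$-tuples of distinct elements of $\{1,\dots,N\}$.

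Next, I would invoke the exchangeability of the $X_i$'s. Conditional on $N=m$, each of the $m^{(k)} := m(m-1)\cdots(m-k+1)$ ordered tuples contributes the same expectation, so
\[
\Ee[Z \mid N=m] \;=\; m^{(k)} \cdot \Ee h\!\left(X_1,\dots,X_k;\{X_1,\dots,X_m\}\right).
\]
Taking expectation over $N$ and using the key Poisson identity $\Pee(N=m)\cdot m^{(k)} = n^k \cdot \Pee(N'=m-k)$, where $N'\isd\Po(n)$, I would get
\[
\Ee Z \;=\; n^k \sum_{m\geq k} \Pee(N'=m-k)\cdot \Ee h\!\left(X_1,\dots,X_k;\{X_1,\dots,X_m\}\right).
\]
After the change of variable $j=m-k$, the right-hand side is $n^k$ times the expectation of $h(X_1,\dots,X_k; \{X_1,\dots,X_k\}\cup\{X_{k+1},\dots,X_{k+j}\})$ averaged over $j\sim\Po(n)$.

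Finally, I would identify the second argument with $\{Y_1,\dots,Y_k\}\cup \Pcal_n$ by setting $Y_i := X_i$ for $i\leq k$ and using the folklore fact recalled just before \eqref{eq:Pdef}: an i.i.d.\ uniform sample of size $N'\sim\Po(n)$ is exactly a rate-$n$ Poisson process on $[0,1]^2$, independent of the $Y_i$'s. This gives the claimed identity. The only genuine issue is justifying the interchange of sum and expectation, which is immediate from Tonelli's theorem since $h$ is bounded and the Poisson factorial moments are finite; no step is conceptually hard, and the entire argument is purely computational, which is why the paper (following \cite{PenroseBook}) defers it to the appendix.
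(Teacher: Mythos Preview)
Your proposal is correct and follows essentially the same approach as the paper: condition on $N=m$, use exchangeability to reduce to a single ordered $k$-tuple times the falling factorial $(m)_k$, apply the Poisson identity $(m)_k\,\Pee(N=m)=n^k\,\Pee(N=m-k)$, shift the index by $k$, and recognize the remaining points as an independent copy of $\Pcal_n$. The paper's appendix proof is line-for-line the same computation, just without your explicit remark about Tonelli justifying the interchange.
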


The previous theorem can be used to count ``small substructures" in the Poisson setting.
We are primarily interested in the binomial random graph.
The next lemma is useful for transferring results from the Poisson random geometric graph to the
binomial random graph.
The proof is relatively standard.
For completeness we provide the proof in Appendix~\ref{sec:configurations}, since
it is not available in the literature as far as we are aware.

\begin{lemma}\label{lem:configurations}
Let $h_n(v_1,\dots,v_k;V)$ be a sequence of $\{0,1\}$-valued, measurable functions defined
on all
tuples $(v_1,\dots,v_k;V)$ with $V \subseteq \eR^2, v_1,\dots,v_k \in V$ and
set 
\[
Z_n := \sum_{v_1,\dots,v_k \in \Pcal_n} h_n(v_1,\dots,v_k;\Pcal_n), \quad
\Ztil_n := \sum_{v_1,\dots,v_k \in \Xcal_n} h_n(v_1,\dots,v_k;\Xcal_n),
\]
with $\Pcal_n$ as in~\eqref{eq:Pdef} and $\Xcal_n$ as in~\eqref{eq:Xdef}.
Suppose that $\Ee Z_n = O(1)$ and that there exists a sequence $(r_n)_n$ such that 
$\pi n r_n^2 = o( \sqrt{n} )$ and the value of $h_n(v_1,\dots,v_k;V)$ does not depend on
$V \setminus (B(v_1;r_n)\cup\dots\cup B(v_k;r_n))$.
Then $Z_n = \Ztil_n$ whp.
\end{lemma}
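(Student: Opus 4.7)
The plan is to couple $\Pcal_n$ and $\Xcal_n$ using the same underlying i.i.d.\ sequence, control the symmetric difference $D := \Pcal_n \triangle \Xcal_n$, and show that \emph{whp} no tuple contributing to $Z_n$ or $\Ztil_n$ is affected by $D$. Explicitly, I take $X_1,X_2,\ldots$ i.i.d.\ uniform on $[0,1]^2$ and $N \isd \Po(n)$ independent of them, and set $\Pcal_n := \{X_1,\ldots,X_N\}$, $\Xcal_n := \{X_1,\ldots,X_n\}$, so that $|D| = |N-n|$. Chebyshev's inequality (using $\Var N = n$) gives $|D| \leq \omega(n)\sqrt n$ \emph{whp} for any $\omega(n)\to\infty$, and the hypothesis $\pi n r_n^2 = o(\sqrt n)$ allows me to pick $\omega(n) \to \infty$ slowly enough that also $\omega(n)\cdot \pi r_n^2 \sqrt n = o(1)$.

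The locality hypothesis gives the key reduction: if $\mathbf v = (v_1,\ldots,v_k) \in (\Pcal_n \cap \Xcal_n)^k$ and $A_{\mathbf v} := \bigcup_i B(v_i;r_n)$ is disjoint from $D$, then $\Pcal_n \cap A_{\mathbf v} = \Xcal_n \cap A_{\mathbf v}$, so by locality $h_n(\mathbf v;\Pcal_n) = h_n(\mathbf v;\Xcal_n)$. Hence $Z_n \neq \Ztil_n$ forces the existence of a ``bad'' tuple $\mathbf v$ with $h_n(\mathbf v;\Pcal_n) \vee h_n(\mathbf v;\Xcal_n) = 1$ and $D \cap A_{\mathbf v} \neq \emptyset$ (the latter holding automatically when some $v_i \in D$, since $v_i \in A_{\mathbf v}$). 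Letting $B$ denote the number of such bad tuples, it suffices to show $\Ee B = o(1)$.

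I split $B \leq B_1 + B_2$, where $B_1$ counts bad tuples with some $v_i \in D$, and $B_2$ counts bad tuples for which $D$ hits $A_{\mathbf v}$ only at non-vertex points. For $B_1$: by exchangeability of the $X_i$'s conditional on $N$, the identity of which points of $\Pcal_n \cup \Xcal_n$ lie in $D$ is a uniformly random subset of size $|D|$, giving $\Ee[B_1 \mid |D|] \leq O(k|D|/n) \cdot \Ee[Z_n + \Ztil_n] = O(|D|/n)$, which is $o(1)$ on the whp event $|D| \leq \omega(n)\sqrt n$. For $B_2$: conditional on $|D|$, the points of $D$ are i.i.d.\ uniform and conditionally independent of the non-$D$ vertices; for each fixed tuple $\mathbf v$ we have $\Ee[|D \cap A_{\mathbf v}|] \leq |D| \cdot k\pi r_n^2$, so $\Ee[B_2 \mid |D|] \leq k\pi r_n^2 \cdot |D| \cdot \Ee[Z_n + \Ztil_n] = O(|D|\pi r_n^2)$; on the same whp event this is $O(\omega(n)\pi r_n^2 \sqrt n) = o(1)$. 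Markov's inequality then gives $B = 0$ \emph{whp}, and hence $Z_n = \Ztil_n$ \emph{whp}. Along the way I use that $\Ee \Ztil_n = O(1)$ as well, which follows from $\Ee Z_n = O(1)$ by a routine comparison of local Poisson and binomial counts via Theorem~\ref{thm:palm}.

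The main obstacle I anticipate is the dependence between $D$ and $\Pcal_n$: Theorem~\ref{thm:palm} is most cleanly applied when the distinguished points are independent of the Poisson process, but here $D \subseteq \Pcal_n$. The resolution is to condition on $|D|$ and exploit exchangeability of the i.i.d.\ sequence $X_1,X_2,\ldots$, effectively ``relabelling'' $D$ as a uniformly random subset that is conditionally independent of the remaining geometry. Once this is in place, the estimates for $B_1$ and $B_2$ become standard area-integral computations and the proof is completed as sketched.
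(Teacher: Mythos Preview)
Your overall plan --- couple $\Pcal_n$ and $\Xcal_n$ on the same i.i.d.\ sequence, control $|D|=|N-n|$, and use locality to say a discrepancy forces $D$ to meet some configuration's neighbourhood --- matches the paper's strategy. The gap is in your $B_2$ estimate. You write that, conditional on $|D|$, the points of $D$ are i.i.d.\ uniform and independent of the non-$D$ vertices, and conclude $\Ee[B_2\mid |D|]\le k\pi r_n^2\,|D|\cdot\Ee[Z_n+\Ztil_n]$. But take $N=m>n$: a $B_2$-tuple $\mathbf v\in\Xcal_n^k$ may satisfy $h_n(\mathbf v;\Xcal_n)=0$ and $h_n(\mathbf v;\Pcal_n)=1$, and the latter indicator depends on $\Pcal_n\cap A_{\mathbf v}$, which \emph{contains} $D\cap A_{\mathbf v}$. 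So $h_n(\mathbf v;\Pcal_n)$ and $1_{\{D\cap A_{\mathbf v}\neq\emptyset\}}$ are not conditionally independent given the non-$D$ vertices; they can in fact be perfectly positively correlated (e.g.\ if $h_n=1$ exactly when $A_{\mathbf v}$ contains one extra point and $\Xcal_n$ contributes none). Your area bound $|D|\cdot k\pi r_n^2$ therefore does not control this half of $B_2$. The paper handles precisely this direction with an extra ingredient you are missing: it first restricts to $Z_n\le K$ (Markov), then uses that $\pi n r_n^2=o(\sqrt n)$ implies a maximum-degree bound $\Delta\le f(n)=o(\sqrt n)$ \emph{whp}, so each of the at most $K+1$ configurations in the larger set has at most $k(f(n)+1)$ points in its $r_n$-neighbourhood; by exchangeability the chance that the random $|D|$-subset hits one of these $O(Kf(n))$ points is $O(f(n)\,|D|/n)=o(1)$. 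Some such degree-type control is genuinely needed.

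A secondary issue: you invoke $\Ee\Ztil_n=O(1)$ as following ``routinely'' from $\Ee Z_n=O(1)$ via Theorem~\ref{thm:palm}. This is not routine under the bare hypotheses of the lemma: the total-variation distance between the relevant binomial and Poisson local counts is $O(r_n^2)$, and after multiplying by $(n)_k$ this additive error is not $o(1)$; nor is the ratio $\Pee(\text{Bin}(n-k,p)=j)/\Pee(\Po(np)=j)\le e^{jp}$ uniformly bounded. The paper sidesteps this entirely --- it never uses $\Ee\Ztil_n=O(1)$, only $\Pee(Z_n\le K)\ge 1-\eps$, and then the degree bound above handles the direction that would otherwise have required control of $\Ztil_n$. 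If you want to repair your argument, the cleanest route is to drop the appeal to $\Ee\Ztil_n$, condition on $\{Z_n\le K\}$, and insert the maximum-degree step.
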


We will need two results of Penrose~\cite{PenroseBook} that were proved using
Poissonization.
The first of these two results is on the occurrence of small subgraphs. \onote{Removed an extra ``denote''}
For $H$ a graph, we shall denote by $N(H) = N(H;n,r)$ the number of
{\em induced} subgraphs of $G(n,r)$ that are
isomorphic to $H$.
For $H$ a connected geometric graph on $k$ vertices, let us denote

\begin{equation}\label{eq:muGdef}
\mu(H) := \frac{1}{k!} \int_{\eR^2} \dots \int_{\eR^2}
1_{\{G(0,x_1,\dots,x_{k-1};1) \cong H\}} {\dd}x_1\dots{\dd}x_{k-1}
\end{equation}

\noindent
(Here $G\cong H$ means that $G$ and $H$ are isomorphic and $1_A$ is
the indicator function of the set $A$, in our case
the set of all $(x_1,\dots, x_{k-1})\in \left(\eR^2\right)^{k-1}$ that satisfy
$G(0,x_1,\dots,x_{k-1};1) \cong {\grn H}$.)
It can be seen that, since $H$ is a connected geometric graph,
$0 < \mu(H) < \infty$.
The following is a restriction of Corollary 3.6 in~\cite{PenroseBook}
to the special case
of the uniform distribution on the unit square and the Euclidean norm.

\begin{theorem}[Penrose~\cite{PenroseBook}]\label{thm:smallsubgraphs}
For $k \in \eN$, let $H_1,\dots, H_m$ be connected, non-isomorphic
geometric graphs on $k \geq 2$ vertices.
Let $(r_n)_n$ be a sequence of positive numbers satisfying
$r_n = \alpha \cdot n^{-\frac{k}{2(k-1)}}$ for some constant $\alpha > 0$.
Then
\[
 (N(H_1),\dots,N(H_m)) \convd (Z_1,\dots,Z_m),
\]
where $Z_1,\dots,Z_m$ are independent Poisson random variables with means
$\Ee Z_i = \alpha^{2(k-1)} \cdot \mu(H_i)$.
\end{theorem}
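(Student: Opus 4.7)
The plan is to prove the theorem via Poissonization combined with the classical method of factorial moments. Since the indicator that $k$ given vertices induce a copy of some $H_i$ depends only on the points within $r_n$ of those $k$ vertices, and since $\pi n r_n^2 = \pi \alpha^2 n^{-1/(k-1)} = o(\sqrt{n})$, Lemma~\ref{lem:configurations} allows us to work throughout in the Poissonized model $\GPo(n,r_n)$; the independence on disjoint regions (property~\PP{2}) is what makes the Poisson limit drop out naturally.

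For the means, write $N(H_i) = \frac{1}{|\mathrm{Aut}(H_i)|} \sum h_i(v_1,\dots,v_k;\Pcal_n)$, where the sum is over ordered $k$-tuples of distinct points of $\Pcal_n$ and $h_i$ is the indicator that $G(v_1,\dots,v_k;r_n)$ equals a fixed labelled copy $H_i^*$ of $H_i$. Theorem~\ref{thm:palm} then gives
\[
\Ee N(H_i) = \frac{n^k}{|\mathrm{Aut}(H_i)|} \int_{[0,1]^{2k}} 1_{\{G(y_1,\dots,y_k;r_n) = H_i^*\}} \dd y_1\cdots\dd y_k.
\]
Substituting $y_j = y_1 + r_n z_{j-1}$ for $j \geq 2$, and noting that the boundary correction from $y_1$ within $O(r_n)$ of $\partial [0,1]^2$ contributes only an $O(r_n) = o(1)$ factor (using that $H_i$ is connected, so all $z_{j-1}$'s are bounded on the event in question), this integral equals $r_n^{2(k-1)}$ times the analogous integral over $\eR^{2(k-1)}$ at unit radius. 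Combining with the definition of $\mu(H_i)$ in~\eqref{eq:muGdef} and the identity $n^k r_n^{2(k-1)} = \alpha^{2(k-1)}$, we obtain $\Ee N(H_i) \to \alpha^{2(k-1)} \mu(H_i)$.

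The heart of the argument is to show that, for every fixed tuple $(j_1,\dots,j_m)$ of non-negative integers, the joint factorial moment $\Ee \prod_{i=1}^m (N(H_i))_{j_i}$ converges to $\prod_i (\alpha^{2(k-1)}\mu(H_i))^{j_i}$, which is exactly the joint factorial moment of independent Poissons with the stated parameters. Expanding $\prod_i (N(H_i))_{j_i}$ produces a sum over $j := j_1+\dots+j_m$ ordered copies of the $H_i$'s sitting in $\Pcal_n$. The contribution from $j$ pairwise \emph{vertex-disjoint} copies factorizes by \PP{2} and a repeated application of Theorem~\ref{thm:palm}, and converges to the claimed product. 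The main obstacle is controlling the contribution of overlapping configurations: two copies sharing $\ell \in \{1,\dots,k-1\}$ vertices span a \emph{connected} geometric configuration on $2k-\ell$ points (connected because each copy is connected and they share a vertex), whose expected count in $\GPo(n,r_n)$ is bounded, via Theorem~\ref{thm:palm} and a spanning-tree volume estimate, by $O\bigl(n^{2k-\ell} r_n^{2(2k-\ell-1)}\bigr) = O\bigl(n^{(\ell-k)/(k-1)}\bigr) = o(1)$. Analogous bounds, organized by the ``overlap graph'' on the $j$-tuple of copies, dispose of all more intricate overlap patterns. Since Poisson distributions are determined by their moments, convergence of joint factorial moments implies joint convergence in distribution, which transfers back to $G(n,r_n)$ by Lemma~\ref{lem:configurations}, completing the argument.
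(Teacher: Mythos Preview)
The paper does not prove Theorem~\ref{thm:smallsubgraphs}; it is quoted without proof as a special case of Corollary~3.6 in Penrose's monograph~\cite{PenroseBook}. Your outline is essentially the argument Penrose gives there: Poissonize, compute the first moment via Palm theory and a rescaling, then verify convergence of all joint factorial moments by separating vertex-disjoint tuples of copies (whose contribution factorizes and yields the product of Poisson means) from overlapping ones (whose contribution is $o(1)$ by exactly the connected-spanning-tree volume estimate you wrote down), and finally de-Poissonize. One small remark: since the induced subgraph on $k$ points of a geometric graph is determined by those $k$ points alone, the function $h_n$ does not depend on the rest of $V$ at all, so the hypothesis of Lemma~\ref{lem:configurations} is satisfied trivially and the de-Poissonization step (applied to each $N(H_i)$ and then a union bound) goes through. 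In short, your sketch is correct and matches the source argument, but there is no proof in the present paper to compare it against.
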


\noindent
We shall also need the a result on the minimum degree of the
random geometric graphs.
The following is a reformulation of Theorem 8.4 in~\cite{PenroseBook},
restricted to
the case of the Euclidean metric in two dimensions.

\begin{theorem}[Penrose~\cite{PenroseBook}]\label{thm:penrosemindeg}
Let $(r_n)_n$ be a sequence of positive numbers.
The following hold for the random geometric graph $G(n,r_n)$:
\begin{enumerate}
\item If $\pi n r_n^2 = \ln n + x + o(1)$ for some fixed $x\in\eR$ then
\[
\lim_{n\to\infty} \Pee[G(n,r_n)\text{ has min.\,deg.} \geq 1]
=
e^{-e^{-x}}.
\]
\item If $\pi n r_n^2 = \ln n + \ln\ln n + x + o(1)$ for some fixed
$x \in\eR$ then
\[
\lim_{n\to\infty} \Pee[G(n,r_n)\text{ has min.\,deg.} \geq 2]
=
e^{-(e^{-x}+\sqrt{\pi e^{-x}})}.
\]
\item If $\pi n r_n^2 = \ln n + (2k-3)\ln\ln n + 2\ln((k-1)!) + 2x + o(1)$
for some fixed $x\in\eR$ and $k > 2$ then
\[
\lim_{n\to\infty} \Pee[G(n,r_n)\text{ has min.\,deg.} \geq k]
= e^{-e^{-x}}.
\]
\end{enumerate}
\end{theorem}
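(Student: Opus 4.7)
The plan is to Poissonize, prove Poisson convergence of the number of low-degree vertices in $G_\Pcal(n,r_n)$, and transfer back to the binomial setting. Set
\[
W_k := |\{v \in \Pcal_n : \deg_{G_\Pcal(n,r_n)}(v) \le k-1\}|,
\]
so that $\{W_k = 0\}$ is the event that $G_\Pcal(n,r_n)$ has minimum degree at least $k$. Applying Theorem~\ref{thm:palm} with $h(y;A) := 1_{\{|A \cap B(y;r_n)| \le k\}}$ (counting $y$ itself among its potential neighbours and subtracting) yields
\[
\Ee W_k \;=\; n\int_{[0,1]^2} \Pee\bigl(\Po(n\cdot A(x,r_n)) \le k-1\bigr)\, {\dd}x,
\]
where $A(x,r) := \area(B(x,r)\cap[0,1]^2)$.

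I would then decompose $[0,1]^2$ into a bulk region $\{x : \dist(x,\partial[0,1]^2) > r_n\}$, four edge strips of width $r_n$, and four corner squares of side $r_n$. In the bulk $A(x,r_n) = \pi r_n^2$; in an edge strip $A(x,r_n)/(\pi r_n^2)$ is an explicit function (in $[\tfrac12,1]$) of the distance to the nearest side; on a corner piece the ratio lies in $[\tfrac14,\tfrac12]$. Using the standard asymptotic $\Pee(\Po(\mu) \le k-1) = (1+o(1))\,\mu^{k-1} e^{-\mu}/(k-1)!$ as $\mu\to\infty$, one evaluates each contribution to $\Ee W_k$ in each of the three regimes. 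For~(i) only the bulk contributes and $\Ee W_1 \to e^{-x}$. For~(ii), with $k=2$: the bulk gives $e^{-x}$; the edge strips, after the substitution $u = \dist(x,\text{nearest side})/r_n$ and integrating along the strip, give $\sqrt{\pi e^{-x}}$; the corners give $o(1)$. For~(iii), $k \ge 3$ and the larger radius force both edge and corner contributions to $o(1)$, leaving only the bulk term $\to e^{-x}$. Let $\lambda$ denote the resulting limit in each case.

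Poisson convergence then follows by the method of factorial moments: the $m$-point version of Theorem~\ref{thm:palm} gives $\Ee[W_k(W_k-1)\cdots(W_k-m+1)] \to \lambda^m$ for every fixed $m \ge 1$, the key observation being that two candidate low-degree vertices at mutual distance $\le 2r_n$ would be neighbours and so unlikely to both be low-degree, so ``close pairs'' contribute $o(1)$ while widely separated pairs become asymptotically independent. Hence $W_k \convd \Po(\lambda)$ and $\Pee(W_k = 0) \to e^{-\lambda}$. To de-Poissonize, couple $\Pcal_n$ and $\Xcal_n$ on a common probability space: since $||\Pcal_n|-n| = O(\sqrt{n})$ whp and each extra or missing point has expected degree $O(\pi n r_n^2) = O(\ln n)$, adding or removing these points perturbs the minimum-degree event through at most $O(\sqrt{n}\ln n) = o(n)$ vertices, which is negligible at the scale where $W_k$ is of order one in expectation; a short computation makes this quantitative. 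The main obstacle is the boundary analysis: extracting the explicit constant $\sqrt{\pi e^{-x}}$ in part~(ii), and verifying that corner contributions genuinely vanish at the stated thresholds (despite $A(x,r_n)$ being smallest there), requires careful uniform control of both the Poisson tail and the geometry of $A(x,r_n)$ as $x$ approaches $\partial[0,1]^2$.
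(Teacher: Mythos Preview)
The paper does not prove this theorem. It is quoted verbatim as a known result of Penrose, with the remark that it ``is a reformulation of Theorem~8.4 in~\cite{PenroseBook}, restricted to the case of the Euclidean metric in two dimensions.'' There is therefore no in-paper proof to compare your sketch against.

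That said, your outline is essentially the standard approach (and, broadly, the one Penrose himself takes): Poissonize, compute $\Ee W_k$ by splitting $[0,1]^2$ into bulk/edge/corner regions, upgrade to distributional convergence $W_k\convd\Po(\lambda)$ via factorial moments (or a Stein--Chen argument with a dependency graph, as in Theorem~\ref{thm:twomoments}), and de-Poissonize. Two places in your sketch would need genuine work to become a proof. First, the factorial-moment step: the assertion that close pairs contribute $o(1)$ is correct in spirit, but one must also handle pairs at distance between $r_n$ and $2r_n$ (not neighbours, yet their low-degree events are still dependent) and, in case~(ii), pairs where both points lie in the boundary strip; the latter is exactly where the cross-term in $\lambda=e^{-x}+\sqrt{\pi e^{-x}}$ would cause trouble if not controlled. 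Second, your de-Poissonization paragraph is too loose: saying the perturbation affects $O(\sqrt{n}\ln n)=o(n)$ vertices does not by itself control the probability of the event $\{W_k=0\}$. One needs a coupling argument of the type in Lemma~\ref{lem:configurations}, showing that adding or deleting the $O(\sqrt{n})$ extra points rarely creates or destroys a vertex of degree $\le k-1$; this uses that such vertices are whp far from the boundary of the symmetric difference $\Pcal_n\triangle\Xcal_n$.
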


Later on we will do some reverse-engineering of Theorem~\ref{thm:penrosemindeg}.
For this purpose it is convenient
to state also the following intermediate result that was part of the proof of
Theorem~\ref{thm:penrosemindeg}.

\begin{lemma}[\cite{PenroseBook}]\label{lem:penrosemindeg2}
Let $(r_n)_n$ be such that $\pi n r_n^2 = \ln n + \ln\ln n + x
+ o(1)$ for some $x \in \eR$, and let
$W_n$ denote the number of vertices of degree exactly one in $G(n, r_n)$. Then
\[
E W_n \to e^{-x} + \sqrt{ \pi e^{-x} }.
\]
\end{lemma}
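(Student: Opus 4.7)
My plan is a direct first-moment calculation. For $y\in[0,1]^2$ write $A(y):=\area(B(y,r_n)\cap[0,1]^2)$. Conditional on $X_1=y$, the degree of $X_1$ in $G(n,r_n)$ is $\Bi(n-1,A(y))$, and by symmetry
\[
\Ee W_n \;=\; n(n-1)\int_{[0,1]^2} A(y)(1-A(y))^{n-2}\,dy.
\]
Writing $\lambda_n:=\pi nr_n^2=\ln n+\ln\ln n+x+o(1)$, we have $A(y)\le\pi r_n^2=O(\lambda_n/n)$ and hence $nA(y)^2=O((\ln n)^2/n)=o(1)$ uniformly, so $(1-A(y))^{n-2}=e^{-nA(y)}(1+o(1))$. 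I would then split the integral into three pieces: the interior $[r_n,1-r_n]^2$, the four edge strips of width $r_n$, and the four corner squares of side $r_n$. In the interior $A(y)\equiv\pi r_n^2$, so $nA(y)=\lambda_n$, and the contribution is $(1+o(1))\,n(1-2r_n)^2\lambda_ne^{-\lambda_n}\to e^{-x}$, using $ne^{-\lambda_n}\sim e^{-x}/\ln n$.

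The edge strips are the main step. For a point $y=(z,s)$ with $s\in[0,r_n)$ and $z\in[r_n,1-r_n]$, a standard circular-segment computation gives $A(y)=r_n^2\phi(s/r_n)$ with
\[
\phi(t):=\pi-\arccos(t)+t\sqrt{1-t^2},\qquad \phi(0)=\tfrac{\pi}{2},\qquad \phi'(t)=2\sqrt{1-t^2}.
\]
After the substitution $t=s/r_n$, the full edge contribution equals (up to a factor $1+o(1)$)
\[
4n(1-2r_n)\,r_n\lambda_n\int_0^1 \tfrac{\phi(t)}{\pi}\,\exp\!\bigl(-\tfrac{\lambda_n}{\pi}\phi(t)\bigr)\,dt.
\]
Since $\phi$ is concave on $[0,1]$ with $\phi(t)=\tfrac{\pi}{2}+2t+O(t^3)$ near $0$, a Laplace-type estimate around $t=0$ shows the inner integral is asymptotic to $\tfrac{\pi}{4\lambda_n}e^{-\lambda_n/2}$, with contributions from $t$ bounded away from $0$ being exponentially smaller. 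This reduces the edge contribution to $\pi nr_n e^{-\lambda_n/2}(1+o(1))$. Using $nr_n=\sqrt{n\lambda_n/\pi}(1+o(1))$ and $e^{-\lambda_n/2}=(1+o(1))e^{-x/2}/\sqrt{n\ln n}$, the edges contribute $\sqrt{\pi e^{-x}}(1+o(1))$.

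For the corners, $A(y)\ge\pi r_n^2/4$ throughout $[0,r_n]^2$ (a quarter-disk always fits), so $nA(y)\ge\lambda_n/4$; since $te^{-t}$ is decreasing for $t\ge 1$, the integrand is at most $\lambda_n e^{-\lambda_n/4}$, and the total corner contribution is $O(nr_n^2\cdot\lambda_n e^{-\lambda_n/4})=O((\ln n)^2 n^{-1/4})=o(1)$. Summing the three pieces gives $\Ee W_n\to e^{-x}+\sqrt{\pi e^{-x}}$, as claimed. The only nontrivial step is the Laplace-type analysis of the edge integral; the tail estimate for $t$ away from $0$ is handled by concavity of $\phi$, which yields the uniform lower bound $\phi(t)\ge\pi/2+\pi t/2$ on $[0,1]$ and in particular makes the integrand exponentially small outside an $O(1/\lambda_n)$-neighborhood of $0$.
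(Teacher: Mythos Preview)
The paper does not give its own proof of this lemma; it is quoted as an intermediate result from Penrose's monograph~\cite{PenroseBook} (as part of the proof of the minimum-degree threshold theorem there). So there is no in-paper argument to compare against.

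Your direct first-moment computation is correct and is essentially the standard route. The decomposition into interior, edge strips, and corners is the natural one; the interior and corner contributions are handled correctly, and the Laplace-type analysis of the edge integral is the real content. Your use of concavity of $\phi$ to get the uniform lower bound $\phi(t)\ge \pi/2+\pi t/2$ on $[0,1]$ is a clean way to dispose of the tail $t$ bounded away from $0$, and the expansion $\phi(t)=\pi/2+2t+O(t^3)$ near $0$ gives the leading constant $\pi^2/(4\lambda_n)$ for $\int_0^1\phi(t)e^{-\lambda_n\phi(t)/\pi}\,dt$ exactly as you state. The final bookkeeping with $nr_n=\sqrt{n\lambda_n/\pi}$ and $e^{-\lambda_n/2}\sim e^{-x/2}/\sqrt{n\ln n}$ produces the $\sqrt{\pi e^{-x}}$ boundary term, and the sum is as claimed.
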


\subsection{Geometric preliminaries}

This section begins with two elementary results about geometric graphs. The remainder of the section is devoted to  approximating  the area of intersecting regions in  $[0,1]^2$.
We start with a standard elementary result. Because we are not aware of a proof anywhere in the literature, we provide a  proof in Appendix~\ref{sec:spanapp}.

\begin{lemma}\label{lem:span}
Let $G$ be a connected geometric graph. Then $G$ has
a spanning tree of maximum degree at most five.
\end{lemma}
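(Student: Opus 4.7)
My plan is to take $T$ to be a Euclidean minimum spanning tree (EMST) of $G$---a spanning tree minimizing the sum of edge lengths $\sum_{e \in T} \|e\|$---with ties broken so that the sorted-decreasing degree sequence of $T$ is lexicographically minimal. I will show $\Delta(T) \leq 5$ by contradiction via an edge-swap argument.

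The central geometric fact is a $60^\circ$ pigeonhole: if $v$ has six neighbors within distance $r$, then some two of them, $u_i$ and $u_j$, subtend an angle of at most $60^\circ$ at $v$ (the angles around $v$ sum to at most $2\pi$). Assuming without loss of generality $\|v-u_i\| \leq \|v-u_j\|$, the law of cosines yields
\[
\|u_i-u_j\|^2 \;\leq\; \|v-u_i\|^2 + \|v-u_j\|^2 - \|v-u_i\|\,\|v-u_j\| \;\leq\; \|v-u_j\|^2 \;\leq\; r^2,
\]
so $u_iu_j \in E(G)$. The middle inequality is strict unless $v$, $u_i$, $u_j$ form an equilateral triangle.

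Suppose for contradiction $d_T(v) = d \geq 6$ for some $v$, and pick $u_i,u_j$ as above. The swap $T' := T - vu_j + u_iu_j$ is a spanning tree of $G$ with $w(T') \leq w(T)$. If the inequality is strict, this contradicts the EMST property of $T$. In the equality subcase only the degrees of $v$ and $u_i$ change, becoming $d-1$ and $d_T(u_i)+1$ respectively; so if $d_T(u_i) \leq d-2$, the sorted degree sequence of $T'$ is lex-smaller than that of $T$, contradicting the tie-breaker.

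The main obstacle is the remaining degenerate ``equilateral-hexagon'' subcase, in which every close pair of neighbors of $v$ supplied by the geometric lemma forms an equilateral triangle with $v$, and $d_T(u_i) \in \{d-1,d\}$ for every choice. This forces $d = 6$ and the six tree-neighbors of $v$ to lie at the vertices of a regular hexagon centered at $v$ at a common distance $s \leq r$, with all six consecutive pairs $u_k u_{k+1}$ being edges of $G$ of length $s$, and hence with six candidate swaps $T - v u_{k+1} + u_k u_{k+1}$ available. I would finish by showing that at least one of these six swaps must strictly improve the tie-break: otherwise every $u_k$ itself has tree-degree $\geq d = 6$, and applying the same analysis at each $u_k$ forces it to be the center of its own equilateral hexagon of tree-degree-$6$ vertices, which in a finite tree propagates to a contradiction (for example, by looking at a leaf of a BFS layering of the ``bad'' vertices, whose outermost equilateral hexagon must contain a previously-unseen vertex of tree-degree $\leq 5$). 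This contradiction yields $\Delta(T) \leq 5$, as required.
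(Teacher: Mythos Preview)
Your argument follows the paper's approach closely up through the EMST choice and the $60^\circ$ pigeonhole swap, correctly showing that any EMST has maximum degree at most six and that a degree-six vertex must sit at the center of a regular hexagon of tree-neighbours. The gap is in your handling of that degenerate hexagon case.

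Look again at the swap $T' = T - vu_{k+1} + u_ku_{k+1}$. Only $v$ and $u_k$ change degree: $v$ drops from $6$ to $5$, while $u_k$ rises from $d_T(u_k)$ to $d_T(u_k)+1$. If $d_T(u_k) \leq 4$ you correctly get a lex improvement. If $d_T(u_k)=6$ then $T'$ is an EMST containing a degree-$7$ vertex, which your own first step rules out; so this never happens. That leaves $d_T(u_k)=5$, and here the degree multiset of $T'$ is \emph{identical} to that of $T$: you replace the pair $\{6,5\}$ by $\{5,6\}$. So the swap does not improve your lex tie-break. Your sentence ``otherwise every $u_k$ itself has tree-degree $\geq d=6$'' is therefore unjustified---the correct conclusion is $d_T(u_k)=5$ exactly, and the propagation you sketch (forcing each $u_k$ to be the center of its own degree-$6$ hexagon) never gets started.

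The paper fixes this with a different secondary tie-break: among EMSTs with the fewest degree-$6$ vertices, take one maximizing the largest $x$-coordinate of a degree-$6$ vertex. If $v$ is the rightmost degree-$6$ vertex, two consecutive hexagon neighbours $u,w$ lie strictly to its right; the swap $T - uv + uw$ (same weight) drops $v$ to degree $5$ and raises $w$ by one. Since $w$ is strictly to the right of $v$, it had degree $\leq 5$ in $T$, and its new degree is either $6$ (a further-right degree-$6$ vertex, contradiction) or $\leq 5$ (fewer degree-$6$ vertices, contradiction). This geometric tie-break is what your purely combinatorial lex tie-break is missing.
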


\noindent
Note  that Lemma~\ref{lem:span} is best possible since $K_{1,5}$
is a connected
geometric graph. We also need the following observation.
We leave the straightforward proof to the reader.

\begin{lemma}\label{lem:cross}
Let $G = (x_1,\dots,x_n;r)$ be a geometric graph
and suppose that $x_ix_j, x_ax_b \in E(G)$ are two edges that do not
share endpoints.
If the line segments $[x_i,x_j]$ and $[x_a,x_b]$ cross then
at least one of the edges $x_ix_a, x_ix_b, x_jx_a, x_jx_b$ is
also in $E(G)$. \noproof
\end{lemma}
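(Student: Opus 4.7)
My plan is to use the crossing point together with the triangle inequality. Let $p$ denote the point at which the two line segments $[x_i,x_j]$ and $[x_a,x_b]$ cross. Since $p$ lies on the segment $[x_i,x_j]$, we have the identity $\|x_i-p\|+\|p-x_j\| = \|x_i-x_j\| \le r$, and similarly $\|x_a-p\|+\|p-x_b\| = \|x_a-x_b\| \le r$ from the other segment. These two equalities are what turns the intersection hypothesis into a useful length constraint.

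Next I would pair the four ``crossing'' vertices naturally. By the triangle inequality applied through the point $p$,
\[
\|x_i-x_a\| + \|x_j-x_b\| \;\le\; \bigl(\|x_i-p\|+\|p-x_a\|\bigr) + \bigl(\|x_j-p\|+\|p-x_b\|\bigr) \;=\; \|x_i-x_j\| + \|x_a-x_b\| \;\le\; 2r.
\]
Hence $\min\bigl(\|x_i-x_a\|,\|x_j-x_b\|\bigr) \le r$, so at least one of the edges $x_ix_a$ or $x_jx_b$ is present in $G$, which is a stronger conclusion than what the lemma requires. (If one prefers, pairing the other way gives $\|x_i-x_b\|+\|x_j-x_a\|\le 2r$ by the same argument.)

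The only slightly delicate point is the meaning of ``cross,'' which I would take to mean that the relative interiors of the two segments meet in a common point $p$; this guarantees the two length equalities above and that $p$ lies strictly between the endpoints on each segment (so the triangle inequality is used in a well-defined way). There is no real obstacle here — the proof is essentially a one-line application of the triangle inequality once the crossing point is named — which is presumably why the authors consigned it to ``the reader.''
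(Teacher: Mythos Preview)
Your proof is correct and is exactly the kind of one-line triangle-inequality argument the authors had in mind; the paper gives no proof at all for this lemma (it is left to the reader as ``straightforward''), so there is nothing to compare against.
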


We now turn to approximating areas in $[0,1]^2$. First, we give an expression
for the area of the difference between two disks of the same radius.
We leave the proof, which is straightforward trigonometry, to the reader.

\begin{lemma}\label{lem:areadiff}
For $x,y \in \eR^2$ we have, provided $d := \norm{x-y} \leq 2r$:
\[
\area( B(x;r) \setminus B(y;r) ) = \pi r^2 - 2 r^2 \arccos( d/2r ) +
dr\sqrt{1-(d/2r)^2}.
\]
\hfill\noproof
\end{lemma}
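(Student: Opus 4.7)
The plan is to reduce the set difference to the area of the lens-shaped intersection $B(x;r) \cap B(y;r)$, and then compute that lens in the standard way as a sum of two circular segments. Since $\area(B(x;r)) = \pi r^2$ and $B(x;r) \setminus B(y;r) = B(x;r) \setminus (B(x;r) \cap B(y;r))$, it suffices to show that
\[
\area\bigl( B(x;r) \cap B(y;r) \bigr) \;=\; 2 r^2 \arccos( d/2r ) - dr\sqrt{1-(d/2r)^2}.
\]

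To do so, I will place coordinates so that $x = (-d/2, 0)$ and $y = (d/2,0)$; the situation is invariant under rigid motions, so this is without loss of generality. The two circles $\partial B(x;r)$ and $\partial B(y;r)$ meet on the $y$-axis, at the points $p_\pm = (0, \pm \sqrt{r^2 - d^2/4})$. The intersection $B(x;r) \cap B(y;r)$ is symmetric across the $y$-axis, and each half is a circular segment of the corresponding disk cut off by the chord $[p_-, p_+]$. I will compute the area of one such segment (say the one inside $B(x;r)$, lying in the half-plane $\{u \ge 0\}$) as the sector area at $x$ spanned by the rays $x p_+$ and $x p_-$, minus the area of the triangle $\triangle x p_+ p_-$.

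Letting $\theta$ be the half-angle at $x$ subtended by the chord, the right triangle with vertices $x$, the origin, and $p_+$ has legs $d/2$ and $\sqrt{r^2-d^2/4}$ and hypotenuse $r$, so $\cos\theta = (d/2)/r$, i.e.\ $\theta = \arccos(d/2r)$. The sector has area $\tfrac{1}{2} r^2 \cdot (2\theta) = r^2 \arccos(d/2r)$, and the triangle $\triangle x p_+ p_-$ has base $|p_+ - p_-| = 2\sqrt{r^2-d^2/4}$ and height $d/2$, hence area $\tfrac{d}{2}\sqrt{r^2-d^2/4} = \tfrac{1}{2} dr \sqrt{1-(d/2r)^2}$. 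Doubling by symmetry, the lens area is
\[
\area\bigl(B(x;r) \cap B(y;r)\bigr) = 2r^2 \arccos(d/2r) - dr\sqrt{1-(d/2r)^2},
\]
and substituting into $\pi r^2 - \area(B(x;r) \cap B(y;r))$ gives exactly the claimed expression. There is no real obstacle here; the only care needed is to check the half-angle identity $\cos \theta = d/2r$ and to recognize that the formula remains correct at the boundary $d = 2r$ (where the lens degenerates to a point and both sides equal $\pi r^2$) and at $d = 0$ (where the difference is empty and the formula yields $0$, since $\arccos 0 = \pi/2$).
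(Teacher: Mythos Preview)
Your proof is correct. The paper itself omits the argument entirely, stating only that it is ``straightforward trigonometry'' left to the reader, so your segment-by-segment computation of the lens area is exactly the kind of standard derivation the authors had in mind.
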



\noindent
Using the fact that $\frac{\pi}{2}(1 - x) \leq \arccos(x) \leq
\frac{\pi}{2} - x$ for
$0 \leq x \leq 1$, and
the Taylor approximations $\arccos(x) = \frac{\pi}{2} - x + O(x^2)$ and
$\sqrt{1-x} = 1 - x/2 + O(x^2)$,
we get the following straightforward consequence of Lemma~\ref{lem:areadiff}
that will be useful to us later:

\begin{corollary}\label{cor:areadiff2}
For $x,y \in \eR^2$ we have, provided $d := \norm{x-y} \leq 2r$:
\begin{equation}\label{eq:areadiff21}
dr \leq \area( B(x;r) \setminus B(y;r) ) \leq  4dr,
\end{equation}
and
\begin{equation}\label{eq:areadiff22}
\area( B(x;r) \setminus B(y;r) ) = 2dr - O(d^2),
\end{equation}
as $d\downarrow 0$. \noproof
\end{corollary}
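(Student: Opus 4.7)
The plan is to derive both assertions directly from the exact formula
\[
\area(B(x;r)\setminus B(y;r)) = \pi r^2 - 2r^2\arccos(d/2r) + dr\sqrt{1-(d/2r)^2}
\]
of Lemma~\ref{lem:areadiff}, by substituting the elementary inequalities and Taylor approximations listed just above the corollary. Set $t := d/2r \in [0,1]$, so the formula reads $r^2\bigl(\pi - 2\arccos t\bigr) + 2r^2 \cdot t\sqrt{1-t^2}$.

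For \eqref{eq:areadiff21} I would first note that the $dr\sqrt{1-(d/2r)^2}$ term is nonnegative, so for the lower bound it suffices to apply $\arccos t \leq \pi/2 - t$, which yields $\pi r^2 - 2r^2\arccos(d/2r) \geq 2r^2 t = dr$. For the upper bound, the inequality $\arccos t \geq (\pi/2)(1-t)$ gives $\pi r^2 - 2r^2\arccos(d/2r) \leq \pi r^2 t = (\pi/2)dr$; bounding the remaining term trivially by $dr\sqrt{1-t^2} \leq dr$ leaves a total of at most $((\pi/2) + 1)\,dr < 4dr$.

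For \eqref{eq:areadiff22} I would plug the Taylor expansions $\arccos(t) = \pi/2 - t + O(t^2)$ and $\sqrt{1-t^2} = 1 + O(t^2)$ into the same formula. The first gives $\pi r^2 - 2r^2\arccos(d/2r) = dr + O(r^2 t^2) = dr + O(d^2)$, and the second gives $dr\sqrt{1-(d/2r)^2} = dr + O(dr\cdot t^2) = dr + O(d^3/r)$. Summing, $\area(B(x;r)\setminus B(y;r)) = 2dr + O(d^2) + O(d^3/r)$. The only thing to be careful about is making the implicit constants absolute rather than depending on $r$; this is handled by the hypothesis $d \leq 2r$ (equivalently $t\le 1$), which lets us bound $d^3/r \leq 2d^2$ and thus absorb the second error into the first, producing the claimed $2dr - O(d^2)$.

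Overall the corollary is a short, routine calculation; there is no substantive obstacle, just bookkeeping on the error terms and the sign of the subleading contribution.
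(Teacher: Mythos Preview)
Your proposal is correct and follows precisely the approach the paper indicates: the paper itself gives no detailed proof but simply points to the inequalities $\frac{\pi}{2}(1-x)\le\arccos(x)\le\frac{\pi}{2}-x$ and the Taylor expansions of $\arccos$ and $\sqrt{1-x}$ applied to the formula of Lemma~\ref{lem:areadiff}, which is exactly what you do. Your bookkeeping on absorbing the $O(d^3/r)$ term via $d\le 2r$ is the only point requiring a word of care, and you handle it correctly.
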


\noindent
We also need expressions for the area of the intersection of a
disk of
radius $r$ with the unit square. Again the proof is straightforward
trigonometry that we
leave to the interested reader.

\begin{lemma}\label{lem:areaIntSq}
Suppose that $r < \frac12$ and $x\in [0,r)\times(r,1-r)$.
Writing $h$ for the first coordinate of $x$, we have:

\[
\area( B(x;r) \cap [0,1]^2 )
= \pi r^2 - \arccos(h/r) \cdot r^2 + hr\sqrt{1-(h/r)^2}.
\]
\hfill\noproof
\end{lemma}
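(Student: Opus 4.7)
The plan is to reduce the problem to computing the area of a single circular segment, by first checking that under the stated hypotheses the disk $B(x;r)$ only extends past one side of the square.

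Write $x = (h, y)$ with $h \in [0,r)$ and $y \in (r, 1-r)$. I would first check which of the four sides of $[0,1]^2$ are crossed by $\partial B(x;r)$. The left side: since $h < r$, the disk strictly crosses the line $\{0\}\times\eR$. The top and bottom: the perpendicular distances from $x$ to these lines are $1-y$ and $y$, both of which exceed $r$ by the assumption $y\in(r,1-r)$. The right side: its distance from $x$ is $1-h > 1-r > r$, since $r<\tfrac12$. Consequently, the set $B(x;r)\setminus [0,1]^2$ is precisely $B(x;r)\cap H$, where $H := \{(a,b)\in\eR^2 : a<0\}$ is the open left half-plane.

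Next, I would compute $\area(B(x;r)\cap H)$, which is a circular segment of the disk cut off by the vertical chord on the line $a=0$. The perpendicular distance from the center $x$ to this chord is exactly $h$, and the chord separates $x$ (which lies in $\{a\geq 0\}$) from the segment. Applying the standard circular-segment formula
\[
\area(\text{segment at distance } d \text{ from center, far side}) = r^2\arccos(d/r) - d\sqrt{r^2-d^2},
\]
with $d=h$ yields
\[
\area(B(x;r)\cap H) = r^2\arccos(h/r) - h\sqrt{r^2-h^2}.
\]

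Finally, subtracting from the full disk area gives
\[
\area(B(x;r)\cap [0,1]^2) = \pi r^2 - r^2\arccos(h/r) + h\sqrt{r^2 - h^2},
\]
and rewriting $\sqrt{r^2-h^2} = r\sqrt{1-(h/r)^2}$ produces the stated expression. There is no genuine obstacle; the only point that needs care is the three inequalities confirming that exactly one side of the square is crossed, and this follows directly from the hypotheses $h<r$, $y\in(r,1-r)$, and $r<\tfrac12$. The segment-area formula itself is a standard trigonometric computation (integrate $2\sqrt{r^2-t^2}\,dt$ from $0$ to $h$, or equivalently decompose the segment as a circular sector minus a triangle), which I would simply cite.
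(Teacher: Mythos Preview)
Your proposal is correct and follows exactly the approach the paper has in mind: the authors omit the proof entirely, stating only that it is ``straightforward trigonometry that we leave to the interested reader,'' and your computation---verifying that under the hypotheses only the left side of the square is crossed, then subtracting the standard circular-segment area---is precisely that trigonometry.
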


\noindent
Again using $\frac{\pi}{2}(1 - x) \leq \arccos(x) \leq \frac{\pi}{2} - x$ for
$0 \leq x \leq 1$, and
the Taylor approximations $\arccos(x) = \frac{\pi}{2} - x + O(x^2)$ and
$\sqrt{1-x} = 1 - x/2 + O(x^2)$,
we get:

\begin{corollary}\label{cor:areaIntSq}
Suppose that $r < \frac12$ and $x\in[0,r)\times(r,1-r)$.
Writing $h$ for the first coordinate of $x$, we have:
\begin{equation}\label{eq:areaIntSq1}
\frac{\pi}{2}r^2 + hr \leq \area( B(x;r) \cap [0,1]^2 ) \leq  \frac{\pi}{2}
r^2 + 2hr,
\end{equation}%
for $0\leq h \leq r$, and
\begin{equation}\label{eq:areaIntSq2}
\area( B(x;r) \cap [0,1]^2 ) = \frac{\pi}{2} r^2 + 2hr - O(h^2),
\end{equation}%
as $h\downarrow 0$. \noproof
\end{corollary}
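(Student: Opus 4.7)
My plan is to derive both parts directly from the closed-form expression in Lemma~\ref{lem:areaIntSq}. Writing $t := h/r \in [0,1]$ and using $\arccos(t) = \pi/2 - \arcsin(t)$, the lemma rewrites as
\[
\area( B(x;r) \cap [0,1]^2 ) \;=\; \tfrac{\pi}{2} r^2 \;+\; r^2 \arcsin(t) \;+\; h r\sqrt{1-t^2}.
\]
So the job reduces to bounding $r^2\arcsin(t) + hr\sqrt{1-t^2}$ from above and below by $hr$ and $2hr$ respectively (for part (i)), and to giving a second-order expansion as $t\downarrow 0$ (for part (ii)).

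For the lower bound in (i), the hint $\arccos(t) \le \pi/2 - t$ is the same as $\arcsin(t) \ge t$, which gives $r^2\arcsin(t) \ge r^2 t = hr$; since $h r\sqrt{1-t^2}\ge 0$, the bound $\area \ge \tfrac{\pi}{2}r^2 + hr$ follows. For the upper bound in (i), the bound $\arccos(x) \ge \tfrac{\pi}{2}(1-x)$ alone gives $\arcsin(t) \le \tfrac{\pi}{2} t$, which together with $\sqrt{1-t^2}\le 1$ only produces the weaker estimate $(\tfrac{\pi}{2}+1) hr$, so an additional argument is needed. The cleanest route is to verify the single inequality $\arcsin(t) + t\sqrt{1-t^2} \le 2t$ on $[0,1]$. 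Setting $g(t) := 2t - \arcsin(t) - t\sqrt{1-t^2}$, a short calculation collapses $g'(t)$ to
\[
g'(t) \;=\; 2 \;-\; \frac{1}{\sqrt{1-t^2}} \;-\; \sqrt{1-t^2} \;+\; \frac{t^2}{\sqrt{1-t^2}} \;=\; 2 - 2\sqrt{1-t^2} \;\ge\; 0,
\]
and since $g(0) = 0$ we conclude $g(t) \ge 0$ throughout $[0,1]$, which is exactly the required upper bound.

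For part (ii), I would simply plug in the Taylor expansions $\arcsin(t) = t + O(t^3)$ and $\sqrt{1-t^2} = 1 + O(t^2)$ as $t\downarrow 0$:
\[
r^2\arcsin(t) + hr\sqrt{1-t^2} \;=\; (hr + O(h^3/r)) + (hr + O(h^3/r)) \;=\; 2hr + O(h^2),
\]
where in the last step we use $h\le r$ to absorb $h^3/r$ into $O(h^2)$.

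The only non-routine step is obtaining the sharp constant $2$ in the upper bound of part (i), which the preamble's two hints do not quite deliver on their own; the calculus identity $g'(t) = 2 - 2\sqrt{1-t^2}$ is the key simplification that makes this clean. Everything else is direct substitution into Lemma~\ref{lem:areaIntSq} and a one-line Taylor expansion.
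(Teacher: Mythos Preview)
Your proof is correct and follows the same route the paper intends: substitute into Lemma~\ref{lem:areaIntSq}, use the arccos bounds for part~(i), and Taylor-expand for part~(ii). You are also right that the two inequalities the paper cites in its preamble do not by themselves yield the sharp constant~$2$ in the upper bound of~\eqref{eq:areaIntSq1}: plugging $\arccos(t) \ge \tfrac{\pi}{2}(1-t)$ and $\sqrt{1-t^2}\le 1$ into the formula only gives $\tfrac{\pi}{2}r^2 + (1+\tfrac{\pi}{2})hr$. Your extra step, checking that $g(t) := 2t - \arcsin(t) - t\sqrt{1-t^2}$ has $g'(t) = 2 - 2\sqrt{1-t^2} \ge 0$ and $g(0)=0$, is exactly what is needed to recover the stated constant, and the derivative simplification is clean. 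So your argument is essentially the paper's argument with a genuine gap filled in; the paper's preamble is a bit loose on this point.
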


We need one more geometric approximation, that combines the bounds
from~\eqref{eq:areadiff22} and~\eqref{eq:areaIntSq2}.

\begin{lemma}\label{lem:areafinal}
Suppose that $x \in [0,r)\times(2r,1-2r), y \in B(x;r)$, with $y$ to the
right of $x$.
Let $h$ denote the first coordinate of $x$, and let $\alpha, d$ be defined by
$v := y-x = (d\cos\alpha, d\sin\alpha)$ (see Figure~\ref{fig:dhalpha}).
Then
\begin{equation}\label{eq:areafinal1}
\area([0,1]^2\cap(B(x;r)\cup B(y;r))) = \frac{\pi}{2} r^2 + 2hr +
(1+\cos\alpha)dr + O( (d+h)^2),
\end{equation}
and
\begin{equation}\label{eq:areafinal2}
\area( [0,1]^2\cap (B(y;r) \setminus B(x;r) ) )
=
(1+\cos\alpha)dr + O(d(d+h))
\end{equation}
\noindent
where the error terms are uniform over all $-\pi/2 \leq \alpha \leq \pi/2$.
\end{lemma}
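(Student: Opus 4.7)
The plan is to prove (2) first and then deduce (1). For the deduction, by additivity,
\[
\area([0,1]^2 \cap (B(x;r) \cup B(y;r))) = \area([0,1]^2 \cap B(x;r)) + \area([0,1]^2 \cap (B(y;r) \setminus B(x;r))),
\]
and Corollary~\ref{cor:areaIntSq} gives $\area([0,1]^2 \cap B(x;r)) = \frac{\pi}{2}r^2 + 2hr + O(h^2)$. Combining the $O(h^2)$ error with the $O(d(d+h))$ error from (2) yields $O((d+h)^2)$, exactly the bound required for (1).

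To prove (2), I would first observe that the hypothesis $x \in [0,r)\times(2r,1-2r)$ together with $\|y-x\| \leq r$ places both disk centres at vertical distance $>r$ from the top and bottom edges, and at horizontal distance $>r$ from the right edge (for $r$ small enough). Hence $B(x;r)$ and $B(y;r)$ are disjoint from the top, bottom and right edges of $[0,1]^2$. Writing $C := B(y;r)\setminus B(x;r)$, we therefore have $C\setminus[0,1]^2 = \{z\in C : z_1 < 0\}$, where $z_1$ denotes the first coordinate of $z$. Since $\area(C) = 2dr + O(d^2)$ by Corollary~\ref{cor:areadiff2}, it suffices to prove
\[
\area(\{z\in C : z_1 < 0\}) = (1-\cos\alpha)\,dr + O(d(d+h)).
\]

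To evaluate this area, I would use polar coordinates $(\rho,\theta)$ centred at $x$. Solving $\|(h+\rho\cos\theta,\rho\sin\theta)-y\|^2 = r^2$ for $\rho\geq r$ shows that for $\theta\in(\alpha-\pi/2,\alpha+\pi/2)$ the crescent at angle $\theta$ extends from $\rho=r$ out to $\rho = r + t(\theta)$ with $t(\theta) = d\cos(\theta-\alpha) + O(d^2/r)$, and the crescent is empty for $\theta$ outside this interval. The condition $z_1<0$ becomes $h+\rho\cos\theta<0$, which forces $\cos\theta<0$ and $\rho > h/|\cos\theta|$.

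The main technical obstacle is the careful treatment of the $\theta$-range where $\{z_1<0\}$ only partially intersects the crescent slice. Assuming $\alpha\geq 0$ (the case $\alpha<0$ being symmetric), I would split the $\theta$-integration at the threshold $|\cos\theta|=h/r$, i.e.\ at $\theta = \pi/2+\arcsin(h/r)$. On the ``main'' range $\theta\in(\pi/2+\arcsin(h/r),\alpha+\pi/2)$ the entire slice lies in $\{z_1<0\}$, contributing
\[
\int_{\pi/2+\arcsin(h/r)}^{\alpha+\pi/2} rd\cos(\theta-\alpha)\,d\theta + O(d^2) = rd\bigl[1-\cos(\alpha-\arcsin(h/r))\bigr] + O(d^2),
\]
which a short Taylor expansion (using $\arcsin(h/r) = h/r + O((h/r)^3)$) evaluates to $rd(1-\cos\alpha) + O(d(d+h))$. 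On the ``transition'' range $\theta\in(\pi/2,\pi/2+\arcsin(h/r))$, of angular width $O(h/r)$, the slice area is at most $rt(\theta)+t(\theta)^2/2 = O(rd)$, so its total contribution is $O(dh)$. Summing and combining with $\area(C) = 2dr + O(d^2)$ yields (2), completing the proof.
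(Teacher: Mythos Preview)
Your proof is correct but follows a genuinely different route from the paper's. The paper proves~\eqref{eq:areafinal1} first and deduces~\eqref{eq:areafinal2}, whereas you do the reverse. More substantively, the paper's argument is geometric: it slices $B(x;r)\cup B(y;r)$ by two vertical lines---through $x$ and through the midpoint of $[x,y]$---into three pieces $A_1,A_2,A_3$, uses the symmetry of the union about the midpoint to get $\area(A_3)=\tfrac{\pi}{2}r^2+dr+O(d^2)$, bounds $A_2$ between a rectangle and a circular segment to get $\area(A_2)=dr\cos\alpha+O(d^2)$, and handles the thin strip $A_1$ similarly. Your approach is instead analytic: you parametrize the crescent $B(y;r)\setminus B(x;r)$ in polar coordinates about $x$, compute its radial thickness $t(\theta)=d\cos(\theta-\alpha)+O(d^2/r)$, and integrate over the angular range where $z_1<0$.

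Both arguments are of comparable length. The paper's decomposition is perhaps cleaner conceptually and avoids any integration beyond invoking the earlier corollaries. Your polar-coordinate computation is more direct for~\eqref{eq:areafinal2} and makes the appearance of the factor $(1+\cos\alpha)$ transparent via $\int_{\alpha-\pi/2}^{\pi/2}\cos(\theta-\alpha)\,d\theta$. One small point you should make explicit: when $\arcsin(h/r)>\alpha$ your ``main range'' is empty, but in that regime both the target $(1-\cos\alpha)dr$ and the actual area are $O(dh)$, so the estimate still holds.
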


\begin{figure}[h!]
 \begin{center}
  \input{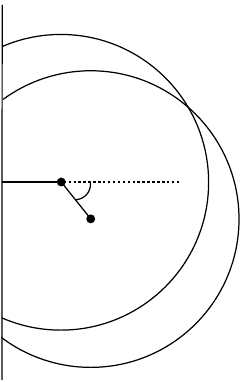_t}
 \end{center}
\caption{Computing the area of $(B(x;r)\cup B(y;r)) \cap [0,1]^2$.
\label{fig:dhalpha}}
\end{figure}

\begin{proof}
Let $\ell_1$ denote the vertical line through $x$, and let $\ell_2$
denote the vertical line
through the midpoint of $[x,y]$.
Let $A_1$ denote the part of $B(x;r)\cup B(y;r)$ between the $y$-axis and
$\ell_1$;
let $A_2$ denote the part of $B(x;r)\cup B(y;r)$ between $\ell_1$ and
$\ell_2$; and let
$A_3$ denote the part of $B(x;r)\cup B(y;r)$ to the {\red right} of $\ell_2$.

By symmetry, and~\eqref{eq:areadiff22}, we have.
\[
\area(A_{\red 3}) = \frac12\area( B(x;r)\cup B(y;r) ) = \frac{\pi}{2}r^2 +
dr + O(d^2).
\]
Observe that $A_2$ is contained in a rectangle of sides
$d \cdot \frac12 \cos(\alpha)$ and
$2r + d$. Hence, $\area(A_2) \leq dr \cos(\alpha) + O(d^2)$.
On the other hand, $A_2$ contains the portion of $B(x;r)$ between $\ell_1$ and
$\ell_2$.
Thus, using~\eqref{eq:areaIntSq2} we see that in fact
\[
\area(A_2) = dr\cos\alpha + O(d^2).
\]
Similarly, $A_{\red 1}$ is contained inside a rectangle of sides $h$ and
$2r + d$; and
it contains the part of $B(x;r)$ between the $y$-axis and $\ell_1$, giving:
\[
 \area(A_{\red 1}) = 2hr + O( h(d+h) ).
\]
Combining the three expressions proves~\eqref{eq:areafinal1}.

Now notice that, if $B^+(x;r)$ denotes the portion of $B(x;r)$ to the right of
$\ell_1$:

\[ \begin{array}{rcl}
\area( [0,1]^2 \cap (B(y;r) \setminus B(x;r)) )
& \geq &
\area( (A_{\red 2}\cup A_{\red 3})\setminus B^+(x;r)) \\
& = & \area(A_{\red 2}\cup A_{\red 3}) - \frac{\pi}{2} r^2 \\
& = & (1+\cos\alpha)dr + O(d^2).
\end{array} \]%
On the other, hand it is not hard to see that the portion of
$B(y;r) \setminus B(x;r)$ that lies between the $y$-axis and $\ell_1$ has area
at most
$h \cdot d$, so that
\[ \begin{array}{rcl}
\area( [0,1]^2 \cap (B(y;r) \setminus B(x;r)) )
& \leq &
\area( (A_{\red 2}\cup A_{\red 3})\setminus B^+(x;r)) + h\cdot d \\
& = & (1+\cos\alpha)dr + O(d(d+h)),
\end{array} \]%
proving~\eqref{eq:areafinal2}.
\end{proof}

\section{The  structure of $G(n, r_n)$ near the connectivity threshold}
\label{sec:structure}

{\blue This section contains a number of observations of varying technical difficulty that
 describe the structure of the
random geometric graph  $G(n, r_n)$ when $r_n$ is such that the probability of
Maker-win
for one of the games under consideration is nontrivial. Intuitively speaking, we characterize regions of $G(n,r_n)$ as being dense or sparse. Most of the graph is dense. The sparse regions are of small diameter, and they are well-separated. Equally as important, we show that the dense region surrounding a sparse region contains enough points to enable Maker to overcome this local bottleneck. Let us begin.  }

Let $G=(V,r)$ be a geometric graph, where  $V = \{ x_1, \ldots , x_n \}
\subset [0,1]^2$. We consider the structure of $G$ with respect to a partition
of $[0,1]^2$ into small squares. We  introduce a vocabulary for describing the
density of vertices each the square. We also categorize the the vertices
themselves, depending on whether they are in dense squares or sparse squares.
In addition, we will pay special attention to vertices in dense squares that
are also close to vertices in sparse squares.

Let $\eta >0$ be arbitrarily small and let $m \in \eN$ {\blue be such that $\side(m) := 1/m  = \eta r$}. Let $\Dcal =
\Dcal(m)$ denote the \emph{dissection} of $[0,1]^2$ into
squares of side length $\side(m)$. %
We will call these squares {\em cells}.
For $K\in \eN$, we define a {\em $K\times K$ block of cells} in the obvious way,
see Figure~\ref{fig:D10}.
Given $T > 0$ and $V \subseteq [0,1]^2$, we call a cell $c\in\Dcal$
{\em good} with respect to
$T, V$ if $|c\cap V| \geq T$ and {\em bad} otherwise.
When the choice of $T$ and $V$ is clear from the context we will just speak
of good and bad.
\mnote{Moved the definition of $\Gamma$ up to here.}
Let $\Gamma = \Gamma(V, m, T, r)$ denote the  graph whose
vertices are the good cells of $\Dcal(m)$, with an
edge $cc' \in E(\Gamma)$ if and only if the lower left
corners of $c,c'$ have distance at most $r - \side \sqrt{2}$.
(Note that this way, any $x\in c$ and $y\in c'$ have distance $\norm{x-y} \leq
r$.)
We will usually just write $\Gamma$ when the choice of $V, m, T, r$ is clear
from the
context.
Let us denote the components of $\Gamma$ by $\Gamma_1, \Gamma_2, \dots$ where
$\Gamma_i$ has at least as many cells as $\Gamma_{i+1}$ (ties are broken
arbitrarily).
For convenience we will also write $\Gamma_{\max} = \Gamma_1$.
We will often be a bit sloppy and identify $\Gamma_i$ with the union of its
cells, and
speak of $\diam(\Gamma_i)$ and the distance between $\Gamma_i$ and $\Gamma_j$
and so forth.

\begin{figure}[h!]
 \begin{center}
  \input{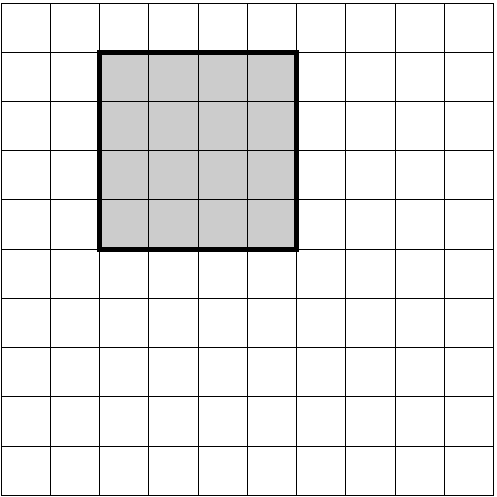_t}
 \end{center}
\caption{The dissection $\Dcal(10)$, with a $4\times 4$ block of cells
highlighted.
\label{fig:D10}}
\end{figure}

Let us call a point $v\in V$ {\em safe} if there is a cell
$c\in\Gamma_{\max}$ such
that $| B(v;r) \cap V \cap c | \geq T$. (I.e.~in the geometric graph $G(V;r)$,
the point $v$ has at least $T$ neighbours inside $c$.)
If $v$ is not safe and there is a good cell $c \in \Gamma_i$, $i \geq 2$, such that
$| B(v;r) \cap V \cap c | \geq T$, we say that $v$ is {\em risky}.
Otherwise, if $v$ is neither safe nor risky, we call $v$ {\em dangerous}.
{\blue Every vertex in a cell of $\Gamma_{\max}$ is safe. Every vertex in a cell of $\Gamma_i$ for $i \geq 2$ is risky. Vertices in bad cells can be safe, risky or dangerous.}
\mnote{Changed semidangerous to `risky.' Risky is no longer a subset of safe.}

For $i \geq 2$ we let $\Gamma_i^+$ denote the set of all points of $V$ in
cells of
$\Gamma_i$, together with all risky points $v$ that satisfy $| B(v;r)
\cap V \cap c |
\geq T$
for at least one $c\in\Gamma_i$.

The following is a list of desirable properties that we would like $V$ and
$\Gamma(V,m,T,r)$ to have:

\begin{enumerate}
\item[\str{1}] $\Gamma_{\max}$ contains more than $0.99 \cdot |\Dcal|$ cells;
\item[\str{2}] $\diam(\Gamma_i^+)<r/100$ for all $i \geq 2$;
\item[\str{3}] If $u,v\in V$ are dangerous then
either $\norm{u-v} < r/100$ or $\norm{u-v} > r\cdot 10^{10}$;
\item[\str{4}] For all $i\ne j\geq 2$ the distance between $\Gamma_i^+$ and
$\Gamma_j^+$ is at least $r\cdot 10^{10}$;
\item[\str{5}] If $v\in V$ is dangerous and ${\red i} \geq 2$ then
the distance between $v$ and $\Gamma_{\red i}^+$ is at least $r \cdot 10^{10}$;
\item[\str{6}] If $c,c'\in\Gamma_{\max}$ are two cells at Euclidean distance
at most $10r$, then
there is a path in $\Gamma_{\max}$ between them of (graph-) length at most
$10^5$.
\end{enumerate}
See Figure \ref{fig:schematic} for a schematic of a geometric graph that satisfies~\str{1}--\str{6}.

\begin{figure}[h!]

\begin{center}
\begin{tikzpicture}[scale=.6]

\draw[white, pattern=dots] (-3,0) -- (11,0) -- (11,10) -- (-3,10) -- cycle;

\draw[fill=gray!20] (4.5,8.5) ellipse (.55 and .4);

\begin{scope}[shift={(8,8)}, rotate=100]
\draw[fill=gray!20] (0,0) circle (1 and 1.2);
\draw[fill=black] (0,0) circle (.5 and .6);
\end{scope}

\begin{scope}[shift={(7,3.5)}, rotate=-10]
\draw[fill=gray!20]  (0,0) ellipse (3 and 2);
\draw[fill=white]  (0,0) ellipse (2 and 4/3);
\draw[fill=gray!65]  (0,0) ellipse (2 and 4/3);
\draw[fill=white]  (0,0) ellipse (1 and 2/3);
\draw[pattern=vertical lines]  (0,0) ellipse (1 and 2/3);

\end{scope}

\begin{scope}[shift={(0,5)}, rotate=75]
\draw[fill=gray!20]  (0,0) ellipse (3.8 and 1.925);
\draw[fill=gray!20]  (0,0) ellipse (4.1 and 1.925);
\draw[fill=black]  (0,0) ellipse (3.3 and 1.375);
\draw[fill=white]  (0,0) ellipse (2.4 and 1);
\draw[fill=gray!65]  (0,0) ellipse (2.4 and 1);
\draw[fill=white]  (0,0) ellipse (1.2 and .5);
\draw[pattern=vertical lines]  (0,0) ellipse (1.2 and .5);
\end{scope}

\begin{scope}[shift={(12,0)}]

\draw[pattern=dots] (0, 9) -- (.5, 9) -- (.5,8.5) -- (0, 8.5) --cycle;
\node[right] at (.6,8.75) {good and safe};
\draw[fill=gray!20] (0, 8) -- (.5, 8) -- (.5,7.5) -- (0, 7.5) --cycle;
\node[right] at (.6,7.75) {bad and safe};
\draw[pattern=vertical lines] (0, 6) -- (.5, 6) -- (.5,5.5) -- (0, 5.5) --cycle;
\node[right] at (.6,5.75) {good and risky};
\draw[fill=gray!65] (0, 5) -- (.5, 5) -- (.5,4.5) -- (0, 4.5) --cycle;
\node[right] at (.6,4.75) {bad and risky};
\draw[fill=black] (0, 3) -- (.5, 3) -- (.5,2.5) -- (0, 2.5) --cycle;
\node[right] at (.6,2.75) {bad and dangerous};
\end{scope}

\node[fill=white] at (4,1) {$\Gamma_{\max}$};

\draw[-latex] (-4, 5) -- (0,5);
\node[left] at (-4,5) {$\Gamma_{1}$};

\draw[-latex] (11.5, 1) -- (7,3.5);
\node[right] at (11.5,1) {$\Gamma_{2}$};

\end{tikzpicture}

\end{center}

\caption{A schematic of part of a geometric graph that satisfies~\str{1}--\str{6}. Cells are characterized as good or bad. The smaller components $\Gamma_1, \Gamma_2$ of $\Gamma$ are surrounded by bad cells. Vertices are also characterized as safe, risky or dangerous.}
\label{fig:schematic}
\end{figure}
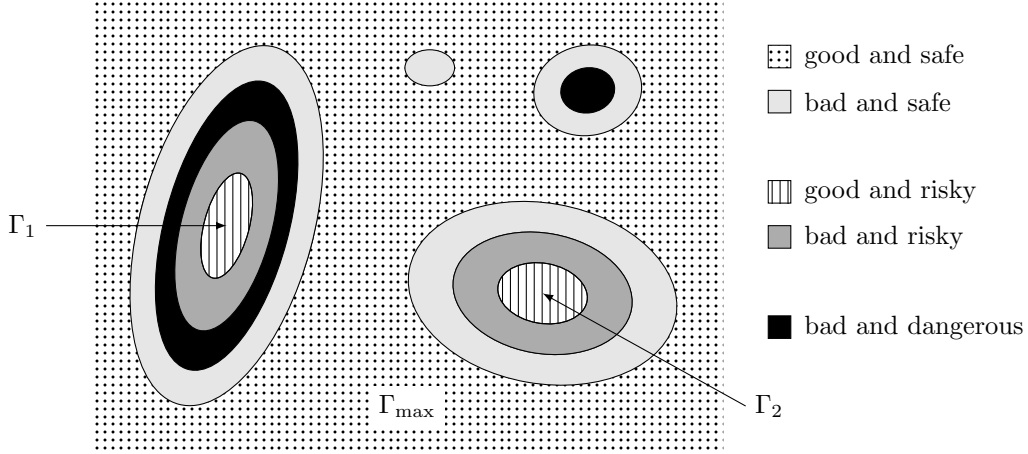

\mnote{Moved from p. 20}
{\blue Finally, we introduce some terminology for sets of dangerous and risky
points.}
Suppose that $V \subseteq [0,1]^2$ and $m, T, r$ are such that~\str{1}--\str{6}
above hold.
Dangerous points come in groups of points of diameter
$< r/100$ that are far apart.
We formally define a {\em dangerous cluster} (with respect to $V,m,T,r$)
to be an inclusion-wise maximal subset of
$V$ with the property that $\diam(A) < r \cdot 10^{10}$ and all elements of
$A$ are dangerous.

A set $A \subseteq V$ is an {\em obstruction} (with respect to $V,m,T,r$)
if it is either a dangerous cluster or $\Gamma_i^+$ for some $i\geq 2$. We
call $A$ an {\em $s$-obstruction} if $|A| = s$, and
we call it an $(\geq s)$-obstruction if $|A|\geq s$.
By \str{3}-\str{5}, obstructions are pairwise separated by distance
$r \cdot 10^{10}$. (One consequence: a vertex in a good cell is adjacent in
$G$ to at most one obstruction.)
A point $v\in V$ is {\em crucial} for $A$ if
\begin{enumerate}
\item[\cruc{1}] $A \subseteq B(v;r)$, and;
\item[\cruc{2}]  The vertex $v$ is safe: there is some cell $c \in \Gamma_{\max}$ such that
$|B(v;r)\cap c\cap V| \geq T$.
\end{enumerate}
We shall call the $T$ vertices in~\cruc{2} {\em important} for the crucial vertex
$v$ and  {\em important} for the obstruction $A$.
Note that this  crucial vertex  could be in the obstruction, or it could be a
nearby safe point.

\subsection{The Dissection Lemma}

For $n \in \eN$ and $\eta > 0$ a constant, let us define
\begin{equation}\label{eq:mdef}
 {\red m=m_n} := \left\lceil\sqrt{\frac{n}{\eta^2\ln n}}\right\rceil.
\end{equation}
The  goal of  this section is to prove that if
$r_n = \ln n + o (\ln n)$ then
 \str{1}--\str{6}  hold for $\Gamma(\Xcal_n,m_n,T,r_n)$ \emph{whp}. This is stated formally in
 the Dissection Lemma \ref{lem:structure} below. First,
we prove two intermediate lemmas.

\begin{lemma}\label{lem:cells}
Let $\eta, \eps, T, K > 0$ be arbitrary but fixed, where $\eta, \eps$ are
small and $T,K$ are large. Let
let $m$ be given by~\eqref{eq:mdef} and let $\Xcal_n$ be as in~\eqref{eq:Xdef}.
The following hold \emph{whp} for $\Dcal(m_n)$ with respect to $T$ and $\Xcal_n$:
\begin{enumerate}
\item\label{itm:cells.i} Out of every $K\times K$ block of cells, the area
of the bad cells inside the block is at most
$(1+\eps)\ln n/n$;
\item\label{itm:cells.ii} Out of every $K\times K$ block of cells
touching the boundary of the unit square,
the area of the bad cells inside the block is at
most $(1+\eps)\ln n / 2 n$.
\item\label{itm:cells.iii} Every $K\times K$ block of cells touching a corner
{\red contains only good cells}.
\end{enumerate}
\end{lemma}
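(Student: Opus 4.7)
My plan is to prove (i)--(iii) by the same union-bound recipe, differing only in how many $K\times K$ blocks one sums over. First I would verify that the number of points in a cell $c$ is $\Bi(n,1/m_n^2)$ with mean $\mu_n = (1+o(1))\eta^2\ln n$, and deduce (via a standard Poisson/Stirling estimate) the sharp tail bound
\[ p := \Pee(|c\cap \Xcal_n|<T) = (1+o(1))\frac{\mu_n^{T-1}}{(T-1)!}e^{-\mu_n} = n^{-\eta^2 + o(1)}. \]
Next I would observe that for any disjoint cells $c_1,\dots,c_J$, the joint vector $(|c_i\cap \Xcal_n|)_i$ is multinomial and hence negatively associated; since ``cell is bad'' is a decreasing event, one obtains the crucial conjunction bound $\Pee(c_1,\dots,c_J\text{ all bad})\leq p^J$. (Equivalently, one could Poissonize via Lemma~\ref{lem:configurations} and exploit the exact independence of cell counts under $\Pcal_n$.)

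For part (i), if a $K\times K$ block has bad-cell area exceeding $(1+\eps)\ln n/n$ then, since each cell has area at most $\eta^2 \ln n/n$, it must contain at least $J_0 := \lfloor(1+\eps)/\eta^2\rfloor + 1$ bad cells, and in particular $\eta^2 J_0 > 1+\eps$. There are $O(m_n^2) = O(n/\ln n)$ positions for a $K\times K$ block, and $\binom{K^2}{J_0}=O(1)$ ways to pick $J_0$ of its cells; so the union bound over all $K\times K$ blocks gives total failure probability at most
\[ O(n/\ln n)\cdot p^{J_0} = O\!\left(n^{1-\eta^2 J_0 + o(1)}\right) = O(n^{-\eps+o(1)}) = o(1). \]

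For (ii), only $O(m_n) = O(\sqrt{n/\ln n})$ blocks touch the boundary, so taking $J_1 := \lfloor(1+\eps)/(2\eta^2)\rfloor+1$ (so that $\eta^2 J_1 > (1+\eps)/2$) the same argument yields failure probability $O(\sqrt{n/\ln n})\cdot p^{J_1} = O(n^{-\eps/2+o(1)}) = o(1)$. For (iii), only $O(1)$ blocks touch any corner and they contain $O(K^2)$ cells in total, so the probability that any corner cell is bad is at most $4K^2\cdot p = O(n^{-\eta^2+o(1)}) = o(1)$. The only subtlety in the whole proof is the calibration of $J_0,J_1$: the factor $1/2$ in the boundary bound of (ii) is not a probabilistic effect but reflects the fact that there are $\Theta(m_n)$ rather than $\Theta(m_n^2)$ boundary blocks, which cuts the allowed ``budget'' of bad cells in half before the exponent of $n$ in the union bound crosses zero. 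Once this bookkeeping is in place, the rest is routine.
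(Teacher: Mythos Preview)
Your proof is correct. The overall architecture---a union bound over $K\times K$ blocks, with parts (i)--(iii) differing only in how many blocks one sums over---matches the paper's exactly. The one genuine difference is in how you bound $\Pee(c_1,\dots,c_J\text{ all bad})$. The paper observes that this event forces the \emph{total} number of points in $A=c_1\cup\dots\cup c_J$ to be below $TK^2$ and then applies a single Chernoff bound (Lemma~\ref{lem:chernoff}) to $|\Xcal_n\cap A|\sim\Bi(n,\area(A))$, obtaining $n^{-(1+\eps)+o(1)}$ directly from $\area(A)\geq(1+\eps)\ln n/n$. You instead estimate the single-cell probability $p=n^{-\eta^2+o(1)}$ and invoke negative association of multinomial marginals to get the product bound $p^{J_0}$, recovering the same exponent via $\eta^2 J_0>1+\eps$. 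The paper's route is marginally slicker in that it never needs to count bad cells or appeal to negative association (and simply writes $2^{K^2}$ for the number of cell subsets rather than $\binom{K^2}{J_0}$); your route is a bit more elementary in that it avoids the Chernoff--$H$ machinery and makes the threshold $J_0$ explicit. Either way the arithmetic lands in the same place, and your closing remark about the factor $1/2$ in~(ii) coming from the block count rather than any probabilistic boundary effect is exactly right.
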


\begin{proof}
The number of $K\times K$ blocks of cells is $(m-K+1)^2 = \Theta( n/\ln n)$.
The number of sets of cells of area $>(1+\eps)\ln n/n$ inside a given
$K\times K$ block is at most $2^{K^2} = O(1)$.
If $A$ is a union of cells inside some $K\times K$ block then the probability
that all its
cells are bad is at most

\[
\Pee( \Po( n \cdot \area(A) ) {\red <T K^2} )
\leq
\exp\left[ - n \cdot \area(A)
\cdot H\left(\frac{{\red TK^2}}{n\cdot\area(A)}\right) \right],
\]

\noindent
using Lemma~\ref{lem:chernoff}, where $H(x) = x\ln x - x + 1$.
If $\area(A) \geq (1+\eps) \ln n / n$ then $\frac{{\red TK^2}}{n\cdot\area(A)}
\leq \frac{{\red TK^2}}{(1{\red +}\eps)\ln n} = o(1)$ so that
$H\left(\frac{{\red T K^2}}{n\cdot\area(A)}\right) = 1+o(1)$ and hence

\[ \begin{array}{rcl}
\Pee( \Po( n \cdot \area(A) ) {\red <T K^2 })
& \leq &
\exp[ - (1+\eps)\ln n \cdot (1+o(1))  ] \\
& = &
n^{-(1+\eps)+o(1)}.
\end{array} \]

\noindent
Thus, the probability that there is a $K\times K$ block such that the area of
the
bad cells inside it
is at least $(1+\eps)\ln n / n$ is bounded above by:

\[
m^2 \cdot 2^{K^2} \cdot n^{-(1+\eps)+o(1)}
= n^{-\eps+o(1) } = o(1).
\]

\noindent
This proves part~\ref{itm:cells.i}.

The number of $K\times K$ blocks touching a side of the unit square is at most
$4m = \Theta( \sqrt{n/\ln n} )$,
and if $A$ is a union of cells with $\area(A) \geq (1+\eps) \ln n / 2n$,
consisting of no more than $K^2$
cells, then the probability that all its cells are bad is at most

\[ \begin{array}{rcl}
\Pee( \Po( n \cdot \area(A) ) {\red <T K^2} )
& \leq &
\exp[ - (1+\eps)\frac12 \ln n \cdot (1+o(1))  ] \\
& = &
n^{-(1+\eps)/2+o(1)}.
\end{array} \]

\noindent
Hence, the probability that there is a $K\times K$ block of cells touching a
side of the
unit square
such that the union of the bad cells inside the block has area at least
$(1+\eps) \ln n / 2n$ is at most:
\[
4m \cdot 2^{K^2} \cdot n^{-(1+\eps)/2 + o(1)} = n^{-\eps/2+o(1)} = o(1).
\]
This proves part~\ref{itm:cells.ii}.

Finally, there are only four $K\times K$ blocks that touch a corner.
The probability that a cell $c$ is bad is at most
\[
\Pee( \Po( n \cdot \area(c) ) {\red <T}) )
\leq
\exp[ - \Omega( \ln n ) \cdot (1+o(1))  ]
= o(1).
\]
Hence, the probability that there is a bad cell inside one of the
$K\times K$ blocks of cells touching a corner of the unit square
is at most:
\[
4 \cdot K^2 \cdot o(1) = o(1).
\]
This proves part~\ref{itm:cells.iii}.
\end{proof}

Recall that a {\em halfplane} is one of the two connected components
of $\eR^2 \setminus \ell$ with $\ell$ a line.
A {\em halfdisk} is the intersection of a disk $B(x,r)$ with a halfplane
whose defining line goes through the center $x$.
Also recall that a set $A \subseteq \eR^2$ is a {\em Boolean combination}
of the sets $A_1,\dots, A_n \subseteq \eR^2$ if
$A$ can be constructed from $A_1,\dots,A_n$ by means of any number of
compositions of the operations intersection, union and complement.
Our next lemma shows that the area of $A$ is well-approximated the area of the
cells that are entirely contained in $A$.

\begin{lemma}\label{lem:Sarea}
There exists a universal constant $C>0$ such that the following holds for
all $\eta > 0$ and sufficiently large $n \in\eN$.
Let $m_n$ be given by~\eqref{eq:mdef},
let $A \subseteq[0,1]^2$ be a Boolean combination of at most
$1000$ halfdisks with radii at most $1000 \cdot \sqrt{\ln n/n}$, and
let
\[
A' := \bigcup \{  c \in \Dcal(m_n) : c \subseteq A \},
\]
denote the union of all cells of $\Dcal(m_n)$ that are contained in $A$.
Then
\[
\area(A') \geq \area(A) - C \cdot \eta \cdot \left(\frac{\ln n}{n}\right),
\]
for $n$ sufficiently large.
\end{lemma}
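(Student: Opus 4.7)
The overall strategy is to bound $\area(A) - \area(A') = \area(A \setminus A')$ by counting the cells of $\Dcal(m_n)$ that straddle the topological boundary $\partial A$. A cell $c$ contributes to $A \setminus A'$ exactly when $c$ meets $A$ but is not entirely contained in $A$; since $c$ is connected, such a cell must contain a point of $\partial A$. Each such cell contributes area at most $\side(m_n)^2$, so
\begin{equation*}
\area(A \setminus A') \;\leq\; \bigl| \{ c \in \Dcal(m_n) : c \cap \partial A \neq \emptyset\} \bigr| \cdot \side(m_n)^2.
\end{equation*}

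Next I would bound the one-dimensional length of $\partial A$. Because $A$ is a Boolean combination of halfdisks $H_1, \dots, H_k$ with $k \leq 1000$, the standard topological fact $\partial A \subseteq \bigcup_{i=1}^k \partial H_i$ applies (away from $\bigcup_i \partial H_i$ every characteristic function $\mathbf{1}_{H_i}$ is locally constant, hence so is $\mathbf{1}_A$). Each $\partial H_i$ consists of a half-circle together with a diameter, so has one-dimensional length at most $(\pi+2)\cdot 1000 \sqrt{\ln n / n}$. Summing over the at most $1000$ halfdisks gives total length $L = O(\sqrt{\ln n / n})$, with the implicit constant depending only on the two $1000$'s in the hypothesis. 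Moreover $\partial A$ decomposes into $O(1)$ smooth rectifiable pieces, so the standard fact that a rectifiable curve of length $\ell$ meets at most $O(\ell/\side + 1)$ cells of a grid of side $\side$ (crossings of horizontal and vertical grid lines force this) gives that $\partial A$ meets at most $C_1 (L/\side(m_n) + 1)$ cells, with $C_1$ universal.

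Putting the pieces together and using $\side(m_n) = 1/m_n = \Theta(\eta \sqrt{\ln n / n})$, we obtain
\begin{equation*}
\area(A \setminus A') \;\leq\; C_1 \bigl( L \cdot \side(m_n) + \side(m_n)^2 \bigr) \;=\; O\bigl(\eta \cdot \tfrac{\ln n}{n}\bigr) + O\bigl(\eta^2 \cdot \tfrac{\ln n}{n}\bigr).
\end{equation*}
For $\eta \leq 1$ (the regime in which the surrounding lemmas actually operate) the first term dominates and yields the claimed bound with a universal $C$. For $\eta \geq 1$ the conclusion is even easier: $A$ lies in the union of at most $1000$ disks of radius at most $1000\sqrt{\ln n/n}$, so $\area(A) = O(\ln n/n)$ already, and a sufficiently large universal $C$ forces $C \eta \ln n/n \geq \area(A) \geq \area(A \setminus A')$.

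The only genuine care needed is to verify that all hidden constants (in the inclusion $\partial A \subseteq \bigcup_i \partial H_i$, in the perimeter bound for each halfdisk, and in the curve-meets-cells count) depend only on the structural constants in the hypothesis and not on $\eta$ or $n$; this is where one has to be slightly careful, but it is straightforward from the explicit description of $\partial A$ as a finite union of circular arcs and line segments of controlled length.
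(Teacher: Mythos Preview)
Your argument is correct. The paper takes a closely related but slightly different route: instead of counting cells meeting $\partial A$, it defines
\[
A'' := \{ z : B(z;\side\sqrt{2}) \subseteq A \} = A \setminus \bigl(\partial A + B(0;\side\sqrt{2})\bigr),
\]
observes that $A'' \subseteq A' \subseteq A$, and bounds the area of the Minkowski tube $\partial A + B(0;\side\sqrt{2})$ directly by summing $\area(\partial H_i + B(0;\side\sqrt{2})) = O(r\cdot\side)$ over the at most $1000$ halfdisks. This gives $\area(A)-\area(A') \leq \area(A)-\area(A'') = O(\eta\,\ln n/n)$ in one line, with no cell-counting lemma and no case split on $\eta$. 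Your approach is essentially the discrete version of the same estimate---the cells meeting $\partial A$ are exactly those contained in the $\side\sqrt{2}$-tube---and is equally valid; the tube argument is just a shade more direct.
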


\begin{proof}
Let $A, A'$ be as in the statement of the lemma. We use $\partial A$ to denote the boundary of $A$.
Let us define $\side := 1/m$ and
\[
A'' := \{ z \in \eR^2 : B(z; \side \sqrt{2}) \subseteq A \}
= A \setminus \left( B(0; \side \sqrt{2}) + \partial A\right).
\]
Then clearly $A'' \subseteq A' \subseteq A$.
If $A$ is a boolean combination of the halfdisks $A_1, \dots, A_k$ then clearly
$\partial A \subseteq \partial A_1 \cup \dots \cup \partial A_k$.
If $A_i$ is a halfdisk of radius $r \leq 1000 \cdot \sqrt{\ln n/n}$, then it is
easily seen that

\[\begin{array}{rcl}
\area\left( \partial A_i + B(0; \side \sqrt{2}) \right)
& \leq &
\frac12 \left( \pi(r+ \side \sqrt{2})^2 - \pi(r- \side \sqrt{2})^2 \right) +
2r \cdot 2 \side \sqrt{2} \\
& = &
r \cdot \side \dot (2 \pi \sqrt{2} + 4\sqrt{2}) \\
& \leq &
10^5 \cdot \eta \cdot \left(\frac{\ln n}{n}\right).
\end{array} \]
\mnote{Specific value for $r$ needed, to get above inequality?}
\noindent
This gives

\[\begin{array}{rcl}
 \area(A' )
& \geq &
\area(A'') \\
& \geq &
\area(A) - \sum_{i=1}^k \area( \partial A_i + B(0, \side \sqrt{2})  \\
& \geq &
\area(A) - 1000 \cdot 10^5 \cdot \eta \cdot \left(\frac{\ln n}{n}\right),
\end{array} \]
which proves the lemma with $C := 10^{8}$.
\end{proof}

We can now prove the Dissection Lemma: our random geometric graph satisfies
\str{1}--\str{6} with high probability.
{\red It will be convenient in the proof to introduce two slightly weaker
properties that will be proved
before their original counterpart:
\begin{enumerate}
\item[\strs{2}] $\diam(\Gamma_i)<r/100$ for all $i \geq 2$;
\item[\strs{4}] For all $i\ne j\geq 2$ the distance between $\Gamma_i$ and
$\Gamma_j$ is at least $r\cdot 10^{10}$;
\end{enumerate}
}
\begin{lemma}[Dissection Lemma]
\label{lem:structure}
Let $T > 0$ be arbitrary but fixed. For $\eta > 0$ sufficiently small, the
following holds.
Let ${\red m_n}$ be given by~\eqref{eq:mdef}, let $\Xcal_n$ be as
in~\eqref{eq:Xdef},
and let ${\red r_n}$ be such that $\pi n r_n^2 = \ln n + o(\ln n)$.
Then~\str{1}--\str{6} hold for $\Gamma(\Xcal_n,m_n,T,r_n)$ \emph{whp}.
\end{lemma}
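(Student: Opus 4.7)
The plan is to derive \str{1}--\str{6} from Lemma~\ref{lem:cells} (used as a deterministic skeleton) together with a few Palm-type probabilistic bounds from Theorem~\ref{thm:palm} for the clauses that reference risky or dangerous vertices. I first fix $\eta>0$ small, choose a constant $K=K(\eta)$ large (say $K=\lceil 10^{6}/\eta\rceil$), and pick $\eps>0$ small, and then invoke Lemma~\ref{lem:cells} with parameters $(\eta,\eps,T,K)$: \emph{whp} every $K\times K$ block of cells has total bad-cell area at most $(1+\eps)\ln n/n$, and the four corner $K\times K$ blocks are entirely good. From here I condition on this event and proceed deterministically.

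I handle the purely cell-level versions \str{1}, \strs{2}, \strs{4} first. The bad-area bound limits each $K\times K$ block to at most $(1+\eps)\pi/\eta^{2}$ bad cells, so the total number of bad cells is $o(m^{2})$ and the good cells number $(1-o(1))m^{2}$. Inside the fully-good corner block---and more generally inside any fine sub-block of size $N\times N$ with $N\eta \leq 1/\sqrt{2}$---every pair of good cells is automatically adjacent in $\Gamma$; chaining overlapping such sub-blocks through the high-good-density region attaches at least $0.99|\Dcal|$ good cells to the corner anchor, establishing \str{1}. For \strs{2}, suppose some $\Gamma_{i}$ ($i\geq 2$) had cells $c_{1},c_{2}$ at distance $\geq r/100$. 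Let $A_{i}$ denote the union of cells of $\Gamma_{i}$ and set $A_{i}^{+} := \{x \in [0,1]^{2} : \dist(x, A_{i})\leq r-\side\sqrt{2}\}$; every good cell inside $A_{i}^{+}$ is forced into $\Gamma_{i}$, so the cells in $A_{i}^{+}\setminus A_{i}$ are all bad. Applying Corollary~\ref{cor:areadiff2} to $B(c_{1};r-\side\sqrt{2})\cup B(c_{2};r-\side\sqrt{2})\subseteq A_{i}^{+}$ gives $\area(A_{i}^{+})\geq \pi r^{2}+r^{2}/50 - O(r^{2}/10^{4})$, and Lemma~\ref{lem:Sarea} converts this to a lower bound on the bad-cell area in the covering $K\times K$ blocks; once $K\eta$ is large enough that $A_{i}^{+}$ fits inside a single such block and $\eps<1/(50\pi)$, this contradicts Lemma~\ref{lem:cells}(i). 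The proof of \strs{4} is the same, with two non-maximum components playing the role of $c_{1},c_{2}$ and the combined moat spreading across a bounded number of $K\times K$ blocks.

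The risky/dangerous clauses form the main technical obstacle and require probabilistic input beyond the deterministic skeleton. To upgrade \strs{2} to \str{2} (and analogously \strs{4} to \str{4}): a risky vertex $v$ associated with $\Gamma_{i}$ but sitting at distance more than, say, $r/200$ from $\Gamma_{i}$ would need the $T$ certifying points of some $c\in\Gamma_{i}$ inside $B(v;r)\cap c$ to cluster in a thin lune of area $O(\eta r^{2})$, while simultaneously no $\Gamma_{\max}$-cell fits entirely inside $B(v;r-\side\sqrt{2})$ (otherwise $v$ would be safe). Applying Theorem~\ref{thm:palm} with $h$ the indicator of such a configuration, summed over the position of $v$, the location of $c$ and the $T$-subset, yields expected count $o(1)$; by Markov, no such $v$ exists \emph{whp}. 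For \str{3} and \str{5}, a pair of dangerous vertices (respectively a dangerous vertex and a $\Gamma_{i}^{+}$) at separation $d \in (r/100, r\cdot 10^{10})$ similarly forces a combined $T$-sparse neighborhood; partitioning this shell into a constant number of geometric annuli and bounding the Palm expectation on each (mirroring Penrose's analysis in the proof of Theorem~\ref{thm:penrosemindeg}), the total expected count is $o(1)$.

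Finally, \str{6} is a short routing argument. Two cells $c,c'\in\Gamma_{\max}$ at Euclidean distance $\leq 10r$ sit in some $K'\times K'$ region with $K'=O(1/\eta)$, which by Lemma~\ref{lem:cells}(i) contains only $O_{\eta}(1)$ bad cells; by \strs{2} almost all its good cells lie in $\Gamma_{\max}$, and a greedy walk through adjacent good cells (each hop of corner-distance $\leq r-\side\sqrt{2}$) connects $c$ to $c'$ in $O(1/\eta^{2})$ steps, comfortably below $10^{5}$ for $\eta$ sufficiently small.
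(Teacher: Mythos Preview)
Your overall plan---derive \str{1}--\str{6} from the bad-area bound of Lemma~\ref{lem:cells}---matches the paper's, but two of your steps have genuine gaps.

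For \strs{2} (and hence \strs{4}) your area argument does not close. You correctly note that every good cell inside $A_i^{+}$ must lie in $\Gamma_i$, so cells in $A_i^{+}\setminus A_i$ are bad; but you then lower-bound $\area(A_i^{+})$ and treat this as a lower bound on bad-cell area, forgetting to subtract $\area(A_i)$, the area of $\Gamma_i$'s own good cells. Nothing you have proved so far controls $|A_i|$, and the $r^{2}/50$ excess can be swallowed. The paper fixes this with a device you are missing: instead of the full neighbourhood it places four \emph{halfdisks} $D_L,D_R,D_T,D_B$ at the leftmost, rightmost, top and bottom points of $\Gamma_i$, each pointing \emph{away} from the component. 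By extremality these halfdisks contain no cell of $\Gamma_i$, and any good cell inside one would be $\Gamma$-adjacent to $\Gamma_i$ and hence already in $\Gamma_i$---contradiction. So the halfdisks are entirely bad and the area bound now contradicts Lemma~\ref{lem:cells}.

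Your probabilistic upgrade to \str{2}--\str{5} is also problematic. The ``thin lune'' event you describe for \str{2} is not rare: the certifying cell $c\in\Gamma_i$ always has area $O(\eta^{2}r^{2})$, and containing $T$ points is precisely what ``good'' means. More fundamentally, ``$v$ is risky for $\Gamma_i$'' depends on which component is $\Gamma_{\max}$, so it is not a local event of the form $h(v;V\cap B(v;Cr))$ to which Theorem~\ref{thm:palm} applies. The paper needs no additional probability here: once \strs{2} and \strs{4} are in hand it reruns the \emph{same} halfdisk argument with $\Gamma_i$ replaced by $\Gamma_i^{+}$ (respectively $\{u,v\}$ or $\{v\}\cup\Gamma_i^{+}$), and \str{2}--\str{5} then follow deterministically from Lemma~\ref{lem:cells}. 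The point is that an extremal point $p_L$ of $\Gamma_i^{+}$ is not safe, so no $\Gamma_{\max}$-cell lies in $B(p_L;r)$; extremality rules out $\Gamma_i$-cells in the left halfdisk; and \strs{4} puts every other $\Gamma_j$ too far away to interfere. Finally, your \str{6} bound of $O(1/\eta^{2})$ steps \emph{grows} as $\eta\to 0$, so it cannot be ``below $10^{5}$ for $\eta$ small''; the paper obtains the absolute constant via a ring argument (some annulus $R_k$ has a good cell in every square, yielding a cycle in $\Gamma_{\max}$) together with a disk-packing bound on the length of any path confined to a region of radius $O(r)$.
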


\begin{proof}
We can assume that the conclusion of Lemma~\ref{lem:cells}
holds with $\eps := 10^{-5}$ and $K := \lceil (1+\eps) \cdot 10^{100} /
\eta^2 \rceil$.

\noindent
{\bf Proof of~\str{1}:}
Consider a $K\times K$ block of cells $\Bcal$. {\red By Lemma \ref{lem:cells},}
it contains at most
\[
N := {\red \frac{(1+\eps)\ln n}{ \side ^2n}=
\frac{1+\eps+o(1)}{\eta^2} }
\]
bad cells, since  $\side =(1+o(1))\sqrt{\eta^2 \ln n / n}$.)

Thus, at least $K - N > 0.99 K$ rows of the block do not contain any bad cell.
The cells of such a bad-cell-free row clearly belong to the same component of
$\Gamma$
(provided $\side  < r-\side \sqrt{2}$ which is certainly true for $\eta$
sufficiently small).
Since there is also at least one bad-cell-free column, we see that
all the bad-cell-free rows of the block belong to the same component of
$\Gamma$,
and this component
contains at least 99 percent of the cells in the block.
Let $\Ccal(\Bcal)$ denote the component of $\Gamma$ that contains more than
$0.99 K^2$
cells of
the block $\Bcal$.

Let us now consider two $K\times K$ blocks $\Bcal_1, \Bcal_2$, where
$\Bcal_2$ is
obtained by shifting $\Bcal_1$ to the left
by one cell. Then there are at least $K-2N > 0$ rows where both blocks don't
have any bad cells.
This shows that the component $\Ccal(\Bcal_1) = \Ccal(\Bcal_2)$.
Clearly the same thing is true if $B_2$ is obtained by shifting $B_1$ right,
down or up by one cell.

Now let $\Bcal_1, \Bcal_2$ be two arbitrary $K\times K$ blocks.
Since we can move from $\Bcal_1$ to $\Bcal_2$ by repeatedly shifting left,
right, down or up,
we see that in fact $\Ccal(\Bcal_1)=\Ccal(\Bcal_2)$ for {\em any} two
blocks $\Bcal_1, \Bcal_2$.
This proves that there is indeed a component of $\Gamma$ that contains
more than $0.99\cdot |\Dcal|$ cells.

{\bf Proof of~\strs{2}:}
Let $c$ be a cell that contains at least one point of $\Gamma_i$ with $i\geq 2$.
Let us first assume that $c$ is at least $K/2$ cells away from the
boundary of $[0,1]^2$.
In this case we can center a $K\times K$ block of cells $\Bcal$ on $c$
(If $K$ is odd we place $\Bcal$ so that $c$ is the middle cell, and if $K$ is
even we place $\Bcal$ so that a corner of $c$ {\red is} the center of $\Bcal$).
Reasoning as in the proof of~\str{1}, at least one row below,
one row above, at least one column to the left and at least one column to the
right of $c$
are bad-cell-free.
The cells in these bad-cell-free rows and columns must belong to $\Gamma_{\max}$
(by the proof of~\str{1}). Therefore  $\Gamma_i + B(0,r- \side \sqrt{2})$ is
completely
contained in the block $\Bcal$.
Let $p_L$ be a leftmost point of $\Gamma_i$, let $p_R$ be a rightmost point,
let $p_B$ be a lowest point and let $p_T$ be a highest point of $\Gamma_i$.
(These points need not be unique, but this does not pose a problem for the
remainder of the proof.)
Let $D_L$ denote the halfdisk

\[ D_L := B(p_L; r-2 \side \sqrt{2}) \cap \{ z\in\eR^2 : z_x < (p_L)_x \}. \]

\noindent
Then $D_L$ cannot contain any good cell, because that would contradict that
$p_L$
is the leftmost
point of $\Gamma_i$.
Similarly we define the halfdisks $D_R, D_B, D_T$ and observe that they each
cannot contain any good cell.
(see Figure~\ref{fig:halfdiskdrop}).

\begin{figure}[h!]
 \begin{center}
  \input{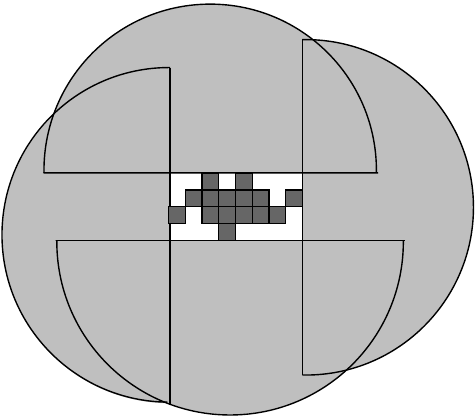_t}
 \end{center}
\caption{Dropping halfdisks onto a non-giant component of $\Gamma$.
\label{fig:halfdiskdrop}}
\end{figure}

Now let us set $A := D_L \cup D_R \cup D_B \cup D_T$.
Assuming that $\diam(\Gamma_i) \geq r/100$ we have either $\norm{p_L-p_R} \geq
r / 100\sqrt{2}$
or $\norm{p_T-p_B} \geq r / 100\sqrt{2}$.
We can assume without loss of generality $\norm{p_L-p_R} \geq r / 100\sqrt{2}$.
Observe that, since $(p_L)_x \leq (p_T)_x \leq (p_R)_x$, we have

\mnote{Come back to this lower bound}
and similarly for $\area(D_B\setminus (D_L\cup D_R) )$.
It follows that
\[
\area( D_T \setminus (D_L\cup D_R) )
\geq \left( \frac{r/100\sqrt{2}}{2(r-2 \side \sqrt{2})}\right) \cdot \area(D_T),
\]
and
\[
\area(A) \geq \pi (r-2 \side \sqrt{2})^2 (1 + \frac{1}{200\sqrt{2}}).
\]
Thus, if $A'$ denotes the union of all cells that are contained in $A$
then, using Lemma~\ref{lem:Sarea}, we have
\[
\begin{array}{rcl}
 \area(A')
& \geq &
\area(A) - C \eta \left(\frac{\ln n}{n}\right) \\
& \geq &
{\blue \pi r^2 (1-6 \eta)^2 (1 + \frac{1}{200\sqrt{2}})} - C \eta
\left(\frac{\ln n}{n}\right) \\
& \geq &
(1+10^{-5}) \left(\frac{\ln n}{n}\right),
\end{array}
\]
for $n$ large enough, provided $\eta$ was chosen sufficiently small.
Since $A \subseteq \Bcal$ this contradicts {\red Lemma \ref{lem:cells}(i).}
Hence $\diam(\Gamma_i) < r/100$.

Let us now consider the case when some cell $c$ that is within $K/2$ cells of a
side of $[0,1]^2$ contains a point of $\Gamma_i$ with $i\geq 2$.
If $c$ is within $K/2$ cells of two sides of $[0,1]^2$ then it is in fact
in a $K\times K$ block touching a corner. Since there
are no bad cells in such a block, we have {\red from the proof of \str{1} that
$c\in\Gamma_{\max}$ and hence
$\Gamma_{\max} \cap \Gamma_\neq \emptyset$}, a contradiction.

Hence we can assume $c$ is within $K/2$ cells of one of the sides, but more
more than $K/2$ cells away from all other sides.
By symmetry considerations, we can assume the closest side is
the $y$-axis.
Let $p_L,p_R,p_T,p_B$ and $D_L, D_R, D_T, D_B$ and
$A := D_L \cup D_R \cup D_B \cup D_T$ be defined as before.
Observe that $D_R \subseteq [0,1]^2$ and
that at least half the area of $D_T, D_B$ falls inside
$[0,1]^2$.
Thus
\[
\area( A \cap [0,1]^2 ) \geq \frac12\area(A).
\]
Hence, if $A' \subseteq A$ again denotes the union of the cells contained in
$A$:
\[
\begin{array}{rcl}
\area(A') & \geq &
\area( A \cap [0,1]^2 ) - C \eta \left(\frac{\ln n}{n}\right) \\
& \geq & \frac12 \area(A) - C \eta \left(\frac{\ln n}{n}\right) \\
& \geq &
(1+10^{-5})\left(\frac{\ln n}{n}\right) / 2,
\end{array}
\]
for $\eta$ sufficiently small (using Lemma~\ref{lem:Sarea} to get the
{\red first} line,
and previous computations
to get the last line).
But this contradicts part~\ref{itm:cells.ii} of Lemma~\ref{lem:cells}.

It follows that $\diam(\Gamma_i) < r/100$, as required.

\noindent
{\bf Proof of~\str{3}:}
The proof is analogous to the proof {\red of~\strs{2}}.
If $u,v$ are two dangerous points with $\norm{u-v} < r \cdot 10^{10}$ then
we let $p_L, p_R, p_T, p_B$ be the leftmost, rightmost, top and bottom points of
$A = \{u,v\}$ and continue as before {\red to find a contradiction to Lemma
\ref{lem:cells}}.

\noindent
{\bf Proof of~\strs{4}:}
Again the proof is analogous to the proof of~\strs{2}.
If $\Gamma_i, \Gamma_j$ with $2 \leq i < j$ are two components of $\Gamma$
and the distance between
them is at most $r \cdot 10^{10}$, then we take
$p_L, p_R, p_T, p_B$ to be the leftmost, rightmost, top and bottom points of
$A = \Gamma_i \cup \Gamma_j$ and continue as before.

\noindent
{\red
{\bf Proofs of \str{2},\str{4}:} We run through the proofs replacing
$\Gamma_i$ by $\Gamma_i^+$.
It will still be possible to claim that $D_L$ contains no good cells. Before, it
would seem possible that such a cell was in $\Gamma_j,j\neq i$, disallowing
our contradiction.
Now we know that the components of $\Gamma$ are to far apart for this to happen.
}

\noindent
{\bf Proof of~\str{5}:}
Once again the proof is analogous to the proof of~\strs{2}.
If $v$ is dangerous and $\Gamma_i$ with $i \geq 2$ is a component of $\Gamma$
and the distance from $v$ to $\Gamma_i^+$ is
at most $r\cdot 10^{10}$, then we
take $p_L, p_R, p_T, p_B$ to be the leftmost, rightmost, top and bottom points
of
$A = \{v\} \cup \Gamma_{\red i}^+$ and continue as before.

\noindent
{\bf Proof of~\str{6}:}
We assume first that $c$ is at least $100 r'$ away from the boundary,
{\red where $r'=r- \side \sqrt{2}$}.
Let $p$ be the lower left corner of $c$, and for $x,y \in \Zed$ let us
{\red define
the {\em square}}
\[
S_{x,y} := p + \left[ \frac{(x-\frac12)r'}{\sqrt{5}},
\frac{(x+\frac12)r'}{\sqrt{5}}\right]\times
\left[ \frac{(y-\frac12)r'}{\sqrt{5}}, \frac{(y+\frac12)r'}{\sqrt{5}}\right], \]

\noindent
and for $k \in \eN$ let us set
$ R_k := \{ S_{x,y} : \max(|x|,|y|) = k \}$,
see Figure~\ref{fig:shortpath}.

\begin{figure}
\begin{center}
\input{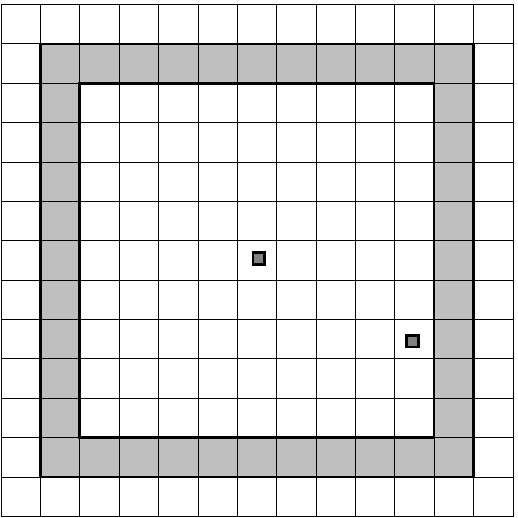_t}
\end{center}
\caption{The squares $S_{x,y}$ and $R_5$.\label{fig:shortpath}}
\end{figure}

Observe that $c'$ is contained in some $S_{x,y}$ with $|x|,|y| \leq 11$
as $c,c'$ have distance at most {\blue $10r$}.
On the other, it cannot be that each of {\red $R_{12}, \dots, R_{27}$}
contains a square that
does not contain any good cell. This is because otherwise, if $Q_i \in R_i$ is a
square that does not contain any good cell, and we set $A :=
\bigcup_{i=12}^{27} Q_i$
then $A$ satisfies the conditions of Lemma~\ref{lem:Sarea}.
Hence setting $A' := \bigcup\{ c \in \Dcal(m_n) : c \subseteq {\red A} \}$,
we have that

\[\begin{array}{rcl}
   \area(A')
& \geq &
\area(A) - C \cdot \eta \cdot \left(\frac{\ln n}{n}\right) \\
& = &
\frac{16}{5} (r')^2 - C \cdot \eta \cdot \left(\frac{\ln n}{n}\right) \\
& \geq &
(1+10^{-5}) \pi r^2 - C \cdot \eta \cdot \left(\frac{\ln n}{n}\right) \\
& \geq &
(1+10^{-10}) \left(\frac{\ln n}{n}\right),
  \end{array} \]

\noindent
for $\eta > 0$ sufficiently small (here $C$ is the absolute constant
from Lemma~\ref{lem:Sarea}).
But this contradicts {\red Lemma \ref{lem:cells}}.

Hence, there is an $12 \leq k \leq 27$ such that all squares in $R_k$ contain
at least one good cell.
This implies that there is a cycle $C$ in $\Gamma_{\max}$ that is completely
contained in {\red $\bigcup_{k=12}^{27} R_k$}
(note that if $a \in S_{x,y}$ and $b \in S_{x+1,y}$ then $\norm{a-b} \leq r'$;
and similarly if
$a \in S_{x,y}$ and $b \in S_{x,y+1}$.)
Let us pick a $c'' \in \Gamma_{\max}$ that has distance
at least $100 r'$ from both $c$ and $c'$
(Such a $c''$ exists by~\str{1} if $n$ is sufficiently large since
$R_0, \dots, R_{27}
\subseteq B(p,100 r)$).
Consider a $cc''$-path in $\Gamma_{max}$. It must cross $C$ somewhere.
Hence, by Lemma~\ref{lem:cross},\mnote{Why invoke Lemma \ref{lem:cross}?}
we can find a path from $c$ to a cell of $C$
that stays inside squares of $R := R_0 \cup \dots \cup R_{27}$.
Similarly there is a path from $c'$ to a cell of $C$ that stays
inside squares of $R_0 \cup \dots \cup R_{27}$.
Hence, there is a $cc'$-path that stays entirely inside squares of $R$.

Now let $c=c_0, c_1, \dots, c_N=c'$ be a $cc'$-path that stays inside squares
of $R$.
{\red Let $p_i$ denote the lower left corner of the cell $c_i$.
By leaving out vertices if necessary, we can assume that
$c_ic_{i+2} \not\in E(\Gamma)$ for all $i=0, \dots, N-2$.
It follows} that the balls $\{ B(p_i,r'/2) : i \text{ even}\}$
are disjoint, {\red as are}  the balls $\{ B(p_i,r'/2) : i \text{ odd}\}$.
On the other hand we have that

\[ B(p_i, r' ) \subseteq B\left(p,r'\cdot\left(1+\frac{(\frac12+27)\sqrt{2}}
{\sqrt{5}}\right) \right)
 \subseteq B(p, 100 r'),
\]

\noindent
for all $i$.
Thus $\left\lceil\frac{N}{2}\right\rceil \pi (\frac{r'}{2})^2 \leq
\pi (100 r')^2$.
This gives that $N \leq 8 \cdot 10^4 \leq 10^5$, as required.

The case when $c$ is closer than $100r'$ to the boundary of the unit square
is analogous and we leave it to the reader.
\end{proof}


\subsection{The Obstruction Lemma}

{\blue In this section, we prove an important, technical lemma concerning the neighborhood of an obstruction.
In particular, we show that \emph{whp} every obstruction has a reasonable number of crucial vertices, along with their corresponding important vertices. This condition is essential for Maker: the obstructions are bottlenecks that require judicious play. These crucial and important vertices provide Maker with the flexibility he needs to overcome the local sparsity of the obstruction. We start by stating the main result of this section.

\begin{lemma}[Obstruction Lemma]
\label{lem:crucial}
For $\eta$ sufficiently small and $T$ sufficiently large, the following holds.
Let $(m_n)_n$ be given by~\eqref{eq:mdef} and let $V_n := \Xcal_n$
with $\Xcal_n$ as in~\eqref{eq:Xdef},
let $(r_n)_n$ be a sequence of positive numbers
such that
\[
\pi r_n^2 = \ln n + (2k-3)\ln\ln n + O(1),
\]
 with $k \geq 2$ fixed.
Then the following hold \emph{whp} with respect to $V_n, m_n, T, r_n$:
\begin{enumerate}
\item\label{crucial.i} For every $2 \leq s < T$, every $s$-obstruction has at
least
$k+s-2$ crucial vertices;
\item\label{crucial.ii} Every $(\geq T)$-obstruction has at
least $k+T-2$ crucial vertices.
\end{enumerate}
\end{lemma}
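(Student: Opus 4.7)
The proof follows the standard first-moment strategy: we work in the Poisson model $\GPo(n,r_n)$, transfer the conclusion back to $\Xcal_n$ using Lemma~\ref{lem:configurations} (which applies because all the relevant local configurations involve points within $O(r_n \cdot 10^{10}) = o(\sqrt{n})$ of each other), and bound expected counts of ``bad configurations'' via Theorem~\ref{thm:palm}. For part (i), fix $s \in \{2,\dots,T-1\}$ and $j \in \{0,\dots,k+s-3\}$; let $X_{s,j}$ count $s$-obstructions with exactly $j$ crucial vertices. By Palm theory, $\Ee X_{s,j}$ becomes $n^s$ times an integral over $(v_1,\dots,v_s)\in([0,1]^2)^s$ of the probability that the indicated conditions hold. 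The $s$-tuple must have diameter $<r/100$ by~\str{2} and \str{3}, contributing a volume factor of order $r^{2(s-1)}$, so $n^s\cdot r^{2(s-1)} = \Theta(n (\ln n)^{s-1})$.

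We then split by boundary distance $h$. In the \emph{interior} regime ($h \geq Kr$), each $v_i$ being dangerous forces every cell lying entirely inside $B(v_i;r)$ to be bad. In the Poisson model, distinct cells are \emph{independent}, and each interior cell is bad with probability at most $n^{-\eta^2 + o(1)}$ by Lemma~\ref{lem:chernoff}. Since $B(v_i;r)$ encloses $\Theta(1/\eta^2)$ cells, the probability $v_i$ is dangerous is at most $n^{-\pi + o(1)}$, giving total interior contribution $n^{1-\pi+o(1)} = o(1)$. Analogous reasoning rules out interior components $\Gamma_i^+$, so all obstructions lie within $Kr$ of the boundary. \emph{Corner} regions are excluded by Lemma~\ref{lem:cells}(iii): the corner blocks contain only good cells.

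The main work is the \emph{boundary} regime, say $v_1$ at distance $h \in [0,Kr]$ from a side. By Lemma~\ref{lem:areafinal} and Corollary~\ref{cor:areaIntSq}, the potential-crucial-vertex region $P(A) = [0,1]^2 \cap \bigcap_{i} B(v_i;r)$ has area $\tfrac{\pi}{2}r^2 + 2hr + O((h+\diam A)^2)$. Having fewer than $k+s-2$ crucial vertices is dominated by the event that $|P(A)\cap\Pcal_n| \leq k+s-3$, whose probability Lemma~\ref{lem:Poissontriviality} bounds by $\mu(h)^{k+s-3}$ with $\mu(h) = n(\tfrac{\pi}{2}r^2 + 2hr)$. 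Meanwhile, the obstruction condition again forces nearby cells to be bad, contributing an additional exponential factor $e^{-\mu(h)(1+o(1))}$ (as in the proof of Theorem~\ref{thm:penrosemindeg}). Integrating over $h\in[0,Kr]$ with the change of variable $u = 2nrh$ and combining with the factor $n(\ln n)^{s-1}$ gives
\[
\Ee X_{s,j} \;\lesssim\; (\ln n)^{s-1}\cdot (\ln n)^{k+s-3}\cdot e^{-\pi n r_n^2/2}\cdot\!\!\int_0^\infty\!\! e^{-u}du \;=\; O\!\left((\ln n)^{-1/2}\right) = o(1),
\]
once the exponent $\tfrac12\pi n r_n^2 = \tfrac12\ln n + (k-\tfrac32)\ln\ln n + O(1)$ is plugged in; the parameters are calibrated exactly so that $j = k+s-3$ is marginally insufficient. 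Summing over the finitely many values of $s<T$ and $j<k+s-2$ completes part~(i).

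For part~(ii), a $(\geq T)$-obstruction $A$ has $\diam(A) < r/100$ by \str{2}, so $A$ lies in some ball $B(c;r/50)$ and $P(A) \supseteq B(c;0.98r) \cap [0,1]^2$ has area $\Omega(r^2) = \Omega(\ln n/n)$. The expected number of points of $\Pcal_n$ in $P(A)$ is $\Omega(\ln n)$, and by Lemma~\ref{lem:chernoff}, $|P(A)\cap\Pcal_n| \geq 2(k+T)$ with probability $1 - n^{-\omega(1)}$; a union bound over the $O(n)$ possible obstruction centers still gives this whp. Moreover, almost every such point is safe (its $r$-ball contains a $\Gamma_{\max}$-cell with $\geq T$ points, which again holds whp by a Chernoff bound and \str{1}), yielding at least $k+T-2$ crucial vertices. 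The main obstacle throughout is the boundary integration in part~(i): it exactly mirrors Penrose's minimum-degree calculation, and the matching of the sparsity cost for the obstruction with the Poisson tail for crucial vertices is what makes the threshold $\pi n r_n^2 = \ln n + (2k-3)\ln\ln n + O(1)$ the correct regime.
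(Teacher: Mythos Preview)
Your approach has a genuine gap in the interior case. You claim that if $v$ is dangerous then every cell inside $B(v;r)$ is bad, and hence by independence the probability is at most $n^{-\pi+o(1)}$, so interior obstructions do not exist \emph{whp}. But this first-moment bound does not close: the probability that all cells inside $B(v;r)$ are bad is of order $e^{-n\pi r^2}=n^{-1}(\ln n)^{-(2k-3)}$, and after multiplying by the $n(\ln n)^{s-1}$ coming from the $s$-tuple you get $(\ln n)^{s-2k+2}$, which is \emph{not} $o(1)$ for $k=2$ and $s\geq 2$. Indeed, interior obstructions (dangerous clusters) \emph{do} occur with non-vanishing probability at this scale---they arise from low-degree vertices---so you cannot hope to rule them out; you must instead show they have enough crucial vertices. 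A second problem is that you equate ``fewer than $k+s-2$ crucial vertices'' with ``$|P(A)\cap\Pcal_n|\leq k+s-3$'', but a point of $P(A)$ is crucial only if it is also \emph{safe}, so this domination fails; your invocation of Lemma~\ref{lem:Poissontriviality} is also for the wrong tail.

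The paper takes a rather different route. It first isolates the key probabilistic input as a separate statement about \emph{$(a,b,c)$-pairs} (Lemma~\ref{lem:pairexp} and Corollary~\ref{cor:paircnt}): with high probability there are no pairs $u,v$ with $\|u-v\|<r/100$ having few points in the inner annulus $B(u,r-\|u-v\|)\setminus B(u,\|u-v\|)$ relative to the number very close to $u$. The proof of Lemma~\ref{lem:crucial} is then essentially \emph{deterministic}: assuming \str{1}--\str{5}, the cell estimates of Lemma~\ref{lem:cells}, and the absence of bad $(a,b,\geq c)$-pairs, one picks $u,v\in A$ realizing $\diam(A)$, bounds the number of points in the outer shell by the degree cap $D$, and concludes from Corollary~\ref{cor:paircnt} that the inner annulus contains at least $k+s-2$ points; these are then shown to be crucial because they lie outside the obstruction and hence (by \str{4},\str{5}) must be safe. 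The $(\geq T)$-case with $A=\Gamma_i^+$ uses an additional area argument via Lemmas~\ref{lem:Sarea} and~\ref{lem:cells}. This separation of the probabilistic estimate from the geometric reasoning is what lets the paper treat interior and boundary obstructions uniformly.
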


}

We prove the Obstruction Lemma via a series of intermediate, technical results. 
First, we prove a general lemma about   obstructions that are either near the boundary or have large diameter. 
The next lemma may seem like a bit of overkill at first. Let us therefore point out that it will be used 
in several places in the paper.

\begin{lemma}\label{lem:boundarydistancecount}
Let $k \geq 2$ and $0 < \delta \leq \frac{1}{100}$ and {\blue $C, D \geq 100$}
be arbitrary
but fixed, and let {\red $r_n$} be such that

\[ \pi n r_n^2 = (1+o(1))\ln n. \]

\noindent
Let $Z$ denote the number of $k$-tuples
$(X_{i_1},\dots,X_{i_k}) \in \Pcal_n^k$ such that
\begin{enumerate}
\item\label{itm:bdddist.i} $|(B(X_{i_j};r_n)\setminus B(X_{i_j};\delta r_n))
\cap \Pcal_n|
\leq D$ for all $1\leq j \leq k$, and;
\item\label{itm:bdddist.ii} $\norm{X_{i_j}-X_{i_{j'}}} \leq C r_n$ for all
$1\leq j < j'
\leq k$, and;
\item\label{itm:bdddist.iii} One of the following holds:
\begin{enumerate}
\item\label{itm:bdddist.a} There is a $1\leq j \leq k$ such that $X_{i_j}$ is
within $C r$ of a corner of the
unit square, or;
\item\label{itm:bdddist.b} There is a $1\leq j \leq k$ such that $X_{i_j}$ is
within $C r$ but no closer than $\delta r$
to the boundary of the unit square, or;
\item\label{itm:bdddist.c} There are $1\leq j < j' \leq k$ such that
$\norm{X_{i_j}-X_{i_{j'}}} \geq \delta r_n$.
\end{enumerate}
\end{enumerate}
Then $\Ee Z = O\left( n^{-c} \right)$ for some $c=c(\delta)$.
\end{lemma}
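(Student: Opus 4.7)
The plan is to apply the Palm theorem (Theorem~\ref{thm:palm}) to rewrite
\[
\Ee Z = n^k \int_{[0,1]^{2k}} \Pee\bigl(E(y_1,\ldots,y_k)\bigr)\cdot 1_{\{G(y_1,\ldots,y_k)\}}\, \dd y_1\cdots \dd y_k,
\]
where $E$ is the event that the annulus $A_j := B(y_j;r_n)\setminus B(y_j;\delta r_n)$ contains at most $D$ points of $\Pcal_n$ for every $j$, and $G$ encodes the deterministic geometric conditions~(ii) and~(iii). (Adding the Palm points $Y_1,\ldots,Y_k$ to $\Pcal_n$ can only enlarge the annulus counts and thus only shrinks~$E$; this is the favourable direction for an upper bound.) Setting $U := \bigcup_j A_j \cap [0,1]^2$ and noting that on $E$ we have $|\Pcal_n\cap U|\leq \sum_j|\Pcal_n\cap A_j|\leq kD$, Lemma~\ref{lem:chernoff} gives $\Pee(E)\leq (\ln n)^{O(1)} e^{-n\cdot \area(U)}$.

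I then decompose $Z\leq Z_a+Z_b+Z_c$ according to which sub-condition of~(iii) is invoked, losing only an $O(k^2)$ factor when fixing the distinguished index $j$ (or pair $(j,j')$), and lower-bound $\area(U)$ in each case using the area lemmas of Section~\ref{sec:prelim}. For case~(c), with $d:=\norm{y_1-y_2}\in[\delta r,Cr]$, Corollary~\ref{cor:areadiff2} gives $\area(B(y_2;r)\setminus B(y_1;r))\geq dr\geq\delta r^2$, hence $\area(U)\geq \pi r^2+\delta r^2/2$ after subtracting the two inner $\delta$-disks; combining with $\int_{[0,1]^2} \dd y_1 \cdot \int_{B(y_1;Cr)}\dd y_2\cdots \dd y_k=O(r^{2(k-1)})$ and the prefactor $n^k$ yields $\Ee Z_c=O((\ln n)^{O(1)} n^{-\delta/(2\pi)})$. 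For case~(b), parametrising $y_1$ by its distance $h\in[\delta r,Cr]$ to a side of the unit square, Corollary~\ref{cor:areaIntSq} gives $\area(U)\geq \pi r^2/2+hr/2$, and the one-dimensional Laplace integral $\int_{\delta r}^{Cr} e^{-n(\pi r^2/2 + hr/2)}\,\dd h$ contributes an extra factor of order $n^{-1/2}\cdot n^{-\delta/(2\pi)}/(nr)$; the remaining bookkeeping yields $\Ee Z_b=O((\ln n)^{O(1)} n^{-\delta/(2\pi)})$. For case~(a), parametrising $y_1$ by coordinates $(a,b)\in[0,Cr]^2$ relative to a corner, a two-dimensional Laplace integral based on $\area(U)\geq \pi r^2/4 + ar + br - O(\delta^2 r^2)$ gives $\Ee Z_a=O((\ln n)^{O(1)} n^{-1/4})$.

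Summing the three contributions yields $\Ee Z=O(n^{-c(\delta)})$ with, for instance, $c(\delta)=\delta/(4\pi)$. The hardest step is case~(b): using only the crude bound $\area(U)\geq \pi r^2/2$ gives $e^{-n\cdot \area(U)}=n^{-1/2+o(1)}$, which is exactly cancelled by the extra $O(r)$ of measure coming from $y_1$ lying along the boundary instead of in the bulk (the interior case only costs $O(r^2)$ of measure). To extract a genuine power saving one must retain the linear term $hr$ in the area and exploit the lower cutoff $h\geq \delta r$ coming from~(iii.b); in cases~(a) and~(c) the geometric defect (a quarter-disk missing, or a macroscopic displacement between two balls) automatically produces a power saving in the same spirit.
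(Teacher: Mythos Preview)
Your overall approach---Palm formula, union bound on the annuli, Chernoff---matches the paper's. The difference is in how you organise the case split, and your choice leaves a gap in case~(c). You assert $\area(U)\geq \pi r^2+\delta r^2/2$ there and then integrate $y_1$ over all of $[0,1]^2$, but that area bound requires $B(y_1;r)\cup B(y_2;r)\subseteq[0,1]^2$. A tuple can satisfy (c) with $y_1,y_2$ both within $\delta r$ of a side (displaced along the side by $\geq\delta r$, far from any corner); such a tuple satisfies neither (a) nor (b), so it is counted only by $Z_c$, yet for it $\area(U)\approx\tfrac12(\pi r^2+\delta r^2)$. The correct contribution from this sub-region is still $O(n^{-\delta/(2\pi)+O(\delta^2)})$---the $O(r)$ measure of the side strip compensates for the halved exponent---but that requires a separate argument, which is precisely the ``hardest step'' you flagged for case~(b) reappearing inside case~(c).

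The paper avoids this by slicing the other way: it partitions by the location of the first Palm point $u_1$ (corner, side, bulk: $\Zcnr,\Zsde,\Zmdl$), and then within each location invokes whichever of (a), (b), (c) supplies the extra area. In particular $\Zsde$ absorbs both (b)-type and (c)-type side configurations in one estimate via $n\cdot\area(A(u,v)\cap[0,1]^2)\geq\tfrac12\, n\cdot\area(A(u,v))\geq\musde$. Your proof becomes correct once you graft this location sub-split onto $Z_c$; as written, the area lower bound in case~(c) is false on part of the domain of integration. (A minor side remark: Corollary~\ref{cor:areadiff2} only covers $d\leq 2r$, while your (c) allows $d\leq Cr$ with $C\geq 100$; for $d>2r$ the balls are disjoint and the bound is trivially better, so this is harmless.)
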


\begin{proof}
Let us set $C' := 2kC$.
Let $\Zcnr$ denote the number $k$-tuples $(u_1,\dots,u_k) \in \Pcal_n$
satisfying the demands from the lemma and $u_1$ is within $C'r_n$ of a corner
of the unit square; {\blue this includes all $k$-tuples that satisfy condition
(a).}
Let $\Zsde$ denote the number of such $k$-tuples satisfying the demands from
the lemma, and
$u_1$ is within $C'r_n$ of the boundary of the unit square, but not within
$C'r_n$ of a corner; {\blue this includes all $k$-tuples satisfying condition
(b) but not condition (a).}
Let $\Zmdl$ denote the number of such $k$-tuples with $u_1$ more than $C'r_n$
away from
the boundary of the unit square {\blue and satisfying condition (c) but not
counted by $\Zcnr$ or $\Zsde$.}

Using Theorem~\ref{thm:palm} above, we find that

\[
\Ee \Zcnr =
n^k \cdot \Ee h(Y_1,\dots,Y_k;\{Y_1,\dots,Y_k\}\cup\Pcal_n ),
\]

\noindent
where $h$ is the indicator function of the event that $Y_1$ is within
$C'r_n$ of a corner and
$(Y_1,\dots,Y_k)$ satisfy the demands from the lemma; and $Y_1,\dots, Y_k$ are
chosen uniformly at
random from the unit square and are independent of each other and of $\Pcal_n$.

Observe that, for every $u \in [0,1]^2$ we have:

\[ n \cdot \area( (B(u;r) \setminus B(u;\delta r))\cap [0,1]^2 )
 \geq \mucnr := n \cdot \frac14\pi(1-\delta^2){\red r^2}
\]

\noindent
Let $A \subseteq [0,1]^2$ be the set of points of the unit square that
are within $C'r$ of a corner, and denote $A(u) := B(u;Cr) \cap [0,1]^2$:
\mnote{Factor $k$ below for choice of $i_j$?}
\begin{equation}\label{eq:Zcnr}
\begin{array}{rcl}
\Ee\Zcnr
& \leq &
\displaystyle
n^{\red k} \int_{A}\int_{A(u_1)}\dots\int_{A(u_1)}
\Pee{\Big(} \Po\left( (B(u_1;r)\setminus B(u_1;\delta r)
\cap [0,1]^2\right)\leq D {\Big)}  {\dd}u_k\dots{\dd}u_1 \\
& \leq &
\displaystyle
n^k \cdot 4 \pi (C' r)^2 \cdot \left( \pi C^2 r^2 \right)^{k-1} \cdot
\Pee( \Po( \mucnr ) \leq D ) \\
& = &
\displaystyle
O\left( \ln^k n \cdot \exp[ - \mucnr \cdot  H(D/\mucnr) ] \right) \\
& = &
\displaystyle
O\left( \ln^k n \cdot \exp\left[ -(\frac14(1-\delta^2)+o(1))\cdot \ln n
\right] \right) \\
& = &
\displaystyle
O\left( n^{-\frac14(1-\delta^2) + o(1)} \right).
\end{array}
\end{equation}
Here we used Lemma~\ref{lem:chernoff} and that
$D /  \mucnr \to 0$ so that $H( D /  \mucnr ) \to 1$.

Let us now consider $\Zmdl$.
For $u,v\in \eR^2$, let us write
\[
A(u,v) := ((B(u;r)\setminus B(u;\delta r))\cup(B(v;r)\setminus B(v;\delta r))).
\]
If $u,v \in [0,1]^2$ are such that $\norm{u-v} \geq \delta r$
then~\eqref{eq:areadiff21} gives
\[
n \cdot  \area(A(u,v)) \geq
\mumdl := (\pi+\delta- 2\pi\delta^2)n{\red r^2}  .
\]
Using Theorem~\ref{thm:palm} again, we get

\begin{equation}\label{eq:Zmdl}
\begin{array}{rcl}
 \Ee \Zmdl
& \leq &
n^k \cdot \left(\pi C^2 {\red r^2}\right)^k \cdot
\Pee( \Po( \mumdl ) \leq D ) \\
& = &
O\left(
\ln^k n
\cdot \exp[ - \mumdl \cdot H(D/\mumdl) ]
\right) \\
& = &
O\left(
n^{-{\red 1-\delta/\pi+2\delta^2+o(1)}}
\right).
\end{array}
\end{equation}

\noindent

We now consider $\Zsde$.
Observe that if $u,v \in [0,1]^2$ are such that $v$ is more than $\delta r$
away from one side of the unit square, and more than
$r$ from all other sides, then~\eqref{eq:areaIntSq1} gives that

\[
\begin{array}{rcl}
n \cdot \area( A(u,v)\cap [0,1]^2 )
& \geq &
n \cdot \area\left( [0,1]^2 \bigcap B(v;r)\setminus
{\blue \left( B(v;\delta r) \cup B(u; \delta r) \right) } \right) \\
& \geq &
(\frac{\pi}{2}+\delta-2\pi\delta^2) n r^2.
\end{array}
\]
Similarly, if $u,v \in [0,1]^2$ are such that both $u,v$ are more than $r$
away from all but one side
of the unit square and $\norm{u-v} \geq \delta r$ then~\eqref{eq:areadiff21}
gives

\[
\begin{array}{rcl}
n \cdot \area( [0,1]^2 \cap A(u,v) )
& \geq &
n \cdot \frac12 \area(A(u,v)) \\
& \geq &
\musde := \left(\frac{\pi+\delta-2\pi\delta^2}{2}\right) n r^2.
\end{array}
\]
We see that
\begin{equation}\label{eq:Zsde}
\begin{array}{rcl}
 \Ee \Zsde
& \leq &
n^k \cdot 4 \cdot C' r_n \cdot \left(\pi C r_n^2\right)^{\red k-1} \cdot
\Pee( \Po(  \musde ) \leq D ) \\
& = &
O\left(
n^{\frac12} \ln^{\frac{2k-1}{2}} n
\cdot \exp[ - \musde \cdot H\left(D/\musde)\right]
\right) \\
& = &
O\left(
n^{-\delta/2\pi+\delta^2+o(1)}
\right).
\end{array}
\end{equation}
Since $\delta \leq 1/100$, we have $\delta/2\pi>\delta^2$.
Together with~\eqref{eq:Zcnr} and~\eqref{eq:Zmdl} this proves the bound on
$\Ee Z$
with $c(\delta) = \delta/100\pi$.
\end{proof}

Next, we prove a series of three technical lemmas about nearby pairs of points with restrictions on their common neighbours.
This argument culminates in Corollary~\ref{cor:paircnt} below, which illuminates the structure of the shared neighborhood of such pairs.
 We start with two definitions, shown visually in Figure \ref{fig:(a,b,c)}. Given $V \subseteq \eR^2$ and $r > 0$,
we will say that the pair $(u,v) \in V^2$
is an {\em $(a,b,c)$-pair} (with respect to $r$) if:

\begin{itemize}
\item[\pr{1}] $\norm{u-v} < r/100$;
\item[\pr{2}] $B(u, r-\norm{u-v}) \setminus B(u, \norm{u-v} )$ contains
exactly $a$
points of $V \setminus \{{\red u,v}\}$;
\item[\pr{3}] $\left( B(u,r)\cup B(v,r)\right) \setminus B(u, r-\norm{u-v})$
contains exactly $b$ points of $V \setminus\{{\red u,v}\}$ .
 \item[\pr{4}] $B(u,\norm{u-v})$ contains exactly $c$ points of
$V \setminus \{u,v\}$.
\end{itemize}

\noindent
We say that $(u,v)$ is an $(a,b,\geq c)$-pair if it is
an $(a,b,c')$-pair for some $c' \geq c$.

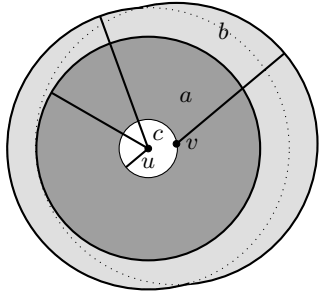
\begin{figure}[ht]
\begin{center}
\begin{tikzpicture}[scale=.125]

\draw[fill=gray!25, thick] (3,.5) circle (15);

\draw[fill=gray!25, gray!25] (0,0) circle (15);

\draw[fill=gray!25, gray!25] (0,0) circle (15);
\draw[thick] (0,0) ++ (92:15) arc (92:287:15);

\draw[fill=gray!75, thick] (0,0) circle (11.9);

\draw[fill=white] (0,0) circle (3.1);

\node[below] (u) at (0,0) {{\small $u$}};
\node[right] (v) at (3,.5) {{\small $v$}};

\draw[fill] (0,0) circle (10pt);
\draw[fill] (3,.5) circle (10pt);

\node at (4,5.5) {{\small $a$}};
\node at (8,12.5) {{\small $b$}};
\node at (1,1.5) {{\small $c$}};

\draw[dotted] (0,0) circle (15);
\draw[dotted] (3,.5) circle (15);

\draw[thick] (0,0) -- (220:3.1);
\draw[thick] (0,0) -- (150:11.9);
\draw[thick] (0,0) -- (110:15);

\begin{scope}[shift={(3,.5)}]
   \draw[thick] (0,0) -- (40:15);
\end{scope}

\end{tikzpicture}
\end{center}
\caption{Vertices $u,v$  with $\| u - v \| < r/100$ form an $(a,b,c)$-pair when the vertex counts in the three regions shown are $a$, $b$, and $c$, respectively. The definition of an $(a,b)$-pair ignores the vertex count in the innermost circle.}
\label{fig:(a,b,c)}
\end{figure}

\begin{lemma}\label{lem:pairexp}
For $k\geq 2$, let {\red $r_n$} be such that
\[
\pi n r_n^2 = \ln n + (2k-3)\ln\ln n + O(1),
\]
for some fixed $x\in\eR^2$, and let $\Pcal_n$ as in~\eqref{eq:Pdef}.
Let $a,b,c \in \eN$ be arbitrary but fixed, and let 
$R$ denote the number of $(a,b,\geq c)$-pairs in $\Pcal_n$ with respect to
$r_n$. Then $\Ee R = O\left( \ln^{a-(k+c)} n \right)$.
\end{lemma}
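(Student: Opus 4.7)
My starting point is the Palm formula (Theorem~\ref{thm:palm}) applied with $k=2$: writing $h(u,v;V)$ for the indicator that $(u,v)$ forms an $(a,b,\geq c)$-pair in $V$, we have
\[
\Ee R \;=\; n^2 \cdot \Ee\bigl[ h(Y_1,Y_2;\{Y_1,Y_2\}\cup \Pcal_n)\bigr],
\]
with $Y_1,Y_2$ i.i.d.~uniform on $[0,1]^2$. I would condition on $u := Y_1$, $v := Y_2$, writing $d := \|u-v\|$, and exploit that the three regions $A_1 := B(u,r-d)\setminus B(u,d)$, $A_2 := (B(u,r)\cup B(v,r))\setminus B(u,r-d)$, and $A_3 := B(u,d)$ are pairwise disjoint; by~\PP{2} the counts $|\Pcal_n\cap A_i|$ are then independent Poissons with means $\lambda_i := n\cdot \area(A_i\cap [0,1]^2)$. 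The event of interest factors as $\{\Po(\lambda_1)=a\}\cap \{\Po(\lambda_2)=b\}\cap \{\Po(\lambda_3)\geq c\}$; I bound its probability using the exact Poisson density for the first two factors and Lemma~\ref{lem:Poissontriviality} for the third.

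\textbf{Bulk contribution.} For $u$ at distance more than $r$ from $\partial[0,1]^2$, direct computation (see also Corollary~\ref{cor:areadiff2}) gives $\lambda_1 = \pi n(r^2 - 2rd)$, $\lambda_2 = 2(\pi+1)nrd + O(nd^2)$, and $\lambda_3 = \pi n d^2$. Switching to polar coordinates for $v-u$ (which yields a factor of $2\pi$) and substituting $s := nrd$ (so $d\,dd = s\,ds/(nr)^2$ and $s \in [0, nr^2/100\pi]$), the hypothesis $\pi n r^2 = \ln n + (2k-3)\ln\ln n + O(1)$ converts $e^{-\pi n r^2}$ into $\Theta(n^{-1}(\ln n)^{-(2k-3)})$, while, crucially, the exponential parts of $-\lambda_1$ and $-\lambda_2$ combine to yield $e^{-2s + O(s^2/nr^2)}$, which is bounded above by $e^{-s}$ throughout the relevant range of $s$. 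Since $(nr)^{-2}(nr^2)^{-c} = \Theta(n^{-1}(\ln n)^{-(1+c)})$ and $\int_0^\infty s^{1+b+2c}e^{-s}\,ds$ is a universal constant, the pieces assemble to
\[
\Ee R_{\mathrm{bulk}} \;=\; O\bigl((\ln n)^{a-(2k-3)-(1+c)}\bigr) \;=\; O\bigl((\ln n)^{a-2k+2-c}\bigr) \;\leq\; O\bigl((\ln n)^{a-(k+c)}\bigr),
\]
the last inequality using $k\geq 2$ (and in fact being strict for $k>2$).

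\textbf{Boundary contribution.} For $u$ within $r$ of one side of the square but far from the corners, Lemma~\ref{lem:areaIntSq} shows that $\lambda_1 \geq \pi n r^2/2 + 2nhr - \pi nrd - O(1)$, where $h$ denotes the distance from $u$ to that side. Although $e^{-\pi n r^2/2}$ is much larger than its bulk analogue, integrating $\int_0^r e^{-2nhr}\,dh = O(1/(nr))$ over the strip exactly compensates, and the analogously halved $\lambda_2$ still provides a decaying exponential in $s$. Tracking the powers of $n$ and $\ln n$ yields once more $O((\ln n)^{a-(k+c)})$. Near the corners the available area is only $O(r^2) = O(\ln n/n)$ while $e^{-\lambda_1}$ improves only to $n^{-1/4+o(1)}$, so the corner contribution is polynomially small in $n$ and thus negligible.

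\textbf{Main obstacle.} The delicate step is controlling the higher-order error terms in the expansions of $\lambda_2$ and of the various disk-intersection areas as $s$ grows up to $\Theta(\ln n)$. The constraint $d < r/100$ from property~\pr{1} is essential here: it keeps correction terms such as $O(s^2/nr^2)$ in the exponent strictly smaller than the leading $-2s$ coming from $-\lambda_2$ throughout the entire range of integration, so that the $s$-integral remains bounded by an absolute constant and no $n$-dependent inflation occurs.
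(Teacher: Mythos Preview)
Your proposal is correct and follows essentially the same route as the paper: Palm formula, split into bulk/side/corner, polar coordinates, the substitution $s=nrd$, and power-counting of $\ln n$ using $e^{-\pi n r^2}=\Theta(n^{-1}(\ln n)^{-(2k-3)})$. The only packaging difference is that the paper dispatches the corner case (and the portion of the side with $h\in[\delta r,100r]$) by invoking the earlier Lemma~\ref{lem:boundarydistancecount} rather than by the direct $n^{-1/4+o(1)}$ estimate you sketch; both arguments give polynomial decay.
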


\begin{proof}
We start by considering $\Ee R$.
Let $\Rcnr$ denote the number of $(a,b,\geq c)$-pairs $(X_i,X_j)$ in $\Pcal_n$
for which $X_i$ is within $100 r$ of a corner of $[0,1]^2$;
let $\Rsde$ denote the number of $(a,b,\geq c)$-pairs $(X_i,X_j)$ in $\Pcal$
for which $X_i$ is within $100r$ of the boundary of $[0,1]^2$, but not within
$100 r$
of a corner, and let $\Rmdl$ denote the number of $(a,b,\geq c)$-pairs
$(X_i,X_j)$
for which $X_i$ is more than $100 r$ away from the boundary of $[0,1]^2$.

By Lemma~\ref{lem:boundarydistancecount} (with $k=2, \delta = 1/100,
D=\max(a+b,100), C=100$) we have

\begin{equation}\label{eq:RcnrFinal}
\Ee \Rcnr = O\left( n^{-{\red \Omega(1)}} \right).
\end{equation}

\noindent
For $0 < z < r/100$ let us write:

\[ \begin{array}{l}
\mu_1(z) := n \cdot \area(B(u;r-z) \setminus B(u;z)), \\
\mu_2(z) := n \cdot \area( (B(u;r) \cup B(v;r)) \setminus B(u;r-z)), \\
\mu_3(z) := n \cdot \area( B(u;z) ),
\end{array} \]

\noindent
where $u,v \in\eR^2$ are two points with $\norm{u-v}=z$.
Observe that 

\[ 
\mu_1(z)+\mu_2(z)+\mu_2(z) = n \cdot \area( B(u,r) \cup B(v,r) ) \geq
n \cdot (\pi r^2 + rz),  \]

\noindent
using Lemma~\ref{lem:areadiff}.
If $z \leq r/100$ then we also have

\begin{equation}\label{eq:papegaai}
\mu_1(z)+\mu_2(z) \geq n \cdot (\pi r^2 + rz - \pi z^2) \geq n \cdot ( \pi r^2 + \frac12 rz ).
\end{equation}

\noindent
Let us now consider $\Rmdl$. 
Using Theorem~\ref{thm:palm} and Lemma~\ref{lem:Poissontriviality}
we find

\begin{equation}\label{eq:RmdlFinal}
\begin{array}{rcl}
\Ee \Rmdl
& =  &
\displaystyle
\frac{n^2}{2} \int_{[100r,1-100r]^2} \int_{B(v;r/100)}
\frac{\mu_1(\norm{u-v})^{a}e^{-\mu_1(\norm{u-v})}}{a!} \cdot 
\frac{\mu_2(\norm{u-v})^{b}e^{-\mu_2(\norm{u-v})}}{b!} \cdot \\
& &
\displaystyle 
\quad \quad \quad \quad \quad \quad \quad \quad \quad \quad \quad \quad \quad \quad \quad
\quad \quad \quad \quad \quad \quad \quad \quad \quad \quad \quad \quad 
\mu_3(\norm{u-v})^{c}  {\dd}u{\dd}v \\
& =  &
\displaystyle
\frac{n^2}{2} (1-200 r)^2 \int_0^{r/100}
\frac{\mu_1(z)^{a}e^{-\mu_1(z)}}{a!} \cdot
\frac{\mu_2(z)^{b}e^{-\mu_2(z)}}{b!} \cdot
\mu_3(z)^{c} \cdot 
 2\pi z{\dd}z \\
& \leq  &
\displaystyle
n^2 \int_0^{r/100}
\left( \pi n r^2 \right)^{a}
\left( 4\pi n r z \right)^{b}
\left( \pi n z^2 \right)^{c}
e^{ - \pi n r^2 - n r z } 2\pi z{\dd}z \\
& = &
\displaystyle
O\left(
n^{2+c} \cdot \ln^a n \cdot (nr)^b \int_0^{r/100}
e^{- \ln n - (2k-3)\ln\ln n - n r z }  z^{b+2c+1}{\dd}z
\right) \\
& = &
\displaystyle
O\left(
n^{1+c} \cdot \left(\ln n\right)^{a-(2k-3)} \cdot (nr)^b \int_0^{r/100}
e^{- n r z }  z^{b+2c+1}{\dd}z
\right) \\
& = &
\displaystyle
O\left(
n^{1+c} \cdot \left(\ln n\right)^{a-(2k-3)} \cdot (nr)^{-(2+2c)}
\right) \\
& = &
O\left(
\left(\ln n\right)^{a-(2k-3)} \cdot (nr^2)^{-(1+c)}
\right) \\
& = &
\displaystyle
O\left(
\left(\ln n\right)^{a+2-2k-c} \right) \\
& = &
\displaystyle
O\left( (\ln n)^{a-(k+c)} \right).
\end{array}
\end{equation}

\noindent
Here we have used a switch to polar coordinates to get the second line;
we used~\eqref{eq:papegaai} to get the third line; the
change of variables $y = nrz$ to get the sixth line; and in the last
line we use $k\geq 2$.

Now we turn attention to $\Rsde$.
Let $\Rsde'$ denote the number of $(a,b,c)$-pairs $(u,v)$ with
$u$ at distance at least $r/100$ and at most $100 r$ from the boundary.
Then, again by Lemma~\ref{lem:boundarydistancecount}:

\begin{equation}\label{eq:RsdePrime}
\Ee \Rsde' = O\left( n^{-O(1)} \right).
\end{equation}

\noindent
On the other hand, by Theorem~\ref{thm:palm} and a switch to polar coordinates
(and where $z=||u-v||$ and $w$ is the distance of the nearest
of $u,v$ to the boundary):
\[
\begin{array}{rcl}
\Ee \left( \Rsde - \Rsde'\right)
& \leq  &
\displaystyle
8 n^2 \int_0^{r/100}  \!\!\! \int_0^{r/100} \!\!\!
\left( \pi n r^2 \right)^{a}
\left( 4 \pi n r z \right)^{b}
\left( \pi n z^2 \right)^{c}
e^{ - \frac{\pi}{2} n r^2 - \frac12 n r z - n r w + \pi z^2} \pi z{\dd}z{\dd} w \\
& \leq & 
\displaystyle
8 n^2 \int_0^{r/100}  \!\!\! \int_0^{r/100} \!\!\!
\left( \pi n r^2 \right)^{a}
\left( 4 \pi n r z \right)^{b}
\left( \pi n z^2 \right)^{c}
e^{ - \frac{\pi}{2} n r^2 - \frac14 n r z - n r w} \pi z{\dd}z{\dd} w \\
& = &
\displaystyle
O\left(
n^{2+c} \cdot \ln^a n \cdot (nr)^b \int_0^{r/100} \!\!\! \int_0^{r/100} \!\!\!
z^{b+2c+1}
e^{ - \frac{\pi}{2} n r^2 - \frac12 n r z - n r w} {\dd}z{\dd} w
\right) \\
& = &
\displaystyle
O\left(
n^{\frac32+c} \cdot \left(\ln n\right)^{a-(2k-3)/2} \cdot (nr)^b
\int_0^{r/100} \!\!\! \int_0^{r/100} \!\!\! z^{b+2c+1}
e^{ - \frac12 n r z - n r w} {\dd}z{\dd} w
\right) \\
& = &
\displaystyle
O\left(
n^{\frac32+c} \cdot \left(\ln n\right)^{a-(2k-3)/2} \cdot (nr)^{b-(b+2c+3)}
\right) \\
& = &
O\left(
\left(\ln n\right)^{a-(2k-3)/2} \cdot (nr^2)^{-(\frac32+c)}
\right) \\
& = &
O\left(
\left(\ln n\right)^{a-(2k-3)/2-\frac32-c}
\right) \\
& = &
O\left(
\left(\ln n\right)^{a-(k+c)}
\right).
\end{array}
\]

\noindent
Combining this with~\eqref{eq:RcnrFinal},~\eqref{eq:RmdlFinal}
and~\eqref{eq:RsdePrime} proves the lemma.
\end{proof}

Clearly the definition of $(a,b,\geq c)$-pairs can reformulated
in terms of a function $h_n$ as in Lemma~\ref{lem:configurations}.
As an immediate corollary of Lemmas~\ref{lem:pairexp}
and~\ref{lem:configurations} we now find:

\begin{corollary}\label{cor:paircnt}
For $k\geq 2$, let $(r_n)_n$ be such that
\[
 \pi n r_n^2 = \ln n + (2k-3)\ln\ln n + O(1),
\]
and let $\Xcal_n$ be as in~\eqref{eq:Xdef}.
Let $a,b,c \in \eN$ be fixed such that $a \leq c+k-1$. W.h.p.,
there are no $(a,b,\geq c)$-pairs for $\Xcal_n, r_n$.
\end{corollary}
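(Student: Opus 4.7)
The plan is to obtain the corollary by combining the two preceding lemmas in a straightforward two-step argument: first apply Lemma~\ref{lem:pairexp} to handle the Poisson model, then transfer to the binomial model via Lemma~\ref{lem:configurations}.

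The first step is immediate. The hypothesis $a \leq c+k-1$ is designed precisely so that the exponent $a-(k+c)$ in the conclusion of Lemma~\ref{lem:pairexp} is at most $-1$. Hence, letting $R_n$ denote the number of $(a,b,\geq c)$-pairs in $\Pcal_n$ with respect to $r_n$, Lemma~\ref{lem:pairexp} gives
\[
\Ee R_n = O\bigl((\ln n)^{a-(k+c)}\bigr) = O\bigl((\ln n)^{-1}\bigr) = o(1),
\]
and Markov's inequality yields $R_n = 0$ \emph{whp}.

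The second step is the transfer to $\Xcal_n$. I would define $h_n(v_1,v_2;V) \in \{0,1\}$ to be the indicator that $(v_1,v_2)$ is an $(a,b,\geq c)$-pair in $V$ with respect to $r_n$, so that $Z_n = \sum_{v_1,v_2 \in \Pcal_n} h_n(v_1,v_2;\Pcal_n) = 2 R_n$ (with the factor of two accounting for ordered pairs) and $\Ztil_n = \sum_{v_1,v_2 \in \Xcal_n} h_n(v_1,v_2;\Xcal_n)$. The conditions \pr{1}--\pr{4} only ever count points inside $B(v_1;r_n) \cup B(v_2;r_n)$, so $h_n(v_1,v_2;V)$ does not depend on $V \setminus (B(v_1;r_n) \cup B(v_2;r_n))$, verifying the locality hypothesis of Lemma~\ref{lem:configurations}. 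The size hypothesis $\pi n r_n^2 = o(\sqrt{n})$ holds trivially because $\pi n r_n^2 = \ln n + (2k-3)\ln\ln n + O(1)$, and $\Ee Z_n = 2 \Ee R_n = o(1) = O(1)$. Lemma~\ref{lem:configurations} then yields $Z_n = \Ztil_n$ \emph{whp}, so combining with $Z_n = 0$ \emph{whp} we conclude $\Ztil_n = 0$ \emph{whp}, which is exactly the statement of the corollary.

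There is really no obstacle here: the corollary is essentially a packaging statement, with all of the difficulty concentrated in the computation of $\Ee R_n$ in Lemma~\ref{lem:pairexp} and in the Poisson-to-binomial transfer of Lemma~\ref{lem:configurations}. The only thing that needs any attention is the verification that $h_n$ is local in the sense required by Lemma~\ref{lem:configurations}, and this is immediate from the definition of an $(a,b,\geq c)$-pair.
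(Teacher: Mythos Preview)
Your proposal is correct and follows exactly the approach the paper takes: the paper simply states that the definition of $(a,b,\geq c)$-pairs can be reformulated as a function $h_n$ in the sense of Lemma~\ref{lem:configurations}, and declares the corollary immediate from Lemmas~\ref{lem:pairexp} and~\ref{lem:configurations}. You have filled in the details appropriately. One cosmetic quibble: the definition of an $(a,b,c)$-pair is asymmetric in $u$ and $v$, so $R_n$ already counts ordered pairs and the factor of two is unnecessary; this of course has no effect on the argument.
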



Finally, we are ready to prove the Obstruction Lemma.

\begin{proofof}{Lemma \ref{lem:crucial}}
Let $\eta > 0$ be small and $T>0$ be large, to be determined in the proof.
All the probability theory needed for the current proof is essentially done.
For the remainder of the proof we can assume that $\Xcal_n = \{X_1, \dots, X_n\}$ is
and arbitrary point set of points in the unit square
for which the conclusions of Lemma~\ref{lem:cells}
with $\eps := 1/10^{10}$ and $K = 10^{10}$ hold, \str{1}--\str{5} hold and
the conclusions of Corollary~\ref{cor:paircnt} hold for
all $a,b,c\leq 1000 \cdot D$, where
\begin{equation}\label{eq:Ddef}
D := (T-1) \cdot  \left\lfloor \pi \left(r+\frac{\sqrt{2}}{m}\right)^2 /
\left(\frac{1}{m}\right)^2 \right\rfloor.
\end{equation}
(There are at most
$\left\lfloor \pi (r+\sqrt{2}/m)^2 / (1/m)^2 \right\rfloor$  cells that
intersect a disk of radius $r$, so
$D$ is an upper bound on the  degree of a dangerous vertex.)
That is, there are no $(a,b,\geq c)$-pairs with $b,c\leq 1000\cdot D$ and
$a \leq k+c-1$.
(Observe that $D$ is bounded because $r = O( \sqrt{\ln n / n} )$ {\red
and $m$ is defined as in \eqref{eq:mdef}}.)

Let $A$ be an arbitrary obstruction with $2 \leq s := |A| < T$.
Then $A$ is necessarily a dangerous cluster.
Pick $u,v \in A$ with $\diam(A) = \norm{u-v} =: z$.
Observe that $(B(u,r)\cup B(v,r)) \setminus B(u,r-z)$ cannot contain
more than $2D$ points, since this would imply either
$d(u) \geq D$ or $d(v) \geq D$ and this would in turn imply that one of
$u,v$ would be safe.
Thus $B(u,r-z) \setminus B(u,z)$ contains at least $m := k+s-2$ points
$X_{i_1},\dots,X_{i_m}$.
(Otherwise $(u,v)$ would be a $(a,b,c)$-pair for some $s-2 \leq c \leq D$, $a \leq c+k-1$ 
and $b \leq  2 D$, contradicting our assumptions.)

Observe that $X_{i_1},\dots,X_{i_m}$ are within $r$ of every element of $A$.
Also observe that $X_{i_1},\dots,X_{i_m}$ must be {\blue either safe or
risky}, because,
by definition of a dangerous cluster,
$A$ is an inclusion-wise maximal subset of
$\Xcal_n$ with the property that $\diam(A) < r \cdot 10^{10}$ and all
elements of $A$ are dangerous.
Thus each $X_{i_j}$ has at least $T$ neighbours in some good cell $c_j$.
By~\str{5} we must have $c_1,\dots,c_m \in \Gamma_{\max}$ so that
$X_{i_1}, \dots, X_{i_m}$ are indeed crucial.

Now suppose that $|A| \geq T$.
If $A$ is a {\red dangerous} cluster then we proceed as before.
Let us thus suppose that $A=\Gamma_i^+$ for some $i\geq 2$.
Let
\[
d_{V\setminus A}(v) := | N(v) \setminus A |,
\]
and let $W$ denote the set of those $X_i$ that are crucial for $A$.
If $|W|\geq k$ then we are done.
Let us thus assume that $|W| < k$.
Let us {\red observe:}

\begin{equation}\label{eq:vinAminW}
\text{Any $v\in A \setminus W$ has $d_{V\setminus A}(v) \leq D$.}
\end{equation}

\noindent
(This is because if $d_{V\setminus A}(v) > D$ then $v$ is adjacent
to at least $T$ vertices in some cell $c$
that does not belong to $\Gamma_i$.
This cell $c$ is therefore certainly good.
By~\str{4}, any good cell that is within $r \cdot 10^{10}$ of $A$,
but not part of $\Gamma_i$, must belong to $\Gamma_{\max}$.)

In that case $|A\setminus W| > T-k \geq k$ ($T$ being sufficiently large).
Pick an arbitrary $u \in A \setminus W$ and set
\[
A' := B(u,r/100) \cap {\red V}.
\]
Let $v$ be a point of $A'\setminus W$ of maximum distance to $u$, and write
$z := \norm{u-v}$.
Observe that $(B(u,r)\cup B(v,r)) \setminus B(u,r-z)$ cannot contain
more than $3D$ points, since this would imply either
$d_{V\setminus A}(u) {\red >} D$ or $d_{V\setminus A}(v){\red >} D$ and this
in turn
would imply that one of $u,v$ would be crucial.
Thus $B(u,r-z) \setminus B(u,z)$ contains at least $k$ points
$X_{i_1},\dots, X_{i_k}$.
(Otherwise $(u,v)$ would be {\red an} $(a,b)$-pair for some
$a \leq k-1, b \leq 1000 \cdot D$,
contradicting our assumptions.)
We claim these points must be crucial. Aiming for a contradiction, suppose
$X_{i_j} \not\in W$
for $1\leq j \leq k$.
What is more, by choice of $A'$ and $v$, we must have
$\norm{X_{i_j}-u} > r/100$.
Let us first suppose that $u$ is at least $2r$ away from the boundary of the
unit square.
Using the bound~\eqref{eq:areadiff21} and Lemma~\ref{lem:Sarea}, we see that the
total area of the cells that fall inside
$O := ( B(X_{i_j};r) \cup B(u;r) ) \setminus B(u;r/100)$ is at least

\[\begin{array}{rcl}
r^2 \left( \pi + \frac{1}{100} \right)
- \pi r^2 (\frac{1}{100})^2  - C \eta \left(\frac{\ln n}{n}\right)
& \geq &
(1+\frac{1}{1000}) \pi r^2 - C \eta \left(\frac{\ln n}{n}\right) \\
& \geq &
(1+10^{-5}) \frac{\ln n}{n},
\end{array} \]

\noindent
provided we have chosen $\eta$ small enough.
Similarly, if $u$ is within $2r$ of one of the sides of
the unit square and at least $2r$ from all other sides then
the total area of the cells inside $O$ is at least
$\frac12\area(O) - C \eta \left(\frac{\ln n}{n}\right)
\geq (1+10^{-5})\ln n/2n$;
and if $u$ is within $2r$ of two of the sides of the unit square
then the total area of the cells inside $O$ is at least
$\frac14\area(O) - C \eta \left(\frac{\ln n}{n}\right)
\geq {\red (1+10^{-5})\ln n/4n}$.
(Provided $\eta$ was chosen small enough in both cases).
Hence, by the conclusion of Lemma~\ref{lem:cells}, at least one of
these cells in $O$ must be good.
Since such a good cell is not part of the component of $\Gamma$ that
generates $A$,
and is closer than $r \cdot 10^{10}$ to $A$, it must belong to $\Gamma_{\max}$.
Hence either $X_{i_j}$ or $u$ is crucial. Since we chose $u \in A\setminus W$
we must have $X_{i_j} \in W$, a contradiction.

This proves that all of $X_{i_1},\dots, X_{i_k}$ are crucial, completing the
proof.
\end{proofof}

\section{The connectivity game}
\label{sec:connect}

To improve the presentation, we separate the proof into a
deterministic part and a probabilistic part.
The deterministic part is the following lemma:

\begin{lemma}\label{lem:conngamedet}
Suppose that $V \subseteq [0,1]^2, m \in \eN, T > 100, r > 0$ and
$r \leq \rho \leq 2r$ are such that
\begin{enumerate}
\item \str{1}--\str{5} hold with respect to $r$, and;
\item Every obstruction with respect to $\rho$ has at least 2 crucial vertices.
\end{enumerate}
Then Maker wins the connectivity game on $G(V;\rho)$.
\end{lemma}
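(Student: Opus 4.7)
The plan is to invoke Lehman's theorem (Theorem~\ref{t:lehman}) and build two explicit edge-disjoint spanning trees $T_1,T_2$ of $G(V;\rho)$. I partition $V$ into three pieces handled in turn: the ``giant'' $V_\Gamma := \bigcup_{c\in\Gamma_{\max}}(c\cap V)$, the obstruction vertices, and the remaining safe vertices of $V\setminus V_\Gamma$ (which are either crucial for some obstruction or ``orphans''); every non-safe vertex is risky or dangerous and hence lies in some obstruction by the definitions of $\Gamma_i^+$ and of dangerous clusters.

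\textbf{Stage 1: the giant.} Each good cell has diameter $\eta r\sqrt{2}<\rho$ and at least $T\ge 4$ vertices, so induces a clique in $G(V;\rho)$ admitting two edge-disjoint spanning trees by Corollary~\ref{cor:connectivityKn}. Two $\Gamma$-adjacent cells span a complete bipartite subgraph of $G(V;\rho)$ by construction of $\Gamma$. Fix any spanning tree of the connected cell-graph $\Gamma_{\max}$; for each of its edges $cc'$, pick a matching of size two between $c\cap V$ and $c'\cap V$ (possible since $T\ge 2$) and assign one matching edge to each of $T_1,T_2$. Combined with the in-cell pair of spanning trees in each good cell, this produces two edge-disjoint spanning trees $\mathcal{T}_1^\Gamma,\mathcal{T}_2^\Gamma$ of $V_\Gamma$.

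\textbf{Stage 2: obstructions.} For each obstruction $A$, fix two distinct crucial vertices $v_1,v_2$. Because crucial means safe (by \cruc{2}) while every vertex of an obstruction is risky or dangerous, $v_1,v_2\notin A$. The inclusion $A\subseteq B(v_i;\rho)$ makes each $v_i$ adjacent in $G(V;\rho)$ to every $a\in A$, and $\diam A<r/100<\rho$ (by \str{2} if $A=\Gamma_i^+$ and by \str{3} if $A$ is a dangerous cluster), so $A$ itself induces a clique. For each $v_i\notin V_\Gamma$, let $c_i\in\Gamma_{\max}$ be a safety-witness cell for $v_i$ and choose two distinct important neighbours $w_i\ne w_i'$ of $v_i$ in $c_i$ (again using $T\ge 2$). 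Add to $T_1$ the star $\{\{v_1,a\}:a\in A\}$ together with $\{v_i,w_i\}$ for each $v_i\notin V_\Gamma$, and add to $T_2$ the mirror-image star from $v_2$ together with $\{v_i,w_i'\}$ for each $v_i\notin V_\Gamma$. Edge-disjointness is immediate: the two stars share no edge because $v_1\ne v_2$ and $v_1,v_2\notin A$; the attachment pairs $\{v_i,w_i\},\{v_i,w_i'\}$ differ because $w_i\ne w_i'$; and every new edge has an endpoint outside $V_\Gamma$, so it is disjoint from $\mathcal{T}_1^\Gamma\cup\mathcal{T}_2^\Gamma$. Across different obstructions there is no clash either: by \str{4}--\str{5} any two distinct obstructions lie more than $r\cdot 10^{10}$ apart, so no crucial vertex can serve two obstructions (such a $v$ would squeeze both $A$ and $A'$ into $B(v;\rho)$ of diameter $\le 2\rho$), and an analogous distance argument shows no good cell is a safety witness for crucial vertices of two different obstructions.

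\textbf{Stage 3 and conclusion.} Every remaining safe vertex $v\in V\setminus V_\Gamma$ not yet incident to an added edge lies in a bad cell and is not crucial for any obstruction; for each such $v$ pick a safety-witness cell $c_v\in\Gamma_{\max}$ and two distinct important neighbours $w_v\ne w_v'\in c_v$, then add $\{v,w_v\}$ to $T_1$ and $\{v,w_v'\}$ to $T_2$ (distinct orphans may safely share the same $w_v$ since their other endpoint differs). Each of $T_1,T_2$ then contains exactly $|V|-1$ edges and is connected (starting from $\mathcal{T}_i^\Gamma$ every subsequently added edge attaches a previously unseen vertex as a leaf), so each is a spanning tree, and the two are edge-disjoint by the checks above. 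Lehman's theorem delivers Maker's win. The main subtlety lies in the sub-case $v_1,v_2\notin V_\Gamma$ of Stage~2, where one genuinely needs the two distinct important neighbours $w_i\ne w_i'$ per crucial vertex to keep the two attachment edges edge-disjoint across $T_1$ and $T_2$.
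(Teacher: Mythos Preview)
Your approach mirrors the paper's: invoke Lehman's theorem and exhibit two edge-disjoint spanning trees built on $\Gamma_{\max}$ first, then extended through each obstruction via its two crucial vertices, and finally to the remaining safe vertices.

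There is one genuine gap. You assert that ``every vertex of an obstruction is risky or dangerous'' and conclude $v_1,v_2\notin A$. This is not justified: when $A=\Gamma_i^+$, every point lying in a cell of $\Gamma_i$ belongs to $A$ by definition, and nothing in \str{1}--\str{5} prevents such a point from also being safe (its cell may sit just beyond the $\Gamma$-adjacency threshold of some cell in $\Gamma_{\max}$ while the point itself still has $T$ neighbours there). The paper in fact explicitly remarks, right after defining crucial vertices, that ``this crucial vertex could be in the obstruction''. When some $v_i\in A$, your construction breaks in two ways: if both $v_1,v_2\in A$ the two stars share the edge $\{v_1,v_2\}$; and even if only $v_1\in A$, then in $T_2$ you add both the star-edge $\{v_2,v_1\}$ and the attachment edge $\{v_1,w_1'\}$, creating a cycle. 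So neither edge-disjointness nor acyclicity is guaranteed as written.

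The fix is immediate: let both stars target only $A\setminus\{v_1,v_2\}$. Each $v_i\notin V_\Gamma$ is already attached to both trees via $\{v_i,w_i\}$ and $\{v_i,w_i'\}$ (crucial implies safe, so these exist), and every $a\in A\setminus\{v_1,v_2\}$ is reached in $T_j$ through $v_j$. With this modification your edge count, connectivity and disjointness checks go through, and the proof is complete along the same lines as the paper's much terser argument.
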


\begin{proof}
Observe that $\Gamma=\Gamma(V;m,T,\rho)$
also satisfies~\str{1}--\str{5} if we modify them very slightly by replacing
the number $r\cdot 10^{10}$ in \str{3}--\str{5}
by $r \cdot 10^{10} / 2$.
All mentions of safe, dangerous, obstructions etc.~will be with respect to
$\rho$
in the rest of the proof.

{ \grn
From Theorem~\ref{t:lehman}, we know that Maker can win the game if
$G(V;\rho)$ contains two edge disjoint spanning trees.

In every cell $c \in \Gamma_{\max}$ there are at least $T$ vertices and they
form a clique, so we can find two edge disjoint trees on them. Since
$\Gamma_{\max}$ is connected, there is a pair of edges between every two
adjacent cells, and they complete two edge disjoint trees on all vertices in
cells of $\Gamma_{\max}$. Every vertex that is safe but not in any obstruction
has more than $T$ neighbours among the vertices in cells of $\Gamma_{\max}$,
so we can extend our pair of trees to such vertices as well. Finally, any
obstruction has at least two crucial vertices, so the vertices in the
obstruction can be connected to one of the trees via one crucial vertex, and
to the other one via another crucial vertex. This way, we obtain a pair of
edge disjoint spanning trees in our graph.
}
\end{proof}

\begin{proofof}{Theorem~\ref{thm:connhit}}
Observe that if there is a vertex of degree at most one then Breaker can
isolate this vertex
and win the connectivity game (recall that Breaker has the first move).
This implies:

\begin{equation}\label{eq:connhitLB}
\Pee\left( \rho_n(\text{Maker wins the connectivity game}) \geq
\rho_n(\text{min.\,deg. $\geq 2$}) \right) = 1.
\end{equation}

\noindent
It remains to see that $\rho_n(\text{Maker wins the connectivity game}) \leq$
$\rho_n(\text{min.\,deg. $\geq 2$})$ also  holds \emph{whp}.

Let $K>0$ be a (large) constant, and define:

\begin{equation}\label{eq:rUrLdef}
r_L(n) := \left(\frac{\ln n + \ln\ln n - K}{\pi n}\right)^{\frac12}, \quad
r_U(n) := \left(\frac{\ln n + \ln\ln n + K}{\pi n}\right)^{\frac12}.
\end{equation}

\noindent
By Theorem~\ref{thm:penrosemindeg}, there is a $K=K(\eps)$ such that

\[ \begin{array}{rcl}
\Pee{\Big(} \delta(G(n,r_L)) < 2, \, \delta(G(n,r_U)) \geq 2 {\Big)}
& \geq &
\Pee{\Big(}  \delta(G(n,r_L)) < 2 {\Big)} - \Pee{\Big(}
\delta(G(n,r_U)) < 2 {\Big)} \\
& = &
1 - e^{-(e^{K}+\sqrt{\pi e^{K}})}  - (1 - e^{-(e^{-K}+\sqrt{\pi e^{-K}})}) +
o(1) \\
& = &
e^{-(e^{-K}+\sqrt{\pi e^{-K}})}
- e^{-(e^{K}+\sqrt{\pi e^{K}})} + o(1).
\end{array}
\]

\noindent
Hence, for arbitrary $\eps > 0$, we can choose $K$ such that

\[
\Pee{\Big(} \delta(G(n,r_L)) < 2,\,  \delta(G(n,r_U)) \geq 2 {\Big)}
\geq 1 - \eps + o(1).
\]

\noindent
This shows that

\begin{equation}\label{eq:connhiteq}
\Pee{\Big(} r_L(n) \leq \rho_n(\text{min.\,deg. $\geq 2$}) \leq r_U(n) {\Big)}
\geq
1-\eps+o(1).
\end{equation}

\noindent
Now notice that, by Lemma~\ref{lem:structure} the properties~\str{1}--\str{5}
are satisfied with probability $1-o(1)$ by $V=\Xcal_n, m = m_n, T=10,
r=r_L(n)$ with
$\Xcal_n$ as given by~\eqref{eq:Xdef}, $m_n$ as given by~\eqref{eq:mdef}
and $r_L$ as above.
By Lemma~\ref{lem:crucial}, with probability $1-o(1)$,
$V=\Xcal_n, m = m_n, T=10, r=r_L(n)$ are such that every
$(\geq 2)$-obstruction with
respect to $r_L$ has at least
two crucial vertices. This is then clearly also true with respect to any
$\rho \geq r_L$.
Since a $1$-obstruction is just a vertex of low degree, by definition
every $1$-obstruction has at least two crucial vertices for every
$\rho \geq \rho_n(\text{min.\,deg. $\geq 2$})$.
Also observe that $r_L(n) \leq 2 r_U(n)$.
Hence, by Lemma~\ref{lem:conngamedet} and~\eqref{eq:connhitLB}:

\[ \Pee\left( \rho_n(\text{Maker wins the connectivity game}) =
\rho_n(\text{min.\,deg. $\geq 2$}) \right) \geq 1-\eps-o(1).
\]

\noindent
Sending $\eps \downarrow 0$ gives the theorem.
\end{proofof}


\section{The Hamilton cycle game}
\label{sec:hamilton}

In this section, we prove Theorem~\ref{thm:hamhit}. First, we prove that Maker
can win two different path-making games. These games will be useful in
constructing local paths that will be stitched together to create the Hamilton
cycle. Next, we introduce \emph{blob cycles} and prove some results about them. In
particular, we give some conditions under which blob cycles can be merged into
larger blob cycles. With these intermediate results in hand, we then
characterize the hitting radius for the Hamilton cycle game.

\subsection{Two helpful path games}
\label{sec:pathgames}

We begin by considering two auxiliary path games. Maker will use these local
path games to create parts of his Hamilton cycle.
Both games are played on a clique in which Maker tries to make a long path.
In the first game, Maker tries to make a path through all but one vertex.

\begin{lemma}\label{lem:pathplusiso}
Let $s \geq 3$ be arbitrary.
Maker can make a path of length $s-2$ in $K_s$.
\end{lemma}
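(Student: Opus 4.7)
The plan is to give an explicit winning strategy for Maker, handling the small cases $s \in \{3,4,5\}$ by elementary counting and the remaining cases $s \geq 6$ by a standard game strategy.

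For $s = 3$, the graph $K_3$ has three edges; Maker claims at least one, yielding a path of length $1 = s-2$. For $s = 4$, Maker claims exactly three of the six edges of $K_4$; since these three edges contribute total degree six over four vertices, by pigeonhole some vertex has Maker-degree at least two, giving a $P_3$ (a path of length $2 = s-2$). For $s = 5$, Maker claims five of the ten edges of $K_5$; any graph on at most five vertices with five edges must contain a $P_4$, since the Erd\H{o}s--Gallai extremal number for $P_4$-free graphs on five vertices equals four (attained, e.g., by the star $K_{1,4}$), so a path of length $3 = s-2$ is forced.

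For $s \geq 6$, Maker uses a longest-path-with-rotations strategy: he maintains a longest path $P$ in his claimed subgraph, extends it at an endpoint when possible, and otherwise performs a P\'osa rotation---claiming an unclaimed edge from a path endpoint to an internal vertex of $P$, producing a new path of the same length with a fresh endpoint, then attempting extension at the new endpoint. The crucial slack is that Maker only needs length $s-2$, not $s-1$, so he always has at least one spare vertex off $P$. Counting shows that when $P$ has length $\ell \leq s-2$, Breaker has claimed only $\ell+1$ edges, while the pool of potential extension edges (from the two endpoints to the $s-\ell-1 \geq 2$ outside vertices) plus rotation edges (from endpoints to the $\ell-1$ internal vertices) is at least $2(s-\ell-1) + 2(\ell-1) = 2s-4$, comfortably outpacing Breaker.

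The main obstacle will be the endgame near $\ell = s-2$, where Breaker may concentrate his moves on the few remaining extension/rotation edges, and where a naive rotation argument could cycle through already-seen endpoints. This is handled by the standard \emph{rotation spectrum} bookkeeping used throughout Maker--Breaker Hamiltonicity theory (see, e.g., the strategies behind Theorem~\ref{thm:kriv}): Maker maintains the invariant that the set of endpoints reachable from $P$ by chains of rotations grows after each round, guaranteeing that within a bounded number of rotations either a new extension edge appears or $P$ has already reached length $s-2$, at which point Maker wins.
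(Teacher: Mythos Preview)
Your base cases $s\in\{3,4,5\}$ are correct and in fact nicer than what the paper does (which simply leaves them to the reader): you observe that Maker's edge count alone forces the required path by Erd\H{o}s--Gallai, so no strategy is needed there.

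The gap is in your treatment of $s\geq 6$. Your counting sentence ``when $P$ has length $\ell$, Breaker has claimed only $\ell+1$ edges'' implicitly assumes every Maker move extended the path; but the whole point of introducing rotations is that some Maker moves do \emph{not} extend, and each such rotation costs a full round (and hence a Breaker move) without increasing $\ell$. So the count $\ell+1$ on Breaker's edges is unjustified precisely in the regime where you need it. You then acknowledge the real difficulty --- that rotations can cycle through endpoints without ever producing a fresh extension --- and defer it to ``standard rotation spectrum bookkeeping'' behind Theorem~\ref{thm:kriv}. That theorem concerns biased Hamilton games on $K_n$ and its proof machinery is far from a black box you can invoke here; in any case, nothing in the present paper supplies that bookkeeping, so as written your argument for $s\geq 6$ is a sketch, not a proof.

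By contrast, the paper avoids rotations entirely. It argues by induction: Maker singles out one vertex $u$, responds inside $K_s\setminus u$ according to the inductive strategy (guaranteeing a path on $s-2$ of those $s-1$ vertices), and whenever Breaker touches an edge at $u$ Maker grabs another edge at $u$. At the end $u$ has Maker-degree at least $\lfloor (s-1)/2\rfloor$, and a short case analysis (does $u$ see an endpoint of the inductive path? two consecutive internal vertices? the missing vertex $w$?) shows the inductive path can always be lengthened by one via $u$. This is elementary and self-contained; if you want to rescue your approach you would need to actually carry out the rotation/endpoint-set growth argument in full.
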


\begin{proof}
\mnote{Improved proof flow, but did not change the argument.}
The cases  $s=3,4$ are straight-forward and left to the reader. Assume that
$s\geq 5$ and the statement is true for $s-1$.
Our strategy  for Maker  is as follows. He chooses an arbitrary vertex $u$.
If Breaker picks an edge incident with $u$ then Maker responds
by taking another edge incident with $u$. (If there is no such free edge, he claims an arbitrary free edge and 
forgets about it in the remainder of the game.)
If Breaker claims an edge between two vertices of $K_s\setminus u$ then
Maker responds by picking another edge between two vertices of $K_s\setminus u$
according to the winning strategy on $K_{s-1}$.

We now argue that this is a winning strategy.
Let $M$ denote Maker's graph at the end of the game.
If $M$ contains a path spanning $K_s\setminus u$ we are done.
Otherwise, $M$ contains a path $P = v_1, \ldots ,  v_{s-2}$ that contains all
but one
vertex $w$ of $K_s\setminus u$.
Observe that $u$ is adjacent to at least $\lfloor\frac{s-1}{2}\rfloor$ vertices
of $K_s\setminus u$.
If $u$ is adjacent to $v_1$ or to $v_{s-2}$ then we are done, so we can assume
this is not the case.

If $u$ is  not adjacent to $w$, then it has
at least $\lfloor\frac{s-1}{2}\rfloor > \lceil\frac{s-4}{2}\rceil$ neighbours
amongst the
$s-4$ interior vertices of $P$.
But then $u$ is adjacent to two consecutive vertices $v_i, v_{i+1}$ of $P$
and hence $P' := v_1, \dots, v_i, u, v_{i+1}, \dots, v_{s-2}$ is
a path of the required type.
Hence we can assume that $u$ is adjacent to $w$.
If $u$ is also adjacent to $v_{s-3}$ then
$P' := v_1, \dots, v_{s-3}, u, w$ is path of the required type.
Hence we can assume this is not the case.
Similarly we can assume $u$ is not adjacent to $v_2$.
If $s=5$ or $s=6$ then this in fact implies that $u$ is not adjacent to
any vertex of $P$, so that $w$ is the only neighbour of $u$.
But this contradicts the fact that $u$ has at least
$\lfloor\frac{s-1}{2}\rfloor\geq 2$
neighbours.

Considering $s>6$,
vertex $u$ has at least $\lfloor\frac{s-1}{2}\rfloor-1 >
\lceil\frac{s-6}{2}\rceil$
neighbours amongst $v_3, \dots, v_{s-4}$.
Again it follows that $u$ is adjacent to two consecutive vertices of
$P$,
so that we once again find a path of the required type.
This concludes the proof.
\end{proof}

\mnote{Tobias's updated path game since $B$ might not be a clique}
{\blue
Our second auxiliary game is the $(a,b)$ path game.
It played on the graph $G_{a,b}$ which has a vertex set of size $a+b$,
partitioned into two sets $A, B$ with $|A|=a, |B|=b$, where
$B$ induces a stable set and all edges not between two vertices of $B$ are
present. (So in particular, $A$ is a clique and $uv \in E(G_{a,b})$ for
all $u \in A, v \in B$.

Maker's objective is to create either a single path between two vertices
of $B$ that contains all vertices of $A$ (and possibly some other vertices
from $B$), or two vertex disjoint paths between vertices of $B$ that cover
all vertices of $A$.

\begin{lemma}\label{lem:abpathgame}
The $(a,b)$ path game is a win for Maker if one of the
following conditions is met
\begin{enumerate}
\item\label{itm:bgeq6} $b \geq 6$, or;
\item\label{itm:a=3} $a = 3$ and $b \geq 5$, or;
\item\label{itm:a=2} $a \in \{1,2\}$ and $b \geq 4$.
\end{enumerate}
\end{lemma}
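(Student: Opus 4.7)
My plan is to handle the three conditions of Lemma~\ref{lem:abpathgame} by separate arguments, with conditions~\ref{itm:a=2} and~\ref{itm:a=3} reducing to direct case analyses and condition~\ref{itm:bgeq6} requiring a more delicate combination with Lemma~\ref{lem:pathplusiso}.

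For condition~\ref{itm:a=2}: when $a = 1$, Maker simply needs two edges from the single $A$-vertex $v$ to $B$; since $v$ has $b \geq 4$ neighbours in $B$, at least two can be secured against any Breaker strategy. When $a = 2$ with $A = \{u,v\}$, Maker's first move is to claim $uv$ if available. If he obtains $uv$, only one further $u$-to-$B$ and one $v$-to-$B$ edge with distinct endpoints are needed, which a short analysis supplies. If Breaker takes $uv$ first, Maker instead aims to construct either a single $B$-to-$B$ path of the form $b_1 u b_3 v b_2$ (bypassing $uv$) or two vertex-disjoint paths $b_1 u b_2$ and $b_3 v b_4$, and one verifies directly that with $b \geq 4$ at least one of these plans succeeds.

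For condition~\ref{itm:a=3}: when $A$ is a triangle, Maker first attempts to build a Hamiltonian path in $A$ itself; if Breaker blocks too many $A$-$A$ edges, Maker falls back on using a $B$-vertex as a connector between two vertices of $A$. In either subcase the remaining requirement is a constant number of $A$-$B$ edges with pairwise distinct $B$-endpoints, and $b \geq 5$ is precisely enough to guarantee them by a case-by-case response strategy similar to the one used for $a = 2$.

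For condition~\ref{itm:bgeq6} the strategy interleaves two sub-games. In the first, restricted to the $\binom{a}{2}$ edges inside the clique $A$, Maker follows the strategy of Lemma~\ref{lem:pathplusiso} to produce a path $P = v_1 v_2 \cdots v_{a-1}$ spanning $A \setminus \{w\}$ for some $w \in A$. In the second, restricted to the $ab$ edges between $A$ and $B$, Maker plays a pairing-type response designed to preserve, for every vertex $v \in A$, at least two available $v$-to-$B$ edges, while avoiding concentrated loss of $B$-endpoints at any one $A$-vertex. Maker's overall rule is reactive: if Breaker's move is inside $A$, Maker responds inside $A$ by the strategy of Lemma~\ref{lem:pathplusiso}; if Breaker's move is between $A$ and $B$, Maker responds between $A$ and $B$. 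Once Maker has finished the $A$-internal path and therefore identified $v_1, v_{a-1}, w$, he uses the preserved $A$-$B$ edges to extract two $B$-edges at each of these three vertices, with six distinct $B$-endpoints, producing the two vertex-disjoint paths $b_1 v_1 \cdots v_{a-1} b_2$ and $b_3 w b_4$ that cover $A$.

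The main obstacle lies in the $A$-$B$ sub-game: Maker does not know in advance which three $A$-vertices will play the roles of the path endpoints and the isolated vertex, so he must maintain two usable $B$-edges at every $A$-vertex with enough disjointness to cover any possible triple that the first sub-game might produce. The fact that $b \geq 6$ enters precisely here, providing exactly enough $B$-endpoints to accommodate Breaker's worst concentration strategy through a box-game-style bookkeeping argument. Once this technical bookkeeping is in place, the two sub-strategies combine cleanly to yield Maker's win.
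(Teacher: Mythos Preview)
Your overall decomposition matches the paper's: case-split on $a$, use Lemma~\ref{lem:pathplusiso} for the $A$-internal game in part~\ref{itm:bgeq6}, and handle the small cases by explicit response/pairing strategies. Parts~\ref{itm:a=3} and~\ref{itm:a=2} are essentially right; the paper gives explicit pairings rather than the slightly vaguer response arguments you describe, but the ideas coincide.

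The gap is in part~\ref{itm:bgeq6}. You claim Maker can secure, at the three post-hoc-designated vertices $v_1,v_{a-1},w$, two $B$-edges each with six distinct $B$-endpoints, and hence always produce the two-disjoint-paths outcome. This is too strong and cannot be achieved by any reactive $A$--$B$ strategy when $b=6$. For instance, against the natural ``respond at the same $A$-vertex'' rule (the only concrete strategy compatible with your description), Breaker can play, for each $v$ in a large subset of $A$, the three edges $vb_4,vb_5,vb_6$; Maker's responses then give $v$ exactly the $B$-neighbourhood $\{b_1,b_2,b_3\}$. If $v_1,v_{a-1},w$ all fall into this subset---and Maker has no control over this, since the $A$-internal game is played independently---their $B$-neighbourhoods are all contained in a common $3$-set, so you cannot find four distinct $B$-endpoints, let alone six. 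No ``box-game bookkeeping'' fixes this: Maker cannot know in advance which three $A$-vertices will matter.

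The paper's proof avoids this by not insisting on two disjoint paths. Its $A$--$B$ strategy is just the simple reactive one, guaranteeing every $A$-vertex at least three $B$-neighbours. After extending the $A$-path to $P'=b_1v_1\cdots v_{a-1}b_2$, it splits: if $w$ has two $B$-neighbours outside $\{b_1,b_2\}$, take two disjoint paths; otherwise $w$'s three neighbours must include both $b_1,b_2$ and some $b_3$, and then $b_1v_1\cdots v_{a-1}b_2\,w\,b_3$ is a \emph{single} $B$--$B$ path covering $A$ (using $b_2$ as an internal $B$-vertex, which the game's objective explicitly permits). You need this single-path fallback; once you add it, your argument for part~\ref{itm:bgeq6} goes through with the simple reactive $A$--$B$ strategy and no further bookkeeping.
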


\begin{proof}
We prove each of these three winning conditions in turn.

\smallskip

\noindent
{\bf Proof of part~\ref{itm:bgeq6}:}
We can assume that $a \geq 4$, because otherwise one of the other cases will apply.
If $b \geq 6, a \geq 4$ then Maker can play as follows.
\begin{itemize}
\item Whenever Breaker plays an edge between two vertices of $A$ then
Maker responds by claiming another edge inside $A$ according to the
strategy from Lemma~\ref{lem:pathplusiso}, guaranteeing a path in $A$ that
contains all
but one vertex of $A$.
\item If Breaker claims an edge between $u\in A$ and  $v\in B$ then
Maker claims an arbitrary unclaimed edge connecting $u$ to a vertex in $B$.
\end{itemize}
If this is not possible then Maker claims an arbitrary edge.

Consider the graph at the game's end.
Every vertex of $A$ will have at least three
neighbours in $B$.
Maker has also claimed a path $P$ in $A$ that contains $|A|-1$ vertices.
Let $a_1, a_2$ be the endpoints of $P$, and let $a_3$ be the sole vertex not on $P$.
Since $a_1, a_2$ both have at least three neighbours in $B$, we can extend
$P$ to a path $P'$ between two vertices $b_1, b_2 \in B$.
If $a_3$ has two neighbours distinct from $b_1, b_2$ then
have our two vertex disjoint paths between vertices of $B$ covering $A$.

Otherwise $a_3$ is adjacent to $b_1, b_2$ and a third vertex $b_3 \in B$.
But then we can extend $P'$ by going to $a_3$ and then to $b_3$ to
get a single path between two $B$-vertices that covers $A$.

\smallskip

\noindent
{\bf Proof of part~\ref{itm:a=3}:}
We now consider the case when $a=3, b\geq 5$.
Say $A = \{a_1, a_2, a_3\}, B = \{ b_1, \dots, b_\ell\}$ with $\ell \geq 5$.
The strategy for Maker is as follows.
For each $i=1,2,3$ he pairs the edges
$a_ib_1, a_ib_2$ (meaning that if Breaker claims one of them, then Maker responds by claiming
the other) and he pairs the edges $a_ib_3, a_ib_4$. This will make sure every
vertex of $A$ has at least two neighbours in $B$.
He also pairs $a_1b_5, a_2b_5$. This will make sure that one of $a_1, a_2$ has at least three neighbours in $B$.
Furthermore, he pairs $a_1a_2, a_1a_3$ and $a_2a_3, a_3b_5$.
This ensures Maker will either claim two edges in $A$, or he will have one edge in $A$ and
$a_3$ will have three neighbours in $B$.
This concludes the description of Maker's strategy.

To see that it is a winning strategy for Maker, let us assume first that Maker's graph
has two edges in $A$. That is, he has claimed a path $P$ through all three vertices of $A$.
Since both endpoints of $P$ have two neighbours in $B$, it extends to a path $P'$ between
two points of $B$.

Suppose then that Maker has claimed only a single edge of $A$, but every vertex of $A$ is incident with three vertices of
$B$. In this case we can reason as in the proof of part~\ref{itm:bgeq6} to see that
Maker has either a single, or two vertex disjoint paths between vertices of $B$ that cover all vertices of $A$.

\smallskip

\noindent
{\bf Proof of part~\ref{itm:a=2}:}
If $a=1$ and $b \geq 4$ then is it easy to see that Maker can claim
at least two edges incident with the unique vertex of $A$, which gives
a path of the required type.

Let us thus assume that  $A = \{a_1,a_2\}$ and $B = \{b_1,\dots, b_{\ell}\}$ with $\ell \geq 4$.
The winning strategy for Maker depends on Breaker's first move.

{\bf Case 1:} Breaker did not claim $a_1a_2$ in his first move.
Without loss of generality, Breaker claimed $a_1b_1$.
Maker responds by claiming $a_1a_2$, and for the remainder of the
game he pairs
$a_1b_3, a_1b_4$ and $a_2b_1, a_2b_2$.
This way, he will clearly end up with a single path between two
vertices of $B$ that covers $A$.

{\bf Case 2:} Breaker claimed the edge $a_1a_2$ in his first move.
In response, Maker  claims $a_1b_1$.
For the rest of the game, Maker plays as follows.
Let $e_B$ be the first edge incident with $A$ that Breaker claims after this
point. We distinguish three subcases.

\textbf{(2-a)} If $e_B=a_1 b_i$
for some $i\geq 1$, then
Maker responds by claiming $a_2 b_i$. Without loss of generality $i=2$.
For the rest of the game, Maker now pairs $a_1b_3, a_1 b_4$
and $a_2b_3, a_2b_4$.
This way, both $a_1$ and $a_2$ will have at least two neighbours in $B$, and
$a_2$ has at least one neighbour not adjacent to $a_1$.
But then there is either a single path  between $B$-vertices covering
$a_1, a_2$ or there are two vertex disjoint paths between $B$ vertices, one which
covers $a_1$ and one which covers $a_2$..

\textbf{(2-b)}
If $e_B= a_2b_1$, then Maker claims $a_2b_2$ in response and for the remainder
of the game he pairs $a_1b_3, a_1b_4$ and $a_2b_3, a_2b_4$.
Again $a_1$ and $a_2$ will have at least two neighbours in $B$, and
$a_2$ will have at least one neighbour not adjacent to $a_1$.
As in case {\bf (2-a)}, we then have the required path(s).

\textbf{(2-c)}
If $e_B=a_2b_i$ for $i\geq 2$, then without loss of generality we can assume
$i=2$.
Maker responds by claiming $a_2b_3$; and for the remainder of the game
he pairs $a_1b_2, a_1b_4$ and $a_2b_1, a_2b_4$.
Again $a_1$ and $a_2$ will have at least two neighbours in $B$, and
$a_2$ will have at least one neighbour not adjacent to $a_1$.
Again we can find the required path(s).
\end{proof}

}

\subsection{Blob Cycles}

\mnote{If we permute inner vertices of a blob, we get another blob cycle, we
can  reduce the blob by a couple of vertices. I made this change throughout.
Please read it carefully.}

{\blue In this section, we prove some results about \emph{blob cycles.}}
An $s$-blob cycle  $C$ is the union of a cycle with a clique on $s$
consecutive vertices of it; this $s$-clique is called the \emph{blob} of $C$.
See Figure \ref{fig:kblob} for a depiction.
{\blue When $C$ is an $s$-blob cycle with $|V(C)|=m$, we typically write
$V(C) = \{ u_0, \dots, u_{m-1} \}$ where $u_iu_{i+1}$ is an edge for all $i$,
 modulo $m$,
and $u_0, \dots, u_{s-1}$ are the blob vertices.}
A {\em $s$-blob Hamilton cycle} is the union of a Hamilton cycle with a
clique on $s$ consecutive vertices of it.

{\blue  Blob cycles will be the building blocks of our Hamilton cycle in
$G(n, r_n)$. In the next section,  we will construct our Hamilton cycle by
gluing together many blob cycles in a good cell. Most of these blob cycles
are used to connect to a sparser part of the graph. We then use a much larger
blob cycle to conglomerate the blob cycles in a good cell into one blob cycle.
The lemmas below concern adding vertices and/or edges to a blob cycle, and
then finally combining many blob cycles into one.}

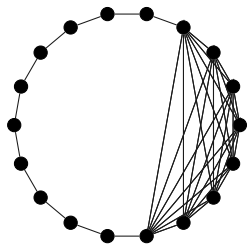
\begin{figure}[ht!]
 \begin{center}
\begin{tikzpicture}[scale=.5]

\tikzstyle{every node}=[circle, draw, fill=black,
                        inner sep=0pt, minimum width=5pt]

\foreach \i in {0,...,17}
{
\node (X\i) at (\i *20: 3) {};
}

{
\foreach \i/\j in {3/4,4/5,5/6,6/7,7/8,8/9,9/10,10/11,11/12,12/13,13/14}
{
\draw[black!90] (X\i) -- (X\j);
}

\foreach \i in {3,2,1,0,17,16,15,14}
{
  \foreach \j in {3,2,1,0,17,16,15,14}
  {
  \draw[black!90] (X\i) -- (X\j);
  }
}
}

\end{tikzpicture}
 \end{center}
\caption{An $8$-blob cycle on 18 vertices.\label{fig:kblob}}
\end{figure}

\begin{lemma}\label{lem:sblob}
For {\blue $k \geq 4$}, there is a $N=N(k)$ such that Maker
can make a $k$-blob Hamilton {\red cycle} in {\red an} $s$-clique for $s >N$.
\end{lemma}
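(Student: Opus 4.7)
Plan: My strategy for Maker is two-phased. In Phase 1 he builds the $k$-clique that will serve as the blob, and in Phase 2 he closes this into a Hamilton cycle in which the blob vertices appear along it as a consecutive segment.

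In Phase 1, I apply Theorem~\ref{thm:Hgamedet} with $H=K_k$: for $s$ large enough, Maker wins the $K_k$-game on $K_s$, and the strategy terminates in some bounded number $m_0=m_0(k)$ of Maker moves. At the end of this phase Maker owns the $\binom{k}{2}$ edges of a $K_k$ on some vertex set $V_0=\{v_1,\dots,v_k\}$ (with the labelling of his choice, since every ordering is a Hamilton path inside the clique), while Breaker has claimed at most $m_0$ edges in total.

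In Phase 2, writing $W=V(K_s)\setminus V_0=\{w_1,\dots,w_{s-k}\}$, it suffices for Maker to claim a Hamilton path that runs from $v_k$ through $W$ and ends at $v_1$: together with the path $v_1v_2,\dots,v_{k-1}v_k$ already in the blob, this yields a Hamilton cycle on which $V_0$ appears as a consecutive segment, i.e.\ a $k$-blob Hamilton cycle. The Phase 2 subgame is therefore a Hamilton $v_1v_k$-path game on the complete graph on $\{v_1,v_k\}\cup W$, with the edge $v_1v_k$ deleted and at most $m_0(k)$ of the remaining edges pre-claimed by Breaker. To win this I invoke Theorem~\ref{thm:kriv}: Maker wins the $(1{:}b)$-biased Hamilton cycle game on $K_n$ whenever $b=O(n/\ln n)$. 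A standard adaptation extends this to the Hamilton-path game between two prescribed endpoints in $K_n$, and by bias monotonicity Maker can absorb the $m_0(k)=O(1)$ pre-claimed Breaker edges (these may be treated as part of a slightly enlarged opening Breaker move). Since $m_0$ and the auxiliary bias are both constants, while the threshold grows like $n/\ln n$, Phase 2 is Maker's win whenever $s-k+2$ is sufficiently large in terms of $k$; choosing $N(k)$ accordingly completes the proof.

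The main obstacle is Phase 2: Theorem~\ref{thm:kriv} as stated is about the unbiased Hamilton \emph{cycle} game on $K_n$ without pre-claimed edges, and extending it simultaneously to fixed endpoints and to a constant number of pre-claimed Breaker edges requires some care. Both extensions are routine individually — the fixed-endpoint game can be reduced to a Hamilton-cycle-type game via an auxiliary vertex joined only to $v_1$ and $v_k$, and pre-claimed edges slot into the bias framework by monotonicity — but they must be combined cleanly in the bias accounting.
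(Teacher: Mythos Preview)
Your two-phase outline matches the paper's, and Phase~1 is fine. The gap is in Phase~2: the auxiliary-vertex reduction you propose does not go through. If you add a vertex $z$ joined only to $v_1$ and $v_k$, then in the resulting graph every Hamilton cycle must use both $zv_1$ and $zv_k$; since Breaker moves first he simply claims $zv_1$ and Maker is dead. The alternative reading---treat the $n-2$ missing $z$-edges as Breaker pre-claims in $K_{n+1}$ and absorb them into the bias---also fails, because $n-2$ pre-claimed edges force a bias of order $n$, far above the $n/\ln n$ threshold in Theorem~\ref{thm:kriv}. So the fixed-endpoint Hamilton path game is not a black-box consequence of Theorem~\ref{thm:kriv}, and the ``standard adaptation'' you invoke is not actually available from the paper's toolkit.

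The paper sidesteps this by \emph{not} prescribing the endpoints. In Phase~2 Maker plays two subgames in parallel: on edges inside $V\setminus C$ he follows the biased Hamilton-cycle strategy of Theorem~\ref{thm:kriv} (the constant bias absorbs the $O(1)$ edges Breaker claimed during Phase~1); on edges between $C$ and $V\setminus C$ he uses a pairing response, so that every $u\in V\setminus C$ untouched by Breaker in Phase~1 ends up with at least $\lceil k/2\rceil\geq 2$ Maker-edges into $C$. Since more than half the vertices of $V\setminus C$ were untouched, some two \emph{consecutive} vertices $u,v$ on Maker's cycle $\tilde C$ have Maker-edges to distinct $x,y\in C$, and splicing $u\,x\,\dots\,y\,v$ through the clique yields the $k$-blob Hamilton cycle. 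The key idea you are missing is letting the splice points emerge a posteriori from a counting argument rather than fixing them in advance.
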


\begin{proof}
By Theorem~\ref{thm:Hgamedet} there is an $s_0 = s_0(k)$ such
that Maker can make a $k$-clique on $K_s$ for all $s \geq s_0$.
Now let $s_1 = s_1(k)$ be such that Maker
can win the biased $(1:b)$-Hamilton cycle game on
$K_s$ for all $s\geq s_1$, where $b := {s_0\choose 2}$.
(Such a $s_1$ exists by Theorem~\ref{thm:kriv}.) \\
We set $N := \max\left(2 {s_0\choose 2}+1,s_1\right) + k$.

For $s \geq N$ Maker {\red can now} play as follows.
First he makes a $k$-clique on the first $s_0$ vertices of
$K_s$ by playing according to the strategy given by Theorem~\ref{thm:Hgamedet}.
(If Breaker plays an edge not between two vertices among the first $s_0$
then Maker pretends {\grn Breaker} played an arbitrary free edge between two
of the first
$s_0$ vertices and responds to that.)

Once Maker has succeeded in making a $k$-clique $C$, no more than
${s_0 \choose 2}$
edges have been
played so far.
He now continues playing as follows:

\begin{itemize}
 \item If Breaker claims an edge between two vertices of $V\setminus C$ then
Maker responds
according to the strategy given by Theorem~\ref{thm:kriv} that will ensure him a
cycle on $V\setminus C$;
\item If Breaker claims an edge between $u \in V\setminus C$ and $v \in C$ then
{\blue Maker} claims an unclaimed edge between $u$ and another vertex of $C$.
\end{itemize}

At the end of the game Maker's graph will contain a cycle $\tilde{C}$ on
$V\setminus C$.
Note that if a vertex $u \in V\setminus C$ was not incident with any edge
claimed {\red by} Breaker
at the time when Maker finished claiming $C$ then, by the end of the game,
$u$ will be incident in Maker's graph with at least {\blue $\lceil k/2\rceil
\geq 2$} vertices of $C$.
By choice of $N$, more than half of all the vertices of $V\setminus C$ were
not incident
with any of Breaker's edges just after Maker finished building his $k$-clique
$C$.
Hence there will be two vertices $u,v$ that are consecutive on $\tilde{C}$
and two vertices $x \neq y \in C$ such that Maker claimed the edges $ux, vy$.
This gives the required $k$-blob Hamilton cycle.
\end{proof}

\begin{lemma}\label{lem:blobcrap0}
For every $s\geq 3$ the following holds.
Let $G$ be a graph, $C \subseteq G$ an $s$-blob cycle, and $u, v \not\in V(C)$
with $uv \in E(G)$ such that $u,v$ each have at least {\blue $\lceil |V(C)|/2
\rceil$} neighbours on $C$.
Then there is an {\blue $s'$-blob cycle $C'$
with $s' \geq s-2$ and} $V(C') = \{u,v\} \cup V(C)$ where $uv \in E(C')$ and
the blob of $C'$ is contained
in the blob of $C$.
\end{lemma}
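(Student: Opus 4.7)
The plan is to locate two vertices $x, y$ of $V(C)$ that are adjacent in $C$ and satisfy $x \sim u$, $y \sim v$ (or the symmetric condition), then construct $C'$ by replacing the edge $xy$ with the path $x$-$u$-$v$-$y$. Write $m = |V(C)|$, label the cycle vertices $w_0,\dots,w_{m-1}$ in cyclic order with blob $B = \{w_0,\dots,w_{s-1}\}$, and let $S_u, S_v \subseteq V(C)$ denote the neighborhoods of $u, v$ in $C$, so $|S_u|,|S_v| \geq \lceil m/2 \rceil$.

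First I would show that there exists a ``good'' adjacency in $C$, i.e.\ a pair of vertices $x,y$ joined by either a cycle edge or a blob-clique edge with $x \in S_u$ and $y \in S_v$ (or vice versa). Since $|S_u| + |S_v| \geq m$, classifying each $w_i$ by its type in $\{S_u \setminus S_v,\, S_v \setminus S_u,\, S_u\cap S_v,\, \text{neither}\}$ and applying pigeonhole to the cyclic type pattern typically yields a good \emph{cycle} edge directly. In the extremal situation where this cyclic argument fails, the type pattern is so restricted that $S_u \cap S_v$ is forced to be large; two of its vertices that both lie in the blob provide a good blob-clique edge.

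Next, given a good adjacency $(x,y)$, I would perform the insertion: replace the edge $xy$ by the path $x$-$u$-$v$-$y$, keeping all other cycle edges of $C$ intact. The result is a Hamilton cycle on $V(C) \cup \{u,v\}$ containing the edge $uv$. For the blob of $C'$: if $xy$ is a cycle edge outside $B$ or one of the two blob-boundary edges $w_{s-1}w_s$ and $w_{m-1}w_0$, then $B$ is fully preserved and $s' = s$. If $xy$ lies strictly inside $B$, or is a blob-clique edge, the blob vertices of $C'$ split into two consecutive arcs around $u, v$; but because $B$ is a clique, I can freely permute blob vertices within each arc (subject to the boundary conditions at $w_{m-1}$ and $w_s$). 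Using this freedom, I would arrange the insertion so that the smaller arc contains at most two blob vertices; the larger arc then yields a clique on $\geq s-2$ consecutive vertices in $C'$, so $s' \geq s-2$.

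The main technical obstacle is the existence argument of the first step, particularly the extremal case $|S_u| = |S_v| = m/2$ with $S_u = S_v$, where no cycle edge alone can be good. Here the proof must lean on the blob's clique structure to supply a good edge between two elements of $S_u \cap S_v$ located in $B$, which is the reason the hypothesis of the lemma couples the half-neighborhood condition with the blob structure.
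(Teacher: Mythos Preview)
Your overall plan matches the paper's: find a cycle edge $u_iu_{i+1}$ with $u_i\in S_u$ and $u_{i+1}\in S_v$ (or vice versa), replace it by the path $u_i,u,v,u_{i+1}$, and then sort out the blob by cases depending on where $i$ sits relative to $\{0,\dots,s-1\}$. The paper simply asserts that such a consecutive pair exists by pigeonhole and proceeds directly to the case analysis; it does not discuss any extremal case at all.

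Your extra work on the extremal situation is where the real gap lies. When $m$ is even, $|S_u|=|S_v|=m/2$, and $S_u=S_v$ alternates around the cycle, no consecutive cycle pair is good, so you fall back on a blob-clique edge between two vertices of $S_u\cap S_v$ lying in $B$. Two things go wrong. First, there need not be two such vertices (take $s=3$ and $S_u=S_v=\{u_1,u_3,u_5,\dots\}$: only $u_1$ lies in the blob). Second, even when two are available, replacing a \emph{non-cycle} blob edge $xy$ by $x,u,v,y$ does not by itself yield a Hamilton cycle; you would first have to reroute the cycle through the clique so that $x,y$ become consecutive on it, and for $s=3$ with $\{x,y\}=\{u_0,u_2\}=\{u_0,u_{s-1}\}$ this is impossible since the blob segment is pinned at both ends by $u_{m-1}$ and $u_s$. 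In fact the lemma as stated is false: take $m=4$, $s=3$, blob $\{u_0,u_1,u_2\}$, and $S_u=S_v=\{u_0,u_2\}$. In any Hamilton cycle on the six vertices, $u_1$ and $u_3$ each have only $u_0,u_2$ as neighbours, forcing the four edges $u_0u_1,u_1u_2,u_2u_3,u_3u_0$ and leaving no room to attach $u$ or $v$ at all. So neither your argument nor the paper's can be completed without a stronger hypothesis (for instance $s\geq 4$, or strictly more than $m/2$ neighbours for $u$ and $v$); in the paper's applications the blob sizes are always large, so this edge case never actually arises there.
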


{\blue
\begin{proof}
In this proof (and the ones that follow), we continue to use the convention
that  $V(C) = \{ u_0, \dots, u_{m-1} \}$ where $u_iu_{i+1}$ is an edge for all
$i$, modulo $m$,
and $u_0, \dots, u_{s-1}$ are the blob vertices.

Since $u, v$ each have at least  $\lceil |V(C)|/2\rceil = \lceil m/2\rceil$
neighbours
on $C$, there
are two consecutive vertices $u_i, u_{i+1}$ such that  $uu_i, vu_{i+1}
\in E(G)$.
If $s-1 \leq i \leq m-1$, then $u_0,\dots,u_i,u,v,u_{i+1},\dots$,
$u_{m-1}$ is an $s$-blob  cycle.
 If $i=0$ then $u_0,u,v,u_1,\ldots,u_{m-1}$ is an $(s-1)$-blob cycle;
the case $i=s-2$ is similar.
If $1 \leq i \leq s-3$, we can relabel the blob vertices $u_1, u_2, \ldots ,
u_{s-3}$ so that $i=1$.
Then $u_0, u_1,  u, v, u_2, \dots, u_{m-1}$ is an $(s-2)$-blob  cycle. In all
cases, the new blob is a subset of the old blob.
\end{proof}
}

\mnote{Moved this lemma before the next one, since it is simpler.}
\begin{lemma}\label{lem:blobcrap1}
For every $\ell$ there is a $s=s(\ell)$ such that the following holds.
If $G$ is a graph, $C \subseteq G$ is an $s$-blob cycle, and $v \not\in V(C)$
has at least $|V(C)|/2-\ell$ neighbours on $C$, then
there exists an {\blue $s'$-blob cycle
$C'$ with $s' \geq s-2$ and}  $V(C') = \{v\}\cup V(C)$ and the blob of $C'$ is
contained in the blob of $C$.
\end{lemma}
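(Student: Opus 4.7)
I would set $s(\ell) := 2\ell + 4$. Label the cycle $V(C) = \{u_0, u_1, \ldots, u_{m-1}\}$ in cyclic order with blob $B = \{u_0, \ldots, u_{s-1}\}$ (a clique in $G$) and non-blob part $N = V(C)\setminus B$, which forms a path in $C$ when $m>s$ and is empty when $m=s$. Write $n_B := |N(v)\cap B|$ and $n_N := |N(v)\cap N|$, so by hypothesis $n_B + n_N \geq m/2 - \ell$. The plan is to describe two insertion strategies for adding $v$ to $C$, and to verify that at least one of them succeeds whenever $s \geq 2\ell + 4$.

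\emph{Strategy (I): direct insertion at a cycle edge.} If some non-blob-internal cycle edge $u_iu_{i+1}$ (that is, $s-1 \leq i \leq m-1$ modulo $m$) has both endpoints in $N(v)$, insert $v$ between $u_i$ and $u_{i+1}$. The result is an $s$-blob cycle whose blob is exactly the original blob $B$.

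\emph{Strategy (II): insertion via clique rearrangement.} Because $B$ is a clique in $G$, any permutation of the interior blob vertices $u_1, \ldots, u_{s-2}$ along the cycle still produces a valid Hamilton cycle of $C$ (the blob-boundary edges $u_{s-1}u_s, u_{m-1}u_0$ and the non-blob edges are untouched, and all blob-internal edges are present). Suppose $v$ has at least two blob-neighbors and at least one of them is interior (when $m=s$, the entire cycle is $K_s$ and every permutation is valid). Pick blob-neighbors $b_1, b_2 \in N(v)\cap B$, rearrange the blob so that $b_1, b_2$ become consecutive in the cycle, and insert $v$ between them. Placing the insertion next to the blob boundary whenever possible (e.g., when one of $b_1, b_2$ is $u_0$ or $u_{s-1}$, or whenever $m=s$) yields a new cycle whose longest consecutive-clique segment has size at least $s-2$; in particular the new blob is contained in $B$, as required.

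\emph{Counting argument.} To finish, I would show that for $s \geq 2\ell+4$, at least one of (I), (II) must apply. Failure of (I) forces $N(v)\cap N$ to be an independent set in the path $N$, and forces both boundary pairs $\{u_{s-1}, u_s\}$ and $\{u_{m-1}, u_0\}$ to meet $N(v)$ in at most one vertex. Failure of (II) forces $n_B \leq 1$, or else $n_B = 2$ with $N(v)\cap B = \{u_0, u_{s-1}\}$. A short case analysis on $n_B \in \{0, 1, 2\}$, using that the maximum independent set in a path on $k$ vertices has size at most $\lceil k/2 \rceil$ and that the boundary failures of (I) exclude further vertices of $N$ from $N(v)$, yields in every case the upper bound $n_B + n_N \leq (m-s+3)/2$, which contradicts $n_B + n_N \geq m/2 - \ell$ precisely when $s \geq 2\ell + 4$.

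The main subtle point lies in Strategy (II): because the blob's boundary vertices $u_0$ and $u_{s-1}$ are attached in $C$ to specific non-blob vertices, only the interior of the blob can be permuted freely when $m>s$. This is why the strategy requires at least one blob-neighbor of $v$ to be interior, and why the tightest subcase in the counting analysis is $n_B = 2$ with $N(v)\cap B = \{u_0, u_{s-1}\}$; this edge case is exactly what forces $s(\ell)$ to be as large as $2\ell + 4$.
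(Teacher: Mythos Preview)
Your proof is correct and follows essentially the same approach as the paper: two insertion strategies (insert $v$ at a cycle edge outside the blob interior, preserving the blob; or permute the blob interior to place two blob-neighbours of $v$ adjacent and insert there, losing at most two from the blob), together with a counting argument showing one strategy must succeed. The paper's Case~1 is your Strategy~(II) restricted to two \emph{interior} blob-neighbours, and its Case~2 is your Strategy~(I); your slightly more refined Strategy~(II) (allowing one boundary and one interior neighbour) and more careful case analysis give the tighter value $s(\ell)=2\ell+4$, whereas the paper simply takes $s=100(\ell+1)$ for convenience.
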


\begin{proof}
{\blue
Let us set $s := 100 \cdot (\ell+1)$,\mnote{We can reduce $s=2\ell$ if we want
to.} and
 write $V(C) = \{ u_0, \dots, u_{m-1} \}$ and  $u_0, \dots, u_{s-1}$ are the
vertices of an $s$-clique.
First, suppose first that $v$ is adjacent to at least two vertices of $u_1,
\dots, u_{s-2}$. By reordering these vertices,
$vu_1, vu_2 \in E(G)$. Set $C' = u_0, u_1, v, u_2, \dots, u_{m-1}$.
Then $C'$ is clearly an $(s-2)$-blob cycle whose blob is inside the blob of $C$.
}

Next, suppose then that $v$ has at most one neighbour among $u_1,\dots,
u_{s-2}$.
Then $v$ must have $\lfloor m/2\rfloor - (\ell+1)$ neighbours on the path
$P = u_{s-1},
\dots, u_{m-1}, u_0$.
Observe that the biggest subset of $V(P)$ that does not contain two
consecutive vertices has cardinality
\[
\lceil |V(P)|/2\rceil = \lceil (m-s+2)/2\rceil <
m/2 - 50(\ell+1) + 1 < \lfloor m/2\rfloor - (\ell+1).
\]
Therefore $v$ is adjacent to two consecutive
vertices on the path $P$, say $vu_i, vu_{i+1} \in E(G)$.
This time $C' = u_0, \dots, u_i, v, u_{i+1}, \dots, u_{m-1}$ is clearly an
$s$-blob
cycle whose blob is identical
to the blob of $C$.
\end{proof}

\begin{lemma}\label{lem:blobcrap2}
For every $\ell$ there is an $s=s(\ell)$ such that the following holds.
Suppose that $G$ is a graph, $C_1, C_2 \subseteq G$ are vertex disjoint,
$C_1$ is an $s$-blob cycle,
$C_2$ is a 5-blob cycle, and every vertex of the blob of $C_2$ has at least
$\lfloor |V(C_1)|/2 \rfloor -\ell$ neighbours on $C_1$.  Then
there exists an {\blue $s'$-blob cycle $C$ where $s' \geq s-2$}
 with $V(C) = V(C_1)\cup V(C_2)$ and the blob of $C$
is contained in the blob of $C_1$.
\end{lemma}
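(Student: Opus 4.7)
Our plan is to open $C_2$ into a Hamilton path between two of its blob vertices and insert this path into a suitably rearranged Hamilton cycle of $C_1$. Writing $V(C_2) = \{v_0, \ldots, v_{m_2-1}\}$ with cycle edges $v_i v_{i+1}$ (mod $m_2$) and blob $\{v_0, \ldots, v_4\}$, one checks by explicit construction that for every pair of distinct blob vertices $v_a, v_b$---except the pair $\{v_0, v_4\}$ of blob vertices that are cycle-adjacent to the non-blob portion of $C_2$---there is a Hamilton path $P_{a,b}$ of $C_2$ from $v_a$ to $v_b$. For instance, $P_{0,1}$ is $v_0, v_{m_2-1}, v_{m_2-2}, \ldots, v_5, v_4, v_3, v_2, v_1$, and $P_{0,2}$ is $v_0, v_{m_2-1}, \ldots, v_5, v_4, v_1, v_3, v_2$. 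This yields at least $\binom{5}{2}-1 = 9$ valid opening pairs.

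We seek a valid opening pair $(v_a, v_b)$ and a consecutive pair $(u, w)$ in some Hamilton cycle of $C_1$ with $v_a u, v_b w \in E(G)$, such that replacing the edge $uw$ by $P_{a,b}$ preserves a blob of size $\geq s-2$. Since the blob of $C_1$ is a clique, its interior can be permuted arbitrarily within any Hamilton cycle while keeping the blob endpoints $u_0, u_{s-1}$ adjacent to the non-blob chain. Accordingly, the available \emph{blob-preserving} insertion positions $(u, w)$ include: (i) any consecutive non-blob pair; (ii) either of the two blob/non-blob interface pairs; (iii) a pair of the form $(u_0, u_j)$ or $(u_j, u_{s-1})$ with $u_j$ in the blob interior; and (iv) two blob-interior vertices $u_j, u_k$ placed at the first or last two positions of a permutation of the blob interior. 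In each case one verifies that the resulting new cycle contains a clique of $\geq s-2$ consecutive blob vertices.

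To prove existence of a good tuple, define $d(u) := |\{a \in \{0,\ldots,4\}: v_a u \in E(G)\}|$ for $u \in V(C_1)$, so that the hypothesis gives
\[
\sum_{u \in V(C_1)} d(u) \;\geq\; 5\bigl(\lfloor m_1/2\rfloor - \ell\bigr).
\]
Suppose for contradiction that no good tuple exists. Then every blob-preserving pair $(u, w)$ is \emph{bad}, meaning the sets $S(u) := \{a : v_a u \in E\}$ and $S(w)$ admit no valid distinct pair $(a, b)$ (distinct with $\{a,b\}\neq\{0,4\}$). A runs-based counting argument (in the spirit of the runs analyses used in the proof of the Dissection Lemma) applied to the non-blob chain $u_{s-1}, u_s, \ldots, u_{m_1-1}, u_0$ of length $m_1 - s + 2$ bounds $\sum_{u\in\text{chain}} d(u)$ by $\tfrac{5}{2}(m_1-s+2) + O(1)$. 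Separately, a case analysis on whether each of $d(u_0), d(u_{s-1})$ is $0$, $1$ with a specific label, or $\geq 2$---combined with the constraints this imposes on blob-interior $d$-values via pairs of types (iii) and (iv)---bounds $\sum_{u\in\text{blob}} d(u) \leq s + O(\ell)$. Summing, $\sum_u d(u) \leq \tfrac{5m_1}{2} - \tfrac{3s}{2} + O(\ell)$, which contradicts the lower bound whenever $s$ exceeds a suitable constant multiple of $\ell$.

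Taking $s = s(\ell)$ large enough, we obtain a good tuple $(v_a, v_b, u, w)$. Rearranging $C_1$'s Hamilton cycle so that $u, w$ are consecutive and replacing the edge $uw$ by $P_{a,b}$ produces a cycle $C$ with $V(C) = V(C_1) \cup V(C_2)$ and blob of size $\geq s-2$ contained in the blob of $C_1$, as required. The hardest part of this plan is the bad-case analysis: it requires delicate bookkeeping to combine the runs structure along the non-blob chain with the several subcases for $d(u_0), d(u_{s-1})$ and to verify that the type (iv) pairs really do give the claimed bound $\sum_{u\in\text{blob}} d(u) \leq s + O(\ell)$.
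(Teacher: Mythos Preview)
Your global counting approach is sound in principle and does lead to a correct proof, but it is substantially different from---and much heavier than---the paper's argument, and your write-up contains a wrong intermediate bound and a misplaced reference.

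\textbf{What the paper does instead.} The paper never sets up a contradiction. It fixes $s=100(\ell+1)$, takes only three blob vertices $v_1,v_2,v_3$ of $C_2$, and splits into two direct cases. If some two of them have distinct neighbours among $u_0,\dots,u_{s-1}$, one reorders the blob of $C_1$ and splices in the Hamilton path of $C_2$ there, losing at most two blob vertices. Otherwise two of them, say $v_1,v_2$, have at most one neighbour each in the blob interior, hence at least $\lfloor m/2\rfloor-\ell-1$ neighbours on the non-blob path $P=u_{s-1},\dots,u_{m-1},u_0$; a one-line pigeonhole then produces consecutive $u_i,u_{i+1}$ on $P$ with $v_1u_i,v_2u_{i+1}\in E(G)$, and the splice there preserves the whole blob. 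That is the entire proof.

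\textbf{Errors in your sketch.} Your claimed bound $\sum_{u\in\text{blob}} d(u)\le s+O(\ell)$ is false. Take $S(u_j)=\{0,4\}$ for every blob vertex: then every type (iii) and (iv) pair is bad (the union is $\{0,4\}$), yet $\sum_{u\in\text{blob}}d(u)=2s$. The correct bound is $\le 2s+O(1)$, and there is no $\ell$-dependence at all---$\ell$ enters only through the lower bound $\sum d(u)\ge 5(\lfloor m_1/2\rfloor-\ell)$. With the corrected numbers one gets $\sum d(u)\le \tfrac{5}{2}(m_1-s)+2s+O(1)=\tfrac{5m_1}{2}-\tfrac{s}{2}+O(1)$, which still contradicts the lower bound once $s>10\ell+O(1)$, so your approach survives. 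Also, the appeal to ``runs analyses in the Dissection Lemma'' is off: that lemma concerns the geometry of bad cells in the unit-square dissection and contains no argument of the kind you need. The chain bound follows simply from the observation that in any maximal run of vertices with $S\neq\emptyset$ of length $\ge 2$, every vertex has $d\le 2$, so the alternating pattern $5,0,5,0,\dots$ is extremal.
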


{\blue
\begin{proof}
Let us set $s := 100 \cdot (\ell+1)$\mnote{We can reduce the value of
$s$ to $2 \ell +5$, if desired.},
and write $V(C_1) = \{ u_0, \dots, u_{m-1} \}$ where   $u_0, \dots, u_{s-1}$
are the blob vertices.
Similarly $V(C_2) = \{v_0, \dots, v_{n-1}\}$
where $v_0, \dots, v_4$ form a clique and $v_0,\dots, v_4$ each have at
least $\lfloor m/2\rfloor - \ell$ neighbours on
$C_1$.

First suppose that $v_1u_0, v_2u_1 \in E(G)$.
The graph  $C' = u_0$, $v_1, v_0$, $v_{n-1}, \dots, v_2$, $u_{1},
\dots, u_{m-1}$ is an $(s-1)$-blob cycle of the required type.
Next, when  $v_1u_1, v_2u_2 \in E(G)$  the graph $C' = u_0, u_1$, $v_1, v_0$,
$v_{n-1}, \dots, v_2$, $u_{2},
\dots, u_{m-1}$ is an $(s-2)$-blob cycle.
More generally, if any two among $v_1, \dots, v_3$ have distinct neighbours
among $u_0,\dots, u_{s-1}$ then we can just re-order the vertices
of $C_1$ and $C_2$ and apply the
above argument to get an $(s-1)$-blob cycle or an $(s-2)$-blob cycle.

If any two of $v_1,v_2,v_3$ both have at least two neighbours among
$u_1,\dots, u_{s-2}$ then we are done by the previous argument.
So assume this is not the case.
Relabeling $C_2$ if necessary, $v_1, v_2$ each have at most one neighbour
among $u_1,\dots, u_{s-2}$.
Then $v_1,v_2$ each have $\lfloor m/2 \rfloor -\ell-1$ neighbours on
$P = u_{s-1},\dots, u_{m-1}, u_0$.
In turn, there are at least {\red $\lfloor m/2 \rfloor -\ell-1$}
\mnote{Replaced -2 with -1 here. Aren't the worst 2 cases: (a) 2 endpoints,
or (b) a set of consecutive vertices?} points of $P$ {\em adjacent to a
neighbour of $v_1$}.
Since $(\lfloor m/2 \rfloor -\ell-1) + (\lfloor m/2 \rfloor -\ell-1) >
m-s+1 = |V(P)|$ there exists an
$s-1 \leq i \leq m-1$ such that $v_1u_i, v_2u_{i+1} \in E(G)$.
Clearly $C' = u_0, \dots, u_i$, $v_1, v_0$, $v_{n-1},
\dots, v_2$, $u_{i+1}, \dots, u_{n-1}$
is an $s$-blob cycle of the required type.
\end{proof}
}

Our final conglomeration lemma is the crucial tool for merging an ensemble of
blob cycles, along with some additional edges and vertices, into a single
spanning cycle. Ultimately, we will use this lemma to handle every good cell
in our dissection. The additional vertices and edges will be chosen so that
they allow us (1) to combine spanning cycles in neighboring cells and (2) to
add vertices in bad cells to our spanning cycle. See Section
\ref{sec:hamproof} for details.

\begin{lemma}\label{lem:blobcrap5}
For every $\ell_1, \ell_2$ there exists an $s=s(\ell_1, \ell_2)$ such that the
following holds.
Suppose $G$ is a graph and $C, C_1,\dots, C_{\ell_1} \subseteq G$ and
$u_1, \dots, u_{\ell_1}, v_1,\dots, v_{\ell_1}, w_1, \dots, w_{\ell_2} \in V(G)$
are such that:
\begin{itemize}
\item The vertices $u_1, \dots, u_{\ell_1}, v_1,\dots, v_{\ell_1}, w_1,
\dots, w_{\ell_2}$ are distinct;
\item $u_iv_i \in E(G)$ for $i=1,\dots,\ell_1$;
\item $C_1,\dots, C_{\ell_1}$ are 10-blob cycles and $C$ is an $s$-blob cycle;
\item $C, C_1,\dots, C_{\ell_1}$ are vertex disjoint and  do not contain any of
the {\red $u_i$'s, $v_i$'s or $w_i$'s};
\item $u_i, v_i$ both have at least $\lfloor |V(C_i)|/2\rfloor$ neighbours on
$C_i$
for $i=1,\dots,\ell_1$;
 {\blue
\item Every vertex of $\{w_1,\dots,w_{\ell_2}\}$
has at least
$\lfloor|V(C)|/2\rfloor$ neighbours on $C$;
\item For every $1\leq i\leq \ell_1$, every vertex of the blob of $C_i$ has at
least $\lfloor |V(C)|/2\rfloor$
neighbours on $C$.
}
\end{itemize}
Then there is an  {\blue $(s-2\ell_1-2\ell_2)$}-blob cycle $C'$ such that

\[  \begin{array}{l}
V(C') = V(C) \cup V(C_1)\cup\dots\cup V(C_{\ell_1}) \cup \{u_1, \dots,
u_{\ell_1},
v_1,\dots, v_{\ell_1}, w_1, \dots, w_{\ell_2}\},  \\
u_1v_1, \dots, u_{\ell_1}v_{\ell_1} \in E(C'),
\end{array} \]

\noindent
and the blob of $C'$ is contained in the blob of $C$.
\end{lemma}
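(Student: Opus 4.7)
My plan is to do an iterative absorption, combining the three preceding conglomeration lemmas. For each $i\in\{1,\dots,\ell_1\}$ I first apply Lemma~\ref{lem:blobcrap0} to $C_i$ and the edge $u_iv_i$ to obtain a blob cycle $C_i^{\ast}$ with $V(C_i^{\ast})=V(C_i)\cup\{u_i,v_i\}$, with $u_iv_i\in E(C_i^{\ast})$, with blob of size at least $8$, and with blob contained in the blob of $C_i$. In particular, the blob vertices of $C_i^{\ast}$ still have at least $\lfloor|V(C)|/2\rfloor$ neighbours on $V(C)$. (A minor $\lfloor\cdot\rfloor$ vs.\ $\lceil\cdot\rceil$ parity mismatch with the hypothesis of Lemma~\ref{lem:blobcrap0} is absorbed by a small case analysis.) I then set $C^{(0)}:=C$ and iteratively absorb the objects $C_1^{\ast},\dots,C_{\ell_1}^{\ast}$ and $w_1,\dots,w_{\ell_2}$ one at a time, producing blob cycles $C^{(0)},C^{(1)},\dots,C^{(\ell_1+\ell_2)}$, the last of which is the required $C'$.

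The technical heart of the argument is the following invariant, maintained at every step~$t$: the blob of $C^{(t)}$ is a contiguous subset of the blob of $C$ of size at least $s-2t$, and each of the $t$ insertions so far has happened either strictly inside this blob or at an edge of $C^{(t)}$ both of whose endpoints lie in $V(C)$ (an ``unbroken'' edge of the original cycle $C$). A direct iteration of Lemmas~\ref{lem:blobcrap1} and~\ref{lem:blobcrap2} cannot give the required bound on $s$, because the deficit parameter $\ell$ of those lemmas grows with $|V(C^{(t)})|-|V(C)|$, which is unbounded in terms of $\ell_1,\ell_2$ alone (the $|V(C_i)|$ are arbitrary). The invariant circumvents this by measuring everything against $V(C)$: after $t$ insertions at most $t$ edges of $C$ have been broken, and the vertices of $V(C)$ outside the current blob still lie in their original $C$-cyclic order on $C^{(t)}$.

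Each single insertion is handled via the dichotomy from the proof of Lemma~\ref{lem:blobcrap2}. Either the new object (a $w_j$, or two distinct blob vertices of the next $C_i^{\ast}$) has two distinct neighbours in the current blob, in which case I reorder the blob (which is a clique) so that those neighbours become consecutive on $C^{(t)}$ and insert there, losing $2$ blob vertices and preserving the invariant; or else the object has at least $\lfloor|V(C)|/2\rfloor-1$ neighbours in $V(C)$ outside the current blob. A double-counting on the path of $V(C)$-vertices outside the current blob, viewed on $C$, then gives at least
\[ 2\bigl(\lfloor|V(C)|/2\rfloor-1\bigr) - \bigl(|V(C)|-(s-2t)\bigr) \;\geq\; s-2t-O(1) \]
edges of $C$ with both endpoints in the relevant neighbourhood, and since at most $t$ of them have been broken by previous insertions, at least $s-3t-O(1)$ of these remain intact on $C^{(t)}$. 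Inserting at any such edge does not touch the blob, and the invariant is preserved without losing any blob vertex.

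The main obstacle is the $C_i^{\ast}$-insertion case, where the Lemma~\ref{lem:blobcrap2}-style gluing requires two \emph{distinct} blob vertices of $C_i^{\ast}$ with suitable adjacencies to the two endpoints of the chosen edge. Here I exploit the fact that the blob of $C_i^{\ast}$ has at least $8$ vertices: even in the worst case, where all blob vertices of $C_i^{\ast}$ share essentially the same set of neighbours on $V(C)$, or share a single ``bottleneck'' neighbour inside the current blob, one can still select two of them whose neighbourhoods on $V(C)$ outside the current blob each have size $\geq\lfloor|V(C)|/2\rfloor-1$, and a routine bipartite refinement of the count above produces the required intact edge. Choosing $s=s(\ell_1,\ell_2)$ of order $3(\ell_1+\ell_2)+O(1)$ makes the estimate $s-3t-O(1)>0$ hold for every $t\leq\ell_1+\ell_2-1$, so the induction carries through and produces the desired $(s-2\ell_1-2\ell_2)$-blob cycle $C'$ whose blob is contained in the blob of $C$.
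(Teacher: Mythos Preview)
Your proposal is correct and follows the same strategy as the paper: apply Lemma~\ref{lem:blobcrap0} first to obtain the $C_i^{\ast}$, then absorb the $w_j$'s and the $C_i^{\ast}$'s into $C$ using the insertion-edge arguments behind Lemmas~\ref{lem:blobcrap1}--\ref{lem:blobcrap2} rather than the lemmas as black boxes. Your invariant on ``unbroken edges of $C$'' makes explicit what the paper compresses into the phrase ``in one fell swoop'': the paper locates all the insertion edges on $C$ in advance (taking the deficit parameter $\ell$ large enough to keep them pairwise disjoint) and then performs all merges simultaneously, which is equivalent to your iterative formulation.
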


{\blue
\begin{proof}
We prove the statement for $s := \max \{ s_1(\ell_1), s_2(\ell_2) \}  + 2
\ell_1 + 2 \ell_2$
where $s_1(\cdot)$ is the function provided by Lemma~\ref{lem:blobcrap1} and
$s_2(\cdot)$ is
as provided by Lemma~\ref{lem:blobcrap2}.

First, we apply Lemma~\ref{lem:blobcrap0} to each triple $u_i, v_i, C_i$.
 {\red For $1 \leq i \leq \ell_1$, this yields an 8-blob cycle $C_1', \dots,
{\red C_{\ell_1}'}$ containing the edge $u_iv_i$ and}
such that each vertex of the blob of $C_i'$ has at least $\lfloor
|V(C)|/2\rfloor$
neighbours
on $C$. {\red Also,  the
edge $u_iv_i$ will not be part of the assumed 8-blob of $C_i'$}.

Next, we join all of the $w_i$'s with the $s$-blob cycle $C$. We apply
Lemma~\ref{lem:blobcrap1} repeatedly, with $\ell = 2\ell_2$. This value of
$\ell$ lets us pick distinct $u_j, u_{j+1}$ for each $w_i$, one after another.
We then add all the $w_i$ to $C$ in one fell swoop to get the
$(s-2\ell_1)$-blob cycle $C'$ that contains all of the $w_i$'s.

Finally, we merge $C'$ with the $C_1', \ldots , C_{\ell_1}'$ in a similar
manner. We apply Lemma~\ref{lem:blobcrap2} repeatedly, with
$\ell = 2 \ell_1 + 3 \ell_2$. This choice of $\ell$ allows us to pick distinct
$u_j, u_{j+1}$ for each $C_i'$.
We then merge $C_1', \ldots , C_{\ell_1}$ with  $C$ in one fell swoop to get
the desired $(s-2\ell_1-2\ell_2)$-blob cycle $C''$.
\end{proof}
}

\subsection{Proof of the Hamilton cycle game} \label{sec:hamproof}

We again divide the remainder of the proof of  Theorem~\ref{thm:hamhit}
into a deterministic and a probabilistic part.
The following lemma is the deterministic part:

\mnote{This proof has been reworked, but I did not make it all blue.
Please read this whole section carefully!}
\begin{lemma}\label{lem:hamgamedet}
There exists a value of $T$ such that the following holds.
If $V \subseteq [0,1]^2, m \in \eN, r > 0$ and $r \leq \rho \leq 2r$ are
such that
\begin{enumerate}
\item \str{1}--\str{6} hold with respect to $r$, and;
\item\label{itm:hamgamedet.ii} For all $2 \leq s \leq T$, every
$s$-obstruction with
respect to $r$ has
at least $s+2$ crucial vertices, and;
\item\label{itm:hamgamedet.iii} Every $(\geq T)$-obstruction  with respect to
$r$
has at least six crucial vertices, and;
\item $G(V,\rho)$ has minimum degree at least four.
\end{enumerate}
Then Maker wins the Hamilton cycle game on $G(V;\rho)$.
\end{lemma}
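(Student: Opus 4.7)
The plan is to have Maker decompose the edges of $G(V;\rho)$ into disjoint local games played in parallel, win each with a prescribed local structure, and then stitch the pieces together into a Hamilton cycle using the blob-cycle machinery from earlier in this section. I choose $T$ large enough that Lemma~\ref{lem:sblob} produces a $k$-blob Hamilton cycle on every $T$-clique and that Lemmas~\ref{lem:blobcrap0}--\ref{lem:blobcrap5} apply with all parameters that will arise; because the number of absorptions and mergers involving any single good cell is bounded by a universal constant via \str{6}, one universal $T$ suffices.

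First I set up the bookkeeping. Fix a spanning tree $\mathcal{T}$ of $\Gamma_{\max}$. For each obstruction $A$ with $|A|\geq 2$, pick a set $W_A$ of crucial vertices of size $\min(|A|+2,6)$; for every $v\in W_A$, fix a witnessing cell $c(v)\in\Gamma_{\max}$ and a set $I_v\subseteq c(v)\cap V$ of $T$ important neighbours. For each safe vertex $u$ lying in a bad cell but in no obstruction, fix analogously $c(u)$ and $I_u$. For each $1$-obstruction $\{v\}$, the minimum-degree-four hypothesis and \str{3}--\str{5} jointly supply four neighbours of $v$ in $G(V;\rho)$, each of which is necessarily safe (none is dangerous by \str{3}, none is in $\Gamma_i^+$ for $i\geq 2$ by \str{5}). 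Classify the edges of $G(V;\rho)$ as (i) \emph{cell edges}, both endpoints in the same cell of $\Gamma_{\max}$; (ii) \emph{obstruction edges}, both endpoints in $A\cup W_A$ for some obstruction with $|A|\geq 2$; (iii) \emph{connector edges} $\{v,w\}$ with $w\in I_v$ for a designated $v$; (iv) \emph{tree edges} between $\mathcal{T}$-adjacent cells; and (v) the four \emph{$1$-obstruction edges} at each dangerous singleton. By \str{3}--\str{5} the categories can be made pairwise disjoint, and Maker ignores any unclassified edge.

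Second, all local games are played in parallel: whenever Breaker claims an edge in some local game, Maker responds inside that game; otherwise Maker plays an arbitrary free edge. Because each local strategy uses at most one Maker move per Breaker move, the games proceed simultaneously with Breaker moving first, and Maker wins them all. In each cell game he builds a $k$-blob Hamilton cycle via Lemma~\ref{lem:sblob}. In each obstruction game with $|A|\geq 2$ he plays the $(|A|,|W_A|)$-path game of Lemma~\ref{lem:abpathgame}; note that the obstruction is a clique in $G(V;\rho)$ (its diameter is below $r/100\leq\rho$, by \str{2} for $\Gamma_i^+$-obstructions and by \str{3} for dangerous clusters), that the crucial vertices lie outside the obstruction (every point of an obstruction is risky or dangerous, hence not safe), and that $|W_A|\in\{4,5,\geq 6\}$ matches the three winning cases. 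On connector-edge pairs $\{v,w\},\{v,w'\}$ with $w,w'\in I_v$, on pairs of tree edges, and on pairs among the four edges at each $1$-obstruction, Maker uses the classical pairing strategy, ending with at least $T/2$ Maker edges from $v$ to $I_v$ for every relevant $v$, at least half of the edges between any two $\mathcal{T}$-adjacent cells, and at least two Maker edges at each $1$-obstruction.

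Third, Maker assembles the Hamilton cycle. In each good cell $c$ he starts with its blob cycle; for every obstruction path (or path-pair) with a crucial endpoint $v$ satisfying $c(v)=c$, he splices it into $c$'s blob cycle using the $T/2$ Maker edges from $v$ to $I_v$ together with Lemma~\ref{lem:blobcrap5}; he absorbs each orphan-safe $u$ with $c(u)=c$ analogously via Lemma~\ref{lem:blobcrap1}, and each $1$-obstruction through its two Maker edges after arranging (by choice of $c(v)$ and the clique-flexibility inside the blob) that both Maker-neighbours appear consecutively on the cycle. Finally, walking along $\mathcal{T}$, at each tree edge Maker merges the two adjacent blob cycles into one via Lemma~\ref{lem:blobcrap5}, using the Maker-owned cross-cell edges. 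Each absorption and merger consumes a bounded number of blob vertices; by \str{6} the number of merges touching any single cell is $O(1)$, and the per-cell bookkeeping of obstructions and orphans is likewise $O(1)$. Taking $k$ (hence $T$) sufficiently large ensures that the blob of every cycle survives all conglomerations, so the resulting spanning blob cycle is a Hamilton cycle in Maker's graph. The main technical obstacle is this final budget check, which reduces to a finite calculation and determines the universal value of $T$.
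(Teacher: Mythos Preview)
Your overall architecture --- partition the edges into local games, play them in parallel, and stitch the results via the blob-cycle lemmas --- is exactly the paper's plan, but two concrete steps do not go through as written.

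\textbf{The interface between external vertices and the cell blob cycle is insufficient.} You put \emph{all} vertices of a good cell $c$ into a single $k$-blob Hamilton cycle $C$, and then propose to splice in an obstruction path ending at a crucial vertex $v$ ``using the $T/2$ Maker edges from $v$ to $I_v$ together with Lemma~\ref{lem:blobcrap5}''. But the absorption lemmas (\ref{lem:blobcrap1}, \ref{lem:blobcrap2}, \ref{lem:blobcrap5}) all require the vertex being absorbed to have roughly $|V(C)|/2$ Maker-neighbours on $C$, not merely $T/2$. Since a good cell may contain many more than $T$ vertices (and in the random graph it typically has $\Theta(\ln n)$), the $T/2$ edges to $I_v$ are far too few. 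The paper fixes this by \emph{marking} the important vertices $I_v$ in advance, holding them \emph{outside} the blob cycles, partitioning the remaining unmarked vertices of $c$ into a large $s$-blob piece $C_0(c)$ and several small $10$-blob pieces $C_1(c),\dots,C_\ell(c)$, and then playing, for every marked vertex $u$, a pairing game that guarantees $u$ has at least $\lceil|C_i(c)|/2\rceil$ Maker-edges to each $C_i(c)$ (strategies (ix)--(x) in the paper). Only with this two-level structure do the hypotheses of Lemma~\ref{lem:blobcrap5} hold. Your single-blob-cycle-per-cell setup has no such mechanism, and the invocation of Lemma~\ref{lem:blobcrap5} fails.

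\textbf{Obstruction paths whose two crucial endpoints land in different cells are not handled.} The $(a,b)$-path game on an obstruction $A$ produces one or two paths whose endpoints are crucial vertices $v,v'$; in general $c(v)\neq c(v')$. You say you splice the path into the blob cycle of ``the'' cell $c$ with $c(v)=c$, but then the other endpoint $v'$ dangles: you cannot form a cycle from $C$ together with a path that leaves $C$ and does not return. The paper resolves this by invoking \str{6} --- not to bound the number of merges per cell, as you claim, but to find a short $\Gamma_{\max}$-path $\Pi$ from $c(v)$ to $c(v')$, marking two vertices per cell of $\Pi$, and pairing edges along $\Pi$ so that the obstruction path is extended until \emph{both} endpoints lie in the same cell. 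Only then can it be treated as a single ``marked edge'' $u_iv_i$ for Lemma~\ref{lem:blobcrap5}. Your proposal contains no such routing, and your use of \str{6} to bound merges is a misreading of that condition.
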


Before launching into the proof, we give a high-level overview of the
argument. 
We use the spanning tree $\Tcal$ of $\Gamma_{\max}$ from Lemma~\ref{lem:span}
as the skeleton of the Hamilton cycle.
Maker plays many local mini-games, and then stitches together the local structures to create the Hamilton cycle.  Figure \ref{fig:ham-strategy} shows a simplified version of the mini-games in a good cell.
We  \emph{mark} a large,
but finite, number of vertices in good cells. The majority of the marked
vertices are \emph{important vertices} in good cells that are associated to
\emph{critical vertices} for bad cells. Some marked vertices are reserved for
joining the blob cycles in good cells that are adjacent in $\Tcal$.   Maker
plays the game so that (1) in each cell, he can create a family of blob cycles
spanning the unmarked vertices (Lemma~\ref{lem:sblob}, multiple times), (2)
there are two independent edges between good cells that are adjacent in
$\Tcal$, and (3) he can construct a family of paths through the bad cells.
In particular, every vertex in a bad cell will be on exactly one path between
a pair of marked vertices that are in the same good cell.  At this point, we
place the marked vertices into two categories. Pairs of marked vertices that
are endpoints of a special path are temporarily considered \emph{marked edges}
$u_i v_i$. Note that the path games create one or two paths through each obstruction. The unused marked vertices $w_j$ are considered as \emph{marked singletons}.

\begin{figure}[h!]
\begin{center}
\begin{tabular}{cc}

\begin{tikzpicture}[scale=.32]

\tikzstyle{myarrows}=[line width=0.25mm,draw=black,-triangle 45,postaction={draw, line width=1mm, shorten >=2mm, -}]

\tikzstyle{myarrows2}=[line width=0.25mm,draw=gray,-triangle 45,postaction={draw=gray, line width=1mm, shorten >=2mm, -}]


\draw[gray!25,fill] (0,10) -- (10,10) -- (10,12) -- (0,12) -- cycle;
\draw[thick, fill=gray!40] (0,0) -- (10,0) -- (10,10) -- (0,10) -- cycle;
\draw[thick] (0,10) -- (0,12);
\draw[thick] (10,10) -- (10,12);

\draw[dashed] (0,8) -- (10,8);
\draw[dashed] (0,5) -- (10,5);

\draw[dashed] (3.33,5) -- (3.33,10);

\draw[dashed] (6.66,5) -- (6.66,10);

\foreach \i in {1.6, 5, 8.4}
{

\begin{scope}[shift={(\i,0)}]

\foreach \j in {-1, 0, 1}
{
\node[circle, draw, fill=black,inner sep=0pt, minimum width=2pt] at (\j, 9.25) {};
\node[circle, draw, fill=black,inner sep=0pt, minimum width=2pt] at (\j, 8.75) {};
}

\end{scope}


\draw[thick]  (-3,9) ellipse (.75 and 1.5);

\node at (-3, 6.75) {\scriptsize{obstruction}};

\node[circle, draw, fill=black,inner sep=0pt, minimum width=2pt] (V1) at (-3, 9.5) {};
\node[circle, draw, fill=black,inner sep=0pt, minimum width=2pt] (V2) at (-3, 9) {};
\node[circle, draw, fill=black,inner sep=0pt, minimum width=2pt] (V3) at (-3, 8.5) {};


\draw[thick]  (13,9) circle (1.5);

\node[circle, draw, fill=black!70,inner sep=0pt, minimum width=2pt] (Z1) at (12.5, 9.5) {};
\node[circle, draw, fill=black!70,inner sep=0pt, minimum width=2pt] (Z3) at (13.5, 9.5) {};
\node[circle, draw, fill=black!70,inner sep=0pt, minimum width=2pt] (Z4) at (12.5, 8.5) {};
\node[circle, draw, fill=black!70,inner sep=0pt, minimum width=2pt] (Z2) at (13.5, 8.5) {};

\node at (13, 7) {\scriptsize{safe cluster}};


{
\draw[myarrows] (.25,9) -- (-2.5,9);
\draw[myarrows] (5,9.75) -- (5,11);
\draw[myarrows] (9.75,9) -- (12,9);
}

{
\foreach \k in {-3.5, 0, 3.5}
{
\begin{scope}[shift={(\k,0)}]
\draw[myarrows] (4.5,8.25) -- (4.5,6.25);
\draw[myarrows] (4.5,5.5) -- (4.5,2.5);
\draw[myarrows] (5.5,8.25) -- (5.5,2.5);
\end{scope}
}
}
}

\end{tikzpicture}

&

\begin{tikzpicture}[scale=.32]

\tikzstyle{myarrows}=[line width=0.25mm,draw=black,-triangle 45,postaction={draw, line width=1mm, shorten >=2mm, -}]

\tikzstyle{myarrows2}=[line width=0.25mm,draw=gray,-triangle 45,postaction={draw=gray, line width=1mm, shorten >=2mm, -}]


\draw[gray!25,fill] (0,10) -- (10,10) -- (10,12) -- (0,12) -- cycle;
\draw[thick, fill=gray!40] (0,0) -- (10,0) -- (10,10) -- (0,10) -- cycle;
\draw[thick] (0,10) -- (0,12);
\draw[thick] (10,10) -- (10,12);

\draw[dashed] (0,8) -- (10,8);
\draw[dashed] (0,5) -- (10,5);

\draw[dashed] (3.33,5) -- (3.33,10);

\draw[dashed] (6.66,5) -- (6.66,10);

\foreach \i in {1.6, 5, 8.4}
{

\begin{scope}[shift={(\i,0)}]

\foreach \j in {-1, 0, 1}
{
\node[circle, draw, fill=black,inner sep=0pt, minimum width=2pt] at (\j, 9.25) {};
\node[circle, draw, fill=black,inner sep=0pt, minimum width=2pt] at (\j, 8.75) {};
}

\end{scope}


\draw[thick]  (-3,9) ellipse (.75 and 1.5);

\node at (-3, 6.75) {\scriptsize{obstruction}};

\node[circle, draw, fill=black,inner sep=0pt, minimum width=2pt] (V1) at (-3, 9.5) {};
\node[circle, draw, fill=black,inner sep=0pt, minimum width=2pt] (V2) at (-3, 9) {};
\node[circle, draw, fill=black,inner sep=0pt, minimum width=2pt] (V3) at (-3, 8.5) {};


\draw[thick]  (13,9) circle (1.5);

\node[circle, draw, fill=black!70,inner sep=0pt, minimum width=2pt] (Z1) at (12.5, 9.5) {};
\node[circle, draw, fill=black!70,inner sep=0pt, minimum width=2pt] (Z3) at (13.5, 9.5) {};
\node[circle, draw, fill=black!70,inner sep=0pt, minimum width=2pt] (Z4) at (12.5, 8.5) {};
\node[circle, draw, fill=black!70,inner sep=0pt, minimum width=2pt] (Z2) at (13.5, 8.5) {};

\node at (13, 7) {\scriptsize{safe cluster}};

}

\draw[very thick] (5,2.5) ellipse (4 and 1.5);

\foreach \i in {1.6, 5, 8.4}
{

\begin{scope}[shift={(\i,0)}]

\draw[very thick] (0, 6.5) circle (1);

\end{scope}

\draw[thick] (.6, 8.75) -- (V3) -- (V2) -- (V1) -- (.6, 9.25);

\draw[thick] (9.4, 8.75) -- (Z4) -- (Z3) -- (Z2) -- (Z1) -- (9.4, 9.25);

\draw[thick] (4, 9.25) -- (4,10.75);
\draw[thick] (5, 9.25) -- (5,10.75);

\node[circle, draw, fill=black,inner sep=0pt, minimum width=2pt] at (4, 10.75) {};
\node[circle, draw, fill=black,inner sep=0pt, minimum width=2pt] at (5, 10.75) {};

}

\draw[fill] (5,3.75) ellipse (2.5 and .6);

\foreach \i in {1.6, 5, 8.4}
{

\begin{scope}[shift={(\i,0)}]

\draw[fill] (0, 7.3) ellipse (.6 and .4);

\end{scope}

}

\end{tikzpicture}

\\
(a) & (b)
\end{tabular}

\caption{A schematic for the mini-games for Maker's  Hamilton strategy in a good cell. (a) The marked vertices (top row) are designated to make paths through nearby obstructions and safe clusters, and to connect to good cells that are adjacent in the tree $\Tcal$. The unmarked vertices (bottom two rows) are partitioned into subsets for blob cycle creations. Maker claims half the edges from each vertex to each  lower level. (b) After all edges have been claimed, Maker has paths through obstructions and two independent edges to nearby good cells, and some unused marked vertices, and a soup of blob cycles. The blob cycles  absorb the paths and unused vertices (using the many edges down to the lower levels in the good cell), culminating in the Hamilton cycle.}
\label{fig:ham-strategy}

\end{center}
\end{figure}
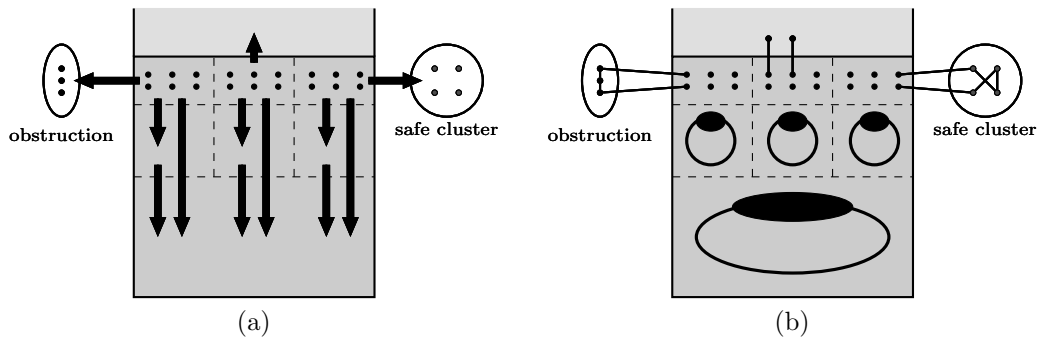

{\blue
 We construct our Hamilton cycle as follows. For each good cell, we
conglomerate its vertices via  Lemma \ref{lem:blobcrap5} using the family of
 blob cycles and the marked edges $u_i v_i$ and marked singletons $w_j$.
This creates a cycle that spans all  unmarked vertices, all marked edges
and all marked singletons. Finally, we replace each marked edge with the graph
structure it represents. This adds all of the bad vertices to our cycle, and
connects  every cell in $\Gamma_{\max}$.  The result is the
 desired Hamilton cycle. With this outline in mind, we proceed with the proof.
 }

\begin{proof}
{\blue
Let $T$ be a (large but finite) number, to be made explicit later on in the
proof. For each good cell of our dissection, we will identify at most $T$
vertices to help us with nearby vertices in bad cells. Collectively, we will
refer these vertices as \emph{marked}. The remaining vertices are
\emph{unmarked}, and these unmarked vertices will be used to create blob
cycles within the good cell.
}

\mnote{Really: holds for $2r$ and therefore for $r$. I think we can say this
more effectively.}
As in the proof of Lemma~\ref{lem:conngamedet}, observe that
$\Gamma( V; m, T, \rho)$
satisfies~\str{1}--\str{6} if we modify~\str{3}--\str{5}
very slightly by replacing the number $r \cdot 10^{10}$ by $r\cdot 10^{10} / 2$.
Also observe that items~\ref{itm:hamgamedet.ii} and~\ref{itm:hamgamedet.iii}
clearly
also hold with respect to $\rho$.
Again all mention of safe, dangerous, obstructions and so on will be with
respect
to $\rho$ from now on.


{\blue
Before the game starts, Maker identifies many local games that he will play.
These games fall into four categories, according to the types of vertices
involved. We have
games between good vertices in neighboring good cells;
games for bad vertices and nearby marked good vertices;
 games for unmarked vertices within a good cell;
and games between marked and unmarked vertices within a good cell.
We now describe these games in detail.}

Lemma~\ref{lem:span} implies  that $\Gamma_{\max}$ has a spanning tree
$\Tcal$ of maximum degree at most five.
Before the game, we fix such a spanning tree.
\mnote{I have changed how we use the spanning tree to make the Hamilton cycle.}
{\blue
Maker will use this spanning tree as the skeleton for the Hamilton cycle.
For each edge $c c'$ in the spanning tree, we identify four vertices in each
cell that are important for the edge $cc'$ and are considered marked.
Since $\Tcal$ has maximum degree 5, we mark at most 20 vertices in each good
cell. With $T$ being
large, we can and do take all these marked vertices distinct.
}

Maker keeps track of each obstruction. By Lemma \ref{lem:crucial}
(with $k=4$), every $s$-obstruction has at least $2+s \geq 4$ crucial vertices
for when $2 \leq s < T$, or at least $4$ crucial vertices when $s \geq T$. Note that a 2-obstruction must have at least 4 crucial vertices. 
This for instance means we cannot have two vertices of degree four that are joined by an edge (which  would have given Breaker a winning strategy).

To each obstruction, we assign either all of its crucial vertices if there
are fewer than six, or six of its crucial vertices, if there are more.
(No vertex can be crucial for multiple obstructions: obstructions are
well-separated by \str{3} -- \str{5}, so a cell contains crucial vertices for
at most one obstruction.)
Each of these crucial vertices has at least $T$ important vertices inside
some cell of
$\Gamma_{\max}$.
We assign four important vertices (all  in the same cell) to each crucial
vertex. (As above, we choose all important vertices to be distinct.)
{\blue Every  important vertex and every crucial vertex in a good cell  is
considered marked.
In $G$, vertices in a good cell are adjacent to at most one obstruction, so
 this adds at most $6+24=30$ marked vertices to each cell.}

 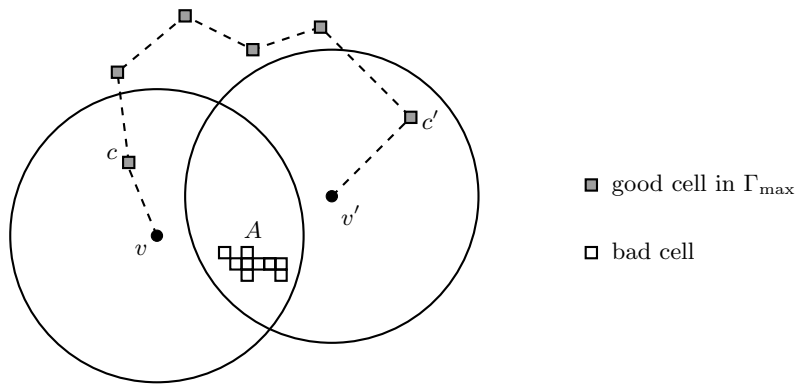
\begin{figure}[h!]
\begin{center}
\begin{tikzpicture}[scale=.3]

\begin{scope}[shift={(.25,.25)}]
\draw[thick, dashed] (-5.5,5) -- (-6,9) -- (-3,11.5) -- (0,10) -- (3, 11) -- (7,7);
\end{scope}

\draw[thick, dashed] (3.75, 3.75) -- (7.25, 7.25);

\draw[fill] (3.75, 3.75) circle (.25);
\node[below right] at (3.75, 3.75) {\small{$v'$}};
\draw[thick] (3.75, 3.75) circle (6.5);

\node[right] at (7.33, 7.15) {\small{$c'$}};

\draw[thick, dashed] (-4, 2) -- (-5.25, 5);

\draw[fill] (-4, 2) circle (.25);
\node[below left] at (-4, 2) {\small{$v$}};
\draw[thick] (-4, 2) circle (6.5);

\node[above] at (-6, 5) {\small{$c$}};

\foreach \x/\y in {
-6/9,-3/11.5,0/10,3/11, -5.5/5, 7/7, 15/4
}
{
 \draw[thick, fill=gray!75] (\x,\y) -- ( $(\x,\y) + (1/2, 0)$) --  ( $(\x,\y) + (1/2, 1/2)$) --  ( $(\x,\y) + (0, 1/2)$) -- cycle;
}

\begin{scope}[shift={(-1.25,1)}]

\foreach \x/\y in {
0/0,1/0,2/0
-.5/-.5,.5/-.5,1/-.5,1.5/-.5,2/-.5,2.5/-.5,
1/-1,2.5/-1,
16.25/0
}
{
 \draw[thick] (\x,\y) -- ( $(\x,\y) + (1/2, 0)$) --  ( $(\x,\y) + (1/2, 1/2)$) --  ( $(\x,\y) + (0, 1/2)$) -- cycle;
}

\node at (1.5, 1.25) {\small{$A$}};

\node[right] at (17,3.25) {\small{good cell in $\Gamma_{\max}$}};
\node[right] at (17,.33) {\small{bad cell}};

\end{scope}

\end{tikzpicture}
 \end{center}
 \caption{An obstruction $A$ with crucial vertices $v, v'$. These vertices have important vertices in distinct cells $c$ and $c'$. There is a short path between $c$ and $c'$ in $\Gamma_{\max}$, and we mark two vertices in each cell of this path.}
 \label{fig:two-crucial}
\end{figure}

We must also consider pairs of important vertices assigned to the obstruction.
Let  $c, c' \in \Gamma_{\max}$
be the cells that the two quadruples of important vertices lie in (these
important vertices  belong to different crucial vertices, so we might have
$c \neq c'$), as shown in Figure \ref{fig:two-crucial}.
This gives an upper bound on their distance, so by~\str{6} there is a (short)
path $\Pi$ between them. We orient all edges of the path
towards one of the endpoints, say $c$. Inside each cell of the path we assign
two vertices to $\Pi$. These vertices are considered marked.
A given good cell $c''$ is near at most one obstruction and the path is short,
so there are at most  ${6 \choose 2}$ such paths through $c''$. Thus at most,
we mark no more than $2 {6 \choose 2} = 30$ additional vertices in $c''$ (choosing
distinct vertices every time because $T$ is large). This completes our marking
of vertices for obstructions.


Next, we deal with safe vertices that are not in a cell of $\Gamma_{\max}$.
Each such vertex $v$ has at least $T$ neighbours inside some cell
$c \in\Gamma_{\max}$.
We assign $v$ arbitrarily to one  such $c$.
Next, for each cell $c$, we consider the set of safe points assigned to it.
We partition these safe vertices into at most {\grn 36} cliques by  centering a
$3r\times3r$ square
on the center of $c$, and dividing it into $(r/2)\times(r/2)$ squares.
 We will refer to each of these cliques as a {\em safe cluster}.
For each safe cluster we do the following. If it has more than six members,
we declare
six of them ``crucial''. 
For each such crucial vertex $w$, we pick four of
its neighbours in
$c$ (different from all marked vertices in $c$, since  $T$ is large).
We
declare these vertices to be \emph{important for $w$}.
If a safe cluster has at most six members, then we consider these vertices
to be singleton safe clusters. We assign four \emph{important} vertices in $c$
to each of these vertices. These important vertices are considered marked.
In the worst case (where every safe cluster size is at most six), we mark
$36 \cdot 6 \cdot 4 = 864$ vertices in $c$.

We have accounted for all vertices outside of $\Gamma_{\max}$, and marked
fewer than 1000 vertices in each good cell. The path games associate one or two paths to each obstruction or safe cluster, so we end up with at most 200 marked edges.
Finally, we address all unmarked vertices in cells of $\Gamma_{\max}$.
Inside each cell $c \in \Gamma_{\max}$, we partition the unmarked points
into sets $C_0(c)$, $C_1(c), \dots, C_{\ell}(c)$ where
$|C_i(c)| = N(10)$ for $i=1,\dots, \ell$,  and $|C_0(c)| > N(s)$, with
$N(10)$ and  $N(q)$ as in Lemma~\ref{lem:sblob}, where $\ell = 1000$ and $s=s(200,1000)$
 from Lemma \ref{lem:blobcrap5}  are both constants.
We can now specify our constant $T$, which must allow us to mark our distinct
vertices, and to make the blob cycles. Choosing $T = 1000 + \ell N(10) + N(s)$
is sufficient.

This completes Maker's organization of the graph. During the game, Maker plays
as follows:

\begin{enumerate}
\item Every edge $cc'$ {\red of $\cal T$}
has important vertices $x_1, x_2, x_3, x_4 \in c$ and
$y_1, y_2, y_3, y_4 \in c'$.
We pair the edges $x_1 y_1$ and $x_2 y_2$.  When Breaker claims one of them,
Maker responds by claiming the other one. Likewise, we pair edge $x_3 y_3$ and
$x_4 y_4$. Therefore Maker claims two independent edges between $c$ and $c'$.

\item For every crucial vertex, we pair the edges to the four important vertices
assigned to it. When Breaker claims one of them, Maker responds by claiming
the other one.
Therefore,  Maker claims at least two of these edges.

\item If Breaker claims an edge inside an obstruction $O$  together with the
crucial vertices $C$ assigned to that obstruction, then Maker responds
according to his winning strategy for
the corresponding $(a,b)$-path game (Lemma~\ref{lem:abpathgame}), where
$A=O$ and $B=C$.

\item If Breaker claims an edge inside a safe cluster $S$  together with its
important vertices $I$, Maker again responds according to his winning strategy
for the $(a,b)$-path game, this time with $A=S$ and $B=I$.

\item Suppose $c_1c_2$ is a directed edge of a path $\Pi$ between two cells
$c, c'$ containing
important vertices for an obstruction.
Let $u_1,u_2 \in c_1$ and $v_1, v_2\in c_2$ be the vertices assigned to $\Pi$.
Maker pairs $u_1v_1, u_1v_2$ and  pairs $u_2v_1, u_2v_2$.

\item Similarly if $u_1, u_2, u_3, u_4 \in c$ are important for an
obstruction, and
$v_1,v_2 \in c$ are assigned to a path $\Pi$ in $\Gamma_{\max}$ then
Maker pairs $u_iv_1, u_iv_2$ for $i=1,\dots,4$.

\item If Breaker claims an edge between two vertices of $C_i(c)$ for some
$1\leq i\leq \ell$ and $c\in\Gamma_{\max}$ then
Maker claims another edge in $C_i(c)$ according to his winning strategy for the
10-blob game (Lemma~\ref{lem:sblob}).

\item Likewise, if Breaker claims an edge inside $C_0(c)$ for some
$c\in\Gamma_{\max}$ then
Maker responds according to his winning strategy for the $s$-blob game
(Lemma~\ref{lem:sblob}).

\item If Breaker claims an edge $uv$ with $u \in c
\setminus (C_0(c)\cup\dots\cup C_{\ell}(c))$ for some $c\in\Gamma_{\max}$ and
$v \in C_i(c)$ then Maker claims another edge between $u$ and a vertex of
$C_i(c)$.

\item If Breaker claims an edge between $u \in C_0(c)$ and $v\in C_i(c)$ with
$c\in\Gamma_{\max}$ and
$1\leq i \leq \ell$, then Maker claim another edge $wv$ with $w \in C_0(c)$.

\item For any other Breaker move, Maker responds arbitrarily.

\end{enumerate}

We now prove that this is a winning strategy for Maker. We must show that
after the game ends, Maker's graph contains a Hamilton cycle.
\begin{enumerate}[{\bf P1}]
\item
First, by (iv),  Maker will have won the $(a,b)$-game for each safe cluster
$S$.
There will be either two important vertices (in the same good cell $c$) connected
by a  path that spans the safe cluster, or two pairs of such vertices connected by 
vertex disjoint paths that span the cluster.

\item
Similarly,  by (iii), Maker will have won the  $(a,b)$-game for every
obstruction.
In other words,  there will be one or two paths between pairs of crucial vertices that
span the obstruction. By (ii), each of these path extends to a path between important
vertices in two possibly different cells $c, c' \in \Gamma_{\max}$. By (v)
and (vi), these paths  also extend to a paths between two
marked vertices in the same cell $c\in\Gamma_{\max}$.
Also, every vertex that got marked crucial but is not part of such a path,
will be part of a path of length
two between two of its important vertices.

\item
Let $u,v$ be a pair of important vertices in the same good cell $c$ that are
the endpoints of a path through an obstruction or safe cluster. We treat $uv$
as a marked edge. We also have a pair of vertices marked to connect to each adjacent cell in $\Tcal$. The total number of marked edges is at most 200.
Meanwhile any marked vertex $w$ that is not part of one of
these paths (as either an endpoint or an interior point) is treated as a
marked singleton.

\item
Next, we consider the  unmarked vertices in a good cell $c$. Strategies (vii)
and (viii) ensure that Maker will have created the blob cycles
$C_0(c), C_1(c), \ldots, C_{\ell}(c)$. These cycles span all unmarked vertices
in $c$.
By (ix) and (x), the family of blob cycles and the marked edges and the marked
singletons satisfy the conditions of Lemma \ref{lem:blobcrap5}. Therefore, we
can construct one cycle that spans all of the vertices in $c$, and includes
every marked edge. At this point, we have a spanning cycle in every good cell,
where this cycle used marked edges.

\item
Let $uv$ be a marked edge in $c$. Recall that this marked edge $uv$ is a
placeholder for another structure in Maker's graph (in fact, Breaker may have
claimed the actual edge between these vertices). First, if the vertices $u,v$
were important for the spanning tree $\Tcal$ of $\Gamma_{\max}$, then there is
a corresponding marked edge $u'v'$ in the spanning cycle of $c'$. We replace
these two marked edges by edges $uu'$ and $vv'$, which merges the spanning
cycles in $c$ and $c'$.  Second, if the vertices $u,v$ were important for an
obstruction $O$, then we replace the marked edge $uv$ with the spanning path
through $O$ whose endpoints are $u$ and $v$. Third, we replace any marked edge
associated with a safe cluster with the analogous path through that cluster.
Once we have replaced all the marked edges, we have our Hamilton cycle.
\end{enumerate}
\end{proof}

\begin{proofof}{Theorem~\ref{thm:hamhit}}
The proof is very similar to that of Theorem~\ref{thm:connhit}.
It is clear that Breaker wins if there is a vertex of degree at most three
(Breaker starts).
Hence

\begin{equation}\label{eq:HamhitLB} \Pee\left( \rho_n(\text{Maker wins the
Hamilton cycle game}) \geq
\rho_n(\text{min.\,deg. $\geq 4$}) \right) = 1.
\end{equation}

\noindent
We now define:

\[
r_L(n) := \left(\frac{\ln n + 5\ln\ln n - K}{\pi n}\right)^{\frac12}, \quad
r_U(n) := \left(\frac{\ln n + 5\ln\ln n + K}{\pi n}\right)^{\frac12},
\]

\noindent
for $K$ a (large) constant. By Theorem~\ref{thm:penrosemindeg}, we can
choose  $K=K(\eps)$ such that

\begin{equation}\label{eq:hamhiteq}
\Pee{\Big(} r_L(n) \leq \rho_n(\text{min.\,deg. $\geq 4$}) \leq r_U(n) {\Big)}
\geq
1-\eps+o(1).
\end{equation}

\noindent
By Lemma~\ref{lem:structure} the properties~\str{1}--\str{6}
are satisfied with probability $1-o(1)$ by $V=\Xcal_n, m = m_n, T=O(1),
r=r_L(n)$ with
$\Xcal_n$ as given by~\eqref{eq:Xdef}, $m_n$ as given by~\eqref{eq:mdef}
and $r_L$ as above.
By Lemma~\ref{lem:crucial}, with probability $1-o(1)$,
the remaining conditions of Lemma~\ref{lem:hamgamedet} are met
for any $r\leq \rho\leq 2r$ with $\delta( G(V;\rho) )\geq 4$.
Hence:

\[ \Pee\left( \rho_n(\text{Maker wins the Hamilton cycle game}) =
\rho_n(\text{min.\,deg. $\geq 4$}) \right) \geq 1-\eps-o(1).
\]

\noindent
Sending $\eps \downarrow 0$ gives the theorem.
\end{proofof}

\section{The perfect matching game}
\label{sec:perfect}

We start by considering the obvious obstructions preventing Maker-win:
vertices $v$ with degree $d(v) \leq 1$, and edges $uv$ with edge-degree
$d(uv) = |(N(v) \cup N(u)) \backslash \{ u,v \} | \leq 2$. Indeed, with
Breaker first to move, he can isolate a vertex of degree one. For an edge with
at most two neighbouring vertices, Breaker can ensure that one vertex
remains unmatched by Maker.

\begin{lemma}\label{lem:perfecthitExpPois}
Let $(r_n)_n$ be such that
\[
\pi n r_n^2 = \ln n + \ln\ln n + x + o(1),
\]
for some $x\in \eR$.
Let $Z_n$ denote the number of vertices of degree exactly one plus the number
of edges of
edge-degree exactly two in $\GPo(n,r_n)$.
Then
\[
\Ee Z_n {\red \rightarrow} (1+\pi^2/8)e^{-x} + \sqrt{\pi}(1+\pi) e^{-x/2}
\]
as $n\to\infty$.
\end{lemma}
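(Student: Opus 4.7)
The plan is to split $Z_n = V_n + E_n$, where $V_n$ counts vertices of degree exactly one and $E_n$ counts edges of edge-degree exactly two, and compute $\Ee V_n$ and $\Ee E_n$ separately. The proof of Lemma~\ref{lem:penrosemindeg2} in Penrose's book first establishes the same limit $\Ee V_n \to e^{-x} + \sqrt{\pi}\,e^{-x/2}$ for the Poissonized graph before de-Poissonizing, so that piece is essentially free.

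For $E_n$ I would apply the Palm formula (Theorem~\ref{thm:palm}) with $k=2$ to the indicator $h(u,v;A) := \frac{1}{2}\mathbf{1}[\|u-v\|\leq r_n]\cdot\mathbf{1}[|A\cap(B(u;r_n)\cup B(v;r_n))\setminus\{u,v\}|=2]$ to obtain
\[
\Ee E_n \;=\; \frac{n^2}{4}\int_{[0,1]^2}\!\int_{[0,1]^2} \mathbf{1}[\|u-v\|\leq r_n]\;\mu(u,v)^2 e^{-\mu(u,v)}\,du\,dv,
\]
where $\mu(u,v):= n\cdot\area\big((B(u;r_n)\cup B(v;r_n))\cap[0,1]^2\big)$. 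I would then decompose the integration domain according to the location of $v$ into a bulk region $[2r_n,1-2r_n]^2$, four congruent side strips, and four corner squares. Writing $L:=n\pi r_n^2 = \ln n + \ln\ln n + x + o(1)$, the corner contribution is $o(1)$ because $\mu\gtrsim L/4$ there, forcing $e^{-\mu}$ to be polynomially small in $n$. For the bulk, Corollary~\ref{cor:areadiff2} gives $\mu = n(\pi r^2 + 2dr - O(d^2))$ with $d=\|u-v\|$; substituting $y = 2nr_n d$ and using $e^{-L}\sim e^{-x}/(n\ln n)$ together with $nr_n^2=L/\pi$, a routine one-dimensional computation (the integrand is concentrated where $y=O(1)$) produces the bulk limit $\frac{\pi^2}{8}e^{-x}$.

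The heart of the argument is the side contribution. By symmetry I would restrict $v=(h,v_2)$ to the left strip $[0,r_n]\times[2r_n,1-2r_n]$ and multiply by four. Lemma~\ref{lem:areafinal} gives, uniformly in the relevant regime, $\mu \approx n\bigl(\tfrac{\pi}{2}r^2 + 2hr + (1+\cos\alpha)dr\bigr)$, where $\alpha$ is the angle of $u-v$; importantly the same formula holds when $u$ lies to the left of $v$ (apply Lemma~\ref{lem:areafinal} with the roles of $x$ and $y$ swapped; the resulting expression for $\mu$ algebraically simplifies to the one above). The only active constraint is $h+d\cos\alpha\geq 0$, i.e.\ that $u\in[0,1]^2$. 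After the rescaling $t=2nhr_n$, $s=2nr_nd$ (and sending the upper limits to $+\infty$, which is justified since the tails decay exponentially), all the $n$- and $r_n$-dependence miraculously cancels and the side contribution reduces to a constant multiple of $e^{-x/2}$ times the finite triple integral
\[
\int_0^\infty\!\!\int_0^\infty\!\!\int_{-\pi}^{\pi} s\,e^{-t-(1+\cos\alpha)s/2}\,\mathbf{1}[t+s\cos\alpha\geq 0]\,d\alpha\,ds\,dt,
\]
which I would evaluate by splitting on the sign of $\cos\alpha$: the constraint is vacuous when $\cos\alpha\geq 0$, while for $\cos\alpha<0$ integrating out $t$ first produces an extra factor $e^{-s|\cos\alpha|}$. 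Applying $\int_0^\infty s e^{-as}\,ds = 1/a^2$ in each piece yields the side limit $\pi^{3/2}e^{-x/2}$.

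The main obstacle will be the bookkeeping in the side integral: one must verify uniformly in $\alpha,d,h$ that Lemma~\ref{lem:areafinal} really captures $\mu$ up to $o(1)$ in the rescaled regime $t,s=O(1)$ (where $h,d$ are of order $r_n/L$, so the quoted error $O((d+h)^2)$ multiplied by $n$ is indeed $o(1)$), and that the subcase $u$ to the left of $v$ is correctly covered by swapping $x$ and $y$ in Lemma~\ref{lem:areafinal}. Everything else is standard change-of-variables and tail estimation. Summing the bulk and side contributions for $E_n$ with Penrose's value for $V_n$ yields $\Ee Z_n \to (1+\tfrac{\pi^2}{8})e^{-x} + \sqrt{\pi}(1+\pi)\,e^{-x/2}$.
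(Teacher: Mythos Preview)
Your approach coincides with the paper's: split $Z_n$ into the vertex count (handled via Lemma~\ref{lem:penrosemindeg2}) and the edge count, apply the Palm formula to the latter, and decompose into bulk/side/corner pieces using Corollary~\ref{cor:areadiff2} and Lemma~\ref{lem:areafinal}. The only cosmetic difference is that the paper parametrizes the side integral by the vertex closest to the boundary (so $\alpha\in[-\pi/2,\pi/2]$ and no indicator is needed), while you integrate over $\alpha\in[-\pi,\pi]$ with the constraint $t+s\cos\alpha\geq 0$; the two are equivalent under $\alpha\mapsto\alpha+\pi$ on the half where $\cos\alpha<0$. The paper also invokes Lemma~\ref{lem:boundarydistancecount} to dispose of the corner and of the region $d,h\gg r/\ln n$, where your direct ``$\mu\gtrsim L/4$'' argument does the same job.

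One arithmetic point: the triple integral you wrote down evaluates to $\tfrac{32}{3}$, not a value giving a side limit of $\pi^{3/2}e^{-x/2}$. After integrating out $t$ and then $s$ via $\int_0^\infty s\,e^{-as}\,ds = a^{-2}$, the $\cos\alpha\geq 0$ half yields $4\int_{-\pi/2}^{\pi/2}(1+\cos\alpha)^{-2}\,d\alpha=\tfrac{16}{3}$, and the $\cos\alpha<0$ half gives the same by symmetry. The resulting side contribution is $\tfrac{\pi^{3/2}}{3}e^{-x/2}$, which is precisely what the paper's own computation produces; the constant $\sqrt{\pi}(1+\pi)$ in the lemma statement is inconsistent with that and appears to be a misprint for $\sqrt{\pi}(1+\pi/3)$.
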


\begin{proof}
Let $Y_n$ denote the number of vertices of degree exactly one and let $W_n$
denote the
number of edges of edge-degree exactly two.
From Theorem~\ref{thm:penrosemindeg} and Lemma~\ref{lem:penrosemindeg2}, we
have
\begin{equation}\label{eq:EY}
\Ee Y_n = e^{-x} + \sqrt{\pi e^{-x}} + o(1).
\end{equation}

\noindent
It thus remains to compute $\Ee W$.
Let $\Wcnr$ denote the number of edge-degree two edges $\{X_i,X_j\}$
with both vertices $X_i, X_j$ within $100 r$ of two of the sides of  $[0,1]^2$.
By Lemma~\ref{lem:boundarydistancecount} {\red using $k=2$ and condition (iii)
(a)}, we have

\begin{equation}\label{eq:WcnrExp}
\Ee\Wcnr = O\left( n^{-O(1)} \right).
\end{equation}

\noindent
Let $\Wmdl$ denote the number of edge-degree two edges $\{X_i,X_j\}$
for which both $X_i X_j$ are at least $r$ removed from the boundary of
the unit square.
For $0 \leq z \leq r$, let us write
\[
\mu(z) = n \cdot \area( B(u;r) \cup B(v;r) ),
\]
where $u,v \in \eR^2$ are such that $\norm{u-v}=z$.
Let $\eps > 0$ be arbitrary.
By {\red equation \eqref{eq:areadiff22} of} Corollary~\ref{cor:areadiff2},
there is a $\delta=\delta(\eps)$ such that

\begin{equation}\label{eq:muzbds}
\begin{array}{c}
\pi n r^2 + (2-\eps)nrz \leq \mu(z) \leq \pi n r^2 + (2+\eps)nrz,   \\
\text{ and } \\
(1-\eps) \pi n r^2 \leq \mu(z) \leq (1+\eps) \pi n r^2,
\end{array}
\end{equation}

\noindent
for all $0 \leq z \leq \delta {\red r}$.
Let $\Wmdl^\delta$ denote the number of edge-degree two edges $\{X_i,X_j\}$
for which both $X_i, X_j$ are at least $r$ removed from the boundary of
the unit square, and $\norm{X_i-X_j} \leq \delta r$.
By Lemma~\ref{lem:boundarydistancecount} {\red using $k=2$ and condition (iii)
(c)},  we have

\begin{equation}\label{eq:WmdlMinWmdlEps}
 \Ee \left(\Wmdl-\Wmdl^\delta\right)
= O\left( n^{-O(1)}\right).
\end{equation}

\noindent
Let us now compute $\Wmdl^\delta$. We have

\begin{equation}\label{eq:WmdlEpsUB}
\begin{array}{rcl}
\Ee \Wmdl^\delta
& \leq &
\displaystyle
\frac12 \cdot n^2 \int_{[0,1]^2}\int_{B(v;\delta r)}
\frac{\mu(\norm{u-v})^{2}e^{-\mu(\norm{u-{\red v}})}}{2}
 {\dd}u{\dd}v \\
& = &
\displaystyle
\frac12 \cdot n^2 \int_0^{\delta r}
\frac{\mu(z)^{2}e^{-\mu(z)}}{2}
 2\pi z {\dd}z \\
& \leq  &
\displaystyle
\frac12 n^2 \int_0^{\delta r}
\frac{\left( (1+\eps) \pi n r^2 \right)^{2}
e^{ - \pi n r^2 - (2-\eps) n r z }}{2} 2\pi z{\dd}z \\
& = &
\displaystyle
(1+o(1)) \cdot \frac{(1+\eps)^2\pi}{2} \cdot n^2 \cdot \ln^2 n \cdot
e^{-\pi n r^2}
\int_0^{\delta r}
e^{ - (2-\eps) n r z } z{\dd}z \\
& = &
\displaystyle
(1+o(1)) \cdot \frac{(1+\eps)^2\pi e^{-x}}{2} \cdot n \cdot \ln n
\int_0^{\delta r}
e^{ - (2-\eps) n r z } z{\dd}z \\
& = &
\displaystyle
(1+o(1)) \cdot \frac{(1+\eps)^2\pi e^{-x}}{2} \cdot n \cdot \ln n \cdot
\left(\frac{1}{(2-\eps)nr}\right)^2 \int_0^{(2-\eps)\delta nr^2}
e^{ - y } y{\dd}y \\
& = &
\displaystyle
(1+o(1)) \cdot \frac{(1+\eps)^2\pi e^{-x}}{2(2-\eps)^2} \cdot
 {\red \frac{\ln n}{n r^2}} \cdot \int_0^{\infty}
e^{ - y } y{\dd}y \\
& = &
\displaystyle
(1+o(1)) \cdot \frac{(1+\eps)^2\pi^2 e^{-x}}{2(2-\eps)^2}.
\end{array}
\end{equation}

\noindent
Here the factor $\frac12$ in the first line comes from the fact that
Theorem~\ref{thm:palm}
applies to {\em ordered} pairs; to get the second line we switched to
polar coordinates; to get the third line
we applied the bounds~\eqref{eq:muzbds}; to get the fourth line we used
that $\pi n r^2 = (1+o(1)) \ln n$;
to get the fifth line we used that $\pi n r^2 = \ln n + \ln\ln n + x + o(1)$ so
that $e^{-\pi n r^2} = (1+o(1)) e^{-x} / n\ln n$; to get the sixth line we used
the change of variables $y = (2-\eps) n r z$; and in the last two lines we
used that $n r^2\to \infty$
so that $\int_0^{(2-\eps)\delta nr^2} e^{ - y } y{\dd}y \to
\int_0^{\infty} e^{ - y } y{\dd}y = 1$.

By analogous computations we get

\begin{equation}\label{eq:WmdlEpsLB}
\begin{array}{rcl}
\Ee \Wmdl
& \geq &
\Ee \Wmdl^\delta \\
& \geq &
\displaystyle
\frac12 \cdot n^2 \int_{[100r,1-100r]^2}\int_{B(v;\delta r)}
\frac{\mu(\norm{u-v})^{2}e^{-\mu(\norm{u-{\red v}})}}{2}
 {\dd}u{\dd}v \\
& = &
\displaystyle
{\red (1-o(1))} \cdot \frac{n^2}{2} \int_0^{\delta r}
\frac{\mu(z)^{2}e^{-\mu(z)}}{2}
 2\pi z {\dd}z \\
& \geq  &
\displaystyle
{\red (1-o(1))}  \cdot \frac{n^2}{2} \int_0^{\delta r}
\frac{\left( (1-\eps) \pi n r^2 \right)^{2}
e^{ - \pi n r^2 - (2+\eps) n r z }}{2} 2\pi z{\dd}z \\
& = &
\displaystyle
{\red (1-o(1))} \cdot \frac{(1-\eps)^2\pi^2 e^{-x}}{2(2+\eps)^2}.
\end{array}
\end{equation}

\noindent
Combining~\eqref{eq:WmdlMinWmdlEps},~\eqref{eq:WmdlEpsUB}
and~\eqref{eq:WmdlEpsLB} and sending $\eps\downarrow 0$, we find

\begin{equation}\label{eq:WmdlFinal}
 \Ee \Wmdl = (1+o(1))\cdot \frac{\pi^2 e^{-x}}{8}.
\end{equation}

Finally, we must consider the remaining four rectangles of width $r$ that
are adjacent to exactly one border.\mnote{Moved definition of
$\mu(w,z,\alpha)$ down.}
Let $\Wsde$ denote the number of edges $\{X_i,X_j\}$ with edge-degree two
such that at least one
of $X_i,X_j$ is no more than $r$ away from some side of the unit square, and
both are at least $100r$ away from all other sides;
and let $\Wsde^\delta$ denote all such pairs for which in addition
at least one of $X_i, X_j$ is no more than $\delta r$ away from a side of the
unit square,
and $\norm{X_i-X_j} \leq \delta r$.
By Lemma~\ref{lem:boundarydistancecount} {\red using $k=2$ with the union of
conditions (iii)(b) and (iii)(c)}, we have

\begin{equation}\label{eq:WsdeMinWsdeEps}
 \Ee\left(\Wsde-\Wsde^\delta\right) = O\left( n^{-O(1)} \right).
\end{equation}

For $0 \leq w, z \leq r$ and $-\pi/2 \leq \alpha \leq \pi/2$, let us write:

\[
\mu(w,z,\alpha) := n\cdot \area( [0,1]^2 \cap (B(u;r)\cup B(v;r))),
\]

\noindent
where $u,v\in [0,1]^2$ are such that $u_x = w < v_x, \norm{u-v} = z$
and the angle between $v-u$ and the
positive $x$-axis is $\alpha$ (see Figure~\ref{fig:dhalpha}).
Fix $\eps> 0$. By Lemma~\ref{lem:areafinal} there is a $\delta=\delta(\eps)$
such that
for all $0 \leq w, z\leq \delta r$ and all $-\pi/2\leq\alpha\leq\pi/2$:

\begin{equation}\label{eq:bounds23}
\begin{array}{c}
 \frac{\pi}{2} n r^2 + (1+\cos\alpha-\eps) nrz + (2-\eps)nwr \\
\leq \\
\mu(w,z,\alpha) \\
\leq \\
\frac{\pi}{2} n r^2 + (1+\cos\alpha+\eps) nrz + (2+\eps)nwr,
\end{array}
\end{equation}
and
\begin{equation}\label{eq:bounds24}
(1-\eps)\frac{\pi}{2}nr^2 \leq \mu(w,z,\alpha) \leq (1+\eps)\frac{\pi}{2}nr^2.
\end{equation}

Let $B^+(v;r)$ denote the set of all $p \in B(v;r)$ with $p_x \geq v_x$.
We can write

\[
\begin{array}{rcl}
 \Ee \Wsde^\delta
& {\red \leq } &
\displaystyle
4 \cdot n^2 \int_{[0,\delta r]\times [100r,1-100r]}\int_{B^+(v;r)}
\Pee\left(\Po([0,1]^2 \cap (B(u;r)\cup B(v;r)))=2\right) {\dd}u{\dd}v \\
& = &
\displaystyle
(1+o(1)) \cdot 4 n^2 \cdot \int_{-\pi/2}^{\pi/2}\int_0^{\delta r}
\int_0^{\delta r}
\frac{\left(\mu(w,z,\alpha)\right)^2}{2} e^{-\mu(w,z,\alpha)} {\red z} \,
{\dd}w{\dd}z{\dd}\alpha \\
& \leq &
\displaystyle
(1+o(1)) \cdot 4 n^2 \cdot  
\int_{-\pi/2}^{\pi/2} \int_0^{\delta r}\int_0^{\delta r}
\frac{\left((1+\eps)\frac{\pi}{2}nr^2\right)^2}{2}e^{-\frac{\pi}{2}nr^2
-(1+\cos\alpha-\eps)nrz -(2-\eps)nrw} {\red z} \,  {\dd}w{\dd}z{\dd}\alpha \\
& = &
\displaystyle
(1+o(1)) \cdot \frac{(1+\eps)^2}{2} \cdot n^2 \cdot \ln^2 n \cdot
e^{-\frac{\pi}{2} nr^2} \cdot 
\int_{-\pi/2}^{\pi/2}\int_0^{\delta r}\int_0^{\delta r}
e^{-(1+\cos\alpha-\eps)nrz - (2-\eps)nrw} {\red z} \,
{\dd}w{\dd}z{\dd}\alpha \\
& = &
\displaystyle
(1+o(1)) \cdot \frac{(1+\eps)^2}{2} \cdot n^{\frac32} \cdot \ln^{\frac32} n
\cdot e^{-x/2} \cdot 
\int_{-\pi/2}^{\pi/2}\int_0^{\delta r}\int_0^{\delta r}
e^{-(1+\cos\alpha-\eps)nrz - (2-\eps)nrw} {\red z} \,
{\dd}w{\dd}z{\dd}\alpha \\
& = &
\displaystyle
(1+o(1)) \cdot \frac{(1+\eps)^2e^{-x/2}}{2} \cdot n^{\frac32}
\cdot \ln^{\frac32} n \cdot
\left(\frac{1}{(2-\eps)nr}\right) \cdot
\left[ 1 - e^{-(2-\eps)\delta n r^2} \right] \cdot
\\
& &
\displaystyle
\int_{-\pi/2}^{\pi/2}\int_0^{\delta r}
e^{-(1+\cos\alpha-\eps)nrz} {\red z} \,  {\dd}z{\dd}\alpha \\
& = &
\displaystyle
(1+o(1)) \cdot \frac{(1+\eps)^2e^{-x/2} }{2(2-\eps)} \cdot n^{\frac32} \cdot
\ln^{\frac32} n \cdot
(nr)^{-1} \cdot
\\
& &
\displaystyle
\int_{-\pi/2}^{\pi/2} \left( \frac{1}{(1+\cos\alpha-\eps)nr} \right)^2
\int_0^{(1+\cos\alpha-\eps)\delta nr^2}
 {\red y } \, e^{-y}{\dd}y{\dd}\alpha \\
& = &
\displaystyle
(1+o(1)) \cdot \frac{(1+\eps)^2e^{-x/2}}{2(2-\eps)} \cdot n^{\frac32}
\cdot \ln^{\frac32} n \cdot
(nr)^{-3} \cdot
\int_{-\pi/2}^{\pi/2} \left(\frac{1}{1+\cos\alpha-\eps}\right)^2 {\dd}\alpha \\
\end{array}
\]

Here we have once again used Theorem~\ref{thm:palm} in the first line,
together with symmetry considerations.
{\red The first line gives an upper bound on $\Wsde^\delta$ since a vertex in
$B(u;r) \cap B(v;r)$ would increase the edge degree by 2 instead of 1.}
In the second line we switched to polar coordinates; in the third line we
used the bounds~\eqref{eq:bounds23} and~\eqref{eq:bounds24};
in the fourth line we used $\pi n r^2 = (1+o(1))\ln n$; in the fifth
line we used $\pi n r^2 = \ln n + \ln\ln n + x + o(1)$;
in the sixth line we integrated with respect to $w$; in the seventh
line we applied the substitution
$y = (1+\cos\alpha-\eps)nrz$; in the eight line we used that
$ \int_0^{(1+\cos\alpha-\eps)\delta nr^2}
{\red y} \, e^{-y}{\dd}y\to \int_0^{\infty} {\red y} \, e^{-y}y{\dd}y = 1$.
We get:

\begin{equation}\label{eq:WsdeEpsUB}
\begin{array}{rcl}
 \Ee \Wsde^\delta
& \leq &
\displaystyle
(1+o(1)) \cdot \frac{(1+\eps)^2e^{-x/2}}{2(2-\eps)} \cdot \ln^{\frac32} n \cdot
(nr^2)^{-\frac32} \cdot
\int_{-\pi/2}^{\pi/2} \left(\frac{1}{1+\cos\alpha-\eps}\right)^2 {\dd}\alpha \\
& = &
\displaystyle
(1+o(1)) \cdot \frac{(1+\eps)^2e^{-x/2}\pi^{\frac32}}{2(2-\eps)} \cdot
\int_{-\pi/2}^{\pi/2} \left(\frac{1}{1+\cos\alpha-\eps}\right)^2 {\dd}\alpha \\
\end{array}
\end{equation}

\noindent
Reversing the use of the upper and lower bounds from~\eqref{eq:bounds23}
and~\eqref{eq:bounds24}, and repeating the
computations giving~\eqref{eq:WsdeEpsUB} we find

\[
\begin{array}{rcl}
 \Ee\Wsde
& \geq &
\Ee\Wsde^\delta \\
& \geq &
\displaystyle
(1+o(1)) \cdot 4 n^2 \cdot  
\int_{-\pi/2}^{\pi/2} \int_0^{\delta r}\int_0^{\delta r}
\frac{\left((1-\eps)\frac{\pi}{2}nr^2\right)^2}{2}e^{-\frac{\pi}{2}nr^2
-(1+\cos\alpha+\eps)nrz -(2+\eps)nrw}{\dd}w{\dd}z{\dd}\alpha \\
& = &
\displaystyle
(1+o(1)) \cdot \frac{(1-\eps)^2e^{-x/2}\pi^{\frac32}}{2(2+\eps)} \cdot
\int_{-\pi/2}^{\pi/2} \left(\frac{1}{1+\cos\alpha+\eps}\right)^2 {\dd}\alpha \\
\end{array}
\]

\noindent
Combining this with~\eqref{eq:WsdeMinWsdeEps} and ~\eqref{eq:WsdeEpsUB} and
sending $\eps\downarrow 0$, we find (employing the dominated convergence
theorem to
justify switching limit and integral):
\[
 \Ee \Wsde
=
(1+o(1)) \cdot \frac{e^{-x/2}\pi^{\frac32}}{4} \cdot
\int_{-\pi/2}^{\pi/2} \left(\frac{1}{1+\cos\alpha}\right)^2 {\dd}\alpha.
\]
We compute
\[
 \int_{-\pi/2}^{\pi/2} \left(\frac{1}{1+\cos\alpha}\right)^2 {\dd}\alpha
=
\left[
\frac{\sin(x)}{2(\cos(x) + 1)} + \frac{\sin^3(x)}{6(\cos(x) + 1)^3}
\right]_{-\pi/2}^{\pi/2} = \frac{4}{3}.
\]
Hence

\begin{equation}\label{eq:WsdeFinal}
 \Ee\Wsde = (1+o(1)) \cdot \frac{e^{-x/2}\pi^{\frac32}}{3}.
\end{equation}

Combining~\eqref{eq:WcnrExp},~\eqref{eq:WmdlFinal} and~\eqref{eq:WsdeFinal}
shows

\[
 \Ee W =
\frac{\pi^2 e^{-x}}{8} + \frac{e^{-x/2}\pi^{\frac32}}{3} + o(1)
\]

Together with~\eqref{eq:EY} this proves the Lemma.
\end{proof}

The following lemma can be proved via
a straightforward adaptation of the proof of Theorem 6.6 in~\cite{PenroseBook}.
For completeness we provide a proof in the appendix.

\begin{lemma}\label{lem:headache}
Let $n \in \eN$ and $r>0$ be arbitrary.
Let $Z$ denote the number of vertices of degree exactly one in $G_P(n,r)$
plus the number of edges of edge-degree exactly two in $G_P(n,r)$.
Then
\[
\dtv( Z, \Po(\Ee Z) ) \leq 6 \cdot (I_1+I_2+I_3+I_4+I_5+I_6)
\]
where
\[ \begin{array}{l}
\displaystyle
I_1 := n^2 \int_{[0,1]^2}\int_{B(x;100r)} \Pee(E_x)\Pee(E_y){\dd}y{\dd}x, \\
\displaystyle
I_2 := n^2 \int_{[0,1]^2}\int_{B(x;100r)} \Pee(E_x^{y}, E_y^{x}){\dd}y{\dd}x, \\
\displaystyle
I_3 := n^4 \int_{A_3} \Pee(F_{x_1,y_1})\Pee(F_{x_2,y_2})%
{\dd}y_2{\dd}x_2{\dd}y_1{\dd}x_1, \\
\displaystyle
I_4 := n^4 \int_{A_3} \Pee(F_{x_1,y_1}^{x_2,y_2},F_{x_2,y_2}^{x_1,y_1})%
{\dd}y_2{\dd}x_2{\dd}y_1{\dd}x_1, \\
\displaystyle
I_5 := n^3 \int_{A_5}\Pee(F_{x_1,y_1})\Pee(E_{x_2})%
{\dd}x_2{\dd}y_1{\dd}x_1, \\
\displaystyle
I_6 := n^3 \int_{A_5}\Pee(F_{x_1,y_1}^{x_2}, E_{x_2}^{x_1,y_1})%
{\dd}x_2{\dd}y_1{\dd}x_1.
\end{array}\]

\noindent
Here

\[ \begin{array}{rcl}
A_3 := \{ (x_1,y_1,x_2,y_2) \in \left([0,1]^2\right)^4
& : &
y_1\in B(x_1,r), y_2\in B(x_2;r), \\
& &
x_2,y_2\in B(x_1;100r)\cap B(y_1;100r) \},
\end{array} \]

\[
A_5 := \{ (x_1,y_1,x_2) \in \left([0,1]^2\right)^3 :
y_1\in B(x_1,r), x_2\in B(x_1;100r)\cap B(y_1;100r) \},
\]

and $E_x$ denotes the event that one point of $\Pcal$ falls inside
$B(x;r)$, and $E_x^y$ the event that one point of $\{y\}\cup\Pcal$ falls
inside $B(x;r)$ and
$E_x^{y,z}$ denotes the event that one point of $\{y,z\}\cup\Pcal$ falls
inside $B(x;r)$; $F_{x,y}$ denotes the event that
two points of $\Pcal$ fall
inside $B(x;r)\cup B(y;r)$, and $F_{x,y}^{w}, F_{x,y}^{w,z}$ are defined
similarly
as $E_x^y, E_x^{y,z}$
\end{lemma}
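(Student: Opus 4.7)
The plan is to apply the Chen--Stein Poisson-approximation method for sums of locally dependent Bernoulli indicators, as adapted to point processes in~\cite{PenroseBook}. Write $Z = \xi + \eta$, where $\xi := \sum_{x \in \Pcal_n} \mathbf{1}[\deg_{\GPo(n,r)}(x) = 1]$ counts degree-one vertices and $\eta := \sum_{\{x,y\} \subseteq \Pcal_n,\, y \in B(x;r)} \mathbf{1}[\text{edge-degree of } xy \text{ equals } 2]$ counts edges of edge-degree two. The key structural fact is locality: the degree-one indicator at $x$ is determined by $\Pcal_n \cap B(x;r)$, and the edge-degree indicator at $xy$ is determined by $\Pcal_n \cap (B(x;r) \cup B(y;r))$. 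Hence any two indicators whose defining regions are disjoint are independent by property~\PP{2} of the Poisson process. Any two indicators can only be dependent if their locations sit within $100r$ of each other, with plenty of room to spare---this explains the ball $B(x;100r)$ in $I_1, I_2$ and the $100r$-neighbourhood constraints defining $A_3, A_5$.

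The second step is the Chen--Stein bound itself. For a family of $\{0,1\}$-indicators on a dependency graph,
\[
\dtv(Z, \Po(\Ee Z)) \leq 2(b_1 + b_2),
\]
where $b_1 := \sum_\alpha \sum_{\beta \in B_\alpha} \Ee I_\alpha \cdot \Ee I_\beta$ and $b_2 := \sum_\alpha \sum_{\beta \in B_\alpha,\, \beta \neq \alpha} \Ee[I_\alpha I_\beta]$, and the usual $b_3$-term vanishes because one may take $B_\alpha$ to be a true dependency neighbourhood. Splitting the index set into vertex-type and edge-type indicators yields three flavours of dependent pairs---vertex--vertex, edge--edge, and vertex--edge. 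The three $b_1$-contributions will become $I_1, I_3, I_5$ and the three $b_2$-contributions will become $I_2, I_4, I_6$. The overall prefactor $6$ absorbs the Chen--Stein constant together with the combinatorial factors from summing over the three pair types and from ordered/unordered pair counts.

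The third step is to rewrite each of the six resulting sums as the stated integral using Theorem~\ref{thm:palm}, which turns a sum over distinct $k$-tuples of points of $\Pcal_n$ into $n^k$ times an integral of a Palm expectation over $(\eR^2)^k$. For the $b_1$ terms, the Palm kernel acts on each factor separately and yields the marginal probabilities $\Pee(E_x)$ and $\Pee(F_{x,y})$ that appear in $I_1, I_3, I_5$: for instance, the Palm probability that a vertex of $\Pcal_n$ at $x$ has degree one equals $\Pee(E_x)$, and the Palm probability that a pair $(x,y)$ with $y\in B(x;r)$ forms an edge of edge-degree exactly two equals $\Pee(F_{x,y})$. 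For the $b_2$ terms, we Palm-condition simultaneously on all reference points, so the indicator events must be adjusted to account for the extra conditioned points possibly lying inside the defining disks---this is exactly what the superscript notation $E_x^y$, $F_{x,y}^{w}$, $F_{x,y}^{w,z}$ encodes.

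The main technical obstacle will be the careful bookkeeping of multi-point Palm conditioning for the joint probabilities $I_2, I_4, I_6$. For example, in $I_4$, after Palm-conditioning on $x_1, y_1, x_2, y_2$ simultaneously, the event ``edge $x_1 y_1$ has edge-degree exactly $2$'' may already be partially witnessed by $x_2$ or $y_2$ if they happen to fall in $B(x_1;r) \cup B(y_1;r)$; hence the relevant event is $F_{x_1,y_1}^{x_2,y_2}$, which counts points of $\{x_2,y_2\} \cup \Pcal$ in that region (and similarly for $I_6$, where the vertex and edge indicators intertwine). Beyond this routine but fiddly bookkeeping, the argument is a direct transcription of the proof of Theorem~6.6 in~\cite{PenroseBook}, the only genuine novelty being the need to handle two types of local indicators at once.
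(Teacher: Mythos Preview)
Your high-level strategy matches the paper's: apply the Arratia--Goldstein--Gordon bound (stated here as Theorem~\ref{thm:twomoments}) to the locally dependent Bernoulli indicators for degree-one vertices and edge-degree-two edges, with dependency neighbourhoods given by $100r$-balls; the six resulting sums split as vertex--vertex, edge--edge, vertex--edge for each of the $b_1$- and $b_2$-type quantities, and your closing reference to Penrose's Theorem~6.6 is on target.

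However, your third step mis-identifies the mechanism by which the Chen--Stein sums become the integrals $I_1,\dots,I_6$. The Arratia et al.\ bound is stated for a \emph{deterministic} index set; your indicators are indexed by the random points of $\Pcal_n$ (and random pairs therein), so the quantities $p_v=\Ee Z_v$ are not even well defined. You propose to resolve this via Theorem~\ref{thm:palm}, but Palm theory computes expectations of sums over tuples of Poisson points, whereas $b_1$ is a sum of \emph{products of expectations} and is not of Palm form. The paper does not invoke Theorem~\ref{thm:palm} at all in this proof. Instead it discretizes: partition $[0,1]^2$ into an $m\times m$ grid of cells, define cell-based indicators $\xi_i$ and $\xi_{(i,j)}$ on the deterministic index set $[m^2]\cup\binom{[m^2]}{2}$, apply Theorem~\ref{thm:twomoments} to obtain six discrete sums $S_1^m,\dots,S_6^m$, and then send $m\to\infty$, using dominated convergence to show $S_j^m\to I_j$ and pointwise convergence $Z^m\to Z$. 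This discretize-and-limit device is exactly what Penrose does in Theorem~6.6, so your final sentence is right---but the route you describe in step three is not the one taken, and as written it has a gap.
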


\begin{lemma}\label{lem:perfecthitprobPois}
Let $(r_n)_n$ be such that
\[
\pi n r_n^2 = \ln n + \ln\ln n + x + o(1),
\]
for some $x\in \eR$.
Let $Z_n$ denote the number of vertices of degree {\bf exactly} one, plus
the number of edges of edge-degree {\bf exactly} two in $\GPo(n,r_n)$.
Then
\[
\Pee( Z_n = 0 ) \to  \exp\left[ - (1+\pi^2/8)e^{-x} + \sqrt{\pi}(1+\pi)
e^{-x/2} \right],
\]
as $n\to\infty$.
\end{lemma}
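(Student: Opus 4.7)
The plan is to combine the Stein--Chen-type Poisson approximation in Lemma~\ref{lem:headache} with the mean asymptotics of Lemma~\ref{lem:perfecthitExpPois}. Set $\lambda := (1+\pi^2/8)e^{-x} + \sqrt{\pi}(1+\pi)e^{-x/2}$, so that Lemma~\ref{lem:perfecthitExpPois} gives $\Ee Z_n \to \lambda$. If we can show $I_j = o(1)$ for each $j = 1, \ldots, 6$, then $\dtv(Z_n, \Po(\Ee Z_n)) = o(1)$; since $|\Pee(Z_n = 0) - e^{-\Ee Z_n}| \leq \dtv(Z_n, \Po(\Ee Z_n))$ and $e^{-\Ee Z_n} \to e^{-\lambda}$ by continuity, this yields $\Pee(Z_n = 0) \to e^{-\lambda}$ as required. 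So everything reduces to bounding the six integrals.

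The key probability estimates come from the hypothesis $\pi n r_n^2 = \ln n + \ln\ln n + x + o(1)$, which gives $e^{-\pi n r_n^2} = \Theta(1/(n\ln n))$ and $r_n^2 = \Theta(\ln n / n)$. For $u$ at distance at least $r$ from $\partial [0,1]^2$, one has $\Pee(E_u) = (\pi n r^2) e^{-\pi n r^2} = O(1/n)$. For $\|u-v\| \leq r$ with $u,v$ both interior, Lemma~\ref{lem:areadiff} gives $\area(B(u;r) \cup B(v;r)) \geq \pi r^2$, hence by Lemma~\ref{lem:Poissontriviality} we have $\Pee(F_{u,v}) = O((nr^2)^2 \cdot e^{-\pi n r^2}) = O(\ln n / n)$. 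Near the boundary, the probabilities may be polynomially larger, but the Lebesgue measure of such regions is correspondingly smaller.

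Plugging in these interior estimates, and noting that the domain $\{y \in B(x;100r)\}$, the set $A_5$, and the set $A_3$ have Lebesgue measure $O(r^2)$, $O(r^4)$, and $O(r^6)$ respectively, one obtains $I_1 = O(n^2 \cdot r^2 \cdot (1/n)^2) = O(\ln n / n)$, $I_5 = O(n^3 \cdot r^4 \cdot (\ln n/n) \cdot (1/n)) = O((\ln n)^3/n)$, and $I_3 = O(n^4 \cdot r^6 \cdot (\ln n/n)^2) = O((\ln n)^5/n)$, all of which are $o(1)$. The ``correlated'' integrals $I_2, I_4, I_6$ differ from $I_1, I_3, I_5$ only by conditioning extra points to lie in certain balls; this either replaces the ``exactly one/two points'' event by the even smaller ``zero/one point'' event, or it shifts the Poisson intensity by at most one point, changing the probability by at most a bounded factor. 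Hence the same upper bounds hold for $I_2, I_4, I_6$.

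The main technical care goes into verifying that the boundary contributions do not spoil these bounds. Following the $\Wcnr/\Wsde/\Wmdl$ decomposition from the proof of Lemma~\ref{lem:perfecthitExpPois}, one splits each $I_j$ into corner, side, and interior parts. Although $\Pee(E_u)$ and $\Pee(F_{u,v})$ can be considerably larger in the boundary regions (up to $O(n^{-1/2})$ near a side and $O(n^{-1/4})$ near a corner, since the effective Poisson rate is halved or quartered), the measure of the corresponding integration region is thinner by a factor of $r$ per boundary-constrained coordinate. Since $r = \Theta(\sqrt{\ln n / n})$, each such factor of $r$ provides exactly the savings needed to offset the enlarged probability bound, and a case-by-case check (mirroring the polar-coordinate substitutions used to compute $\Ee\Wsde$ and $\Ee\Wcnr$ in the proof of Lemma~\ref{lem:perfecthitExpPois}) shows that the boundary contributions are in fact $o(1)$ as well. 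This completes the proof.
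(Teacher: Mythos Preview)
Your overall plan --- combine Lemma~\ref{lem:headache} with Lemma~\ref{lem:perfecthitExpPois} and show each $I_j = o(1)$ --- is exactly what the paper does, and your treatment of the ``product'' integrals $I_1, I_3, I_5$ is essentially correct. (The paper handles the boundary more cleanly by bounding $\Pee(E_y) \leq \Pee(\Po(\pi n r^2/4)\leq 1)$ uniformly and writing $I_1 \leq \Ee Z_n \cdot \pi n(100r)^2 \cdot \Pee(\Po(\pi n r^2/4)\leq 1) = O(n^{-1/4+o(1)})$, which avoids your case split entirely.)

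The genuine gap is in your treatment of $I_2, I_4, I_6$. The claim that these are bounded by constant multiples of $I_1, I_3, I_5$ is false: the events $E_x^y$ and $E_y^x$ are strongly positively correlated, not merely ``shifted by one point''. For interior $x,y$ with $\|x-y\|\leq r$ one has $\{E_x^y, E_y^x\} = \{\Pcal(B(x;r)\cup B(y;r)) = 0\}$, so
\[
\frac{\Pee(E_x^y, E_y^x)}{\Pee(E_x)\Pee(E_y)}
= \frac{e^{-n\,\area(B(x;r)\cup B(y;r))}}{(\pi n r^2)^2\, e^{-2\pi n r^2}}
= \frac{e^{\,n\,\area(B(x;r)\cap B(y;r))}}{(\pi n r^2)^2},
\]
and since the intersection area can be a positive fraction of $\pi r^2$, this ratio is $n^{\Theta(1)}$, not $O(1)$. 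Indeed the crude bound $\Pee(E_x^y,E_y^x)\leq e^{-\pi n r^2}$ gives only $I_2 = O(1)$; the analogous naive bound for $I_4$ gives $O(\ln^2 n)$.

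The paper's remedy is to reinterpret $I_2$ (and similarly $I_4, I_6$) as the expected number of ordered pairs $(u,v)$ at distance $\leq 100r$ with $d(u)=d(v)=1$, and then split according to whether $\|u-v\|\leq r/100$ or not. The far case is handled by Lemma~\ref{lem:boundarydistancecount}; in the near case the pair is recognised as a $(0,0,\geq 0)$-pair (respectively a $(0,0,\geq 2)$- or $(0,1,\geq 1)$-pair for $I_4, I_6$) and Lemma~\ref{lem:pairexp} gives a $O(\ln^{-c} n)$ bound. The geometric input driving both lemmas --- and missing from your argument --- is that two nearby low-degree points force a region of area $\pi r^2 + \Theta(r\|u-v\|)$ to be nearly empty (Corollary~\ref{cor:areadiff2}); it is the resulting extra factor $e^{-\Theta(nr\|u-v\|)}$ under the integral that pushes $I_2, I_4, I_6$ to $o(1)$.
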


\begin{proof}
It suffices to show that if $I_1,\dots, I_6$ is as in
Lemma~\ref{lem:headache}, then
$I_1,\dots, I_6 \to 0$ as $n\to\infty$.
Observe that $I_1, I_5$ is most

\[ \begin{array}{rcl}
I_1, I_5
& \leq &
\Ee Z_n \cdot \pi n (100 r_n)^2 \cdot \Pee( \Po( \pi n r_n^2 / 4 ) \leq 1 ) \\
& = &
O\left( \ln n \cdot \exp[ -(\frac14+o(1)) \ln n \cdot H( 4/\pi n r_n^2 ) ]
\right) \\
& = &
O\left( n^{-\frac14+o(1)} \right),
\end{array} \]

\noindent
where we used Lemma~\ref{lem:chernoff} and the fact that for $r$ sufficiently
small
at least one quarter of $B(v;r)$ is contained in the unit square for all
$v\in[0,1]^2$.

Similarly

\[ \begin{array}{rcl}
I_3
& \leq &
\Ee Z_n \cdot \left( \pi n (100 r_n)^2 \right)^2 \cdot
\Pee( \Po( \pi n r_n^2 / 4 ) \leq 1 ) \\
& = &
O\left( n^{-\frac14+o(1)} \right).
\end{array} \]%

\noindent
Notice that $I_2$ is the expected number of (ordered) pairs of points $(u,v)$
with
$\norm{u-v} \leq 100 r_n$ and $d(u)=d(v)=1$.
Observe that if moreover $\norm{u-v} \leq r/100$, then $(u,v)$ are in fact an
$(0,0)$-pair.
Thus, using Lemma~\ref{lem:pairexp} and Lemma~\ref{lem:boundarydistancecount}:

\[
I_2  \leq
O\left( \ln^{-1} n \right)
+
O\left( n^{-c} \right)
 =
o(1).
\]

\noindent
Similarly, $I_4$ is equals the expected number of $4$-tuples of points
$(u_1,\dots,u_4)$
with all distances $\norm{u_i-u_j} \leq 100r$ and
$\norm{u_1-u_2}, \norm{u_3-u_4}\leq r$, and
$u_1u_2, u_3u_4$ each having edge-degree equal to two.
Observe if in such 4-tuple that $\norm{u_i-u_j} \leq r/100$ for all
$1\leq i,j\leq 4$,
then one of the pairs $(u_i,u_j)$ will in fact be a $(0,0,2)$-pair.
Also observe that for each $(0,0,2)$-pair contributes at most six 4-tuples to
the number of
our 4-tuples with all distances $\leq r/100$.
Thus, using again Lemma~\ref{lem:pairexp} and
Lemma~\ref{lem:boundarydistancecount}:

\[
I_4
\leq
6 \cdot O\left( \ln^{-4} n \right) + O\left( n^{-c}\right)
 =
o(1).
\]

Finally, $I_6$ equals the expected number of 3-tuples $(u_1,u_2,u_3)$
with all distances at most $100 r$, $\norm{u_1-u_2} \leq r$, the degree of $u_3$
equal to two, and
the edge-degree of $u_1u_2$ equal to two.
Observe that, if all distances in such a 3-tuple are $\leq r/100$
then one of $(u_1,u_2), (u_1,u_3), (u_2,u_3)$ will be
a $(0,1,1)$-pair.
Also, for every $(0,1,1)$-pair, there are at most three of our 3-tuples
with all distances $\leq r/100$.
Hence, again Lemma~\ref{lem:pairexp} and Lemma~\ref{lem:boundarydistancecount}:

\[
I_6 \leq
3 \cdot O\left( \ln^{-3} n \right) + O\left( n^{-c} \right)
 =
o(1).
\]

This shows that

\[ \dtv( Z, \Po(\Ee Z) ) = o(1). \]

So in particular

\[
 \Pee( Z = 0 ) = e^{-\Ee Z} + o(1) = e^{-(1+\frac{\pi^2}{8})e^{-x} +
 \sqrt{\pi}(1+\pi) e^{-x/2}} + o(1),
\]
using Lemma~\ref{lem:perfecthitExpPois}.
\end{proof}

\begin{corollary}\label{cor:perfecthitprobPo2}
Let $(r_n)_n$ be such that
\[
\pi n r_n^2 = \ln n + \ln\ln n + x + o(1),
\]
for some $x\in \eR$.
Let $Z_n$ denote the number of vertices of degree {\bf at most} one, plus
the number of edges of edge-degree {\bf at most} two in $\GPo(n,r_n)$.
Then
\[
\Pee( Z_n = 0 ) \to  \exp\left[ - ((1+\pi^2/8)e^{-x} +
\sqrt{\pi}(1+\pi) e^{-x/2}) \right],
\]
as $n\to\infty$.
\end{corollary}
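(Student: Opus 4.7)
The plan is to reduce the statement to Lemma~\ref{lem:perfecthitprobPois}. Writing $Z_n'$ for the random variable appearing there---the number of vertices of degree \emph{exactly} one plus the number of edges of edge-degree \emph{exactly} two---we have $Z_n \geq Z_n'$, and the difference
\[
Z_n - Z_n' = N_0 + E_{\leq 1}
\]
counts isolated vertices and edges of edge-degree at most one. Consequently $\{Z_n = 0\} \subseteq \{Z_n' = 0\}$, with the symmetric difference contained in $\{Z_n - Z_n' > 0\}$. If we can show $\Ee N_0 = o(1)$ and $\Ee E_{\leq 1} = o(1)$, then by Markov's inequality $\Pee(Z_n = 0) = \Pee(Z_n' = 0) + o(1)$, and the stated limit follows directly from Lemma~\ref{lem:perfecthitprobPois}.

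Both expected counts are routine computations in the spirit of Lemma~\ref{lem:perfecthitExpPois}. For $\Ee N_0$, Theorem~\ref{thm:palm} yields
\[
\Ee N_0 = n \int_{[0,1]^2} e^{-n \cdot \area(B(u;r_n) \cap [0,1]^2)}\,{\dd}u.
\]
In the regime $\pi n r_n^2 = \ln n + \ln\ln n + O(1)$ the interior contribution is $\leq n \cdot e^{-\pi n r_n^2} = O(1/\ln n)$; using Corollary~\ref{cor:areaIntSq} to lower-bound the area by $\tfrac{\pi}{2}r_n^2 + h r_n$ for a point at distance $h$ from the nearest side, each of the four side-strip contributions is at most $n \int_0^{r_n} e^{-\tfrac{\pi}{2}n r_n^2 - n r_n h}\,{\dd}h = O(r_n^{-1} e^{-\pi n r_n^2 / 2}) = O(1/\ln n)$; and each of the four corner pieces, where the area is at least $\pi r_n^2/4$, contributes $O(n r_n^2 \cdot e^{-\pi n r_n^2 / 4}) = o(1)$.

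For $\Ee E_{\leq 1}$, the idea is to follow the template of the proof of $\Ee W$ in Lemma~\ref{lem:perfecthitExpPois}. Split the ordered pairs $(u,v)$ with $\|u-v\| \leq r_n$ according to whether one endpoint is within $100 r_n$ of a corner, within $100 r_n$ of a single side, or more than $100 r_n$ from the boundary; and within each region split further by $\|u-v\| \gtrless \delta r_n$. Every ``tail'' contribution---near a corner, or with $\|u-v\| \geq \delta r_n$---is $O(n^{-c})$ by Lemma~\ref{lem:boundarydistancecount} applied with $k=2$. The remaining short-distance contributions are precise analogues of $\Ee \Wmdl^\delta$ and $\Ee \Wsde^\delta$ in Lemma~\ref{lem:perfecthitExpPois}, except that the factor $\mu(\cdot)^2 e^{-\mu(\cdot)}/2$ (for exactly two extra Poisson points) is replaced by $(1+\mu(\cdot)) e^{-\mu(\cdot)}$ (for at most one extra point). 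Since $\mu = \Theta(\ln n)$ throughout the relevant range, this swap saves a factor of order $\ln n$, so the $\Theta(1)$ contributions that yielded the constants $\pi^2 e^{-x}/8$ and $\sqrt{\pi}(1+\pi)e^{-x/2}$ there now become $O(1/\ln n)$. The main (and only real) thing to check is this last point: that the change $\mu^2/2 \rightsquigarrow 1+\mu$ preserves integrability after the substitutions $y = n r_n z$ and $y = (1+\cos\alpha-\eps) n r_n z$ used in Lemma~\ref{lem:perfecthitExpPois}. Granting this, $\Ee E_{\leq 1} = o(1)$ and the corollary follows.
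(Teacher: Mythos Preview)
Your reduction to Lemma~\ref{lem:perfecthitprobPois} via $\Ee N_0 = o(1)$ and $\Ee E_{\leq 1} = o(1)$ is exactly the paper's strategy, and your direct computations are correct. The paper's implementation is a bit more economical: for $N_0$ it simply quotes Theorem~\ref{thm:penrosemindeg} (part~(i) already gives $\Pee(\delta \geq 1) \to 1$ in this regime), and for $E_{\leq 1}$ it observes that an edge $uv$ of edge-degree $\leq 1$ with $\norm{u-v} \leq r/100$ is necessarily an $(a,b,c)$-pair with $a+b+c \leq 1$, so Lemma~\ref{lem:pairexp} (with $k=2$) gives $\Ee E_{\leq 1} = O(\ln^{-1} n) + O(n^{-c})$, the second term coming from Lemma~\ref{lem:boundarydistancecount} for pairs at distance $\geq r/100$. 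This avoids redoing the $\Wmdl^\delta, \Wsde^\delta$ integrals; your route of replacing $\mu^2 e^{-\mu}/2$ by $(1+\mu)e^{-\mu}$ and tracking the lost $\ln n$ factor works too, and the integrability you flag is indeed preserved since the Jacobian factor $z$ (or $z\,{\dd}w$) is unchanged.
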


\begin{proof}
Let $Y$ denote the number of vertices of degree exactly one; let $Y'$ denote
the number of vertices of degree exactly zero; let $W$ denote the number
of edges of
edge-degree exactly two, let $W'$ denote the number of edges
of edge-degree at most one, all in $\GPo(n,r_n)$.
The probability that $Y'>0$ can be read {\red off} from
Theorem~\ref{thm:penrosemindeg}.
By our choice of $r_n$, we have

\begin{equation}\label{eq:Yprime}
\Pee( Y' > 0 ) = o(1).
\end{equation}

Suppose that $uv$ is an edge of edge-degree at most one.
Then either $\norm{u-v} > r/100$, or $(u,v)$ is either a $(1,0,0)$-pair, a
$(0,1,0)$-pair,
a $(0,0,1)$-pair, or a $(0,0,0)$-pair.
By Lemmas~\ref{lem:boundarydistancecount} and~\ref{lem:pairexp}, we have

\begin{equation}\label{eq:Wprime}
\Pee( W' > 0 ) \leq \Ee W' = O\left( n^{-c} \right) +
O\left( \ln^{-1} n \right) = o(1).
\end{equation}

\noindent
Let $Z$ be as in the statement of Corollary.
By~\eqref{eq:Yprime},~\eqref{eq:Wprime} we have
\[
\Pee( Y+ W=0) - \Pee(Y'>0) - \Pee(W'>0) \leq  \Pee( Z = 0 ) \leq
\Pee( Y + W = 0 ),
\]
and hence $\Pee( Z=0 ) = \Pee( Y+W = 0 ) + o(1)$.
The corollary now follows directly from Lemma~\ref{lem:perfecthitprobPois}.
\end{proof}

Using Lemma~\ref{lem:configurations}, this last corollary immediately transfers also to the binomial case.
(We can rephrase $Z_n$ in terms of a measurable function $h_n(u,v,V)$ which equals one
only if either $u\neq v$ and the pair forms an edge of edge-degree at most two or $u=v$ and the degree is
at most one.)

\begin{corollary}\label{cor:perfecthitprobBin}
Let $(r_n)_n$ be such that
\[
\pi n r_n^2 = \ln n + \ln\ln n + x + o(1),
\]
for some $x\in \eR$.
Let $\Ztil_n$ denote the number of vertices of degree {\bf at most} one, plus
the number of edges of edge-degree {\bf at most} two in $G(n,r_n)$.
Then
\[
\Pee( \Ztil_n = 0 ) \to  \exp\left[ - ((1+\pi^2/8)e^{-x} +
\sqrt{\pi}(1+\pi) e^{-x/2}) \right],
\]
as $n\to\infty$.
\end{corollary}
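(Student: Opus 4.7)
The plan is to transfer Corollary~\ref{cor:perfecthitprobPo2} from the Poissonized to the binomial setting via Lemma~\ref{lem:configurations}, as indicated in the remark immediately preceding the statement. Following that remark, I would encode ``vertex of degree $\le 1$'' and ``edge of edge-degree $\le 2$'' as a single $\{0,1\}$-valued local function
\[
h_n(u,v;V) := \mathbf{1}\bigl\{u=v,\ |B(u;r_n)\cap V|\le 2\bigr\} + \mathbf{1}\bigl\{u\ne v,\ \|u-v\|\le r_n,\ |(B(u;r_n)\cup B(v;r_n))\cap V|\le 4\bigr\},
\]
and set
\[
Z^*_n := \sum_{u,v\in\Pcal_n} h_n(u,v;\Pcal_n),\qquad \tilde Z^*_n := \sum_{u,v\in\Xcal_n} h_n(u,v;\Xcal_n).
\]
Since every point of $V$ in $B(u;r_n)\cup B(v;r_n)$ lies in $N(u)\cup N(v)$, the edge-degree of an edge $uv$ is precisely $|(B(u;r_n)\cup B(v;r_n))\cap V|-2$. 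Hence $Z^*_n$ equals the number of vertices of degree $\le 1$ in $\GPo(n,r_n)$ plus twice the number of edges of edge-degree $\le 2$, and in particular $\{Z^*_n=0\}=\{Z_n=0\}$ with $Z_n$ as in Corollary~\ref{cor:perfecthitprobPo2}; the analogous identity $\{\tilde Z^*_n=0\}=\{\tilde Z_n=0\}$ holds in the binomial setting.

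Next I would verify the two hypotheses of Lemma~\ref{lem:configurations}. By construction, $h_n(u,v;V)$ depends on $V$ only through $V\cap(B(u;r_n)\cup B(v;r_n))$, so the locality condition holds with the sequence $r_n$ itself; and $\pi n r_n^2=\ln n+\ln\ln n+O(1)=o(\sqrt n)$. For $\Ee Z^*_n=O(1)$, Theorem~\ref{thm:palm} splits the expectation as the expected number of degree-$\le 1$ vertices plus twice the expected number of edges of edge-degree $\le 2$ in $\GPo(n,r_n)$. The former is $O(1)$ by Theorem~\ref{thm:penrosemindeg} together with Lemma~\ref{lem:penrosemindeg2}; the latter is $O(1)$ by Lemma~\ref{lem:perfecthitExpPois} combined with the $o(1)$ bound on edges of edge-degree $\le 1$ already obtained in the proof of Corollary~\ref{cor:perfecthitprobPo2} via Lemmas~\ref{lem:boundarydistancecount} and~\ref{lem:pairexp}.

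Lemma~\ref{lem:configurations} then yields $Z^*_n=\tilde Z^*_n$ \emph{whp}, and consequently
\[
\Pee(\tilde Z_n=0)=\Pee(\tilde Z^*_n=0)=\Pee(Z^*_n=0)+o(1)=\Pee(Z_n=0)+o(1),
\]
so the claimed limit follows immediately from Corollary~\ref{cor:perfecthitprobPo2}. There is no substantive obstacle: the only point requiring care is the bookkeeping between ordered-pair counting (which double-counts edges) and the original unordered edge count, but since we only need the event $\{\cdot = 0\}$, the factor of two is irrelevant.
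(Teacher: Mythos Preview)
Your proposal is correct and follows exactly the approach the paper sketches in the remark preceding the corollary: encode both vertex and edge conditions in a single $\{0,1\}$-valued local function $h_n(u,v;V)$ (using the diagonal $u=v$ for the vertex condition) and apply Lemma~\ref{lem:configurations}. One cosmetic point: for the bound $\Ee Z^*_n=O(1)$ you cite Theorem~\ref{thm:penrosemindeg} and Lemma~\ref{lem:penrosemindeg2}, which are binomial statements, whereas you need the Poisson expectation; but Lemma~\ref{lem:perfecthitExpPois} already bounds the Poisson expectation of degree-one vertices \emph{and} edge-degree-two edges simultaneously, so together with the $o(1)$ bounds on $\Ee Y'$ and $\Ee W'$ from the proof of Corollary~\ref{cor:perfecthitprobPo2} you are done without the extra citations.
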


\subsection{The proof of Theorem~\ref{thm:perfecthit}}

We will again introduce an auxiliary game that will be helpful for the
analysis of the
perfect matching game on the random geometric graph. As in the (a,b) path game, 
The {\em $(a,b)$ matching game} is played on the same graph $G_{a,b}$ as the $(a,b)$ path game in Section \ref{sec:pathgames}.
The vertices of  $G_{a,b}$ are
partitioned into  sets $A, B$ with $|A|=a, |B|=b$, where the only missing edges are the internal edges of $B$.
Maker's objective is to create a matching that saturates all vertices in $A$
(he does not care about the vertices in $B$). 
When we use this lemma for the perfect matching game, the $a$ vertices will belong to an obstruction $A$ and the $b$ vertices will be important for $A$.

\begin{lemma}\label{lem:abmatchinggame}
The $(a,b)$ matching game is a win for Maker if one of the
following conditions is met
\begin{enumerate}
\item\label{itm:match.bgeq4} $b \geq 4$, or;
\item\label{itm:match.a=3} $a \in \{2,3\}$ and $b \geq 3$, or;
\item\label{itm:match.a=1} $a=1$ and $b\geq 2$.
\end{enumerate}
\end{lemma}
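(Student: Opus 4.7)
My plan is to establish each of the three sufficient conditions separately, with the main effort in case (i).

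Part (iii) is immediate: the single vertex $a_1$ has at least two neighbours $b_1, b_2 \in B$, so Maker pairs the edges $a_1 b_1$ and $a_1 b_2$; he will claim at least one of them, yielding an edge that saturates $A$.

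For part (ii), I split on $a \in \{2, 3\}$ and case on Breaker's opening move. When $a = 2$: if Breaker does not open with the internal edge $a_1 a_2$, Maker claims it and is done with the matching $\{a_1 a_2\}$; otherwise Maker claims $a_1 b_1$ and pairs $(a_2 b_2, a_2 b_3)$, well-defined since $b \geq 3$, which yields a second matching edge $a_2 b_j$ with $j \in \{2,3\}$. When $a = 3$: since $K_3$ has three edges and Breaker has blocked at most one, Maker claims some internal edge $a_\alpha a_\beta$ as his first response; the remaining vertex $a_\gamma$ is then matched via a pairing of two edges $a_\gamma b_i, a_\gamma b_j$ that both avoid Breaker's opener, and since $b \geq 3$ two such edges always exist.

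For part (i), sub-cases $a \leq 3$ follow from (ii) and (iii). For $a = 4$, Maker reads Breaker's first move $e_0$ and picks a labeling $(a_\alpha, a_\beta, a_\gamma, a_\delta)$ of $A$ with $a_\alpha a_\beta \neq e_0$ such that the pairs $(a_\gamma b_1, a_\gamma b_2)$ and $(a_\delta b_3, a_\delta b_4)$ both avoid $e_0$; a short case analysis on whether $e_0$ is internal to $A$ or crosses to $B$ shows such a labeling always exists (when $e_0 = a_i b_j$, it suffices to place $a_i$ in $\{a_\alpha, a_\beta\}$). Maker claims $a_\alpha a_\beta$ as his first response, and the two pairings deliver two further $AB$-edges with distinct $B$-endpoints, giving a three-edge matching that saturates $A$. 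For $a \geq 5$, let $N_0$ be large enough that by Theorem~\ref{thm:kriv} Maker wins the unbiased Hamilton cycle game on any clique $K_{a'}$ with $a' \geq N_0$.

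For $a \geq N_0$, Maker simultaneously runs the Hamilton-cycle strategy on the internal clique $A$ and the single pairing $(a_1 b_1, a_1 b_2)$; since the two sub-strategies act on disjoint edge sets, they interleave without conflict. Breaker's internal-$A$ moves trigger Hamilton responses in $A$; Breaker's moves inside the pair trigger the paired response; any other Breaker move acts as a free pass for the $A$-subgame, which only helps Maker. At the end Maker owns a Hamilton cycle on $A$ together with at least one edge $a_1 b_j$: if $a$ is even, Maker extracts a perfect matching of $A$ directly from the cycle; if $a$ is odd, he selects the alternating matching of the cycle that omits $a_1$ and completes it using $a_1 b_j$. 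The finitely many intermediate cases $5 \leq a < N_0$ are handled by direct pairing arguments analogous to the $a = 4$ case: Maker claims enough disjoint internal $A$-edges to leave at most four uncovered $A$-vertices, and matches those to distinct $B$-endpoints via pairings on the four $B$-vertices $b_1, \ldots, b_4$. The main obstacle is the $a = 4$ case analysis and the interleaving argument for $a \geq N_0$; both reduce to standard techniques once the sets of pair edges and Hamilton-strategy responses are correctly disentangled.
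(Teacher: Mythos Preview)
Your arguments for parts~\ref{itm:match.a=1} and~\ref{itm:match.a=3} are correct and essentially coincide with the paper's (the paper handles $a=3$ by reusing the part~\ref{itm:match.bgeq4} strategy rather than casing on Breaker's opener, but your variant is fine).

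For part~\ref{itm:match.bgeq4} there is a genuine gap. Your treatment of $a\leq 4$ is sound, and the interleaving argument for $a\geq N_0$ via Theorem~\ref{thm:kriv} is also fine. The problem is the intermediate range $5\leq a<N_0$, which you dispose of in one sentence: ``Maker claims enough disjoint internal $A$-edges to leave at most four uncovered $A$-vertices.'' That assertion is exactly the nontrivial content. You are claiming that Maker can build, inside the clique $K_a$ and against Breaker, a matching covering all but a bounded number of vertices, while simultaneously maintaining up to four disjoint pairings into $B$. This is a positional-game statement in its own right; you neither prove it nor cite a result that gives it, and since $N_0$ from Theorem~\ref{thm:kriv} is not explicit, ``finitely many cases'' does not substitute for an argument.

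The paper sidesteps the whole case split by invoking Lemma~\ref{lem:pathplusiso}: for every $s\geq 3$, Maker can build a path on $s-1$ vertices inside $K_s$. Maker runs that strategy whenever Breaker plays inside $A$, and whenever Breaker claims an edge $ub$ with $u\in A$, $b\in B$, Maker claims another edge from $u$ to $B$. At the end Maker owns a path through all but one vertex of $A$, and every vertex of $A$ has at least two $B$-neighbours. Alternating edges of the path give a matching in $A$ missing at most two vertices, which are then matched to distinct $B$-vertices; this is where $b\geq 4$ is actually used. One argument covers all $a$ uniformly, with no appeal to Theorem~\ref{thm:kriv} and no residual finite check. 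Replacing your Hamilton-cycle detour and the unproven intermediate cases by Lemma~\ref{lem:pathplusiso} would close the gap.
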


\begin{proofof}{part~\ref{itm:match.bgeq4}}
If $b \geq 4$ then a winning strategy for Maker is as follows:

\begin{itemize}
\item Whenever Breaker plays an edge between two vertices of $A$ then
Maker responds claiming another edge inside $A$ according to the
strategy from Lemma~\ref{lem:pathplusiso} that will guarantee him
that by the end of the game he will have a path in $A$ that contains all
but one vertex of $A$.
\item If Breaker claims an edge between a vertex $u\in A$ and a vertex
$v\in B$ then
Maker claims an arbitrary unclaimed edge connecting $u$ to a vertex in $B$.
\end{itemize}

\noindent
If Maker cannot claim such an edge, then he claims an arbitrary edge (and we forget about it for the remainder of the game).
At the end of the game, Maker's graph
contains a path $P$ through all but one vertex of $A$, and every vertex of
$A$ will have
at least two neighbours in $B$.
Thus, the path $P$ contains a matching that covers all but at most two
points of $A$, and
the remaining (up to) two points can be covered by (at most two) vertex
disjoint edges to
vertices in $B$.

\noindent
{\bf Proof of part~\ref{itm:match.a=3}:}
If $a=3$ then the strategy just outlined in the proof of
part~\ref{itm:match.bgeq4} also
works.
This time, at the end of the game, Maker's graph contains an edge between two
vertices of $A$, and
the remaining vertex of $A$ has at least one neighbour in $B$.

Let us thus consider the case when $a=2$.
We can write $A = \{a_1, a_2\}$ and $B = \{b_1,\dots,b_\ell\}$ with
$\ell\geq 3$.
First suppose that in Breaker's first move he does not claim
$a_1a_2$. In that case Maker can claim $a_1a_2$ in his first move and win
the game.

Hence we can assume that in his first move, Breaker claims $a_1a_2$.
Maker now responds by claiming $a_1b_1$ in his first move and for the
remainder
of the game pairs the edges $a_2b_2, a_2b_3$, meaning that
if Breaker plays one of these two edges then he claims the other
(otherwise he plays arbitrarily).
This way, Maker will clearly end up with a matching of the required type
in the end.

\noindent
{\bf Proof of part~\ref{itm:match.a=1}:} Maker wins in the obvious way.
\end{proofof}

\begin{lemma}\label{lem:matchgamedet}
There exists a value of $T$ such that the following holds.
If $V \subseteq [0,1]^2, m \in \eN, r > 0$ and $r \leq \rho \leq 10^5r$ are
such that
\begin{enumerate}
\item \str{1}--\str{5} hold for with respect to $r$, and;
\item For all $3 \leq s \leq T$, every $s$-obstruction with respect to $r$ has
at least $s$ crucial vertices, and;
\item Every $(\geq T)$-obstruction with respect to $r$ has at least four
crucial vertices,
and;
\item Every edge of $G(V,\rho)$ has edge-degree at least three, and;
\item $G(V,\rho)$ has minimum degree at least two.
\end{enumerate}
Then Maker wins the perfect matching game on $G(V;\rho)$.
\end{lemma}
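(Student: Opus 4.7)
The plan is to combine the $(a,b)$-matching game of Lemma~\ref{lem:abmatchinggame} for handling local obstructions with a modified Hamilton cycle construction along the lines of Lemma~\ref{lem:hamgamedet}, and then extract a perfect matching by taking alternating edges of the resulting cycle on the vertices not already saturated by the local matching games. Since $n$ is even and every matching produced by an $(a,b)$-matching game saturates an even number of vertices ($|A|$ inside $A$ and $|A|$ inside $B_A$), the parity will work out.

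First, Maker classifies the obstructions. By hypotheses (ii)--(v), for each obstruction $A$ we select a set $B_A$, disjoint from $A$, such that every vertex of $B_A$ is adjacent in $G(V;\rho)$ to every vertex of $A$ and such that the $(|A|,|B_A|)$-matching game is a Maker win: if $|A|=1$ then min.\,deg.\ $\geq 2$ gives $|B_A|\geq 2$; if $|A|=2$ then edge-deg.\ $\geq 3$ gives $|B_A|\geq 3$; for $3\leq |A|\leq T$ we take $|B_A|=|A|$ of the crucial vertices guaranteed by hypothesis (ii); and for $|A|\geq T$ we take $|B_A|=4$ crucial vertices. In each case Lemma~\ref{lem:abmatchinggame} applies. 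Analogously, Maker partitions the safe vertices of bad cells into safe clusters $S$ (exactly as in Section~\ref{sec:hamproof}) and assigns each such $S$ a set $B_S$ of important vertices in a single cell of $\Gamma_{\max}$, so that the corresponding $(|S|,|B_S|)$-matching game is a Maker win.

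Maker then fixes a spanning tree $\Tcal\subseteq\Gamma_{\max}$ of max.\,degree $\leq 5$ (Lemma~\ref{lem:span}) and marks, in each good cell, the chosen $B_A$- and $B_S$-vertices together with two pairs of vertices per incident edge of $\Tcal$, all taken distinct (which is possible since only $O(1)$ vertices per cell are marked and $T$ is large). The remaining unmarked vertices of each $c\in\Gamma_{\max}$ are partitioned into blob-cycle groups $C_0(c),C_1(c),\dots,C_\ell(c)$ exactly as in the proof of Lemma~\ref{lem:hamgamedet}. During play, Maker simultaneously runs: the matching game strategies at every obstruction and safe cluster; the pairings producing two independent edges between adjacent cells of $\Tcal$; and the $s$- and $10$-blob cycle strategies of Lemma~\ref{lem:sblob} on the groups $C_i(c)$, together with the usual pairings connecting marked vertices in $c$ into each group.

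At the end of the game, every obstruction $A$ and safe cluster $S$ is saturated by a matching inside $A\cup B_A$ (resp.\ $S\cup B_S$); inside each $c\in\Gamma_{\max}$, Lemma~\ref{lem:blobcrap5} merges the blob cycles together with any unused marked vertices into a single cycle through all remaining vertices of $c$; and the tree-edge pairings glue these cell-cycles across $\Tcal$ into one Hamilton cycle $\Hcal$ on the set of vertices not covered by an obstruction- or safe-cluster-matching. This set has even cardinality (since $n$ is even and each local matching removed an even number of vertices), so taking alternating edges of $\Hcal$ yields a perfect matching of $V(\Hcal)$. Together with the matchings produced at the obstructions and safe clusters, this gives a perfect matching of $V$, so Maker wins. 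The main obstacle, as in Lemma~\ref{lem:hamgamedet}, is the simultaneous scheduling of all these pairings and subgames: one must verify that Maker has a legal reply to every Breaker move (no two pairings share an edge), that at most $O(1)$ vertices per cell are marked so that the blob-cycle groups remain large enough for Lemmas~\ref{lem:sblob} and~\ref{lem:blobcrap5}, and that the edge-degree and min.\,degree conditions suffice to feed the correct case of Lemma~\ref{lem:abmatchinggame} for each obstruction.
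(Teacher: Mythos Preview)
Your approach is essentially identical to the paper's: play the $(a,b)$-matching game of Lemma~\ref{lem:abmatchinggame} on each obstruction and safe cluster to obtain a matching $M_0$, build a Hamilton cycle on the remaining vertices of $\Gamma_{\max}$ via the blob-cycle machinery of Lemma~\ref{lem:hamgamedet}, and extract a matching $M_1$ from alternating edges. One small imprecision worth tightening: for a $2$-obstruction the edge-degree~$\geq 3$ condition gives three vertices in $N(u)\cup N(v)$ but not necessarily three \emph{common} neighbours, so the subgraph need not be $G_{2,3}$; however the explicit pairing strategy in the proof of Lemma~\ref{lem:abmatchinggame}(ii) only uses one neighbour of $a_1$ and two further neighbours of $a_2$, which the edge-degree and minimum-degree hypotheses do guarantee (and your parity claim is fine simply because every matching saturates an even number of vertices, regardless of how many lie in $B_A$).
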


\begin{proof}
The proof is a relatively straightforward adaptation of the proof of
Lemma~\ref{lem:hamgamedet}.
In fact it is slightly simpler.
This time, on each obstruction or safe cluster we play the corresponding
$(a,b)$-matching game
(which we win by Lemma~\ref{lem:abmatchinggame}).
This will give a {\red matching} $M_0$ that saturates all vertices outside cells of
$\Gamma_{\max}$ and some vertices of $\Gamma_{\max}$.
A small change in {\bf P4} will show that we will have a Hamilton cycle $H$
through the remaining vertices $R$ of $\Gamma_{\max}$.
Indeed, we only mark the edges corresponding to vertices that are important 
for the spanning tree ${\cal T}$ of $\Gamma_{\max}$. Second, we only mark the 
singletons corresponding to important vertices that are not matched in
the various $(a,b)$ games. Then when we apply Lemma \ref{lem:blobcrap5} to merge
the blob-cycles and the marked singletons viz. 
the important vertices not covered by $M_0$.

$|R|$ is even, 
since the total number of points $n$ is even.  Thus the Hamilton cycle
$H$ yields a matching $M_1$ covering all of $R$. We take $M_0\cup M_1$
as our perfect matching. 
\end{proof}

\begin{proofof}{Theorem~\ref{thm:perfecthit}} This is again a straightforward
adaptation of the proof
of Theorem~\ref{thm:connhit}. 
It is clear that Breaker wins if there is a vertex of degree at most one
or an edge of degree at most two.
(Recall that Breaker starts the game.)
Hence

\begin{equation}\label{eq:PMhitLB} 
\Pee\left[ \rho_n(\text{Maker wins}) \geq 
\rho_n(\text{min.deg. $\geq 2$ and 
min.edge-deg. $\geq 3$}) \right] = 1.
\end{equation}

\noindent
We now define:

\[
r_L(n) := \left(\frac{\ln n + \ln\ln n - K}{\pi n}\right)^{\frac12}, \quad
r_U(n) := \left(\frac{\ln n + \ln\ln n + K}{\pi n}\right)^{\frac12},
\]

\noindent
for $K$ a (large) constant. By Lemma~\ref{lem:perfecthitExpPois}, we can
choose  $K=K(\eps)$ such that

\begin{equation}\label{eq:pmmhiteq}
\Pee{\Big[} r_L(n) \leq \rho_n(\text{min.deg. $\geq 2$ and 
min.edge-deg. $\geq 3$}) \leq r_U(n) {\Big]}
\geq
1-\eps+o(1).
\end{equation}

\noindent
By Lemma~\ref{lem:structure} the properties~\str{1}--\str{5}
are satisfied with probability $1-o(1)$ by $V=\Xcal_n, m = m_n, T=O(1),
r=r_L(n)$ with
$\Xcal_n$ as given by~\eqref{eq:Xdef}, $m_n$ as given by~\eqref{eq:mdef}
and $r_L$ as above.
By Lemma~\ref{lem:crucial},  with probability $1-o(1)$,
the remaining conditions of Lemma~\ref{lem:matchgamedet} are met
for any $r\leq \rho\leq 2r$ with minimum vertex degree at least
two and minimum edge degree at least three.
Hence:

\[ \begin{array}{c}
\Pee{\Big [} \rho_n(\text{Maker wins the perfect matching game}) =
\rho_n(\text{min. deg. $\geq 2$ and 
min. edge-deg. $\geq 3$}) {\Big]} \\
\geq \\
1-\eps-o(1).
\end{array} \]

\noindent
Sending $\eps \downarrow 0$ gives the theorem.
\end{proofof}

\section{The $H$-game}
\label{sec:H-game}

\begin{proofof}{Theorem~\ref{thm:Hgame}}
The proof of Theorem~\ref{thm:Hgame} is a bit different from the previous proofs.
Let $H$ be a fixed connected graph and let $k$ denote the least $\ell$ for which the $H$-game is Makers
win on an $\ell$-clique; let $\Fcal$ denote the family of all non-isomorphic graphs on $k$ vertices for which
the game is Maker-win.  Let $H_1, \dots, H_m \in \Fcal$ be those graphs in $\Fcal$ that can actually be realized as a 
geometric graph.
Observe that, since $H$ is connected and Maker cannot win on any graph on $<k$ vertices, each $H_i$ is connected.

Let $\eps>0$ be arbitrary and let $K>0$ be a large constant, to be chosen later.
Let us set

\[ r_U := K \cdot n^{-k/2(k-1)}. \]

\noindent
It follows from Theorem~\ref{thm:smallsubgraphs} that

\[ \begin{array}{rcl}
\Pee( G(n, r_U) \text{ contains a subgraph $\in\Fcal$}  )
& = & 
1 - \exp\left[ - K^{2(k-1)} \cdot \sum_{i=1}^m \mu(H_i) \right] + o(1) \\
& \geq & 
1 - \eps + o(1),
\end{array} \]

\noindent
where $\mu(.)$ is as defined in~\eqref{eq:muGdef} 
and the last inequality holds if we assume (without loss of generality) that $K$ was chosen sufficiently large.

Observe that if $G(n,r_U)$ contains a component of order $>k$, then 
there exist $k+1$ points $X_{i_1}, \dots, X_{i_{k+1}}$ such that 
$\norm{ X_{i_1} - X_{i_j} } \leq (k+1)r_U$ for all
$2 \leq j \leq k+1$.
This gives

\[ 
\begin{array}{rcl}
\Pee( G(n, r_U) \text{ has a component of order $> k$} ) 
& \leq & 
n^{k+1} \cdot {\big(} (k+1)r_U {\big)}^{2k} \\
& = & 
O\left( n^{k+1 - \frac{k^2}{k-1}} \right) \\
& = & 
O\left( n^{-k/(k-1)} \right) \\
& = & 
o(1). 
\end{array} \]

\noindent
Let $E$ denote the event that $G(n,r_U)$ contains a subgraph $\in\Fcal$ but no component on $>k$ vertices.
Then it is clear that 

\[ \Pee\left[ \rho_n(\text{Maker wins the $H$-game}) = 
\rho_n(\text{contains a subgraph $\in\Fcal$}) \right]
\geq \Pee( E ) \geq 1 - \eps -o(1), \]

\noindent
since, if all components have order $\leq k$ then Maker only wins if and only if there
is a component $\in\Fcal$.
Sending $\eps\downarrow 0$ proves the theorem.
\end{proofof}

\begin{proofof}{Corollary~\ref{cor:Hprob}}
This follows immediately from Theorem~\ref{thm:Hgamedet} and Theorem~\ref{thm:smallsubgraphs}.
\end{proofof}

\section{Conclusion and further work}
\label{sec:conclusion}

In the present paper, we explicitly determined the hitting radius for the games of connectivity, perfect matching and Hamilton cycle, all played on the edges of the random geometric graph. As it turns out in all three cases, the hitting radius for $G(n,r)$ to be Maker-win coincides exactly with a simple, necessary minimum degree condition. For the connectivity game it is the minimum degree two, in the case of the perfect matching game it is again the minimum degree two accompanied by the minimum edge degree three, and for the Hamilton cycle game we have the minimum degree four. Each of these characterizations engenders an extremely precise description of the behavior at the threshold value for the radius. We also state a general result for the $H$-game, for a fixed graph $H$, where the hitting radius can be determined by finding the smallest $k$ for which the $H$-game is Maker-win on the $k$-clique edge set and finding the list of all connected graphs on $k$ vertices which are Maker-win.

These results are curiously similar to the hitting time results obtained for the Maker-win in the same three games played on the Erd\H{o}s-R\'enyi random graph process. In that setting, in the connectivity game the hitting time for Maker-win is the same as for the minimum degree two~\cite{StojakovicSzabo05}, the same holds for the perfect matching game~\cite{BFHK11}, and the condition changes to minimum degree four for the Hamilton cycle game~\cite{BFHK11}. As we can see, in the case of the connectivity game and the Hamilton cycle game the conditions are exactly the same as for the random geometric graph. The difference in the condition for the perfect matching game is not a surprise, as the existence of an induced 3-path (which clearly prevents Maker from winning) at the point when the graph becomes minimum degree two is an unlikely event in the Erd\H{o}s-R\'enyi random graph process, but it does happen with positive probability in the random geometric graph.

With respect to the $H$-game much less is known on the Erd\H{o}s-R\'enyi random graph process. The only graph for which we have a description of the hitting time is the triangle---Maker wins exactly when the first $K_5$ with one edge missing appears~\cite{MuSt}. On the random geometric graph we basically have the same witness of Maker's victory, as the smallest $k$ for which Maker can win the triangle game on edges of $K_k$ is $k=5$. Interestingly, for most other graphs $H$ it is known that a hitting time result for Maker-win in the Erd\H{o}s-R\'enyi random graph process cannot involve the appearance of a finite graph on which Maker can win---Maker-winning strategy must be of ``global nature''~\cite{MuSt,NeStSt}. This is in contrast to the results we obtained in Theorem~\ref{thm:Hgame}, showing that on the random geometric graph Maker can typically win the $H$-game by simply spotting a copy of one of some finite list of graphs and restricting his attention to that subgraph.

\medskip

Playing a game on a random graph instead of the complete graph can be seen as a help to Breaker, as the board of the game becomes sparser, there are fewer winning sets and consequently Maker finds it harder to win. A standard alternative approach is to play the biased $(1:b)$ game on the complete graph, where Breaker again gains momentum when $b$ is increased. Naturally, one can combine the two approaches, playing the biased game on a random graph. This has been done in~\cite{StojakovicSzabo05,FGKN} for the Erd\H{o}s-R\'enyi random graph, where the threshold probability for Maker-win in the biased $(1:b)$ game, with $b=b(n)$ fixed, was sought for several standard positional games on graphs. The same question can be asked in our random geometric graph setting.

\begin{question}
 Given a bias $b$, what can be said for the smallest radius $r$ at which Maker can win the $(1:b)$ biased game, for the games of connectivity, perfect matching, Hamilton cycle, and the $H$-game?
\end{question}

In this paper we have only considered the random geometric graph constructed on points taken uniformly at random on the unit square, using
the euclidean norm to decide on the edges.
So other obvious directions for further work are:

\begin{question}
What happens for Maker-Breaker games on 
random geometric graphs in dimensions $d\geq 3$?
What happens if we use other probability distributions or norms in two dimensions?
\end{question}

\bibliographystyle{plain}
\bibliography{ReferencesMakerBreakerRGG}

\appendix

\section{The proof of Theorem~\ref{thm:palm}\label{sec:Palmapp}}

\begin{proofof}{Theorem~\ref{thm:palm}}
We condition on $N=m$. For convenience, let us write
$\Xcal_m := \{ X_i : 1 \leq i \leq m\}$.
We have
\[\begin{array}{rcl}
\Ee Z
& = &
\displaystyle
\sum_{m=k}^\infty \Ee\left[ Z | N=m \right] \cdot \Pee( N = m ) \\
& = &
\displaystyle
\sum_{m=k}^\infty
(m)_k \cdot \Ee\left[ h( X_1,\dots,X_k; \Xcal_m ) \right] \cdot \Pee( N=m ) \\
& = &
\displaystyle
\sum_{m=k}^\infty
(m)_k \cdot \Ee\left[ h( Y_1,\dots,Y_k; \{Y_1,\dots,Y_k\} \cup \Xcal_{m-k} )
\right]
\cdot \frac{n^m e^{-n}}{m!} \\
& = &
\displaystyle
n^k \cdot \sum_{j=0}^\infty
\Ee\left[ h( Y_1,\dots,Y_k; \{Y_1,\dots,Y_k\} \cup \Xcal_j ) \right] \cdot
\frac{n^j
e^{-n}}{j!} \\
& = &
\displaystyle
n^k \cdot \Ee\left[ h( Y_1,\dots,Y_k; \{Y_1,\dots,Y_k\} \cup \Pcal ) \right],
  \end{array}\]
as required.
\end{proofof}

\section{The proof of Lemma~\ref{lem:configurations}\label{sec:configurations}}

\begin{proofof}{Lemma~\ref{lem:configurations}}
Let $\eps>0$ be arbitrary.
By Markov's inequality and the fact that $\Ee Z_n = O(1)$, there exists a
constant
$K>0$ such that

\[
 \Pee( Z_n > K ) \leq \Ee Z_n / K \leq \eps.
\]

\noindent
Let $N$ be the $\Po(n)$-distributed random {\red variable} used in the
definition of $\Pcal_n$.
By Chebyschev's inequality we have
\[
\Pee\left[ |N-n| > K\sqrt{n} \right] \leq \Var(N) / (K\sqrt{n})^2 = 1/K^2 <
\eps.
\]
(We can assume without loss of generality that $1/K^2 < \eps$.)

Also observe that, since $\pi n r^2 = o(\sqrt{n})$, there exists a sequence $f(n) = o(\sqrt{n})$ such that  

\[ \Pee{\big[}\Delta( G(n+K\sqrt{n}, r_n) ) > f(n) {\big]} = o(1), \]

\noindent
where $\Delta(.)$ denotes the maximum degree. 
(This can for instance be seen from 
known results such as Theorem 2.3 in~\cite{cmcdplane}, or by a first moment argument using the Chernoff bound.)

For $m \in \eN$ let us call a tuple $(X_{i_1},\dots,X_{i_k}) \in \Xcal_m^k$ a {\em configuration} if
$h_n( X_{i_1},\dots,X_{i_k}; \Xcal_m) = 1$ and let $Y_m$ denote the number of configurations in 
$\Xcal_m$.
Let pick an arbitrary $t \leq K$ and an arbitrary pair $n-K\sqrt{n} \leq m < m' \leq n+K\sqrt{n}$.

We have

\begin{equation}\label{eq:ZOZP}
\begin{array}{rcl}
\Pee( \Ztil_n \neq Z_n )
& = &
\sum_{m=0}^\infty\sum_{t=0}^\infty \Pee( \Ztil \neq t, Z = t | N=m )
\Pee( N = m ) \\
& \leq &
\sum_{m=n-K\sqrt{n}}^{n+K\sqrt{n}}\sum_{t=0}^K
\Pee( \Ztil \neq t, Z = t | N=m ) \Pee( N = m ) 
\\
& &
+ \Pee( |N-n| > K\sqrt{n} ) + \Pee( Z > K )\\
& \leq &
\sum_{m=n-K\sqrt{n}}^{n+K\sqrt{n}}\sum_{t=0}^K \Pee( \Ztil \neq t, Z=t | N=m )
\Pee( N=m ) + 2\eps  \\
& = & 
\sum_{m=n-K\sqrt{n}}^{n+K\sqrt{n}}\sum_{t=0}^K \Pee( Y_n \neq t, Y_m=t ) 
\Pee( N=m ) + 2\eps
\end{array}
\end{equation}

\noindent
Let us now fix some $n-K\sqrt{n} \leq m \leq n$ and $t \leq K$.
Note that if $Y_m = t$ and $Y_n < t$ then we can fix $t$ configurations in $\Xcal_m$, and
at least one of the points $X_{m+1}, \dots, X_n$ must fall within $r$ of one of the $k \cdot t$ 
points of these configurations.
This gives:

\begin{equation}\label{eq:mleqn1} 
\Pee( Y_m = t, Y_n < t ) \leq  \Pee( Y_n < t | Y_m = t ) \leq k \cdot t \cdot r_n^2 \cdot (n-m) = o(1), 
\end{equation}

\noindent
since $r_n = o( n^{-1/2} )$ and $n-m = O(\sqrt{n})$.

Similarly, if $Y_m=t$ and $Y_n > t$ then we can fix $t+1$ configurations in $\Xcal_n$, and
at least one of the points that is either part of one of these configurations, or
within distance $r$ of a point of one of these configurations must be among
$X_{m+1}, \dots, X_n$.
This gives

\begin{equation}\label{eq:mleqn2} 
\begin{array}{rcl}
\Pee( Y_m = t, Y_n > t )
& \leq &
\Pee( \Delta( G(n+K\sqrt{n}, r_n) ) > f(n) ) \\
& & 
+ \Pee( Y_m = t, Y_n > t, \Delta( G(n+K\sqrt{n}, r_n) ) \leq f(n)  ) \\
& \leq &  
o(1)
+
\Pee( Y_m = t | Y_n > t, \Delta( G(n+K\sqrt{n}, r_n) ) \leq f(n)  ) \\
& \leq & 
o(1)
+ k \cdot (t+1) \cdot (f(n)+1) \cdot \frac{n-m}{n} \\
& = & 
o(1) ,
\end{array}
\end{equation}

\noindent
using $n-m = O(\sqrt{n})$ and $f(n) = \sqrt{n}$ for the last line. 
Combining~\eqref{eq:mleqn1} and~\eqref{eq:mleqn2} shows that
$\Pee( Y_m = t, Y_n \neq t ) = o(1)$ for all $t \leq K$ and $n-K\sqrt{n} \leq m \leq n$.

Similarly, we find that $\Pee( Y_m = t, Y_n \neq t ) = o(1)$ for all $t \leq K$ and $n\leq m \leq n+K\sqrt{n}$.
Using~\eqref{eq:ZOZP} we now find that

\[ 
\Pee( \Ztil_n \neq Z_n )
\leq 
\sum_{m=n-K\sqrt{n}}^{n+K\sqrt{n}}\sum_{t=0}^K \Pee( Y_n \neq t, Y_m=t ) 
\Pee( N=m ) + 2\eps  
 = 
o(1) + 2\eps.
\]

\noindent
Sending $\eps\downarrow 0$ completes the proof.
\end{proofof}

\section{The proof of Lemma~\ref{lem:span}\label{sec:spanapp}}

\begin{proofof}{Lemma~\ref{lem:span}}
Consider a spanning tree $T$ of $G$ that minimizes the sum of the edge lengths.
Then $T$ does not have any vertex of degree $\geq 7$.
This is because, if $v$ were to have degree $\geq 7$, then there are two
neighbours
$u,w$ of $v$
such that the angle between the segments $[v,u]$ and $[v,w]$ is strictly
less than
60 degrees.
We can assume without loss of generality that $[v,u]$ is shorter than $[v,w]$.
Note that if we remove the edge $vw$ and add the edge $uw$ then we obtain
another
spanning tree but with strictly smaller total edge-length, a contradiction.
Hence $T$ has maximum degree at most 6.

Similarly, we see that if $v$ has degree 6 in $T$ then the neighbours of $v$
have
exactly the same distance \onote{Corrected typo}
to $v$ and the angle between a pair of {\orange consecutive} neighbours is exactly 60
degrees.

Let us now pick a spanning tree $T'$ which minimizes the total edge-length
(and hence has no degree $\geq 7$ vertices)
and, subject to this, has as few as possible degree 6 vertices and, subject
to these two demands, the maximum over all degree 6 vertices of
their $x$-coordinate is as large as possible.
Let $v$ be a degree 6 vertex with largest $x$-coordinate. It has two neighbours
$u,w$ with strictly
large $x$-coordinates.  Observe that, as seen above the segments
$[v,u], [v,w], [u,w]$ all have the same
length. Let us remove the edge $uv$ and add $uw$.
This results in another spanning tree, with the same total edge length.
The degree of $u$ has not changed, the degree of $v$ has dropped, and the
degree of $w$ has increased by one.
If the degree of $w$ has become 6 then
our spanning tree is not as we assumed (there is a degree 6 vertex with
$x$-coordinate strictly
larger than that of $v$). Thus the degree of $w$ after this operation is $<6$.
But then the number of degree 6 vertices has decreased, contradiction.
\end{proofof}

\section{The proof of Lemma~\ref{lem:headache}}

\noindent
If $G = (V,E)$ is a graph and $\overline{Z} = (Z_v : v \in V)$ are random
variables, then
$G$ is a {\em dependency graph} for $\overline{Z}$ if, whenever
there is no edge between $A, B \subseteq V$, the
random vectors $\overline{Z_A} := (Z_v : v\in A)$ and
$\overline{Z_B} := (Z_v : v \in B)$
are independent.
The proof of Lemma~\ref{lem:headache} relies on the following result of
Arratia et al.~\cite{twomoments}:

\begin{theorem}[Arratia et al.~\cite{twomoments}]\label{thm:twomoments}
Let $(Z_v : v \in V)$ be a collection of Bernouilli random variables with
dependency graph
$G=(V,E)$. Set $p_v := \Ee Z_v = \Pee( Z_v = 1 )$ and
$p_{uv} := \Ee Z_uZ_v = \Pee( Z_v=Z_u=1 )$.
Set $W := \sum Z_v$, and $\lambda := \Ee W$.
Then
\[
\dtv( W, \Po(\lambda) )
\leq
\min(3,1/\lambda) \cdot \left( \sum_{v\in V(G)} \sum_{u \in N(v)} p_{uv} +
\sum_{v\in V(G)} \sum_{u\in N(v)\cup\{v\}} p_up_v \right).
\]
\end{theorem}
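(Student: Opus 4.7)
The plan is to invoke the Chen--Stein method. For each $A \subseteq \mathbb{Z}_{\geq 0}$, there exists a unique bounded function $f = f_A : \mathbb{Z}_{\geq 0} \to \eR$ with $f_A(0) = 0$ satisfying the Stein equation
\[
 \lambda f(k+1) - k f(k) = 1_A(k) - \Pee(\Po(\lambda) \in A),
\]
and standard calculations using its explicit form yield the key estimate $\|\Delta f_A\|_\infty := \sup_k |f_A(k+1) - f_A(k)| \leq \min(1, 1/\lambda)$. Substituting $W$ and taking expectations gives $\Pee(W \in A) - \Pee(\Po(\lambda) \in A) = \Ee[\lambda f_A(W+1) - W f_A(W)]$, so taking a supremum over $A$ reduces the task to uniformly bounding $|\Ee[\lambda f(W+1) - W f(W)]|$ over all bounded $f$ with $\|\Delta f\|_\infty \leq \min(1, 1/\lambda)$.

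Writing $W = \sum_v Z_v$ and $\lambda = \sum_v p_v$, I would rewrite the quantity to be bounded as $\sum_v (p_v \Ee[f(W+1)] - \Ee[Z_v f(W)])$. For each $v$, set $B_v := N(v) \cup \{v\}$ and decompose $W = Z_v + S_v + T_v$ with $S_v := \sum_{u \in N(v)} Z_u$ and $T_v := \sum_{u \notin B_v} Z_u$. The dependency-graph hypothesis says precisely that $T_v$ is independent of the vector $(Z_u)_{u \in B_v}$. Defining $g(k) := \Ee[f(k + T_v)]$ (which inherits $\|\Delta g\|_\infty \leq \|\Delta f\|_\infty$) and using that $Z_v \in \{0,1\}$, one rewrites
\[
 p_v \Ee[f(W+1)] - \Ee[Z_v f(W)] = p_v \Ee[g(Z_v + S_v + 1)] - p_v\,\Ee[g(1 + S_v) \mid Z_v = 1].
\]
Expanding the first term by conditioning on $Z_v$ and using the telescoping bound $|g(k+j) - g(k)| \leq j\|\Delta g\|_\infty$, each such difference is controlled by $\|\Delta f\|_\infty$ times a quantity comparable to $p_v \Ee[Z_v + S_v] + \Ee[Z_v S_v]$; the first piece arises from the ``insertion'' of an independent $Z_v$ into $W$ and the second from the shift in the conditional law of $S_v$ given $Z_v = 1$ compared with its unconditional law.

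Summing over $v$ and using $\Ee[Z_v S_v] = \sum_{u \in N(v)} p_{uv}$ and $\Ee[S_v] = \sum_{u \in N(v)} p_u$, the net bound becomes $\|\Delta f\|_\infty$ times a quantity dominated by $\sum_v \sum_{u \in N(v)} p_{uv} + \sum_v \sum_{u \in B_v} p_u p_v$, and applying $\|\Delta f\|_\infty \leq \min(1, 1/\lambda)$ produces the stated inequality; the constant $3$ (rather than $1$) absorbs the explicit slack incurred when combining the two telescoping contributions. The main obstacle is the sharp bound $\|\Delta f_A\|_\infty \leq \min(1, 1/\lambda)$: the weaker estimate $\|\Delta f_A\|_\infty \leq 1$ is easy from the explicit formula $\lambda^{k+1} f_A(k+1)/k! = e^\lambda \bigl[\Pee(\Po(\lambda) \in A \cap \{0,\dots,k\}) - \Pee(\Po(\lambda) \in A) \Pee(\Po(\lambda) \leq k)\bigr]$, but promoting this to the scaling $1/\lambda$ for large $\lambda$ requires a careful monotonicity argument comparing consecutive Poisson tail probabilities, and is the single delicate analytic ingredient underpinning the whole theorem.
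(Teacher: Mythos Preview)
The paper does not prove this statement: it is quoted as a known result of Arratia, Goldstein and Gordon and is used as a black box in the proof of Lemma~\ref{lem:headache}. So there is nothing to compare your argument against within the paper itself.

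That said, your outline is the standard Chen--Stein argument and is essentially correct. One small point: your decomposition is slightly more roundabout than the usual one. After writing $W = Z_v + S_v + T_v$, the clean move is to add and subtract $p_v\,\Ee[f(T_v+1)] = \Ee[Z_v f(T_v+1)]$ (using independence of $Z_v$ and $T_v$), which gives
\[
p_v\,\Ee\bigl[f(W+1)-f(T_v+1)\bigr] \;-\; \Ee\bigl[Z_v\bigl(f(1+S_v+T_v)-f(1+T_v)\bigr)\bigr],
\]
and both terms are then bounded by telescoping, yielding exactly $\|\Delta f\|_\infty\bigl(p_v\sum_{u\in B_v}p_u + \sum_{u\in N(v)}p_{uv}\bigr)$ with no slack. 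In particular the constant comes out as $\min(1,1/\lambda)$ directly; the $3$ in the statement is just a conservative citation, not something your argument needs to ``absorb''.
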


\begin{proofof}{Lemma~\ref{lem:headache}}
We will use Theorem~\ref{thm:twomoments}.
We consider the dissection $\Dcal(m)$ for some $m\in\eN$.
For convenience, let us order the cells of $\Dcal(m)$ in some (arbitrary) way as
$c_1,c_2,\dots, c_{m^2}$.
For $1\leq i \leq m^2$, let us denote by $x_i$ the lower left-hand corner of
$c_i$, and let
$\xi_i$ denote the indicator variable defined by;
\[
\xi_i := 1_{\left\{ \Pcal(c_i)=1, \Pcal(B(x_i;r)\setminus c_i) = 1 \right\}}.
\]
For $1\leq i < j \leq m^2$ with $\norm{x_i-x_j}\leq r$, let us write
\[
\xi_{(i,j)} := 1_{\left\{ \Pcal(c_i)=\Pcal(c_j)=1,
\Pcal((B(x_i;r)\cup B(x_j;r))\setminus (c_i\cup c_j)) = 2 \right\}}.
\]
Let us set
\[
Y^m := \sum_{i=1}^{m^2} \xi_i, \quad
W^m := \sum_{1\leq i < j \leq m^2, \atop \norm{x_i-x_j}\leq r} \xi_{i,j},
\]
and $Z^m := Y^m + W^m$.
By construction, we have%
\begin{equation}\label{eq:PoConverge}
 \lim_{m\to\infty} Y^m = Y, \quad, \lim_{m\to\infty} W^m = W,
 \quad \lim_{m\to\infty} Z^m = Z.
\end{equation}

\noindent
For convenience let us write $\Ical^m := [m^2], \Jcal^m := {[m^2]\choose 2}$ and
$\Vcal^m := \Ical^m \cup \Jcal^m$.
We now define a graph $\Gcal^m$ with vertex set $\Vcal_m$ and edges:
\begin{itemize}
 \item $ij\in E(\Gcal^m)$ if $i,j\in\Ical^m, \norm{x_i-x_j}\leq 100r$;
\item $uv\in E(\Gcal^m)$ if $u=\{i_1,i_2\}, v=\{i_3,i_4\} \in \Jcal^m$ and
$\norm{x_{i_a}-x_{i_b}} \leq 100r$ for all $1\leq a,b\leq 4$;
\item $uv\in E(\Gcal^m)$ if $u = i_1 \in\Ical^m, v=\{i_2,i_3\}\in \Jcal^m$
and $\norm{x_{i_a}-x_{i_b}} \leq 100r$ for all $1\leq a,b\leq 3$;
\end{itemize}
By the spatial independence properties of the Poisson process, this defined
a dependency graph on the random indicator variables $(\xi_v : v \in \Vcal^m )$.
By Theorem~\ref{thm:twomoments}, we therefore have that

\begin{equation}\label{eq:TwoMoments}
\dtv( Z^m, \Ee( Z^m ) ) \leq
3 \cdot \left( \sum_{v \in \Vcal^m}\sum_{u \in N(v)} \Ee \xi_v\xi_u
+  \sum_{v \in \Vcal^m}\sum_{u \in N(v)} \Ee \xi_v\Ee \xi_u \right).
\end{equation}

\noindent
Let us write

\[ \begin{array}{l}
\displaystyle
S_1^m := \sum_{v \in \Ical^m}\sum_{u \in N(v)\cap\Ical^m} \Ee \xi_v\Ee \xi_u,
\quad
S_2^m := \sum_{v \in \Ical^m}\sum_{u \in N(v)\cap\Ical^m} \Ee \xi_v\xi_u, \\
\displaystyle
S_3^m := \sum_{v \in \Jcal^m}\sum_{u \in N(v)\cap\Jcal^m} \Ee \xi_v\Ee \xi_u,
\quad
S_4^m := \sum_{v \in \Jcal^m}\sum_{u \in N(v)\cap\Jcal^m} \Ee \xi_v\xi_u, \\
\displaystyle
S_5^m := \sum_{v \in \Jcal^m}\sum_{u \in N(v)\cap\Ical^m} \Ee \xi_v\Ee \xi_u,
\quad
S_6^m := \sum_{v \in \Jcal^m}\sum_{u \in N(v)\cap\Ical^m} \Ee \xi_v\xi_u, \\
 \end{array} \]

\noindent
Then

\begin{equation}\label{eq:ISrelate}
\begin{array}{l}
\sum_{v \in \Vcal^m}\sum_{u \in N(v)} \Ee \xi_v\xi_u
=
S_2^m + S_4^m + 2S_6^m, \\
\sum_{v \in \Vcal^m}\sum_{u \in N(v)} \Ee \xi_v\Ee \xi_u
=
S_1^m + S_3^m + 2S_5^m.
\end{array}
\end{equation}

\noindent
For $x \in [0,1]^2$, let us define $\varphi^m(x) := m^{-2} \Ee \xi_i$
where $i\in\Ical$ is such that
$x \in c_i$.
Then
\[
S_1^m = \int_{[0,1]^2}\int_{B'(x;100r)} \varphi^m(x)\varphi^m(y){\dd}y{\dd}x,
\]
where $B'(x;100r)$ is the union of all cells $c_i$ with $\norm{x-c_i}
\leq 100r$.
Next we claim that $\lim_{m\to\infty} S_j^m \to I_j$ for all $1\leq j \leq 6$.
Now notice that

\[
0 \leq \varphi^m(x) \leq m^{-2} (n m^2) e^{-n m^2} \leq n,
\]
and, for every $x\in[0,1]^2$:
\[
\lim_{m\to\infty} \varphi_m(x) = n \cdot \Pee( E_x ).
\]
We can thus apply the dominated convergence theorem to show that
$\lim_{m\to\infty} S_1^m \to I_1$.
Similarly, we can show that
$S_j^m \to I_j$ for all $2\leq j\leq 6$.
Together with~\eqref{eq:PoConverge},~\eqref{eq:TwoMoments}
and~\eqref{eq:ISrelate}
this proves the Lemma.
\end{proofof}

\end{document}